\numberwithin{equation}{section}
\newcommand{\bQ}{\mathbb{Q}}
\newcommand{\bP}{\mathbb{P}}
\newcommand{\cO}{\mathcal{O}}
\newcommand{\oE}{\overline{E}}
\newcommand{\cF}{\mathcal{F}}
\newcommand{\bZ}{\mathbb{Z}}
\newcommand{\bb}{\bm{b}}
\newcommand{\oY}{\overline{Y}}
\newcommand{\oL}{\overline{L}}
\newcommand{\cI}{\mathcal{I}}
\newcommand{\ind}{\mathrm{ind}}
\newcommand{\Spec}{\mathrm{Spec}}
\newcommand{\id}{\mathrm{id}}
\newcommand{\Qq}{\mathbb{Q}}
\newcommand{\Rr}{\mathbb{R}}
\newcommand{\Zz}{\mathbb{Z}}
\newcommand{\Nn}{\mathbb{N}}
\newcommand{\Span}{\operatorname{Span}}
\newcommand{\vol}{\operatorname{vol}}
\newcommand{\Center}{\operatorname{center}}
\newcommand{\mld}{{\rm{mld}}}
\newcommand{\Weil}{\operatorname{Weil}}
\newcommand{\lct}{\operatorname{lct}}
\newcommand{\LCT}{\operatorname{LCT}}
\newcommand{\CR}{\operatorname{CR}}
\newcommand{\Supp}{\operatorname{Supp}}
\newcommand{\Diff}{\operatorname{Diff}}
\newcommand{\mult}{\operatorname{mult}}
\newcommand{\Rct}{\operatorname{Rct}}
\newcommand{\RCT}{\operatorname{RCT}}
\newcommand{\LCP}{\mathcal{LCP}}
\newcommand{\Oo}{\mathcal{O}}
\newcommand{\Ii}{{\Gamma}}
\newcommand{\reg}{\mathrm{reg}}
\newcommand{\creg}{\mathrm{creg}}
\newcommand\FT{{\rm{FT}}}
\newcommand{\crt}{{\rm{crt}}}
\newcommand{\CRT}{{\rm{CRT}}}
\newcommand{\Coeff}{{\rm{Coeff}}}
\newtheorem{thm}{Theorem}[section]
\newtheorem{conj}[thm]{Conjecture}
\newtheorem{cor}[thm]{Corollary}
\newtheorem{lem}[thm]{Lemma}
\newtheorem{prop}[thm]{Proposition}
\newtheorem{exprop}[thm]{Example-Proposition}
\newtheorem{claim}[thm]{Claim}
\theoremstyle{definition}
\newtheorem{defn}[thm]{Definition}
\newtheorem{ques}[thm]{Question}
\theoremstyle{definition}
\newtheorem{rem}[thm]{Remark}
\newtheorem{remdef}[thm]{Remark-Definition}
\newtheorem{ex}[thm]{Example}
\newtheorem*{notation}{Notation ($\star$)}
\theoremstyle{definition}
\begin{document}

\title{ACC for minimal log discrepancies of exceptional singularities}

\author{Jingjun Han, Jihao Liu, and V.V. Shokurov\\	(W\MakeLowercase{ith an} A\MakeLowercase{ppendix by} Y\MakeLowercase{uchen} L\MakeLowercase{iu})}

\address{Department of Mathematics, Johns Hopkins University, Baltimore, MD 21218, USA}
\email{jhan@math.jhu.edu}

\address{Department of Mathematics, The University of Uath, Salt Lake City, UT 84112, USA}
\email{jliu@math.utah.edu}

\address{Department of Mathematics, Johns Hopkins University, Baltimore, MD 21218, USA}
\email{shokurov@math.jhu.edu}
\address{Department of Algebraic Geometry, Steklov Mathematical Institute of Russian Academy of Sciences, Moscow, 119991, Russia}
\email{shokurov@mi-ras.ru}

\begin{abstract}
We prove the existence of $n$-complements for pairs with DCC coefficients and the ACC for minimal log discrepancies of exceptional singularities. In order to prove these results, we develop the theory of complements for real coefficients. We introduce $(n,\Gamma_0)$-decomposable $\Rr$-complements, and show its existence for pairs with DCC coefficients. 

\end{abstract}
\date{\today}

\subjclass[2010]{14E30, 14J40, 14J17, 14J45, 14C20}
\maketitle
\pagestyle{myheadings}\markboth{\hfill  J. Han, J. Liu and V.V. Shokurov \hfill}{\hfill ACC for minimal log discrepancies of exceptional singularities\hfill}

\tableofcontents


\section{Introduction} 
We work over the field of complex numbers $\mathbb C$.

The minimal log discrepancy (mld for short), which was introduced by the third author, plays a fundamental role in birational geometry. It not only characterizes the singularities of varieties but is also closely related to the minimal model program. In \cite{Sho04}, the third author proved that the conjecture on termination of flips follows from two conjectures on mlds: the ascending chain condition (ACC) conjecture for mlds and the lower-semicontinuity (LSC) conjecture for mlds. In this paper, we focus on the ACC conjecture for mlds.

\begin{conj}[{\cite[Problem 5]{Sho88}}, ACC for mlds]\label{conj: ACC for mlds}Let $d$ be a positive integer and $\Ii\subset[0,1]$ a set which satisfies the descending chain condition (DCC). Then the set
	$$\mld(d,\Ii):=\{\mld(X\ni x,B)\mid (X\ni x,B) \text{ is lc}, \dim X=d, B\in \Ii\}$$
	satisfies the ACC. Here $B\in\Ii$ means that the coefficients of $B$ belong to the set $\Ii$.
\end{conj}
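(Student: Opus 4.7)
The plan is to attack Conjecture~\ref{conj: ACC for mlds} by contradiction, using the existence of $(n,\Gamma_0)$-decomposable $\mathbb{R}$-complements with DCC coefficients announced in the paper as the main engine. Assume for contradiction a strictly ascending sequence $a_i = \mld(X_i\ni x_i, B_i)$ with $\dim X_i = d$ and $B_i \in \Gamma$, and for each $i$ pick a prime divisor $E_i$ over $x_i$ computing $a_i$. After extracting a subsequence I may arrange that every nonzero coefficient of $B_i$ forms a DCC sequence with a limit in $[0,1]$, and that $a_i \to a_\infty$ for some $a_\infty\in[0,d]$.

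The first step is to invoke the paper's complement existence theorem: there exists an integer $n=n(d,\Gamma)$ and a DCC set $\Gamma_0$ such that each germ admits a monotonic $(n,\Gamma_0)$-decomposable $\mathbb{R}$-complement $(X_i\ni x_i, B_i^+)$ with $B_i\le B_i^+$ and $n(K_{X_i}+B_i^+)\sim 0$ locally. Since the complemented pair has local Cartier index dividing $n$, every log discrepancy $a_E(X_i, B_i^+)$ lies in $\tfrac{1}{n}\mathbb{Z}_{\ge 0}$. The identity
\[
a_{E_i}(X_i, B_i) \;=\; a_{E_i}(X_i, B_i^+) \;+\; \mult_{E_i}\bigl(B_i^+ - B_i\bigr)
\]
then writes $a_i$ as a sum of an element of $\tfrac{1}{n}\mathbb{Z}_{\ge 0}$ and a multiplicity term whose coefficient data is controlled by $\Gamma_0$.

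The next step is to upgrade this decomposition to an ACC property for $\{a_i\}$. Writing $B_i^+-B_i=\sum_j \gamma_{i,j} D_{i,j}$ with $\gamma_{i,j}\in\Gamma_0$ and $D_{i,j}$ effective Weil, one needs the integer vectors $(\mult_{E_i} D_{i,j})_j$ to range over a set with ACC properties; then $a_i$ lies in a Minkowski combination of the finite set of values of $a_{E_i}(X_i,B_i^+)$ (finite because it is bounded and lies in $\tfrac{1}{n}\mathbb{Z}$) and an ACC combination of DCC coefficients, which itself satisfies ACC. Producing such control requires a log-birational boundedness statement for the family $\{(X_i, B_i^+)\}$: one would like to stratify it into log-bounded subfamilies on which all extractable divisors of bounded log discrepancy --- including $E_i$ --- come from a single log smooth model, forcing finiteness of each $\mult_{E_i} D_{i,j}$ per stratum.

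The main obstacle is exactly this last control. The divisor $E_i$ computing $a_i$ need not compute $\mld(X_i, B_i^+)$, and a priori its log discrepancy $a_{E_i}(X_i, B_i^+)$ could grow without bound, placing $E_i$ outside any bounded family of extractions. Bounding $a_{E_i}(X_i, B_i^+)$ uniformly is essentially a boundedness problem for klt germs of fixed dimension and bounded Cartier index, close in spirit to the BAB conjecture, and it is precisely where the \emph{exceptional} hypothesis is invoked in the paper's main theorem on $\mld$: exceptionality forces both the ambient germ and all its small-log-discrepancy valuations into a bounded family, collapsing the preceding step. Without the exceptional hypothesis no such boundedness is presently available, which is why Conjecture~\ref{conj: ACC for mlds} in full generality remains open; any complete proof would need an unconditional substitute for this boundedness step, for instance a uniform log-bound on divisors of log discrepancy below a fixed constant over pairs of bounded complement index.
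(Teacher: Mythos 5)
The statement you were asked to prove is not a theorem of the paper at all: it is Conjecture \ref{conj: ACC for mlds} (Shokurov's ACC conjecture for mlds), which remains open, and the paper only establishes it for exceptional singularities (Theorem \ref{thm: ACC mld exc}, via Theorems \ref{thm: acc mld dcc e0lc eplt} and \ref{thm: acc ld dcc coefficient bdd}). So your proposal cannot be judged as a proof, and to your credit you do not claim it is one: you correctly stop at the point where the argument needs an unconditional boundedness input. The obstruction you isolate --- controlling the divisor $E_i$ computing the mld relative to the complement, i.e.\ bounding $a_{E_i}(X_i,B_i^+)$ and the multiplicities $\mult_{E_i}D_{i,j}$ uniformly --- is exactly what the paper supplies \emph{only} under the exceptionality hypothesis: by Lemma \ref{lem: exp sing epsilon plt} an exceptional klt germ with DCC coefficients admits an $\epsilon$-plt blow-up with $\epsilon=\epsilon(d,\Gamma)$, and then Theorem \ref{thm: bdd local index exceptional}, Propositions \ref{prop: bdd index near E}, \ref{prop: bdd index of E qfactorial} (bounded Cartier indices near the reduced component), Proposition \ref{prop: local lct linear system} (a uniform lct bound controlling $\mult_F B_Y$), and Proposition \ref{prop: acc ld kc} (ACC and upper bound for $a(E,X,B)$) together carry out, in Theorem \ref{thm: log discrepancies less than 1}, precisely the bookkeeping you sketch. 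In that sense your outline is essentially the paper's strategy for the exceptional case, and your diagnosis of why it does not extend to arbitrary lc germs is accurate.

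One technical slip worth flagging: from an $(n,\Gamma_0)$-decomposable $\Rr$-complement you cannot conclude that $a_{E}(X_i,B_i^+)\in\tfrac{1}{n}\Zz_{\geq 0}$. By Definition \ref{defn: nI1I2complmenet} one has $B_i^+=\sum_j a_j B_{i,j}^+$ with each $(X_i\ni x_i,B_{i,j}^+)$ an $n$-complement of itself, so $a_{E}(X_i,B_i^+)=\sum_j a_j\,a_E(X_i,B_{i,j}^+)$ lies in the set $\sum_j a_j\cdot\tfrac{1}{n}\Nn$ with $a_j\in\Gamma_0$ possibly irrational; monotonic $n$-complements with $B^+\geq B$ need not exist for irrational coefficients (Example \ref{ex: irr complement}), which is the whole reason the paper introduces decomposable complements. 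This does not wreck your finiteness heuristic --- once such a value is bounded above it still ranges in a finite set, since $\Gamma_0$ is finite --- but it does mean the "Cartier index divides $n$" phrasing must be replaced by the decomposed statement, exactly as in the proof of Theorem \ref{thm: log discrepancies less than 1}. With that correction, your write-up is a fair account of the state of the art rather than a proof, and any genuine proof of Conjecture \ref{conj: ACC for mlds} would indeed require the unconditional substitute for the boundedness step that you name.
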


 Conjecture \ref{conj: ACC for mlds} is known for surfaces by Alexeev \cite{Ale93} and the third author \cite{Sho91}, and for toric pairs by Borisov \cite{Bor97} and Ambro \cite{Amb06}. When $\Ii$ is a finite set, Conjecture \ref{conj: ACC for mlds} is known for a fixed germ by Kawakita \cite{Kaw14} and for three-dimensional canonical pairs by Nakamura \cite{Nak16}. For other related results, we refer the readers to \cite{Sho96,Sho04,Kaw11,Kaw18,Jia19}. 
 
In this paper, we show that Conjecture \ref{conj: ACC for mlds} holds for exceptional singularities. This is the first result for non-toric varieties regarding Conjecture \ref{conj: ACC for mlds} for any DCC set $\Ii$ in arbitrary dimensions. Exceptional singularities were introduced by the third author \cite{Sho92,Sho00} in the study of flips and complements, and have been further studied by many people. We refer to \cite{MP99,Pro00,PS01,IP01,Kud02,CPS10,CS11,Sak12,CS14,Sak14} for related references. 

In his proof of of the BBAB (Birkar--Borisov--Alexeev--Borisov) theorem, Birkar divided Fano varieties into two separated classes: exceptional varieties and non-exceptional varieties \cite{Bir16,Bir19}. Roughly speaking, for non-exceptional varieties, he proved the theorem by creating non-klt centers and using induction. Thus the main difficulty comes from the exceptional case. As klt singularities correspond to Fano varieties (cf. Lemma \ref{lem: existence plt blow up}) and exceptional singularities correspond to exceptional varieties (see  
Theorem \ref{thm: local global exceptional correspondence}), the study of exceptional singularities is expected to be important in the study of klt singularities. 

\medskip 

\noindent\textbf{ACC for mlds of exceptional singularities.} We say that $(X\ni x,B)$ is an \emph{exceptional singularity} if $(X\ni x,B)$ is an lc germ, and for any $\mathbb R$-divisor $G\geq 0$ on $X$ such that $(X\ni x,B+G)$ is lc, there exists at most one prime $\bb$-divisor $E$ over $X\ni x$ such that $a(E,X,B+G)=0$ (see Definition \ref{defn: exc sing}). 

\medskip

Our first main result is the ACC for mlds of exceptional singularities:
\begin{thm}\label{thm: ACC mld exc} Let $d$ be a positive integer and $\Ii\subset [0,1]$ a DCC (resp. finite) set. Then there exists an ACC set (resp. an ACC set whose only possible accumulation point is $0$) $\Ii'$ and a positive real number $\epsilon$ depending only on $d$ and $\Ii$ satisfying the following. Assume that 
	\begin{itemize}
		\item $(X\ni x,B)$ is an exceptional singularity of dimension $d$, and
		\item $ B\in\Ii$,
	\end{itemize}
	then 
	\begin{enumerate}
	\item $\mld(X\ni x,B)\in\Ii'$, and
	\item if $\dim X\geq 2$ and $0<\mld(X\ni x,B)\le\epsilon$, then $\mld(X\ni x,B)=a(E,X,B)$, where $E$ is the unique reduced component  (see Definition \ref{defn: reduced component}) of $(X\ni x,B)$.
	\end{enumerate}
\end{thm}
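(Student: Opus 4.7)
The plan is to convert the local problem into a global problem on an exceptional log Fano pair via a plt blow-up, then combine the local--global correspondence (Theorem \ref{thm: local global exceptional correspondence}), the boundedness of exceptional log Fano pairs (the exceptional case of BBAB, due to Birkar), and the existence of bounded complements developed earlier in the paper. Concretely, by the existence of plt blow-ups (Lemma \ref{lem: existence plt blow up}), extract a prime divisor $E$ over $X\ni x$ via $\pi\colon Y\to X$; exceptionality of $(X\ni x,B)$ guarantees uniqueness, and $E$ is precisely the reduced component of the singularity. Adjunction yields a pair $(E,B_E)$, which by the local--global correspondence is a globally exceptional log Fano pair. The complement theorem of this paper together with adjunction forces the coefficients of $B_E$ to lie in a DCC set $\Gamma_E$ depending only on $d$ and $\Gamma$, and Birkar's theorem then places $(E,B_E)$ in a log bounded family $\mathcal{E}=\mathcal{E}(d,\Gamma)$.

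For part (1), I would decompose
\[\mld(X\ni x,B)=\min\{a(E,X,B),\ m^{*}\},\]
where $m^{*}$ is the infimum of $a(F,X,B)$ over prime divisors $F\neq E$ centered at $x$. By adjunction on $Y$, such $F$ correspond to divisors over $E$, so $m^{*}$ reduces to a local mld of $(E,B_E)$. The value $a(E,X,B)$ takes values in an ACC set because it is an $\mathbb{R}$-linear function of the DCC coefficients of $B$ with weights determined by the bounded geometry of the plt blow-up $\pi$, while $m^{*}$ takes values in an ACC set because $(E,B_E)$ ranges over the bounded family $\mathcal{E}$ with DCC coefficients; ACC of mlds on a log bounded family with DCC coefficients is expected to follow from the ACC for log canonical thresholds via a standard specialization argument.

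For part (2), suppose $\delta:=\mld(X\ni x,B)\leq\epsilon$. I would argue by contradiction: assume some $F\neq E$ realizes $a(F,X,B)=\delta$. Using an $(n,\Gamma_0)$-decomposable $\mathbb{R}$-complement $B^{+}=B+\Delta$ of $(X\ni x,B)$ with $n$ bounded in terms of $d$ and $\Gamma$, set $G:=t\Delta$ with $t=\delta/\mult_F\pi^{*}\Delta\in(0,1]$; then $(X\ni x,B+G)$ is lc and $a(F,X,B+G)=0$, exhibiting $F$ as a reduced component and contradicting exceptionality. The principal obstacle is precisely here: ensuring $\mult_F\pi^{*}\Delta\geq\delta$ (equivalently, $a(F,X,B^{+})\leq 0$) so that the rescaling stays in $(0,1]$ and lc is preserved. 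This is arranged by exploiting the freedom in the decomposable structure of $B^{+}$, together with the boundedness of the index $n$ and of $E$: for $\epsilon$ small enough relative to the constants produced by the complement theorem, any divisor with log discrepancy at most $\epsilon$ over the exceptional singularity is visible in the complement with multiplicity bounded below by a positive constant, whereas the hypothetical $F\neq E$ would force such multiplicity below $\delta$, a contradiction. The secondary technical point — ACC of mlds on the bounded family $\mathcal{E}$ used in part (1) — is the main subsidiary obstacle and is handled by reducing to ACC of log canonical thresholds in a bounded setting.
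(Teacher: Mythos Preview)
Your proposal has two genuine gaps.

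For part (2), your contradiction argument is incomplete at exactly the point you flag: there is no mechanism ensuring that a chosen complement $B^{+}$ satisfies $a(F,X,B^{+})\le 0$ for the hypothetical $F\neq E$. The explanation you offer (``for $\epsilon$ small enough\ldots any divisor with log discrepancy at most $\epsilon$ is visible in the complement'') is not an argument. The paper bypasses this entirely by first proving Lemma~\ref{lem: exp sing epsilon plt}: for an exceptional singularity with DCC coefficients, the unique plt blow-up is in fact an $\epsilon$-plt blow-up with $\epsilon=\epsilon(d,\Gamma)$. Once this is established, part (2) is a one-line computation: for $F\neq E$,
\[
a(F,X,B)=a(F,Y,B_Y+(1-a(E,X,B))E)\ge a(F,Y,B_Y+E)>\epsilon.
\]
The proof of Lemma~\ref{lem: exp sing epsilon plt} does resemble your sketch---one extracts $F$, pushes its coefficient to $1$ via ACC for log canonical thresholds, runs an MMP, and produces a complement with both $E$ and $F$ as lc places---but the $\epsilon$ comes from the ACC for LCT, not from the complement index.

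For part (1), the reduction of $m^{*}$ to a local mld of $(E,B_E)$ is incorrect. For $F\neq E$ one has
\[
a(F,X,B)=a(F,Y,B_Y+E)+a(E,X,B)\cdot\mult_F E,
\]
and the second term is not captured by adjunction to $E$. Moreover, ``ACC for mlds on a log bounded family with DCC coefficients'' is not a known consequence of ACC for LCT; it is essentially Conjecture~\ref{conj: ACC for mlds} itself. The paper instead feeds the $\epsilon$-plt property into Theorem~\ref{thm: acc mld dcc e0lc eplt}, whose proof (Theorem~\ref{thm: log discrepancies less than 1}) controls the three pieces $a(E,X,B)\cdot\mult_F E$, $a(F,Y,B_Y'+E)$, and $\mult_F(B_Y'-B_Y)$ separately, using bounded Cartier indices (Theorem~\ref{thm: bdd local index exceptional}, Propositions~\ref{prop: bdd index near E}--\ref{prop: bdd index of E qfactorial}) and the $(n,\Gamma_0)$-decomposable $\mathbb R$-complements of Theorem~\ref{thm: existence ni1i2 complement}.
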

It would be interesting to ask about the LSC conjecture for mlds of exceptional singularities, and whether the exceptional property of a singularity is a closed condition or not.

\medskip
In order to study exceptional singularities, we introduce a new category of singularities in this paper: the \emph{singularities admitting an $\epsilon$-plt blow-up}. Assume that $(X\ni x,B)$ is a klt germ. We recall that a plt blow-up $f: Y\rightarrow X$ of $(X\ni x,B)$ is an extraction of a unique prime divisor $E$ over $X\ni x$, such that $-E$ is ample over $X$ and $(Y,B_Y+E)$ is plt near $E$, where $B_Y$ is the strict transform of $B$ on $Y$ (see Definition \ref{defn: reduced component}). In this paper, $E$ is called \emph{the reduced component} of $f$. For any non-negative real number $\epsilon$, if $(E,\Diff_E(B_Y))$ is $\epsilon$-klt, then $f$ is called an \emph{$\epsilon$-plt blow-up} of $(X\ni x,B)$. If such $f$ exists, we say that $(X\ni x,B)$ \emph{admits an $\epsilon$-plt blow-up}. We also say that $(X\ni x,B)$ is a singularity  \emph{admitting an $\epsilon$-plt blow-up}.

When $\dim X\ge 2$, there are many singularities admitting an $\epsilon$-plt blow-up. For example, any fixed klt germ admits an $\epsilon$-plt blow-up for some positive real number $\epsilon$, and any smooth point admits an $\epsilon$-plt blow-up for every $0\leq\epsilon<1$. We will show that any klt exceptional singularity with DCC coefficients always admits an $\epsilon$-plt blow-up for some positive real number $\epsilon$ depending only on the DCC set and the dimension (see Lemma \ref{lem: exp sing epsilon plt}). In particular, Theorem \ref{thm: ACC mld exc} is a special case of Theorem \ref{thm: acc mld dcc e0lc eplt}.

\begin{thm}\label{thm: acc mld dcc e0lc eplt}
	Let $d$ be a positive integer, $\epsilon$ a positive real number, and $\Ii\subset [0,1]$ a DCC (resp. finite) set. Assume that $(X\ni x,B)$ is a klt germ of dimension $d$, such that
	\begin{itemize}
				\item $(X\ni x,B)$ admits an $\epsilon$-plt blow-up $f:Y\rightarrow X$, and
						\item $B\in \Ii$,
	\end{itemize}
	then 
	\begin{enumerate}
	\item $\mld(X\ni x,B)$ belongs to an ACC set (resp. an ACC set whose only possible accumulation point is $0$) depending only on $d,\epsilon$ and $\Ii$, and
	\item if $\mld(X\ni x,B)\leq\epsilon$, then $\mld(X\ni x,B)$ is attained at the reduced component of $f$.
	\end{enumerate}
\end{thm}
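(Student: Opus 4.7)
The plan is to reduce both parts to the reduced component $E$ via adjunction, invoke the paper's theory of $(n,\Gamma_0)$-decomposable $\Rr$-complements on the resulting $\epsilon$-klt log Fano pair $(E,\Diff_E(B_Y))$, and then lift the complement structure through the plt blow-up $f$ back to $(X\ni x, B)$.

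\textbf{Setup and proof of (2).} Set $a_E := a(E,X,B) > 0$, so $f^\ast(K_X+B) = K_Y + B_Y + (1-a_E)E$, equivalently $K_Y + B_Y + E = f^\ast(K_X+B) + a_E E$. Restricting to $E$ via adjunction yields $K_E + \Diff_E(B_Y) = a_E\, E|_E$, and $f$-ampleness of $-E$ forces $(E, \Diff_E(B_Y))$ to be an $\epsilon$-klt log Fano pair of dimension $d-1$, whose coefficients lie in a DCC (resp.\ finite) set $\Gamma' = \Gamma'(\Gamma, d)$ by a standard calculation of the different. For (2), any prime divisor $F \neq E$ over $X\ni x$ has center on $Y$ contained in $E = f^{-1}(x)$, and
\[
a(F, X, B) = a(F, Y, B_Y + E) + a_E \cdot \mathrm{ord}_F(E) \geq a(F, Y, B_Y + E) > \epsilon,
\]
where the last inequality uses the precise inversion of adjunction for the plt pair $(Y, B_Y + E)$ together with the $\epsilon$-klt property of $(E, \Diff_E(B_Y))$. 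Hence $\mld(X\ni x, B) \leq \epsilon$ forces the minimum to be attained at $E$.

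\textbf{Proof of (1).} Apply the existence theorem for $(n,\Gamma_0)$-decomposable $\Rr$-complements (the paper's main tool) to the $\epsilon$-klt log Fano pair $(E, \Diff_E(B_Y))$: this produces $n$ and finite $\Gamma_0 \subset (0,1]$ depending only on $d,\epsilon,\Gamma$, together with a decomposition $\Diff_E(B_Y) = \sum_j \alpha_j \Theta^j_E$ with $\alpha_j\in\Gamma_0$, $\sum_j\alpha_j = 1$, and each $(E,\Theta^j_E)$ an $n$-complement of itself. Using the $f$-ampleness of $-E$ together with the Prokhorov--Shokurov lifting machinery, each $n$-complement lifts from $E$ to a neighborhood of $E$ in $Y$ and descends under $f$ to an $n$-complement of some $(X\ni x, B^j)$, giving $B = \sum_j \alpha_j B^j$ with each $n(K_X + B^j) \sim 0$ near $x$. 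Then for every divisor $F$ over $X\ni x$,
\[
a(F, X, B) = \sum_j \alpha_j \cdot a(F, X, B^j) \in \sum_j \alpha_j \cdot \tfrac{1}{n}\ZZ_{\geq 0},
\]
and as $F$ varies the minimum is $\mld(X\ni x, B)$; since $\Gamma_0$ is finite and the candidate values are bounded above, the set of possible mlds is ACC, and ACC with sole accumulation point $0$ when $\Gamma$ is finite (so that $\Gamma_0$ may be taken rational).

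\textbf{Main obstacle.} The decisive step is the lifting and descent of the decomposable complement through the plt blow-up, which requires Koll\'ar-type vanishing combined with the $f$-ampleness of $-E$ to extend sections of $|n(K_E + \Theta^j_E)|$ to a neighborhood of $E$ in $Y$ and then push them down to $X$. A secondary but essential ingredient is the DCC-preservation for coefficients of the different $\Diff_E(B_Y)$, so that the complement theorem applies uniformly in the input data $(d,\epsilon,\Gamma)$.
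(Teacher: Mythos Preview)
Your proof of (2) is correct and matches the paper. The problem is in (1): the central identity $B=\sum_j\alpha_jB^j$ with $n(K_X+B^j)\sim 0$ near $x$ cannot hold. An $(n,\Gamma_0)$-decomposable $\Rr$-complement of $(E,\Diff_E(B_Y))$ produces $\Theta_E=\sum_j\alpha_j\Theta_E^j\ge\Diff_E(B_Y)$, and the inequality is strict because $-(K_E+\Diff_E(B_Y))$ is ample. Lifting and pushing down you obtain $B^+:=\sum_j\alpha_jB^j\ge B$, again strictly: since $(X\ni x,B)$ is klt, $a_E>0$ and $K_X+B$ is not $\Rr$-linearly trivial near $x$, so it cannot be written as a convex combination of $K_X+B^j$ with $n(K_X+B^j)\sim 0$. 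What you actually get is
\[
a(F,X,B)=\sum_j\alpha_j\,a(F,X,B^j)+\mult_F(B^+-B),
\]
and the correction term $\mult_F(B^+-B)$ is the whole difficulty; it is a priori neither bounded nor in any discrete set.

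The paper's argument is organized precisely around controlling this correction (Theorem~7.3). One first applies the decomposable complement theorem on $Y$ (not on $E$) to obtain $B_Y'=\sum_ia_iB_i\ge B_Y$ with $(Y/X\ni x,B_i+E)$ an $n$-complement of itself, and then writes
\[
a(F,X,B)=a_E\cdot\mult_FE+\sum_ia_i\,a(F,Y,B_i+E)+\mult_F(B_Y'-B_Y).
\]
Each summand must be shown to lie in an ACC set: $a_E$ is ACC by Proposition~4.2; $\mult_FE$ is discrete because the Cartier index of $E$ is bounded (this uses the normalized-volume bounds and ultimately Theorem~1.5 on local algebraic fundamental groups, via Proposition~7.2); the middle sum is discrete because $n(K_Y+B_i+E)$ is Cartier; and $\mult_F(B_Y'-B_Y)$ is ACC because the Cartier indices of the components of $B_Y'$ are bounded (Proposition~7.1, using log boundedness of $(E,\Diff_E(B_Y'))$) while $\mult_FD$ is bounded above via an lct estimate (Proposition~3.22, from Birkar's Theorem~1.6). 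None of this machinery is bypassed by your lifting argument; the gap term it was designed to handle is exactly the term your identity erases.
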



 We will show that the log discrepancy of the reduced component, $a(E,X,B)$, belongs to an ACC set (see Proposition \ref{prop: acc ld kc}). In particular, $\mld(X\ni x,B)$ is bounded from above. Thus Theorem \ref{thm: acc mld dcc e0lc eplt} follows immediately from Theorem \ref{thm: acc ld dcc coefficient bdd}. 

\begin{thm}\label{thm: acc ld dcc coefficient bdd}
	Let $d$ be a positive integer, $\epsilon_0,\epsilon$ two positive real numbers, and $\Ii\subset [0,1]$ a DCC set. Let $\mathcal{LD}(d,\epsilon_0,\epsilon,\Ii)$ be the set of all $a(F,X,B)$, where
	\begin{enumerate}
		\item $(X\ni x,B)$ is an $\epsilon_0$-lc germ,
		\item $B\in \Ii$,
		\item $(X\ni x,B)$ admits an $\epsilon$-plt blow-up, and
		\item $F$ is a prime divisor over $X\ni x$.
	\end{enumerate}
	Then $\mathcal{LD}(d,\epsilon_0,\epsilon,\Ii)$ does not have any accumulation point from below, i.e. for any positive real number $M$, the set
	$$\mathcal{LD}(d,\epsilon_0,\epsilon,\Ii)\cap[0,M]$$
	satisfies the ACC. 
\end{thm}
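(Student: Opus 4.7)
The plan is by induction on $d$ and by contradiction. The case $d=1$ is immediate: the only divisorial valuation centered at $x$ is $x$ itself, with $a(x,X,B) = 1 - \mult_x B \in 1 - \Gamma$, an ACC set. For $d \geq 2$, suppose there is a strictly increasing sequence $a_i = a(F_i, X_i, B_i) \in \mathcal{LD}(d,\epsilon_0,\epsilon,\Gamma) \cap [0,M]$ with limit $a > 0$ (positivity being guaranteed by the $\epsilon_0$-lc assumption). For each $i$, fix an $\epsilon$-plt blow-up $f_i : Y_i \to X_i$ of $(X_i \ni x_i, B_i)$ with reduced component $E_i$, so that $K_{Y_i} + B_{Y_i} + E_i = f_i^*(K_{X_i} + B_i)$ and $-E_i$ is $f_i$-ample. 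By adjunction $(E_i, \Diff_{E_i}(B_{Y_i}))$ is an $\epsilon$-klt log Fano pair of dimension $d-1$, and by the Shokurov--Koll\'ar adjunction formula the coefficients of $\Diff_{E_i}(B_{Y_i})$ lie in a DCC set $\Gamma' \supset \Gamma$ depending only on $d$ and $\Gamma$; in particular these pairs form a bounded family by Birkar's BAB theorem.

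If, after passing to a subsequence, $F_i = E_i$ for all $i$, then Proposition \ref{prop: acc ld kc} asserts that $a_i = a(E_i,X_i,B_i)$ lies in an ACC set, contradicting strict increase. Otherwise we may assume $F_i \neq E_i$ for all $i$; since $f_i$ is an isomorphism away from $E_i$, the center of $F_i$ on $Y_i$ lies in $E_i$ and $a_i = a(F_i, Y_i, B_{Y_i} + E_i)$. The crux is then to invoke the existence theorem for $(n,\Gamma_0)$-decomposable $\mathbb{R}$-complements developed in this paper, applied to the germ contraction $f_i$: this yields a uniform positive integer $n$, a uniform finite set $\Gamma_0 \subset (0,1]$, an $\mathbb{R}$-complement $B_{Y_i}^+ \geq B_{Y_i}$ of $(Y_i, B_{Y_i}+E_i)$, and a decomposition $B_{Y_i}^+ + E_i = \sum_j r_{ij}\, D_{ij}$ with $r_{ij} \in \Gamma_0$, $\sum_j r_{ij} = 1$, and each $(Y_i, D_{ij})$ an $n$-complement over $x_i$. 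Then $n \cdot a(F_i, Y_i, D_{ij}) \in \mathbb{Z}_{\geq 0}$ is bounded above uniformly (say by $M/\min \Gamma_0$), hence takes only finitely many values; consequently the convex combination $a(F_i, Y_i, B_{Y_i}^+ + E_i) = \sum_j r_{ij}\, a(F_i, Y_i, D_{ij})$ also takes only finitely many values, and we may assume it equals a constant $c$ after further subsequence. Using
\[
a_i \;=\; c + v_{F_i}(B_{Y_i}^+ - B_{Y_i}),
\]
and the fact that the coefficients of $B_{Y_i}^+ - B_{Y_i}$ are themselves controlled by a DCC set determined by $\Gamma$ and $\Gamma_0$, we conclude that $\{a_i\}$ satisfies ACC, contradicting strict increase.

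The hardest step is the construction of the $(n, \Gamma_0)$-decomposable $\mathbb{R}$-complement for germs with DCC coefficients and its functoriality under the contraction $f_i$; this is the central technical result of the paper on which the whole argument rests. A subtler secondary point is the bookkeeping that converts ``finitely many values for the complemented log discrepancy'' into ``ACC for $a_i$'': one must verify that the correction term $v_{F_i}(B_{Y_i}^+ - B_{Y_i})$ belongs to a DCC set, which we expect to follow from the DCC hypothesis on $\Gamma$ together with the finiteness of $\Gamma_0$. Finally, the reduction to the setting of $\epsilon$-plt blow-ups via Lemma \ref{lem: exp sing epsilon plt}, coupled with the boundedness of $\epsilon$-klt log Fano pairs, is what lets the complement theorem yield uniform constants $n$ and $\Gamma_0$ independent of the sequence.
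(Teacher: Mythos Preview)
Your overall strategy---take the $\epsilon$-plt blow-up, apply the $(n,\Gamma_0)$-decomposable $\mathbb R$-complement, and study the discrepancy of $F$ in terms of the complemented pair---matches the paper's. But there are two genuine gaps that the paper spends real effort closing.

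\textbf{First, the crepant formula is wrong.} You write $K_{Y_i}+B_{Y_i}+E_i=f_i^*(K_{X_i}+B_i)$, but in fact
\[
K_{Y_i}+B_{Y_i}+(1-e_i)E_i=f_i^*(K_{X_i}+B_i),\qquad e_i:=a(E_i,X_i,B_i)\ge\epsilon_0>0,
\]
so for $F_i\ne E_i$ one has $a_i=a(F_i,Y_i,B_{Y_i}+E_i)+e_i\cdot\mult_{F_i}E_i$, not $a_i=a(F_i,Y_i,B_{Y_i}+E_i)$. The extra term $e_i\cdot\mult_{F_i}E_i$ must itself be shown to lie in an ACC set. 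The paper does this by combining Proposition~\ref{prop: acc ld kc} (so $e_i$ is ACC) with a bound on the Cartier index of $E_i$ near $E_i$ (Proposition~\ref{prop: bdd index of E qfactorial}, which rests on the appendix Theorem~\ref{thm:pltind}); together with $\mult_{F_i}E_i\le M/\epsilon_0$ this forces $\mult_{F_i}E_i$ into a \emph{finite} set. You have omitted this entire piece.

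\textbf{Second, the ``correction term'' $v_{F_i}(B_{Y_i}^+-B_{Y_i})$ is not controlled by the data you name.} The coefficients of $B_{Y_i}^+-B_{Y_i}$ lie in an ACC (not DCC) set, but that alone does not make $v_{F_i}(B_{Y_i}^+-B_{Y_i})=\sum_D c_D\,\mult_{F_i}D$ lie in an ACC set: you must also show that each $\mult_{F_i}D$ lies in a finite set. The paper secures this in two separate steps. For discreteness, Proposition~\ref{prop: bdd index near E} bounds the Cartier index of every component $D$ of $B_{Y_i}^+$ near $E_i$ (by lifting Cartier index bounds from the log-bounded family $(E_i,\Diff_{E_i})$ via Lemma~\ref{lem:extendcomplement}). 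For an upper bound on $\mult_{F_i}D$, the paper invokes Proposition~\ref{prop: local lct linear system} (ultimately Birkar's \cite[Theorem~1.6]{Bir16}) to get $(Y_i,(1+t)B_{Y_i}+E_i)$ plt for a uniform $t>0$, hence $\mult_{F_i}B_{Y_i}\le M/t$; components of $B_{Y_i}^+$ not in $\Supp B_{Y_i}$ are handled separately via $\mult_{F_i}D\le Mn/\min\Gamma_0$. None of this follows ``from the DCC hypothesis on $\Gamma$ together with the finiteness of $\Gamma_0$''; it requires the Cartier-index machinery of \S7 and the appendix. (Incidentally, the induction on $d$ in your outline is never used and is not present in the paper's argument.)
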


\begin{rem}
	Actually, there exists a strictly increasing sequence $a_i\in \mathcal{LD}(d,\epsilon_0,\epsilon,\Ii)$, such that $\lim_{i\to +\infty}a_i=+\infty$. 
\end{rem}

\noindent\textbf{Boundedness of local algebraic fundamental groups.} As one of the key steps in the proof of Theorem \ref{thm: acc ld dcc coefficient bdd}, we show that if an $\epsilon_0$-lc germ admits an $\epsilon$-plt blow-up, then the order of the local algebraic fundamental
group is bounded. We remark that such kind of singularities are not bounded even in the analytic sense \cite[Example 3.4]{HLM19}.


Recall that the local algebraic fundamental group $\hat{\pi}_1^{loc}(X,x)$ is the pro-finite completion of
 $\pi_1^{loc}(X,x):=\pi_1(L(X\ni x))$, where $L(X\ni x)$ is the link of the singularity $X\ni x$.
\begin{thm}\label{thm: bdd local index exceptional}
	Let $d$ be a positive integer and $\epsilon_0,\epsilon$ two positive real numbers. Then there exists a positive integer $N$ depending only on $d, \epsilon_0$ and $\epsilon$ satisfying the following. Assume that
	\begin{enumerate}
	    \item $(X\ni x,B)$ is an $\epsilon_0$-lc germ of dimension $d$, and
	    \item $(X\ni x,B)$ admits an $\epsilon$-plt blow-up.
	\end{enumerate} 
	Then the order of the local algebraic fundamental group $\hat{\pi}_1^{loc}(X,x)$ is bounded from above by $N$. In particular, $N!D$ is Cartier near $x$ for any $\mathbb Q$-Cartier Weil divisor $D$ on $X$.
\end{thm}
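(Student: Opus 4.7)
The plan is to use the $\epsilon$-plt blow-up to reduce the bound on $\hat{\pi}_1^{loc}(X,x)$ to a boundedness statement for log Fano pairs of dimension $d-1$, and then to extract the desired bound on the local fundamental group from the Seifert-type structure of the link.

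First I would apply adjunction to the $\epsilon$-plt blow-up $f\colon Y\to X$ extracting the reduced component $E$. Writing $(K_Y+B_Y+E)|_E = K_E + \Diff_E(B_Y)$, the hypothesis gives that $(E, \Theta := \Diff_E(B_Y))$ is $\epsilon$-klt. Since $a(E,X,B)>0$ we have $K_Y+B_Y+E \sim_{\RR,f} aE$ with $a>0$, and because $-E$ is $f$-ample with $E$ contracted to $x$, restricting to $E$ yields that $-(K_E+\Theta)$ is ample. Thus $(E,\Theta)$ is an $\epsilon$-klt log Fano pair of dimension $d-1$ whose coefficients lie in $[0,1-\epsilon]$. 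By Birkar's BAB theorem for $\epsilon$-lc log Fanos (using the coefficient bound $1-\epsilon$), such pairs form a bounded family.

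Next, I would bound the orbifold fundamental group $\hat{\pi}_1^{orb}(E,\Theta)$ by combining Braun's finiteness theorem (the regional/orbifold fundamental groups of klt pairs are finite) with a Noetherian spreading argument over the bounded family; this yields a uniform bound $N_1=N_1(d,\epsilon)$. Then I would invoke the Seifert $S^1$-bundle structure of the link $L(X\ni x)\to (E,\Theta)$ coming from the plt blow-up (Koll\'ar's theory of Seifert bundles over klt orbifolds). This produces a short exact sequence of profinite groups
\[
1 \longrightarrow C \longrightarrow \hat{\pi}_1^{loc}(X,x) \longrightarrow \hat{\pi}_1^{orb}(E,\Theta) \longrightarrow 1,
\]
in which $C$ is a cyclic group generated by the monodromy around $E$, whose order divides the Cartier index of $E$ in a neighbourhood of $E$ in $Y$. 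The $\epsilon$-plt condition constrains this Cartier index: if $Y$ fails to be Cartier along a codimension-one point of $E$ with index $m$, then at least $(m-1)/m$ appears in a coefficient of $\Theta$, forcing $m\le 1/\epsilon$; an analogous analysis in higher codimension (using the $\epsilon_0$-lc hypothesis on $(X\ni x,B)$ together with the boundedness of $(E,\Theta)$) yields $|C|\le N_2(d,\epsilon_0,\epsilon)$. Setting $N := N_1\cdot N_2$ then gives $|\hat{\pi}_1^{loc}(X,x)|\le N$. The ``in particular'' statement follows because the Cartier index of any $\mathbb{Q}$-Cartier Weil divisor $D$ on $X$ near $x$ divides $|\hat{\pi}_1^{loc}(X,x)|$, so $N!D$ is Cartier near $x$.

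The main obstacle I anticipate is turning the qualitative finiteness of Braun's theorem into an \emph{effective} uniform bound over the bounded family produced by BAB; this requires either Koll\'ar's machinery for finite covers compatible with orbifold Seifert structures or a quantitative refinement applied family-wise. Writing down the Seifert bundle and the profinite exact sequence carefully in the $\mathbb{R}$-coefficient plt setting also demands care, so that the orbifold structure on $E$ truly records the local Cartier indices arising from the plt blow-up. Once these two ingredients are in place, assembling the bound becomes essentially bookkeeping.
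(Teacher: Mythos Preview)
Your route is genuinely different from the paper's, and the paper's argument is considerably more direct. The paper does not analyze the link or invoke any Seifert or Braun-type results. Instead it introduces the normalized volume $\widehat{\vol}(E,X,B):=a(E,X,B)^{d-1}\cdot\vol\big(-(K_Y+B_Y+E)|_E\big)$ of the reduced component of the $\epsilon$-plt blow-up and shows two things: (i) this quantity is bounded above and, using the $\epsilon_0$-lc hypothesis to bound the Cartier index of $E|_E$ on $E$, bounded below by positive constants depending only on $d,\epsilon_0,\epsilon$; and (ii) for any finite quasi-\'etale cover $g\colon (X'\ni x')\to (X\ni x)$ with $g^*(K_X+B)=K_{X'}+B'$, the induced $\epsilon$-plt blow-up upstairs has normalized volume equal to $\deg g\cdot\widehat{\vol}(E,X,B)$. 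Since the cover is again $\epsilon_0$-lc with an $\epsilon$-plt blow-up, its normalized volume is also bounded above, so $\deg g$ is bounded and hence $|\hat\pi_1^{loc}(X,x)|$ is bounded. This sidesteps both obstacles you flag.

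Your strategy is plausible but each of the two steps you worry about is a genuine gap. First, upgrading Braun-type finiteness of $\hat\pi_1^{orb}(E,\Theta)$ to a \emph{uniform} bound over a bounded family of $\epsilon$-klt log Fanos is not a formality; the natural way to do it is to bound the degree of quasi-\'etale covers via anticanonical volume, which is essentially the paper's normalized-volume trick transplanted to $E$---so you would be re-deriving the paper's mechanism one dimension down. Second, the Seifert exact sequence you write needs the orbifold structure on $E$ coming from the Cartier indices of $Y$ along $E$ (i.e.\ $\Diff_E(0)$), not from $\Theta=\Diff_E(B_Y)$, since $B$ has real coefficients and does not contribute to the topology; and bounding the cyclic kernel $|C|$ requires bounding the Cartier index of $E$ in $Y$ in \emph{every} codimension, not just codimension one. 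The paper does eventually prove such a bound (via an index-restriction theorem for plt divisors in the appendix), but it is not needed for this theorem and your ``analogous analysis in higher codimension'' would have to supply it independently.
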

Xu proved Theorem \ref{thm: bdd local index exceptional} when $(X\ni x,B)$ is a fixed klt germ \cite[Theorem 1]{Xu14}. The third author \cite{Sho00} proved Theorem \ref{thm: bdd local index exceptional} and Theorem \ref{thm: ACC mld exc} when $\dim X=3$, and the coefficients of $B$ belong to the standard set $\{1-\frac{1}{n}\mid n\in\mathbb{N}^{+}\}\cup\{0\}$. His proof also works for the standard set in higher dimensions by assuming the BBAB theorem, see \cite{Mor18} for some special cases. We remark that in Theorem \ref{thm: bdd local index exceptional}, there is no assumption on the coefficients of $B$.

\begin{rem}\label{exp: mld exc sing goes to 0}
Both assumptions (1) and (2) of Theorem \ref{thm: bdd local index exceptional} are necessary even for surfaces: for example, 
\begin{itemize}
    \item Du Val singularities $A_n$ are $1$-lc, and
    \item the blow-up of the vertex of the cone over a rational curve of degree $n$ is a $\frac{1}{2}$-plt blow-up,
\end{itemize}
but the Cartier indices of all Weil divisors are not bounded from above when $n\to+\infty$ in either case.
\end{rem}

\medskip
\noindent\textbf{Existence of complements.} The third author introduced the theory of complements when he investigated log flips of threefolds \cite{Sho92}. It is related to the boundedness of varieties and singularities of linear systems. The theory of complements is further developed in \cite{Sho00,PS01,PS09,Bir19}. These researches were mainly focused on pairs with finite rational coefficients. 

In this paper, we develop the theory of complements for arbitrary DCC coefficients. This is not only crucial to the proof of Theorem \ref{thm: acc ld dcc coefficient bdd}, but is also important to topics related to the minimal model program (cf. \cite{Liu18}). Hacon informed us that it is expected that the theory of complements in this case may play an important role in future applications, e.g. towards the termination of flips both in characteristic zero and positive characteristics. 


\medskip

We show the existence of $n$-complements for pairs with DCC coefficients, which is a generalization of  \cite[Theorem 1.1, Theorem 1.7, Theorem 1.8]{Bir19} from finite rational coefficients to arbitrary DCC coefficients.
\begin{thm}\label{thm: dcc existence n complement}
	Let $d,p$ be two positive integers and $\Ii\subset [0,1]$ a DCC set. Then there exists a positive integer n divisible by $p$, depending only on $d,p$ and $\Ii$ satisfying the following. Assume that $(X,B)$ is an lc pair of dimension $d$ and $X\to Z$ is a contraction, such that
	\begin{itemize}
		\item $X$ is of Fano type over $Z$, 
		\item $B\in\Ii$, and
		\item $(X/Z,B)$ is $\Rr$-complementary.
	\end{itemize}
	Then there exists an $n$-complement $(X/Z\ni z,B^+)$ of $(X/Z\ni z,B)$ for any point $z\in Z$. Moreover, if
	$\Span_{\Qq_{\geq 0}}(\bar\Ii\backslash\Qq)\cap\mathbb Q\backslash\{0\}=\emptyset,$ then we may pick $B^{+}\geq B$, that is, $(X/Z\ni z,B^+)$ is a monotonic $n$-complement of $(X/Z\ni z,B)$. Here $\bar\Ii$ stands for the closure of $\Ii$ in $\Rr$.

\end{thm}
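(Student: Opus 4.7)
The plan is to reduce Theorem \ref{thm: dcc existence n complement} to Birkar's existence of $n$-complements for pairs with finite rational coefficients (as in \cite{Bir19}) via the theory of $(n,\Gamma_0)$-decomposable $\Rr$-complements advertised in the abstract. The guiding idea is that while $B$ has arbitrary DCC real coefficients, the sought-after complement $B^+$ should be representable as a convex $\Rr$-linear combination $B^+=\sum_j t_j B_j^+$ of $n$-complements whose coefficients all lie in a single finite rational set $\Gamma_0\subset[0,1]\cap\Qq$ depending only on $d,p,\Ii$.

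First I would produce such a $\Gamma_0$ from $\Ii$. Since $\bar\Ii\subset[0,1]$ is a compact DCC set, one can, after some convex-geometry bookkeeping, extract a finite $\Gamma_0\subset[0,1]\cap\Qq$ and a finite collection of convex decompositions $\gamma=\sum_j t_j(\gamma)\gamma_j$ with $\gamma_j\in\Gamma_0$, $t_j(\gamma)\geq 0$, $\sum_j t_j(\gamma)=1$, simultaneously valid for all $\gamma\in\Ii$ that occur. Carrying these identities through divisorially yields $B=\sum_j t_j \tilde B_j$ with each $\tilde B_j$ having $\Gamma_0$-coefficients; the structure is designed so that the floor inequality $nB^+\geq n\lfloor B\rfloor+\lfloor(n+1)\{B\}\rfloor$ needed for an $n$-complement will reduce to the analogous inequality for each $\tilde B_j$ once $n$ is chosen compatibly with $\Gamma_0$.

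Next I would transfer $\Rr$-complementarity from $(X/Z,B)$ to each $(X/Z,\tilde B_j)$: this is where the assumption that $(X/Z,B)$ is $\Rr$-complementary combined with the convex decomposition, together with effectivity/boundedness results for $\Rr$-complements established earlier in the paper, is used to ensure that the decomposition can be arranged with each piece $\Rr$-complementary over $Z$ near $z$. Birkar's theorem then produces a single positive integer $n$ divisible by $p$, depending only on $d,p,\Gamma_0$, together with $n$-complements $(X/Z\ni z,B_j^+)$ of $(X/Z\ni z,\tilde B_j)$. Setting $B^+:=\sum_j t_j B_j^+$, lc-ness of $(X,B^+)$ follows from convexity of log discrepancies, $n(K_X+B^+)\sim 0/Z$ near $z$ follows from $n(K_X+B_j^+)\sim 0/Z$ and $\sum_j t_j=1$, and the required floor inequality is obtained from the choice of $\Gamma_0$ and the analogous inequalities for the $B_j^+$. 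For the monotonic part, the hypothesis $\Span_{\Qq_{\geq 0}}(\bar\Ii\backslash\Qq)\cap\Qq\backslash\{0\}=\emptyset$ is precisely a $\Qq$-linear independence condition on the irrational part of $\bar\Ii$ that forces the rational approximations $\tilde B_j$ to be chosen componentwise $\geq B$; combined with the monotonic version of Birkar's theorem for the finite rational pieces (giving $B_j^+\geq\tilde B_j$), convex combination yields $B^+\geq B$.

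The main obstacle is the first two steps: producing the $\Gamma_0$-decomposition while simultaneously preserving $\Rr$-complementarity of each piece and keeping $n$ uniformly bounded in terms only of $d,p,\Ii$. This requires delicate control over the closure of the DCC set, convex geometry of the image of the coefficient data in a Euclidean space built from $\Gamma_0$, and enough effectivity to convert abstract $\Rr$-complementarity of $(X/Z,B)$ into $\Rr$-complementarity of every summand $(X/Z,\tilde B_j)$ with a uniform bound on the resulting complementary index.
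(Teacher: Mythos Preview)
There is a genuine gap in your argument. Your construction $B^{+}:=\sum_j t_j B_j^{+}$ with irrational weights $t_j$ does \emph{not} yield an $n$-complement: from $n(K_X+B_j^{+})\sim 0$ for each $j$ you only get $n(K_X+B^{+})=\sum_j t_j\cdot n(K_X+B_j^{+})\sim_{\Rr}0$, not $\sim 0$, because an $\Rr$-linear combination of principal divisors with irrational coefficients is generally not principal. Equivalently, $nB^{+}$ need not even be an integral divisor. What you have actually produced is exactly an $(n,\Gamma_0)$-decomposable $\Rr$-complement in the sense of Definition~\ref{defn: nI1I2complmenet}, i.e.\ Theorem~\ref{thm: existence ni1i2 complement}, not Theorem~\ref{thm: dcc existence n complement}.

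The paper bridges this gap by a Diophantine approximation step (see Theorem~\ref{thm: dcc existence n complement with part two} and Lemmas~\ref{lem: linearindependentdiufantu}--\ref{lem: vectordiufantu}): after obtaining the decomposition $\tilde B=\sum_i a_i B^i$ from Theorem~\ref{thm: existence ni1i2 complement} with each $(X/Z\ni z,B^i)$ an $n_1$-complement of itself, one perturbs the irrational weights $a_i$ to rational weights $a_i'$ with $n_0 a_i'\in\Zz$ and $\sum_i a_i'=1$, sets $n:=n_0 n_1$, and takes $B^{+}:=\sum_i a_i' B^i$. Then $n(K_X+B^{+})\sim 0$ genuinely holds, and the approximation is made precise enough (of order $<\epsilon_0/n_0$) that the floor inequality $nB^{+}\geq\lfloor(n+1)\tilde B\rfloor$ survives. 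For the monotonic part, the paper uses the hypothesis $\Span_{\Qq_{\geq 0}}(\bar\Ii\backslash\Qq)\cap\Qq\backslash\{0\}=\emptyset$ via Lemma~\ref{lem:finitesetmonotonic} to embed $\Ii$ in a positive cone $\Span_{\Qq_{\geq 0}}(\{1,r_1,\dots,r_c\})$, and then the \emph{anisotropic} direction $\bm{e}$ in the Diophantine approximation (condition~(5) of Theorem~\ref{thm: dcc existence n complement with part two}) is chosen so that all coordinates of $\bm{v}-\bm{v}_n$ have the correct sign, forcing $B^{+}\geq B$. Your sketch of monotonicity does not touch this mechanism.
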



It is surprising that in the above theorem, we can choose a unique $n:=n(d,p,\Ii)$ as opposed to a finite set of possible $n$'s as originally conjectured by the third author. 

When $-(K_X+B)$ is nef over $Z$, Theorem \ref{thm: dcc existence n complement} was proved by the third author \cite{Sho00} in dimension 2 when $\Ii\subset \Qq$ is the standard set, by Prokhorov and the third author \cite{PS09} in dimension 3 when $\Ii\subset\Qq$ is a hyperstandard set, and finally by Birkar \cite[Theorem 1.7, Theorem 1.8]{Bir19} when $\Ii\subset\Qq$ is a hyperstandard set in any given dimension.

We remark that in this paper, instead of adopting the assumption ``$-(K_X+B)$ is nef over $Z$'' as in the previous works, we use the weaker assumption ``$(X/Z,B)$ is $\Rr$-complementary'' introduced by the third author \cite[1.3]{Sho00}. We believe the latter assumption is the proper setting in the study of boundedness of complements for both Fano type and non-Fano type varieties, see \cite{Sho20} for more details. We also remark that in Theorem \ref{thm: dcc existence n complement} we cannot expect that $B^{+}\ge B$ even in dimension 1 (cf. Example \ref{ex: irr complement}). 



If we replace $\Ii$ with $[0,1]$ in Theorem \ref{thm: dcc existence n complement}, then the theorem does not hold (cf. Example \ref{ex: ex acc no complement}), and the theorem does not hold even we weaken the conclusion ``a positive integer $n$ divisible by $p$'' to ``$n$ belonging to a finite set'' (the boundedness of complements) (cf. \cite{Sho20}). By Theorem \ref{thm: dcc existence n complement}, it is not hard to show that if the number of irreducible components of $B$ is bounded, then the boundedness of complements holds. The third author \cite{Sho20} announced a proof of the boundedness of complements under one of the following conditions: either $(X/Z\ni z,B)$ has a klt $\Rr$-complement, or $\Ii\cap\Qq$ is a DCC set. His proof uses a quite different approach.


\medskip






In this paper, we introduce $(n,\Ii_0)$-decomposable $\Rr$-complements. The proofs of Theorem \ref{thm: acc ld dcc coefficient bdd} and Theorem \ref{thm: dcc existence n complement} heavily rely on Theorem \ref{thm: existence ni1i2 complement}, the existence of $(n,\Ii_0)$-decomposable $\Rr$-complements for pairs with DCC coefficients. 

Recall that an $\Rr$-complement $(X/Z\ni z,B^+)$ of $(X/Z\ni z,B)$ is a pair $(X,B^+)$ such that $(X,B^+)$ is lc over a neighborhood of $z$ and $B^+\geq B$.

\begin{defn}\label{defn: nI1I2complmenet}
	Let $(X,B)$ be a pair, $X\to Z$ a contraction, $z\in Z$ a point, and $\Ii_0=\{a_1,\ldots,a_k\}\subset(0,1]$ a finite set, such that $\sum_{i=1}^ka_i=1$. We say that $(X/Z\ni z,B^+)$ is an \emph{$(n,\Ii_0)$-decomposable $\Rr$-complement} of $(X/Z\ni z,B)$ if 
	\begin{enumerate}
	\item $(X/Z\ni z,B^+)$ is an $\Rr$-complement of $(X/Z\ni z,B)$,
	\item $\sum_{i=1}^ka_iB_i^{+}=B^+$ for some boundaries $B_1^{+},\dots,B_k^{+}$, and
	\item each $(X/Z\ni z,B_i^{+})$ is an $n$-complement of itself.
	\end{enumerate}
\end{defn}

\begin{thm}\label{thm: existence ni1i2 complement}
	Let $d$ be a positive integer and $\Ii\subset[0,1]$ a DCC set. Then there exist a positive integer $n$ and a finite set $\Ii_0\subset(0,1]$ depending only on $d$ and $\Ii$ satisfying the following. Assume that $(X,B)$ is an lc pair of dimension $d$ and $X\to Z$ is a contraction such that 
	\begin{enumerate}
		\item $X$ is of Fano type over $Z$,
		\item $B\in\Ii$, and
		\item $(X/Z,B)$ is $\Rr$-complementary.
	\end{enumerate} 
	Then for any point $z\in Z$, there exists an $(n,\Ii_0)$-decomposable $\Rr$-complement $(X/Z\ni z,B^{+})$ of $(X/Z\ni z,B)$. Moreover, if $\bar\Gamma\subset\Qq$, then we may pick $\Ii_0=\{1\}$, and $(X/Z\ni z,B^{+})$ is a monotonic $n$-complement of $(X/Z\ni z,B)$.
\end{thm}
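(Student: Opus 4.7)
The plan is to reduce Theorem~\ref{thm: existence ni1i2 complement} to Birkar's existence theorem for $n$-complements with rational hyperstandard coefficients (\cite[Theorems~1.7, 1.8]{Bir19}) via a \emph{uniform convex decomposition} of the DCC boundary. By definition, an $(n,\Ii_0)$-decomposable $\Rr$-complement is a convex combination $B^+=\sum_{j=1}^k a_j B_j^+$ of $n$-complements weighted by elements of $\Ii_0$, so the strategy splits into two pieces: (i) a purely combinatorial step decomposing each real boundary coefficient into a fixed rational convex combination, and (ii) a geometric step producing compatible $n$-complements for each rational piece.

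For step (i), I would first establish the following lemma: given the DCC set $\Ii\subset[0,1]$, there exist a finite set $\Ii_0=\{a_1,\dots,a_k\}\subset(0,1]$ with $\sum_j a_j=1$ and a finite hyperstandard set $\Ii'\subset[0,1]\cap\Qq$, both depending only on $\Ii$, such that every $b\in\bar\Ii$ admits a decomposition $b=\sum_j a_j b_j$ with $b_j\in\Ii'$. The key tool is the structure of the compact DCC set $\bar\Ii$ near its accumulation points (and near $1$), where the DCC condition forces rational approximations with bounded denominators; finitely many local decompositions patch into global $\Ii_0,\Ii'$. When $\bar\Ii\subset\Qq$ the lemma is trivial with $\Ii_0=\{1\}$ and $\Ii'=\bar\Ii$.

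For step (ii), apply the decomposition coefficient-wise to $B=\sum_i b_i D_i$, producing $b_i=\sum_j a_j c_{i,j}$ with $c_{i,j}\in\Ii'$, and set $B^{(j)}:=\sum_i c_{i,j}D_i$, so that $B=\sum_j a_j B^{(j)}$. The crux is to verify that each $(X/Z\ni z, B^{(j)})$ is lc and $\Rr$-complementary. Granting this, since $X$ is of Fano type over $Z$ and each $B^{(j)}$ has coefficients in the finite rational set $\Ii'$, Birkar's theorem produces an $n_j$-complement $B^{(j),+}\ge B^{(j)}$ with $n_j(K_X+B^{(j),+})\sim 0$ over $Z$ near $z$. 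Choosing a single $n$ divisible by $p$ and by all $n_j$ (there are only finitely many possible coefficient patterns) and setting $B^+:=\sum_j a_j B^{(j),+}$ yields the desired $(n,\Ii_0)$-decomposable $\Rr$-complement: $B^+\ge B$, $(X,B^+)$ is lc as a convex combination, and $K_X+B^+\sim_{\Rr,Z/z} 0$. In the case $\bar\Ii\subset\Qq$ one obtains directly a monotonic $n$-complement.

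The principal obstacle is the verification of the lc and $\Rr$-complementary properties of the decomposed pieces in step (ii). A naive coefficient-wise decomposition can fail both conditions: the $c_{i,j}$ may exceed $b_i$, violating lc, and even when the pieces are lc, an $\Rr$-complement of $(X,B)$ need not split compatibly into $\Rr$-complements of the pieces. Overcoming this likely requires tailoring the combinatorial lemma so that each $B^{(j)}$ is dominated by $B$ in an appropriate sense, passing to a dlt modification, and using MMP over $Z$ together with the Fano-type hypothesis to produce and control the individual complements simultaneously. This coordination across the $k$ pieces is the main technical novelty beyond \cite{Bir19} and is exactly what forces the ``decomposable'' notion rather than a single $n$-complement for the original real boundary.
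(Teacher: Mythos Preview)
Your step~(i) lemma cannot hold as stated: with $\Ii_0=\{a_1,\dots,a_k\}$ fixed and $\Ii'\subset\Qq$ finite, the set $\{\sum_j a_j b_j : b_j\in\Ii'\}$ is itself finite, so it cannot contain an infinite DCC set such as $\{1-\tfrac{1}{m}\}$. Allowing $\Ii'$ to be an infinite hyperstandard set does not rescue the situation either, because the decomposition would still have to be compatible across infinitely many distinct coefficients while keeping the weights $a_j$ \emph{independent of the pair}. No purely combinatorial lemma of this shape can absorb the passage from DCC to finite.

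The paper's proof supplies exactly the two ingredients your proposal is missing, and they are both geometric rather than combinatorial. First, the reduction from a DCC set to a \emph{finite} set $\Ii'\subset\bar\Ii$ (Theorem~\ref{thm: dcc limit divisor}) is carried out by increasing each coefficient $b_i$ to some $g(b_i)\in\Ii'$ and proving that $(X/Z,\sum g(b_i)B_i)$ remains $\Rr$-complementary; this step uses the ACC for log canonical thresholds and the global ACC via an MMP argument, not a coefficient-wise decomposition. Second, once the coefficients are finite, the paper proves a \emph{uniform $\Rr$-complementary rational polytope} theorem (Theorem~\ref{thm: Uniform perturbation of R complements}, built on Nakamura's accumulation-point theorem~\ref{thm:nakacc}): there is a fixed open neighbourhood of the coefficient vector in its rational envelope on which $\Rr$-complementarity persists. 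This is what simultaneously guarantees that each rational piece $(X/Z,B_i')$ is lc and $\Rr$-complementary---precisely the obstacle you flagged but could not resolve. Only after these two reductions does one run a $-(K_X+B_i')$-MMP and invoke Birkar's theorem on each piece. Your outline jumps directly to the last step without either reduction, and the ``tailoring'' you suggest cannot substitute for them.
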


We need to use \cite[Theorem 1.7, Theorem 1.8]{Bir19} to prove Theorem \ref{thm: existence ni1i2 complement}, and thus Theorem \ref{thm: dcc existence n complement}, hence we do not give an independent proof of \cite[Theorem 1.1, Theorem 1.7, Theorem 1.8]{Bir19}. 

We remark that Theorem \ref{thm: existence ni1i2 complement} implies the ACC for $\Rr$-complementary thresholds, and the existence of uniform $\Rr$-complementary rational polytopes for Fano type varieties. See applications below. 

\medskip

\noindent\textbf{Applications}. There are several applications and by-products of our main theorems. We give a short description of them here.

\medskip

\noindent\textbf{Uniform rational polytopes for (relative) Fano type varieties}. We show the existence of uniform lc rational polytopes (see Theorem \ref{thm: Uniform perturbation of lc pairs}), and the existence of uniform $\Rr$-complementary rational polytopes for (relative) Fano type varieties (see Theorem \ref{thm: Uniform perturbation of R complements}). 

\medskip

\noindent\textbf{Log discrepancies and mlds for $\epsilon_0$-lc germs admitting an $\epsilon$-plt blow-up with finite coefficients}. For these pairs, we show that their log discrepancies form a discrete set and their minimal log discrepancies form a finite set (see Theorem \ref{thm: discrete ld plt}), a generalization of \cite[Theorem 1.1, Theorem 1.2]{Kaw11}.
\medskip


\noindent\textbf{Miscellaneous results for exceptional singularities}. We show that the ACC for $a$-lc thresholds holds for exceptional singularities (see Theorem \ref{thm: ACC aLCT exc}), and the ACC for normalized volumes holds for exceptional singularities (see Theorem \ref{thm: ACC NV exc}). We also show the correspondence between (local) exceptional singularities and (global) exceptional pairs (see Theorem \ref{thm: local global exceptional correspondence}).

\medskip

\noindent\textbf{Complete regularities}. We study the invariant \emph{complete regularity thresholds} (cf. Definition \ref{defn: crt}), which generalizes the log canonical thresholds and the $\Rr$-complementary thresholds. We show that the thresholds satisfy the ACC for (relative) Fano type varieties (see Theorem \ref{thm: crt acc}).

\medskip

\noindent\textbf{Complements for non-Fano type varieties}.
For non-Fano type varieties, we reduce the conjecture on the existence of $n$-complements for pairs with DCC coefficients $\Ii$ to the case when $\Ii$ is a finite rational set (see Theorem \ref{thm: existence n complement nft q to r}).

\medskip

\noindent\textbf{Other applications}. We show the existence of monotonic klt $n$-complements for any $\epsilon_0$-lc germ admitting an $\epsilon$-plt blow-up (see Theorem \ref{thm: existence local klt complement}). We also prove a result on accumulation points of log canonical thresholds for pairs with DCC coefficients (see Theorem \ref{thm: accmu of lct}). 


\medskip

\noindent\textit{Structure of the paper.} In Section 2, we give a sketch of the proofs of some of our main theorems. In Section 3, we introduce some notation and tools which will be used in this paper, and prove certain results. In Section 4, we prove Theorem \ref{thm: bdd local index exceptional}. In Section 5, we prove Theorem \ref{thm: existence ni1i2 complement}. In Section 6, we prove Theorem \ref{thm: dcc existence n complement}. In Section 7, we prove Theorem \ref{thm: acc ld dcc coefficient bdd}. In Section 8, we give some applications (Theorem \ref{thm: existence local klt complement}, \ref{thm: ACC aLCT exc}, \ref{thm: ACC NV exc}, \ref{thm: local global exceptional correspondence}, \ref{thm: strong acc mld for exceptional singularities}, \ref{thm: crt acc}, \ref{thm: existence n complement nft q to r}, and \ref{thm: accmu of lct}) of our main theorems. In Appendix A, we prove Theorem \ref{thm:pltind}.
\medskip

\section{Sketch of the proofs}
In this section, we give a brief account of some of the ideas of the proofs of Theorem \ref{thm: bdd local index exceptional}, Theorem \ref{thm: existence ni1i2 complement}, and Theorem \ref{thm: acc ld dcc coefficient bdd}.

\medskip

\noindent\textit{Sketch of the proof of Theorem \ref{thm: bdd local index exceptional}.} Suppose that $(X\ni x,B)$ is $\epsilon_0$-lc and $f:Y\to X$ is an $\epsilon$-plt blow-up of $(X\ni x,B)$ with the reduced component $E$. By the BBAB Theorem, $a(E,X,B)$ is bounded from above. Moreover, the normalized volume of $E$,
$$\widehat{\vol}(E,X,B):=a(E,X,B)\cdot\vol(-(K_Y+f^{-1}_*B+E)|_{E})$$
is bounded from above and from below by two positive real numbers (see Theorem \ref{thm: nv acc 2}). For any finite morphism $g: X'\ni x'\rightarrow X\ni x$ such that $g^*(K_X+B)=K_{X'}+B'$ for some $\Rr$-divisor $B'\geq 0$, we have
$$\deg g\cdot\widehat{\vol}(E,X,B)=\widehat{\vol}(E',X',B'),$$
where $E'$ is the reduced component corresponding to $E$ with respect to $g$. Since $\widehat{\vol}(E',X',B')$ is also bounded from above, $\deg g$ belongs to a finite set.


\medskip

\noindent\textit{Sketch of the proof of Theorem \ref{thm: existence ni1i2 complement}.} By applying the ideas in \cite{PS09,Bir19}, the ACC for log canonical thresholds, and the global ACC, we can reduce Theorem \ref{thm: existence ni1i2 complement} to the case when $\Ii$ is a finite set (see Theorem \ref{thm: dcc limit lc divisor} and Theorem \ref{thm: dcc limit divisor}). Next, we show the existence of uniform $\Rr$-complementary rational polytope (see Theorem \ref{thm: Uniform perturbation of R complements}), and reduce Theorem \ref{thm: existence ni1i2 complement} to the case when $\Ii$ is a finite set of rational numbers. Theorem \ref{thm: existence ni1i2 complement} immediately follows from \cite[Theorem 1.7, Theorem 1.8]{Bir19} in this case.

\medskip


\noindent\textit{Sketch of the proof of Theorem \ref{thm: acc ld dcc coefficient bdd}.} Let $f: Y\rightarrow X$ be an $\epsilon$-plt blow-up of $(X\ni x,B)$ with the reduced component $E$. For simplicity, we assume that $X$ is $\Qq$-Gorenstein and $Y$ is $\Qq$-factorial. Let $B_Y$ be the strict transform of $B$ on $Y$. By Theorem \ref{thm: existence ni1i2 complement}, there exist a positive integer $n$ and a finite set of real numbers $\Ii'\subset(0,1]$ depending only on $d$ and $\Ii$, an $\mathbb R$-divisor $B_Y^+\ge0$ on $Y$ and $\mathbb Q$-divisors $B_i\geq 0$ on $Y$, such that
\begin{itemize}
\item $B_Y^+=\sum_ia_iB_i$, where $\sum_ia_i=1$ and each $a_i\in\Ii'$, 
\item $B_Y^+\geq B_Y$, and
\item $(Y/X\ni x,B_i+E)$ is a monotonic $n$-complement of itself for any $i$.
\end{itemize}
Let $b:=a(F,Y,B_Y+E)$ and $b':=a(F,Y,B_{Y}^++E)$. Since 
$$M\ge a\ge a(F,Y,B_Y^++E)=\sum_{i} a_ia(F,Y,B_i+E)\ge 0,$$
$b'$ belongs to a finite set.  

\medskip

Our next goal is to show that $b$ belongs to an ACC set. Since $b=b'+\mult_{F}(B_{Y}^+-B_Y)$ and the coefficients of $B_{Y}^+-B_Y$ belong to an ACC set, it suffices to show that 1) the Cartier index of each component of $\Supp(B_Y^+)$ is bounded, and 2) $\mult_F(B_Y^+)$ is bounded from above.

To show 1), let $D_Y$ be an irreducible component of $\Supp(B_Y^+)$. By the BBAB Theorem, $(E,\Diff_{E}(B_Y^+))$ belongs to a bounded family and $\mult_{D_Y}B_Y^+$ belongs to a finite set. Thus the Cartier indices of $K_E+\Diff_{E}(D_Y)$ and $K_E+\Diff_{E}(0)$ are bounded from above. Therefore, the Cartier indices of $K_Y+E$ and $K_Y+E+D_Y$ near $E$ are bounded, which implies that the Cartier index of $D_Y$ near $E$ is bounded.

To show 2), since $\mult_F(D_Y)\leq \frac{Mn}{\min\{a_i\}}$ for any irreducible component $D_Y\subset\Supp(B_Y^+)\backslash\Supp(B_Y)$, it is enough to show that $\mult_F(B_Y)$ is bounded from above. Since $(E,B_{E})$ is log bounded, $\lct(E,B_{E};B_{Y}|_{E})\ge t>0$ is bounded from below away from $0$ (\cite[Theorem 1.6]{Bir16}). In particular, $(Y,(1+t)B_Y+E)$ is lc near $E$, which implies that $\mult_{F}(B_Y)$ is bounded from above.

\medskip

Since $a(E,X,B)$ belongs to an ACC set (see Proposition \ref{prop: acc ld kc}) and 
$$M\ge a=a(F,Y,B_Y+(1-a(E,X,B))E)=b+a(E,X,B)\mult_{F}E,$$
we only need to show that $\mult_FE$ belongs to a finite set. Since $f$ is also an $\epsilon$-plt blow-up of $(X\ni x,0)$, by Theorem \ref{thm: bdd local index exceptional}, the Cartier index of $K_X$ near $x$ is bounded. Therefore, the Cartier index of $K_Y+(1-a(E,X,0))E$ near $E$ is bounded, which implies that the Cartier index of $E$ is bounded, $\mult_FE$ is bounded from above, and $\mult_FE$ belongs to a finite set.

\section{Preliminaries}
\subsection{Pairs and singularities}
We adopt the standard notation and definitions in \cite{Sho92} and \cite{KM98}, and will freely use them.

\begin{defn}[Pairs and singularities]\label{defn: positivity}
	A pair $(X,B)$ consists of a normal quasi-projective variety $X$ and an $\Rr$-divisor $B\ge0$ such that $K_X+B$ is $\Rr$-Cartier. Moreover, if the coefficients of $B$ are $\leq 1$, then $B$ is called a boundary of $X$.
	
	Let $E$ be a prime divisor on $X$ and $D$ an $\mathbb R$-divisor on $X$. 	We define $\mult_ED$ to be the multiplicity of $E$ along $D$. Let $\phi:W\to X$
	be any log resolution of $(X,B)$ and let
	$$K_W+B_W:=\phi^{*}(K_X+B).$$
	The \emph{log discrepancy} of a prime divisor $D$ on $W$ with respect to $(X,B)$ is $1-\mult_{D}B_W$ and it is denoted by $a(D,X,B).$
	For any positive real number $\epsilon$, we say that $(X,B)$ is lc (resp. klt, $\epsilon$-lc, $\epsilon$-klt) if $a(D,X,B)\ge0$ (resp. $>0$, $\ge \epsilon$, $>\epsilon$) for every log resolution $\phi:W\to X$ as above and every prime divisor $D$ on $W$. We say that $(X,B)$ is plt (resp. $\epsilon$-plt) if $a(D,X,B)>0$ (resp. $>\epsilon$) for any exceptional prime divisor $D$ over $X$. 
	
	A germ $(X\ni x,B)$ consists of a pair $(X,B)$ and a closed point $x\in X$. $(X\ni x,B)$ is called an lc (resp. a klt, an $\epsilon$-lc) germ if $(X,B)$ is lc (resp. klt, $\epsilon$-lc) near $x$. $(X\ni x,B)$ is called $\epsilon$-lc at $x$ if $a(D,X,B)\geq\epsilon$ for any prime divisor $D$ over $X\ni x$ (i.e., $\Center_{X}D=x$).
\end{defn}

\begin{defn}[Prime $\bb$-divisors] Let $X$ be a normal quasi-projective variety. We call $Y$ a birational model over $X$ if there exists a projective birational morphism $Y\to X$. A prime $\bb$-divisor over $X$ is a prime divisor $E$ on a birational model $Y$ over $X$ up to the following equivalence: another prime divisor $E'$ on a birational model $Y'$ over $X$ defines the same prime $\bb$-divisor if for any common resolution $W\to Y$ and $W\to Y'$, the strict transforms of $E$ and $E'$ on $W$ coincide.
\end{defn}

\begin{defn}
	For an $\Rr$-divisor $B=\sum b_iB_i$, where the $B_i$ are the irreducible components of $B$, we define $||B||:=\max\{|b_i|\}.$
\end{defn}

\begin{defn} Let $X$ and $Z$ be two normal quasi-projective varieties. We say $f:X\to Z$ is a \emph{contraction} if $f$ is a projective morphism, and $f_{*}\Oo_X=\Oo_Z$ ($f$ is not necessarily birational). 
	
	Let $X\to Z$ be a contraction. We say $X$ is of \emph{Fano type} over $Z$ if $(X,B)$ is klt and $-(K_X+B)$ is big and nef over $Z$ for some boundary $B$.
\end{defn}
\begin{rem}
	Assume that $X$ is of Fano type over $Z$. Then we can run the MMP over $Z$ on any $\Rr$-Cartier $\Rr$-divisor $D$ on $X$ which terminates with some model $Y$ (cf. \cite[Corollary 2.9]{PS09}).
\end{rem}

\begin{defn}[Minimal log discrepancies]\label{defn: mld and alct}
	Let $(X\ni x,B)$ be an lc germ. The \emph{minimal log discrepancy} of $(X\ni x,B)$ is defined as
	$$\mld(X\ni x,B):=\min\{a(E,X,B)\mid E \text{ is a prime divisor over } X\ni x\}.$$
	If $E$ is a prime divisor over $X\ni x$ such that $a(E,X,B)=\mld(X\ni x,B)$, then we say that the minimal log discrepancy of $(X\ni x,B)$ is \emph{attained at} $E$.
\end{defn}

\begin{defn}[$a$-lc thresholds] Assume that $(X,B)$ is an lc pair (resp. $(X\ni x,B)$ is an lc germ). Suppose that $(X,B)$ is $a$-lc (resp. $a$-lc at $x$) for some $a\ge0$. The \emph{$a$-lc threshold} (resp. \emph{$a$-lc threshold at} $x$) of an $\Rr$-Cartier $\mathbb R$-divisor $G\geq 0$ with respect to $(X,B)$ (resp. $(X\ni x,B)$) is 
	$$a\text{-}\lct(X,B;G):=\sup\{c\ge 0\mid (X,B+cG) \text{ is } a\text{-lc}\}.$$ 
	$$\text{(resp. }a\text{-}\lct(X\ni x,B;G):=\sup\{c\ge 0\mid (X\ni x,B+cG) \text{ is } a\text{-lc at }x\}. \text{)}$$
	In particular, if $a=0$, we obtain the lc threshold (resp. lc threshold at $x$). For simplicity, we will use $\lct(X,B;G)$ instead of $0$-$\lct(X,B;G)$ and $\lct(X\ni x,B;G)$ instead of $0$-$\lct(X\ni x,B;G)$.
\end{defn}

\begin{defn}\label{defn: DCC and ACC}
	Let $\Ii$ be a set of real numbers. 
We denote $\bar{\Ii}$ the closure of $\Ii$.
	We say that $\Ii$ satisfies the \emph{descending chain condition} (DCC) if any decreasing sequence $a_1\ge a_2 \ge \cdots \ge a_k \ge\cdots$ in $\Ii$ stabilizes. We say that $\Ii$ satisfies the \emph{ascending chain condition} (ACC) if any increasing sequence in $\Ii$ stabilizes. 
\end{defn}

\begin{thm}[ACC for lc thresholds, {\cite[Theorem 1.1]{HMX14}}]\label{thm: acc lct}
	Fix a positive integer $d$, and DCC sets $\Ii\subset [0,1]$ and $\Ii'\subset [0,+\infty)$. Then 
	$$\LCT(\Ii,\Ii',d):=\{\lct(X,B;G)\mid \dim X=d, \,(X,B) \text{ is lc},\,B\in\Ii,G\in\Ii'\}$$
	satisfies the ACC. 
\end{thm}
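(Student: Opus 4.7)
The plan is to argue by contradiction following the strategy of Hacon--McKernan--Xu. Suppose the conclusion fails, so there is a strictly increasing sequence $t_i = \lct(X_i,B_i;G_i)$ with $(X_i,B_i)$ lc of dimension $d$, $B_i\in\Ii$, $G_i\in\Ii'$, converging to some $t_\infty\in(0,\infty]$. Since $\Ii$ and $\Ii'$ are DCC, after passing to a subsequence the coefficients of $B_i$ and $G_i$ individually form DCC sequences, and $(X_i,B_i+t_iG_i)$ is an lc pair whose boundary coefficients also form a DCC set. The strict monotonicity of $t_i$ will produce a prime divisor $E_i$ over $X_i$ which is an lc place of $(X_i,B_i+t_iG_i)$ but not of $(X_i,B_i+t_jG_i)$ for $j<i$; this is the key structural feature we aim to exploit.

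The next step is to replace $(X_i,B_i+t_iG_i)$ by a suitable dlt modification $Y_i\to X_i$ extracting $E_i$, so that $E_i$ appears as a divisor on $Y_i$ with coefficient $1$. Writing $K_{Y_i}+\Theta_i=\pi_i^*(K_{X_i}+B_i+t_iG_i)$, we then run a $(K_{Y_i}+\Theta_i-\epsilon E_i)$-MMP for small $\epsilon>0$ to produce a model on which $E_i$ is the relevant extremal divisor, and we restrict to $E_i$ via adjunction: $K_{E_i}+\Theta_{E_i}=(K_{Y_i}+\Theta_i)|_{E_i}$. By the standard adjunction formula with hyperstandard coefficients, the boundary $\Theta_{E_i}$ has coefficients in a DCC set depending only on $\Ii,\Ii'$, and $E_i$ has dimension $d-1$. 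Moreover, $t_i$ appears explicitly as the coefficient of a component of $\Theta_{E_i}$ coming from the transform of $G_i$, so the strictly increasing behaviour of $t_i$ transfers to strictly increasing coefficients on $E_i$.

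Now the induction mechanism kicks in. One separates two cases: if the divisor class $K_{E_i}+\Theta_{E_i}$ is not numerically trivial in the relevant sense, a lower-dimensional ACC for lc thresholds (inductive hypothesis in dimension $d-1$) immediately contradicts the strictly increasing $t_i$. If instead $(E_i,\Theta_{E_i})$ is numerically trivial over the base of the extremal contraction, one invokes the global ACC for numerically trivial lc pairs with DCC coefficients (\cite[Theorem 1.5]{HMX14}), which forces the coefficients of $\Theta_{E_i}$ to lie in a finite set in the limit, again contradicting strict monotonicity of $t_i$. The base case $d=1$ is elementary: lc thresholds on a curve are computed pointwise as $(1-b)/g$ where $b\in\Ii$ and $g\in\Ii'$, and this set is ACC by a direct DCC argument.

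The main obstacle is the global ACC input, since its proof itself is a substantial theorem requiring boundedness of lc Calabi--Yau pairs of fixed dimension with DCC coefficients, and the careful bookkeeping needed to ensure that after each MMP step and adjunction the coefficient sets remain DCC (and in fact lie in an explicit hyperstandard set built from $\Ii\cup\Ii'$). A secondary technical difficulty is arranging that a single prime divisor $E_i$ captures the jump from $t_j$ to $t_i$ uniformly in $i$; this usually requires passing to a further subsequence and replacing $X_i$ by a small $\Qq$-factorial modification so that lc places can be extracted as honest divisors.
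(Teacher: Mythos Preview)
The paper does not prove this statement at all: Theorem~\ref{thm: acc lct} is simply quoted from \cite[Theorem 1.1]{HMX14} as a black box and then used throughout (e.g.\ in Lemma~\ref{lem: exp sing epsilon plt} and Theorem~\ref{thm: dcc limit lc divisor}). There is no proof in the paper to compare against.

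Your proposal is a reasonable high-level sketch of the actual Hacon--M\textsuperscript{c}Kernan--Xu argument, so in that sense it is consistent with the cited source. That said, it is only a sketch: the reduction to the global ACC requires a more delicate construction than ``run a $(K_{Y_i}+\Theta_i-\epsilon E_i)$-MMP and restrict'', and the claim that $t_i$ appears literally as a coefficient on $E_i$ after adjunction is not quite right---what one gets are hyperstandard coefficients of the form $\frac{m-1+f+kt_i}{m}$, and one must check these still form a strictly increasing DCC sequence. You also correctly flag the global ACC as the main nontrivial input, but note that in \cite{HMX14} the ACC for lc thresholds and the global ACC are proved together by a joint induction on dimension, not by invoking one to prove the other. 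If you intend this as an independent proof rather than a citation, these points would need to be filled in.
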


\begin{thm}\label{thm: adjunction}
	Let $(X,B+S)$ be an lc pair, such that $S$ is a prime normal divisor, $ S\not\subset \Supp B$, and $B=\sum_{i} b_iB_i$, where $B_i$ are the irreducible components of $B$. Then there exists a naturally defined $\Rr$-divisor $B_S$ on $S$, such that
	\begin{enumerate}
		\item $K_S+B_S=(K_X+B+S)|_{S}$,
		\item $(X,B+S)$ is plt (resp. lc, $\epsilon$-plt) if and only if $(S,B_S)$ is klt (resp. lc, $\epsilon$-klt),
		\item if $(X,B+S)$ is plt, then for any codimension 1 point $V$ of $S$, we have
		$$\mult_VB_S=\frac{m-1+\sum_{i} n_ib_i}{m}$$
		for some non-negative integers $n_i$, where $m$ is the order of the cyclic group $\Weil(\mathcal{O}_{X,V})$, 
		\item if $(X,B+S)$ is $\epsilon$-plt, then $|\Weil(\mathcal{O}_{X,V})|\le\lfloor\frac{1}{\epsilon}\rfloor$. In particular, for any $\mathbb Q$-Cartier Weil divisor $D$ on $X$, $rD|_{S}$ is a Weil divisor, where $r=\lfloor\frac{1}{\epsilon}\rfloor!$, and   
		\item if $b_i$ belongs to a DCC set, then the coefficients of $B_S$ belong to a DCC set.
	\end{enumerate}		
	Furthermore, if $(X,B+S)$ is dlt and $K_X+\Delta+S$ is $\Rr$-Cartier for some $\Delta=\sum \delta_iB_i\geq0$ and $S \not\subset \Supp \Delta$, then there exists a naturally defined $\Rr$-divisor $\Delta_S$ on $S$, such that $K_S+\Delta_S=(K_X+\Delta+S)|_{S}$, and for any codimension 1 point $V$ of $S$, we have
	$$\mult_V\Delta_S=\frac{m-1+\sum_{i} n_i'\delta_i}{m}$$
	for some non-negative integers $n_i'$, where $m$ is the order of the cyclic group $\Weil(\mathcal{O}_{X,V})$.
\end{thm}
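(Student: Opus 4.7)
This is a compilation of classical adjunction/different theory with refinements tracking $\epsilon$-plt behavior and DCC coefficients. For (1)--(2), I would define $B_S:=\Diff_S(B)$ via Koll\'ar's theory of the different (\cite[Ch.~16]{Kol92}, \cite[\S4]{KM98}), yielding $K_S+B_S=(K_X+B+S)|_S$; the lc/plt/$\epsilon$-plt equivalences in (2) are the standard (inversion of) adjunction, with the $\epsilon$ refinement obtained by matching log discrepancies of divisors over a point $V\in S$ with their images on $S$.

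The key content is the coefficient formula (3). I would localize at the generic point of a codimension $1$ point $V\in S$. The plt hypothesis on $(X,S)$ near $V$ implies (by Koll\'ar's plt structure theorem) that $\Weil(\Oo_{X,V})$ is cyclic of some order $m$, generated by the class of $S$. Pass to the index-one cyclic cover $\pi:(X',S')\to(X,S)$ of degree $m$ with $\pi^*(K_X+S)=K_{X'}+S'$; then $X'$ is smooth in codimension $2$ along $S'$ over $V$, and standard adjunction on $X'$ reads
\[
K_{S'}+\pi^*B|_{S'}=\pi^*(K_X+S+B)|_{S'}.
\]
Descending via the $\Zz/m\Zz$-Galois action produces the stated form: the universal ramification along $S$ contributes the $(m-1)/m$ term, and each integer $n_i=\mult_V(\pi^*B_i|_{S'})$ records a local intersection multiplicity.

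Parts (4) and (5) are immediate consequences. For (4), (2) gives $\mult_V B_S\le 1-\epsilon$, and plugging in the formula from (3) forces $1/m\ge \epsilon$, so $m\le\lfloor 1/\epsilon\rfloor$; taking $r=\lfloor 1/\epsilon\rfloor!$ ensures $rD$ is Cartier at $V$ for every $\Qq$-Cartier Weil divisor $D$, hence $rD|_S$ is Weil. For (5), the set $\{\tfrac{m-1+\sum n_ib_i}{m}\}\cap[0,1]$ is DCC whenever $\Ii\subset[0,1]$ is DCC, by a standard argument treating $m$ bounded and $m\to\infty$ separately. For the dlt ``furthermore'', normality of $S$ at codimension $1$ points comes with the dlt hypothesis, so $(K_X+\Delta+S)|_S=K_S+\Delta_S$ is a well-defined $\Rr$-Cartier divisor; the same cyclic-cover construction applies, with the same integer $m=|\Weil(\Oo_{X,V})|$ and $n_i'$ measuring the pullback of components of $\Delta$. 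The main technical obstacle I foresee is carrying the plt index-one cover through the dlt setting, which is handled by localizing at codimension $1$ points where the dlt pair is automatically plt.
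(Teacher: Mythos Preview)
Your proposal is essentially correct and follows the same classical references the paper cites (Shokurov \cite{Sho92}, Koll\'ar et al.\ \cite{Kol92}, Kawakita \cite{Kaw07}, and \cite[Lemma 3.4.1]{HMX14} for (5)). The one place where your argument is slightly imprecise is the ``furthermore'' clause: it is not literally true that a dlt pair $(X,B+S)$ is plt at every codimension~$1$ point $V$ of $S$. If $V$ lies in $S\cap B_i$ with $b_i=1$ and the pair is snc there, the blow-up of $V$ produces an exceptional divisor of log discrepancy $0$. What is true, via Szab\'o's characterization of dlt, is a dichotomy: at each such $V$ either $X$ is smooth (snc locus) or the pair is plt (non-snc locus contains no lc centers). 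In both cases your cyclic-cover computation goes through, so the argument survives, but the claim as stated needs this refinement.

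The paper handles the ``furthermore'' by a different, slightly cleaner device: since $(X,B+S)$ is dlt it admits a small $\Qq$-factorialization, hence $X$ is $\Qq$-factorial in codimension~$2$. This makes $K_X+S$ and each $B_i$ individually $\Qq$-Cartier at every codimension~$1$ point of $S$, so the classical coefficient formula from \cite[\S3]{Sho92} and \cite[\S16]{Kol92} applies directly to $K_X+\Delta+S$ without ever needing to discuss the singularities of the auxiliary pair $(X,\Delta+S)$. Your localization approach reaches the same conclusion but requires the plt/smooth dichotomy above; the $\Qq$-factorialization route sidesteps that case analysis entirely.
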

\begin{proof}
	(1),(2),(3),(4) follow from \cite[\S 3]{Sho92},\cite[\S 16]{Kol92},\cite{Kaw07},\cite[1.4.5]{BCHM10}, and (5) follows from \cite[Lemma 3.4.1]{HMX14}.
	
	Suppose that $(X,B+S)$ is dlt. Then there exists a small $\Qq$-factorialization $f:(X',B'+S')\to(X,B+S)$. In particular, the codimension of the exceptional locus of $f$ in $X'$ is at least 2. Thus $X$ is $\Qq$-factorial in codimension 2. In particular, $(X,S)$ is dlt in codimension 2, and the statement about $K_X+\Delta+S$ follows from \cite[\S 3]{Sho92} and \cite[\S 16]{Kol92}. 
\end{proof}

\subsection{Plt blow-ups}
\begin{defn}[Plt blow-ups]\label{defn: reduced component}
	Let $(X\ni x,B)$ be a klt germ and $\epsilon$ a positive real number.
	
	A \emph{plt blow-up} of $(X\ni x,B)$ (resp. an \emph{$\epsilon$-plt blow-up} of $(X\ni x,B)$) is a blow-up $f: Y\rightarrow X$ with the exceptional divisor $E$ over $X\ni x$, such that
	\begin{itemize}
		\item $(Y,B_Y+E)$ is plt (resp. $\epsilon$-plt) near $E$, where $B_Y$ is the strict transform of $B$ on $Y$, and
		\item $-E$ is ample over $X$.
	\end{itemize}
	The divisor $E$ is called \emph{a reduced component} of $(X\ni x,B)$, and we also call it \emph{the reduced component} of $f$.

	A \emph{$\Qq$-factorial weak plt blow-up} of $(X\ni x,B)$ (resp. $\Qq$-factorial weak \emph{$\epsilon$-plt blow-up} of $(X\ni x,B)$) is a blow-up $f: Y\rightarrow X$ with the exceptional divisor $E$, such that
	\begin{itemize}
		\item $(Y,B_Y+E)$ is $\Qq$-factorial plt (resp. $\Qq$-factorial $\epsilon$-plt) near $E$, where $B_Y$ is the strict transform of $B$ on $Y$,
		\item $-E$ is nef over $X$, 
		\item $-(K_Y+B_Y+E)|_{E}$ is big, and
		\item $f^{-1}(x)=\Supp E$. 
	\end{itemize}
	The divisor $E$ is called \emph{a weak reduced component} of the germ $(X\ni x,B)$, and we also call it \emph{the reduced component} of $f$. 
\end{defn}

\begin{lem}[{\cite[3.1]{Sho96},\cite[Proposition 2.9]{Pro00},\cite[Theorem 1.5]{Kud01},\cite[Lemma 1]{Xu14}}]\label{lem: existence plt blow up}
	Assume that $(X\ni x,B)$ is a klt germ such that $\dim X\geq 2$. Then there exists a plt blow-up of $(X\ni x,B)$.
\end{lem}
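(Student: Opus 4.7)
The result is classical, with various proofs appearing in the cited references; I would follow the MMP approach of \cite{Xu14}. The idea is to single out an appropriate prime divisor over $x$ and extract it via a relative MMP over $X$.

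Start with a log resolution $\pi\colon W\to X$ of $(X\ni x,B)$. Since $\dim X\geq 2$, the fibre $\pi^{-1}(x)$ has positive dimension and hence contains at least one $\pi$-exceptional prime divisor. Among such divisors I would choose $E$ to become a \emph{Koll\'ar component} after extraction; a natural candidate is a divisor centred at $x$ whose log discrepancy is minimal, possibly after further blow-ups to ensure the adjunction data on $E$ is klt. The klt hypothesis gives $a(E_i,X,B)>0$ for every $\pi$-exceptional prime divisor $E_i$, and the goal is to produce a birational morphism $f\colon Y\to X$ with $E$ as the unique $f$-exceptional divisor, with $-E$ $f$-ample, and with $(Y,B_Y+E)$ plt near $E$.

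Define a boundary
\[
\Delta_W \;:=\; \pi^{-1}_*B + E + \sum_{E_i\neq E}\mu_i\,E_i
\]
on $W$, choosing small rational coefficients $\mu_i\in[0,1)$ so that $(W,\Delta_W)$ is plt near $E$ and klt elsewhere, and so that the $(K_W+\Delta_W)$-negative extremal rays over $X$ correspond to curves contained in the divisors $E_i\neq E$. After shrinking $X$ to a small affine neighborhood of $x$, a mild ample perturbation of $B$ places the local picture into the BCHM framework, so the $(K_W+\Delta_W)$-MMP over $X$ exists and terminates. With the $\mu_i$ chosen small relative to $a(E,X,B)$, the MMP successively contracts or flips the $E_i\neq E$ without ever touching $E$, yielding a birational model $f\colon Y\to X$ with $E$ as the unique $f$-exceptional divisor.

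Finally, $\rho(Y/X)=1$ together with the exceptionality of $E$ forces $-E$ to be $f$-ample. Pltness of $(Y,B_Y+E)$ near $E$ is inherited from $(W,\Delta_W)$: the MMP preserves pltness away from contracted loci (which lie off $E$ by the choice of coefficients), and the terms $\mu_iE_i$ vanish under pushforward to $Y$. The main obstacle is the combinatorial arrangement of the $\mu_i$ together with the Koll\'ar-component choice of $E$, ensuring both that the MMP contracts the $E_i\neq E$ rather than $E$ itself and that the adjunction $(E,\Diff_E B_Y)$ ends up klt, which by inversion of adjunction delivers the required pltness.
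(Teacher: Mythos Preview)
The paper gives no proof of this lemma; it is stated with citations to \cite{Sho96,Pro00,Kud01,Xu14} and used as a black box. Your sketch aims at the MMP construction of \cite{Xu14}, so the overall direction matches one of the intended references.

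However, the specific setup you describe is not the one in \cite{Xu14} and contains a genuine gap. In the cited proofs one sets $\Delta_W = \pi_*^{-1}B + \sum_i E_i$ with \emph{every} exceptional prime divisor given coefficient $1$; then $K_W+\Delta_W \equiv_X \sum_i a(E_i,X,B)\,E_i$ is effective and exceptional, and the $(K_W+\Delta_W)$-MMP over $X$ must eventually contract all exceptional divisors. The divisor $E$ is \emph{not} chosen in advance: it is whichever exceptional divisor over $x$ survives longest, and the plt blow-up is the model just before that final divisorial contraction (where dltness with irreducible $\lfloor\Delta_Y\rfloor=E$ gives pltness, and $\rho(Y/X)=1$ forces $-E$ ample).

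Your variant, pre-selecting $E$ and assigning small $\mu_i$ to the other $E_i$, does not work as stated. One computes $K_W+\Delta_W \equiv_X a(E,X,B)\,E + \sum_{i\neq E}(\mu_i+a(E_i,X,B)-1)\,E_i$, and nothing prevents $E$ itself from spanning the first negative extremal ray. For instance, at an $A_n$ surface singularity with $B=0$, all $a(E_i,X,0)=1$; taking $E=E_1$ at one end of the chain and $\mu_i$ small gives $(K_W+\Delta_W)\cdot E_1 = -2+\mu_2 < 0$ while $(K_W+\Delta_W)\cdot E_2 = 1-2\mu_2+\mu_3 > 0$, so the very first step of the MMP contracts $E$. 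The claim that the negative rays lie only in the $E_i\neq E$ is the crux of your argument, and it fails in general.
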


\begin{lem}\label{lem: existence qfact weak plt blow up}
	Let $(X\ni x,B)$ be a klt germ. Suppose that $(X\ni x,B)$ has an $\epsilon$-plt blow-up $f:Y\rightarrow X$, then $(X\ni x,B)$ has a $\Qq$-factorial weak $\epsilon$-plt blow-up $f': W\rightarrow X$. In particular, $f'$ can be constructed as the composition of $f$ and a small $\mathbb Q$-factorialization $g: W\rightarrow Y$ near the reduced component of $f$.
\end{lem}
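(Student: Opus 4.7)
The plan is to take $f' := f \circ g \colon W \to X$, where $g \colon W \to Y$ is a small $\mathbb{Q}$-factorialization of $Y$ in a neighborhood of $E$. Such a $g$ exists because $(Y, B_Y + E)$ is $\epsilon$-plt near $E$, so $(Y, B_Y + (1-\delta)E)$ is klt near $E$ for any $\delta \in (0, \epsilon)$; applying \cite[1.4.3]{BCHM10} to this klt pair produces a small birational morphism $g \colon W \to Y$ with $W$ being $\mathbb{Q}$-factorial near $g^{-1}(E)$. Set $E_W := g^{-1}_{*} E$ and $B_W := g^{-1}_{*} B_Y$. Smallness of $g$ together with the $\Rr$-Cartierness of $K_Y + B_Y + E$ yields the crepant identity $g^{*}(K_Y + B_Y + E) = K_W + B_W + E_W$, so $(W, B_W + E_W)$ inherits all log discrepancies from $(Y, B_Y + E)$ and is therefore $\mathbb{Q}$-factorial $\epsilon$-plt near $E_W$.

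For the nefness and bigness conditions, the key relation is $K_Y + B_Y + E = f^{*}(K_X + B) + a(E, X, B) \cdot E$ with $a(E, X, B) > 0$ by the klt hypothesis. Since $f$ is a plt blow-up, $E$ is $\Rr$-Cartier on $Y$ and $-E$ is $f$-ample, so smallness of $g$ gives $E_W = g^{*}E$, and hence $-E_W$ is the pullback of an $f$-ample divisor and in particular nef over $X$. Restricting the key relation to $E$ and using $f(E) = \{x\}$ gives $-(K_Y + B_Y + E)|_{E} = -a(E, X, B) \cdot E|_{E}$, which is ample on $E$ because $-E|_E$ is ample and $a(E, X, B) > 0$; pulling back along the birational morphism $g|_{E_W} \colon E_W \to E$ then shows $-(K_W + B_W + E_W)|_{E_W}$ is big.

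The remaining fiber condition $f'^{-1}(x) = \Supp E_W$ reduces to $f^{-1}(x) = \Supp E$, which in turn follows because any irreducible curve $C \subset f^{-1}(x)$ would satisfy $-E \cdot C > 0$ by the $f$-ampleness of $-E$, forcing $C \subset E$. Combined with the fact that $g$ is an isomorphism outside a codimension-two subset and is carried out only in a neighborhood of $E$, one gets the set-theoretic equality $g^{-1}(\Supp E) = \Supp E_W$ in that neighborhood. The main technical obstacle I anticipate is ensuring that the codimension-two exceptional locus of $g$ does not contribute extra components to $g^{-1}(\Supp E)$ outside $\Supp E_W$; this is handled by restricting the $\mathbb{Q}$-factorialization precisely to the non-$\mathbb{Q}$-factorial locus of $Y$ near $E$ and noting that the resulting small exceptional fibers lie over points of $E$ and sit inside $\Supp E_W$. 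With all four defining conditions verified, $f' \colon W \to X$ is the desired $\mathbb{Q}$-factorial weak $\epsilon$-plt blow-up of $(X \ni x, B)$.
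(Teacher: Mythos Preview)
Your argument is correct and follows the same construction as the paper: take a small $\mathbb{Q}$-factorialization $g\colon W\to Y$ near $E$ (via \cite[1.4.3]{BCHM10}), and verify the four defining properties by pulling back along $g$. The only point where you diverge from the paper is the fiber condition $f'^{-1}(x)=\Supp E_W$: the paper deduces this in one line from the negativity lemma applied to $-E_W$ over $X$, whereas you factor it as $f^{-1}(x)=\Supp E$ followed by $g^{-1}(\Supp E)=\Supp E_W$.

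Your ``technical obstacle'' paragraph is unnecessary. You have already observed that $E$ is $\mathbb{Q}$-Cartier and $E_W=g^*E$; since some multiple $mE$ is Cartier, it is locally cut out by a single equation, and its set-theoretic preimage under $g$ is exactly the vanishing locus of the pulled-back equation, namely $\Supp(g^*(mE))=\Supp E_W$. So $g^{-1}(\Supp E)=\Supp E_W$ follows immediately, with no need to control where the exceptional fibers of $g$ sit. One small omission: to make the restriction-and-pullback step for bigness rigorous you should note (as the paper does) that $E$ and $E_W$ are normal, which follows from the plt hypothesis, so that $g|_{E_W}\colon E_W\to E$ is a birational contraction between normal varieties.
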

\begin{proof}
	Let $E$ be the reduced component of $f$. Since $(Y,B_Y+E)$ is plt near $E$, by \cite[Corollary 1.4.3]{BCHM10}, there exists a small $\Qq$-factorialization $g:W\to Y$ near $E$. Then
	$$-(K_W+B_W+E_W):=-g^{*}(K_Y+B_Y+E)$$
	is big and nef over $X$, where $B_W$ and $E_W$ are the strict transforms of $B_Y$ and $E$ on $W$ respectively. Since $-E_{W}=-g^{*}E$ is nef over $X$ and $\Center_{X}E_{W}=x$, by the negativity lemma, $\Supp E_{W}$ coincides with the fiber over $x$. 
	
	Since $(W,B_W+E_W)$ and $(Y,B_Y+E)$ are plt near $E_W$ and $E$ respectively, $E_W$ and $E$ are normal. The induced birational morphism $g|_{E_W}:E_W\to E$ is a contraction. Thus
	$$(K_W+B_W+E_W)|_{E_W}=g|_{E_W}^{*}((K_Y+B_Y+E)|_{E}),$$
	and $-(K_W+B_W+E_W)|_{E_W}$ is big and nef.
\end{proof}

\begin{defn}[Exceptional singularities]\label{defn: exc sing}
	Let $(X\ni x,B)$ be an lc germ. We say that $(X\ni x,B)$ is an \emph{exceptional singularity}, if for any $\mathbb R$-divisor $G\ge0$ on $X$ such that $(X\ni x,B+G)$ is lc, there exists at most one exceptional prime $\bb$-divisor $E$ over $X$, such that $x\in\Center_{X}E$ and $a(E,X,B+G)=0$. We also say that \emph{$(X\ni x,B)$ is exceptional}.
\end{defn}

\begin{remdef}[Reduced component of exceptional singularities]\label{remdef: reduced component exc sing}
	By Lemma \ref{lem: reduced component exc sing} below, for any klt exceptional singularity $(X\ni x,B)$, there exists a unique plt blow-up $f:Y\rightarrow X$ of $(X\ni x,B)$. In this situation, the reduced component of $f$ will also be called \emph{the reduced component} of $(X\ni x,B)$.
\end{remdef}

The following lemma is a slight generalization of \cite[Corollary 2.6, Corollary 2.7]{PS01} and \cite[Proposition 2.7]{MP99}. For readers' convenience, we give a full proof here.

\begin{lem}\label{lem: reduced component exc sing} 
Let $(X\ni x,B)$ be an exceptional singularity. Then there exists a unique prime $\bb$-divisor $E$ over $X\ni x$ satisfying the following. 

For any $\Rr$-divisor $G\geq 0$ on $X$, if $(X\ni x,B+G)$ is lc but not klt, then $E$ is the only lc place of $(X,B+G)$ whose center on $X$ contains $x$. Moreover, if $\dim X\geq 2$ and $(X\ni x,B)$ is klt, then there exists a unique plt blow-up of $(X\ni x,B)$ with the reduced component $E$. 
\end{lem}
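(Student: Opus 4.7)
The plan is to construct a candidate prime $\bb$-divisor $E$ via the exceptional hypothesis applied to a suitable auxiliary divisor, verify its universal property across all admissible $G$, and derive the moreover clause from the uniqueness of the plt blow-up.

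First I produce a witness $G_0 \ge 0$ with $(X\ni x, B+G_0)$ lc but not klt at $x$. If $(X\ni x, B)$ is already not klt, take $G_0 = 0$; otherwise in the klt case, take $G_0 := \lct_x(X,B;H)\cdot H$ for an effective ample Cartier divisor $H$ through $x$ (when $\dim X \ge 2$; the case $\dim X = 1$ is analogous since $X$ is smooth at $x$). The exceptional hypothesis applied to $(X\ni x,B+G_0)$ then yields a unique prime $\bb$-divisor $E$ over $X\ni x$ with $a(E,X,B+G_0)=0$. The non-klt case is immediate: for any $G \ge 0$ with $(X,B+G)$ lc, one has $a(E,X,B+G) \le a(E,X,B) = 0$ and the lc assumption forces equality, so $E$ is an lc place of $(X,B+G)$, uniquely so by the exceptional hypothesis.

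Assume now $(X\ni x, B)$ is klt. For any admissible $G$ let $E_G$ denote the unique lc place of $(X, B+G)$ with center containing $x$ provided by the exceptional hypothesis. By \cite[Corollary 1.4.3]{BCHM10}, applied after a small perturbation ensuring klt-ness (cf.\ \cite[Proposition 2.9]{PS09}), each of $E$ and $E_G$ can be extracted as the sole exceptional divisor of a projective birational morphism $f\colon Y \to X$ (resp.\ $f_G\colon Y_G \to X$) with $-E$ (resp.\ $-E_G$) ample over $X$ and the ambient pair plt near the exceptional divisor. Since $G_0, G \ge 0$, subtracting these non-negative contributions preserves the plt property, so $f$ and $f_G$ are themselves plt blow-ups of $(X\ni x, B)$ with reduced components $E$ and $E_G$. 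Hence showing $E_G = E$ reduces to the uniqueness of the plt blow-up of $(X\ni x, B)$, which together with Lemma \ref{lem: existence plt blow up} also settles the moreover clause.

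For this uniqueness, suppose for contradiction that $f_1, f_2\colon Y_i \to X$ are two plt blow-ups with distinct reduced components $F_1 \ne F_2$, and set $a_i := a(F_i, X, B) > 0$. I plan to produce an effective $\Rr$-divisor $\Delta$ on $X$ such that $(X\ni x, B+\Delta)$ is lc with both $F_1$ and $F_2$ as lc places whose centers contain $x$, contradicting the exceptional hypothesis. Since $F_1 \ne F_2$ correspond to distinct divisorial valuations $v_{F_1}, v_{F_2}$ on $\Oo_{X,x}$, these valuations are linearly independent as functionals; accordingly one picks effective Cartier divisors $H_1, H_2$ through $x$ such that the matrix $M := (v_{F_i}(H_j))_{i,j=1,2}$ is invertible and the unique solution $(c_1, c_2)$ of $M(c_1,c_2)^\top = (a_1,a_2)^\top$ is strictly positive (positivity is arranged by choosing $H_j$ with slopes $v_{F_1}/v_{F_2}$ straddling the ratio $a_1/a_2$). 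Setting $\Delta := c_1 H_1 + c_2 H_2$ yields $a(F_i, X, B+\Delta) = 0$ for $i = 1, 2$; a Bertini-type genericity choice of $H_1, H_2$ controls the multiplicities of $\pi^*\Delta$ along other prime $\bb$-divisors on a common log resolution $\pi\colon W \to X$ of $(X, B), f_1, f_2$ and keeps $(X, B+\Delta)$ lc near $x$. The principal obstacle is the simultaneous achievement of the positivity of $(c_1, c_2)$, the global lc condition of $(X, B+\Delta)$, and the prescribed equalities at $F_1, F_2$; this relies on the independence of divisorial valuations together with a careful genericity argument, and may alternatively be approached via an LMMP on a common model of $Y_1, Y_2$ using the $f_i$-ampleness of $-F_i$.
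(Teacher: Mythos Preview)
Your reduction to the uniqueness of plt blow-ups is fine, but the argument you give for that uniqueness has a genuine gap---one you yourself flag. Constructing $\Delta=c_1H_1+c_2H_2$ so that simultaneously $(c_1,c_2)>0$, $a(F_i,X,B+\Delta)=0$ for both $i$, and $(X,B+\Delta)$ is lc near $x$ is not justified. The ``Bertini-type genericity'' claim cannot work as stated: once you force $v_{F_i}(\Delta)=a_i$ exactly, the divisors $H_j$ are no longer general members of a free linear system, so you lose control over the multiplicities along every other prime $\bb$-divisor on a log resolution. Nor is it clear that the set of achievable ratios $v_{F_1}(H)/v_{F_2}(H)$ for effective Cartier $H$ through $x$ straddles $a_1/a_2$. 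The alternative ``LMMP on a common model'' is only a gesture.

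The paper avoids constructing a single $\Delta$ with two prescribed lc places. Instead it interpolates: given $G_1,G_2\ge0$ with $(X\ni x,B+G_i)$ lc and distinct lc places $E_1,E_2$, set $t(s):=\lct(X\ni x,B+sG_1;G_2)$, which decreases from $1$ to $0$, and look at the boundary parameter $s_0:=\inf\{s:\ E_2\text{ is not an lc place of }(X,B+sG_1+t(s)G_2)\}$. Continuity of log discrepancies forces two lc places to coexist at $s_0$, contradicting exceptionality. This argument uses only the \emph{existing} $G_i$ and requires no delicate construction. For the ``moreover'' clause the paper also proceeds in the opposite order from yours: having established the unique lc place $E$, it shows any plt blow-up $f:Y\to X$ has reduced component $\tilde{E}=E$ by producing an $\Rr$-complement from the ampleness of $-(K_Y+B_Y+\tilde{E})$ over $X$; uniqueness of the blow-up itself then follows because it is the ample model of $-E_V$ over $X$ on any log resolution $V\to X$. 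Finally, note that you have not verified $\Center_X E=\{x\}$; the paper handles this with a general hyperplane through $x$.
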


\begin{proof}

Suppose that there exist two $\Rr$-Cartier $\Rr$-divisors $G_1\ge0,G_2\ge0$ on $X$ and a log resolution $g: W\rightarrow X$ of $(X,B+G_1+G_2)$ near a neighborhood of $x$ with prime exceptional divisors $E_1,\dots,E_n$ for some $n\geq 2$, such that
\begin{itemize}
\item $x\in\Center_XE_i$ for every $1\leq i\leq n$,
    \item $(X\ni x,B+G_1)$ and $(X\ni x,B+G_2)$ are lc, and
    \item $a(E_i,X,B+G_i)=0$ for $i\in\{1,2\}$.
\end{itemize}

 For any real number $0\leq s\leq 1$, we define 
	$$t(s):=\lct(X\ni x,B+sG_1;G_2).$$	
	Then $t(s)$ is monotonically decreasing, $t(0)=1$ and $t(1)=0$. For any $s$, there exists $1\leq k\leq n$, such that $E_k$ is the only lc place of $(X,B+sG_1+t(s)G_2)$ with $x\in\Center_XE_k$.
	Let
	$$s_0:=\inf\{s\geq 0\mid E_2\text{ is not an lc place of }(X\ni x,B+sG_1+t(s)G_2)\}.$$
$s_0$ is well-defined because $E_2$ is not an lc place of $(X\ni x,B+G_1)$. By continuity of log discrepancies, there exist two different lc places of the lc germ $(X\ni x,B+s_0G_1+t(s_0)G_2)$, a contradiction. 
	
	Suppose that $\Center_{X}E\neq x$, and $a(E,X,B+G)=0$ for some $\Rr$-divisor $G\geq 0$. Let $H\ni x$ be a general hyperplane section of $X$. Then $c:=\lct(X\ni x,B+G;H)>0$, and $(X,B+G+cH)$ has at least two lc places near $x$, a contradiction. 
	
	If $(X\ni x,B)$ is klt, then we may let $f:Y\to X$ be a plt blow-up of $(X\ni x,B)$, $\tilde{E}$ the reduced component of $f$, and $B_Y$ the strict transform of $B$ on $Y$. First, we show that $\tilde{E}=E_{Y}$, where $E_Y$ is the center of $E$ on $Y$. Since $-(K_Y+B_Y+\tilde{E})$ is ample over $X$, there exists an $\Rr$-divisor $G_Y\geq 0$ on $Y$ such that $K_Y+B_Y+\tilde{E}+G_Y\sim_{\Rr}0/X$ and $(Y,B_Y+\tilde{E}+G_Y)$ is plt near $\tilde{E}$. Let $K_X+B+G:=f_{*}(K_Y+B_Y+\tilde{E}+G_Y)$. Since $(X\ni x,B+G)$ is lc and $a(\tilde{E},X,B+G)=0$, we have $\tilde{E}=E_Y$. Next, we show the uniqueness of $f$. Let $h: V\rightarrow X$ be a log resolution of $(X,B+G)$. Then $V\dashrightarrow Y$ is the ample model of $-E_V$ over $X$, where $E_V$ is the center of $E$ on $V$. The uniqueness of plt blow-up of $(X\ni x,B)$ follows from the uniqueness of the ample model.
\end{proof}

\subsection{Complements}
\begin{defn}[Complements]\label{defn: complement}
	Let $X\to Z$ be a contraction between normal quasi-projective varieties, $B\ge0$ an $\mathbb R$-divisor on $X$, and $z\in Z$ a (not necessarily closed) point. We say that $(X/Z\ni z,B^+)$ is an \emph{$\Rr$-complement} of $(X/Z\ni z,B)$ if $B^{+}\ge B$, $(X,B^{+})$ is lc and $K_X+B^{+}\sim_{\Rr}0$ over some neighborhood of $z$. We say that $(X/Z\ni z,B)$ is $\Rr$-complementary if $(X/Z\ni z,B)$ has an $\Rr$-complement $(X/Z\ni z,B^+)$.
	
	Let $n$ be a positive integer. We say that $(X/Z\ni z,B^+)$ is an \emph{$n$-complement} of $(X/Z\ni z,B)$ if over some neighborhood of $z$,
	\begin{enumerate}
		\item $(X,B^+)$ is lc,
		\item  $n(K_X+B^+)\sim 0$, and
		\item $B^+\geq \lfloor B\rfloor+\frac{1}{n}\lfloor (n+1)\{B\}\rfloor$.
	\end{enumerate}
	We say that $(X/Z\ni z,B^+)$ is a \emph{monotonic $n$-complement} of $(X/Z\ni z,B)$ if we additionally have $B^+\geq B$.
	
	If $\dim Z=0$, for simplicity, we will omit $z$, and say $(X,B^+)$ is an $\Rr$-complement (resp. $n$-complement, monotonic $n$-complement) of $(X,B)$. 
	
	If for any $z\in Z$, $(X/Z\ni z,B^+)$ is an $\Rr$-complement (resp. $n$-complement, monotonic $n$-complement) of $(X/Z\ni z,B)$, then we may also omit $z$ and say $(X/Z,B^+)$ is an $\Rr$-complement (resp. $n$-complement, monotonic $n$-complement) of $(X/Z,B)$. We say that $(X/Z,B)$ is $\Rr$-complementary if $(X/Z,B)$ has an $\Rr$-complement. 
	
	If $X\to Z$ is the identity map, we may omit $Z$ and say $(X\ni z,B^+)$ is an $\Rr$-complement (resp. $n$-complement, monotonic $n$-complement) of $(X\ni z,B)$, and in this case, we also say $(X,B^+)$ is a local $\Rr$-complement (resp. local $n$-complement, monotonic local $n$-complement) of $(X,B)$ near $z$.
	
\end{defn}
\begin{rem}\label{rem: mmp montonic n keep}
	We will use the following fact many times in our paper. See \cite[6.1]{Bir19} for a proof.
	
	Let $(X,B)$ be a pair and $X\rightarrow Z$ a contraction. Assume that $X\dashrightarrow Y$ is a partial MMP over $Z$ on $-(K_X+B)$ and $B_Y$ is the strict transform of $B$ on $Y$. If $(Y/Z,B_Y)$ has a monotonic $n$-complement (resp. $\Rr$-complement) $(Y/Z,B_{Y}^{+})$, then $(X/Z,B)$ has a monotonic $n$-complement (resp. $\Rr$-complement) $(X/Z,B^{+})$.    	
\end{rem} 

We need the following theorem which is conjectured in \cite{PS09} and proved by Birkar \cite{Bir19}. 
\begin{thm}[{\cite[Theorem 1.7, Theorem 1.8]{Bir19}}]\label{thm:Bir19thm1.8}
	Let $d$ be a positive integer and $\Ii_0\subset[0,1]$ a finite set of rational numbers. Then there exists a positive integer $n$ depending only on $d$ and $\Ii_0$ satisfying the following. 
	
	Assume that $(Y,B_Y)$ is a pair and $Y\to X$ is a contraction such that 
	\begin{enumerate}
		\item $(Y,B_Y)$ is lc of dimension $d$,
		\item $B_Y\in\Ii_0$,
		\item $Y$ is of Fano type over $X$, and
		\item $-(K_Y+B_Y)$ is nef over $X$.
	\end{enumerate}
	Then for any point $x\in X$, there exists a monotonic $n$-complement $(Y/X\ni x,B_{Y}^{+})$ of $(Y/X\ni x,B_Y)$.
\end{thm}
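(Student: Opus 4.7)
The plan is to reproduce the strategy of Birkar from \cite{Bir19}. First I would run the MMP on $-(K_Y+B_Y)$ over $X$ (which terminates because $Y$ is of Fano type over $X$), and then pass to the ample model over $X$ to reduce to the case where $-(K_Y+B_Y)$ is ample over $X$. Since the statement is local over $X$, I would then fix a point $x\in X$ and work over an affine neighborhood of $x$, proceeding by induction on the dimension $d$.

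The core of the argument would be a dichotomy between \emph{non-exceptional} and \emph{exceptional} pairs. In the non-exceptional case, one produces a boundary $\Gamma\ge B_Y$ with $K_Y+\Gamma\sim_\Rr 0/X$ such that $(Y,\Gamma)$ has a non-klt center $S$ mapping to $x$. After a dlt modification and adjunction, this gives an lc pair $(S,B_S)$ whose coefficients lie in a hyperstandard set determined by $\Ii_0$; in particular they are rational and belong to a finite set depending only on $d$ and $\Ii_0$. By the inductive hypothesis applied in dimension $d-1$, there is a monotonic $m$-complement of $(S/X\ni x,B_S)$ for some $m$ bounded in terms of $d-1$ and this hyperstandard set. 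The main technical step is then to lift this complement back to $Y$: using Kawamata--Viehweg vanishing applied to $-m(K_Y+B_Y+S)+(K_Y+B_Y+S)$, sections of $\Oo_S(-m(K_S+B_S))$ lift to sections of $\Oo_Y(-m(K_Y+B_Y+S))$, and a careful choice of such a section produces a monotonic $n$-complement on $Y$ for $n$ a suitable multiple of $m$.

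In the exceptional case, where no such boundary $\Gamma$ with a non-klt center exists, the strategy is to invoke the BBAB theorem of \cite{Bir16,Bir19}: the pair $(Y,B_Y)$ lies in a bounded family. Effective base-point freeness applied to this bounded family then produces a single integer $n$ (depending only on $d$ and $\Ii_0$) such that $n(K_Y+B_Y)\sim 0$ over $x$, yielding the monotonic $n$-complement with $B_Y^+=B_Y$.

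The hardest step will be the BBAB theorem itself, which constitutes the bulk of \cite{Bir16} and rests on the ACC for lc thresholds, the global ACC, volume-based boundedness arguments, and a generic limit construction. A secondary difficulty is controlling the hyperstandard coefficient set under adjunction so that the induction on dimension actually closes with a uniform $n$; this is precisely where the assumption that $\Ii_0$ is finite and rational is essential, and it is what prevents a direct generalization to arbitrary DCC coefficients (the purpose of Theorem \ref{thm: existence ni1i2 complement} of the present paper).
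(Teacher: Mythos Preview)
The paper does not give its own proof of this statement: Theorem~\ref{thm:Bir19thm1.8} is quoted directly from \cite[Theorem 1.7, Theorem 1.8]{Bir19} and used throughout as a black box (the authors say explicitly after Theorem~\ref{thm: existence ni1i2 complement} that they ``need to use \cite[Theorem 1.7, Theorem 1.8]{Bir19}'' and ``do not give an independent proof''). So there is no proof in the paper to compare your proposal against.

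That said, your outline of Birkar's argument is broadly correct but contains one logical slip worth flagging. In the exceptional case you propose to ``invoke the BBAB theorem of \cite{Bir16,Bir19}'' to get boundedness of $(Y,B_Y)$. This is circular: the BBAB theorem in \cite{Bir16} takes the boundedness of complements \cite[Theorems 1.7--1.8]{Bir19} as an \emph{input}. In Birkar's actual proof the exceptional case is handled not by BBAB but by effective birationality for Fano varieties (\cite[Theorem 1.2]{Bir19}) together with bounds on anti-canonical volumes, all of which are proven in an interlocked induction \emph{alongside} the complement theorem, not after it. The full BBAB statement is only assembled later in \cite{Bir16}. Apart from this dependency issue, your description of the non-exceptional case (create a non-klt center, adjunction to a hyperstandard set, lift complements via vanishing) matches Birkar's approach and is essentially the same mechanism as Lemma~\ref{lem:extendcomplement1} in the present paper.
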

The following corollary is a simple consequence of Theorem \ref{thm:Bir19thm1.8}.
\begin{cor}\label{cor: plt complement}
	Let $d$ be a positive integer and $\Ii_0\subset [0,1]$ a finite set of rational numbers. Then there exists a positive integer $n$ depending only on $d$ and $\Ii_0$ satisfying the following. Assume that $(X\ni x,B)$ is an lc germ of dimension $d$, such that
	\begin{enumerate}
		\item $B\in \Ii_0$,
		\item $(X\ni x,\Delta)$ is a klt germ for some boundary $\Delta$, 
		\item there exists either a plt blow-up or a $\Qq$-factorial weak plt blow-up $f: Y\rightarrow X$ of $(X\ni x,\Delta)$ with the reduced component $E$, and
		\item $(Y,B_Y+E)$ is lc, where $B_Y$ is the strict transform of $B$ on $Y$.
	\end{enumerate}
	Then there exists a $\Qq$-Weil divisor $G_Y\geq 0$ on $Y$, such that $nG_{Y}$ is a Weil divisor and $(Y/X\ni x,B_Y+E+G_Y)$ is an $n$-complement of $(Y/X\ni x,B_Y+E)$.  
\end{cor}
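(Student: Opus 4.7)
The plan is to apply Theorem \ref{thm:Bir19thm1.8} directly to the pair $(Y, B_Y + E)$ together with the contraction $f \colon Y \to X$ and the point $x$, using the finite rational coefficient set $\Ii_0 \cup \{1\}$. After shrinking $X$ to a small Zariski neighborhood of $x$, we may assume $K_X + B \sim_{\mathbb R} 0$ there, because $K_X + B$ is $\mathbb R$-Cartier and any $\mathbb R$-Cartier divisor is $\mathbb R$-linearly trivial on a sufficiently small Zariski neighborhood of any given point.

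Two of the four hypotheses of Theorem \ref{thm:Bir19thm1.8} are immediate: $(Y, B_Y + E)$ is lc of dimension $d$ by hypothesis (4), and its coefficients lie in the finite rational set $\Ii_0 \cup \{1\}$ by hypothesis (1). For the nefness hypothesis, the crepant identity
\[
K_Y + B_Y + E \;=\; f^{*}(K_X + B) \;+\; a(E, X, B)\,E
\]
together with $K_X + B \sim_{\mathbb R} 0$ gives $-(K_Y + B_Y + E) \sim_{\mathbb R} a(E, X, B)\,(-E)$ over $X$ near $x$; since $(X, B)$ is lc we have $a(E, X, B) \ge 0$, and in both the plt and the $\mathbb Q$-factorial weak plt blow-up case $-E$ is nef over $X$, so $-(K_Y + B_Y + E)$ is nef over $X$ near $x$.

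For the Fano-type hypothesis, let $\Delta_Y$ be the strict transform of $\Delta$ on $Y$. The pair $(Y, \Delta_Y + (1-\epsilon)E)$ is klt for every small $\epsilon > 0$ (plt near $E$ by hypothesis, and klt away from $E$ since $f$ is an isomorphism over $X \setminus \{x\}$ and $(X,\Delta)$ is klt), and the analogous crepant computation gives
\[
-(K_Y + \Delta_Y + (1-\epsilon)E) \;\equiv\; (a(E, X, \Delta) - \epsilon)\,(-E) \quad \text{over } X,
\]
with $a(E, X, \Delta) > 0$. In the plt case this right-hand side is $f$-ample, hence big and nef over $X$; in the weak plt case $-E$ is merely $f$-nef, but $-E|_E$ is big on the unique exceptional component $E$ (since $-(K_Y + \Delta_Y + E)|_E$ is big by the definition of a weak plt blow-up), and the standard fact that an $f$-nef divisor on a birational contraction whose restriction to each exceptional component is big is itself $f$-big then yields Fano type in this case as well.

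Applying Theorem \ref{thm:Bir19thm1.8} produces a positive integer $n$ depending only on $d$ and $\Ii_0$ together with a monotonic $n$-complement $(Y/X \ni x, B_Y + E + G_Y)$ of $(Y/X \ni x, B_Y + E)$ satisfying $G_Y \ge 0$. After enlarging $n$ by a factor depending only on $\Ii_0$ so that $n \cdot \Ii_0 \subset \mathbb Z$, the divisor $n(B_Y + E)$ is Weil, and combined with $n(K_Y + B_Y + E + G_Y) \sim 0$ this forces $nG_Y$ to be Weil as required. The one potentially delicate step is the Fano-type verification in the weak plt case, which hinges on the relative-bigness criterion cited above.
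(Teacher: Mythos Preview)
Your proof is correct and follows essentially the same route as the paper: verify that $Y$ is of Fano type over a neighborhood of $x$ via the klt pair $(Y,\Delta_Y+(1-\epsilon)E)$, check that $-(K_Y+B_Y+E)$ is nef over $X$ from the crepant formula $K_Y+B_Y+E=f^*(K_X+B)+a(E,X,B)E$, and then invoke Theorem~\ref{thm:Bir19thm1.8}. Your case split for relative bigness in the weak plt situation is harmless but unnecessary, since for a birational contraction every divisor is automatically big over the base; on the other hand, your remark about enlarging $n$ so that $n\cdot\Ii_0\subset\mathbb Z$ (ensuring $nG_Y$ is integral) is a detail the paper leaves implicit.
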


\begin{proof}
	Let $\Delta_Y$ be the strict transform of $\Delta$ on $Y$. Since $(X\ni x,\Delta)$ is a klt germ, $(Y,\Delta_Y+E)$ is plt near $E$, and $-E$ is big and nef over $X$. Hence $(Y,\Delta_Y+(1-\epsilon)E)$ is klt near $E$ and $-(K_Y+\Delta_Y+(1-\epsilon)E)$ is big and nef over $X$ for some $0<\epsilon\ll 1$. Thus $Y$ is of Fano type over a neighborhood $x$. 
	
	Since $(X\ni x,B)$ is an lc germ, $$K_Y+B_Y+E=f^{*}(K_X+B)+\alpha E$$
	for some $\alpha\ge0$. Hence $-(K_Y+B_Y+E)$ is nef over $X$, and the corollary follows from Theorem \ref{thm:Bir19thm1.8}. 
\end{proof}

\begin{lem}\label{lem:Fanotypedlt}
Let $X\to Z$ be a contraction. Suppose that $X$ is of Fano type over $Z$, $(X/Z,B)$ is $\Rr$-complementary for some boundary $B$, and $f:Y\to X$ is a birational contraction from a normal quasi-projective variety $Y$, such that $a(E_i,X,B)<1$ for any prime exceptional divisor $E_i$ of $f$. Then $Y$ is of Fano type over $Z$.
\end{lem}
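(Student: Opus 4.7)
The plan is to build a klt boundary $\Xi$ on $X$ with $-(K_X+\Xi)$ ample over $Z$ and with log discrepancies along the prime $f$-exceptional divisors $\{E_i\}$ all strictly less than $1$. The pullback $\Xi_Y$ defined by $K_Y + \Xi_Y = f^*(K_X + \Xi)$ will then be a boundary making $(Y, \Xi_Y)$ klt with $-(K_Y + \Xi_Y)$ nef and big over $Z$, showing $Y$ is of Fano type over $Z$.

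First I would upgrade the Fano type hypothesis to an ample one: pick a klt pair $(X, B_0)$ with $-(K_X+B_0)$ big and nef over $Z$, and by the relative Kodaira lemma write $-(K_X+B_0) \sim_{\Rr} A + E$ with $A$ ample over $Z$ and $E \geq 0$. For sufficiently small $\eta > 0$, the pair $(X, \Theta)$ with $\Theta := B_0 + \eta E$ is klt, and
\[
-(K_X+\Theta) \sim_{\Rr} (1-\eta)(-(K_X+B_0)) + \eta A
\]
is ample over $Z$, being a positive combination of a nef and an ample divisor over $Z$.

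Next, let $B^+$ be an $\Rr$-complement of $(X/Z, B)$, so $B^+ \geq B$, $(X, B^+)$ is lc and $K_X + B^+ \sim_{\Rr} 0/Z$. The hypothesis forces $a(E_i, X, B^+) \leq a(E_i, X, B) < 1$ for each of the finitely many $f$-exceptional divisors $E_i$, so we may set $\delta := \min_i (1 - a(E_i, X, B^+)) > 0$ and $M := \max_i a(E_i, X, \Theta) < \infty$. Define $\Xi := (1-\tau) B^+ + \tau \Theta$ for a small $\tau \in (0,1)$. Then $(X, \Xi)$ is klt, since its log discrepancies are at least $\tau$ times those of the klt pair $(X, \Theta)$, and $-(K_X + \Xi) \sim_{\Rr} \tau(-(K_X + \Theta))$ is ample over $Z$. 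For each $E_i$,
\[
a(E_i, X, \Xi) \leq (1-\tau)(1-\delta) + \tau M = 1 - \delta + \tau(M + \delta - 1),
\]
so taking $\tau$ sufficiently small forces $a(E_i, X, \Xi) < 1$ for every $i$.

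With this $\Xi$, the divisor $\Xi_Y$ has coefficient $1 - a(E_i, X, \Xi) \in (0, 1)$ along each $E_i$, while its non-exceptional components inherit the coefficients of $\Xi$ (which are in $[0,1)$); so $\Xi_Y$ is a boundary and $(Y, \Xi_Y)$ is klt (having the same log discrepancies as $(X, \Xi)$). Moreover $-(K_Y + \Xi_Y) = f^*(-(K_X + \Xi))$ is the pullback of an $\Rr$-divisor ample over $Z$ under a birational morphism, hence nef and big over $Z$. The only delicate point is securing the strict inequality $a(E_i, X, \Xi) < 1$, which is precisely where both the strict hypothesis $a(E_i, X, B) < 1$ (producing $\delta > 0$) and the finiteness of $\{E_i\}$ (producing a finite $M$) are used; without either, the convex combination argument breaks down.
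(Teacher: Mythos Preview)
Your proof is correct and follows essentially the same approach as the paper: both arguments take a convex combination of an $\Rr$-complement $B^+$ of $(X/Z,B)$ with a klt Fano-type boundary, then pick the weight close enough to the $B^+$ side so that the exceptional log discrepancies stay below $1$, and pull back to $Y$. The paper works directly with a big and nef $-(K_X+C)$ rather than first upgrading to ample, but this is a cosmetic difference and your extra step is harmless.
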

\begin{proof}
Since $X$ is of Fano type over $Z$, there exists a klt pair $(X,C)$, such that $-(K_X+C)$ is big and nef over $Z$. Let $(X/Z,B^{+})$ be an $\Rr$-complement of $(X/Z,B)$, and $D_t=tB^{+}+(1-t)C$. Then $(X,D_t)$ is klt and $-(K_X+D_t)$ is big and nef over $Z$ for any $t\in[0,1)$. We have
$$K_Y+B_{Y}^{+}+\sum (1-a_i^{+})E_i=f^{*}(K_X+B^{+})$$
and
$$K_Y+C_{Y}+\sum (1-a_i)E_i=f^{*}(K_X+C),$$
where $B_{Y}^{+}$ and $C_Y$ are the strict transforms of $B^{+}$ and $C$ on $Y$ respectively, $E_i$ are the exceptional divisors of $f$, $a_i^{+}=a(E_i,X,B^{+})$, and $a_i=a(E_i,X,C)$. Then
\begin{align*}
&K_Y+D_{t,Y}:=f^{*}(K_X+D_t)\\
=&K_Y+tB_{Y}^{+}+(1-t)C_{Y}+\sum (t(1-a_i^{+})+(1-t)(1-a_i))E_i.
\end{align*}
Pick $0<t_0<1$ such that $t_0(1-a_i^{+})+(1-t_0)(1-a_i)\ge0$ for any $i$. Thus  
$-(K_Y+D_{t_0,Y})$ is big and nef over $Z$, $(Y,D_{t_0,Y})$ is klt, and $Y$ is of Fano type over $Z$.
\end{proof}

The following lemma shows that any klt exceptional singularity $(X\ni x,B)$ admits an $\epsilon$-plt blow-up if the coefficients of $B$ belong to a DCC set. The proof is similar to \cite[Proposition 7.2]{Bir19}.

\begin{lem}\label{lem: exp sing epsilon plt}
	Let $d\ge 2$ be a positive integer and $\Ii\subset[0,1]$ a DCC set. Then there exists a positive real number $\epsilon$ depending only on $d$ and $\Ii$ satisfying the following. 
	
	Let $(X\ni x,B)$ be a klt exceptional singularity of dimension $d$ such that $B\in\Ii$, and $f: Y\rightarrow X$ the unique plt blow-up of $(X\ni x,B)$ with the reduced component $E$. Then for any $\Rr$-complement $(Y/X\ni x,B_Y+E+G_Y)$ of $(Y/X\ni x,B_Y+E)$, $(Y,B_Y+E+G_Y)$ is $\epsilon$-plt near $E$, where $B_Y$ is the strict transform of $B$ on $Y$. In particular, $f$ is an $\epsilon$-plt blow-up of $(X\ni x,B)$.
\end{lem}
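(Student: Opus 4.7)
The plan is to translate the $\epsilon$-plt condition on $Y$ near $E$ into a uniform $\epsilon$-klt condition on the reduced component $E$ via adjunction, and then conclude by invoking the boundedness of exceptional Fano pairs.

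First, I would show that $(Y, B_Y + E + G_Y)$ is plt near $E$ for every such $\Rr$-complement. Set $G := f_*G_Y$. Since $K_Y + B_Y + E + G_Y \sim_\Rr 0$ over $x$, pushing down yields $K_X + B + G \sim_\Rr 0$ over $x$ with $(X \ni x, B + G)$ lc and $a(E, X, B + G) = 0$. By Lemma \ref{lem: reduced component exc sing} applied to $(X \ni x, B + G)$, the $\bb$-divisor $E$ is the only one over $X \ni x$ attaining log discrepancy zero with respect to $(X, B + G)$, so $(Y, B_Y + E + G_Y)$ is plt near $E$.

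Setting $D_E := \Diff_E(B_Y + G_Y)$ and $B_E := \Diff_E(B_Y)$, Theorem \ref{thm: adjunction} gives $(E, D_E)$ klt, $K_E + D_E \sim_\Rr 0$, and the coefficients of $B_E$ in a DCC set $\Gamma'$ depending only on $\Gamma$. Since $f$ is a plt blow-up of the klt germ $(X \ni x, B)$ and $-E$ is ample over $X$, the divisor $-(K_E + B_E) = -(K_Y + B_Y + E)|_E$ is ample, so $(E, B_E)$ is a klt Fano pair. I would then verify that $(E, B_E)$ is a \emph{globally exceptional} Fano pair: if some $\Rr$-complement $(E, B_E + H)$ failed to be klt, then using ampleness of $-(K_Y + B_Y + E)$ over $X$ one lifts $H$ to an effective $\Rr$-divisor $H_Y$ on $Y$ producing an $\Rr$-complement $(Y / X \ni x, B_Y + E + H_Y)$ that is not plt near $E$; pushing it down to $X$ produces an lc pair with an additional non-klt divisor aside from $E$, contradicting exceptionality of $(X \ni x, B)$. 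By inversion of adjunction, it then suffices to exhibit a uniform $\epsilon > 0$, depending only on $d$ and $\Gamma$, such that every $\Rr$-complement of $(E, B_E)$ is $\epsilon$-klt; this follows from Birkar's boundedness of exceptional Fano pairs and the theory of complements in \cite{Bir19}.

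The main obstacle is the lifting argument used to verify global exceptionality of $(E, B_E)$: one must produce $H_Y$ on $Y$ whose restriction recovers $H$ modulo the different while preserving the non-klt locus, which requires some care with Fano-type geometry and the plt structure near $E$. The subsequent appeal to a uniform $\epsilon$-klt bound for $\Rr$-complements of exceptional Fano pairs in a bounded family, while not entirely formal, is by now a standard consequence of BBAB.
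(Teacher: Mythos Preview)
Your approach is correct but genuinely different from the paper's. The paper argues directly on $Y$: it fixes $\epsilon$ via $1-\epsilon:=\max\{t\in\LCT(\Ii\cup\{1\},\Nn,d)\mid 0<t<1\}$ using the ACC for log canonical thresholds, assumes some $F\neq E$ has $a(F,Y,B_Y+E+G_Y)<\epsilon$, extracts $F$ on a $\Qq$-factorial model $W$, and then runs a $-(K_W+B_W+E_W+(a+c)F_W)$-MMP over $X$. The lct bound guarantees the coefficient of $F_W$ can be pushed to $1$ while staying lc, and since $F_W$ is not contracted one obtains an $\Rr$-complement of $(X\ni x,B)$ with two lc places $E$ and $F$, contradicting exceptionality. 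No adjunction to $E$ and no appeal to global boundedness is needed---only \cite{HMX14} and basic MMP.

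Your route instead passes through the reduced component: you show $(E,B_E)$ is a globally exceptional Fano pair and then invoke \cite[Proposition~7.2]{Bir19} to get a uniform $\epsilon$ such that every $\Rr$-complement of $(E,B_E)$ is $\epsilon$-klt, concluding by adjunction. This is conceptually clean and is essentially the combination of the local--global correspondence (Theorem~\ref{thm: local global exceptional correspondence}, direction (1)$\Rightarrow$(2)) with the global result it reduces to. The cost is that the lifting step---producing $H_Y$ on $Y$ restricting to a prescribed $H$ on $E$ while preserving the non-klt locus---is not a one-liner: in the paper it is carried out in Section~8 by first rationalising the coefficients of the complement (Lemma~\ref{lem: coeffB R-complement}), decomposing into $\Qq$-directions, lifting each $n$-complement via the vanishing argument of Lemma~\ref{lem:extendcomplement1}, and taking a convex combination. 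So your proof is valid but imports more machinery; the paper's direct MMP argument keeps Lemma~\ref{lem: exp sing epsilon plt} logically prior to, and independent of, the lifting technology developed later.
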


\begin{proof}
	By Theorem \ref{thm: acc lct}, we may let 
	$$1-\epsilon:=\max\{t\mid t\in \LCT(\Ii\cup\{1\},\Nn,d),0<t<1\}.$$
	We will show that $\epsilon$ satisfies the required properties.
	
	Suppose that there exists a prime $\bb$-divisor $F\neq E$ over $X\ni x$, such that $a:=a(F,Y,B_Y+E+G_Y)<\epsilon$.
	
	We construct a birational morphism $g:W\rightarrow Y$ in the following way. If $F\subset Y$, then we let $g$ be a small $\Qq$-factorialization. If $F\not\subset Y$, then we let $g$ be a birational contraction which extracts exactly $F$, such that $W$ is $\mathbb Q$-factorial. The existence of $g$ follows from \cite[Corollary 1.4.3]{BCHM10}.
	
	By Lemma \ref{lem:Fanotypedlt}, $W$ is of Fano type over neighborhood of $x$. Let $E_W$ be the strict transform of $E$ on $W$, $K_W+B_W+E_W$ the pullback of $K_Y+B_Y+E$, and $G_W$ the pullback of $G$. Let $F_W$ be the center of $F$ on $W$ and $c$ the coefficient of $F_W$ in $G_W$. The coefficient of $F_W$ in $K_W+B_W+E_W+cF_W$ is $1-a>1-\epsilon$, and $(W/X\ni x,B_W+E_W+cF_W)$ is $\Rr$-complementary.
	
	By the construction of $\epsilon$, $(W,B_W+E_W+(a+c)F_W)$ is lc near $E_W$. Running an MMP on $-(K_W+B_W+E_W+(a+c)F_W)$ over a neighborhood of $x$, we get a model $W'$ on which $-(K_{W'}+B_{W'}+E_{W'}+(a+c)F_{W'})$ is nef, where $B_{W'},E_{W'}$ and $F_{W'}$ are the strict transforms of $B_W,E_W$ and $F_W$ on $W'$ respectively.
	
	We claim that $F_W$ is not contracted in the MMP. Otherwise, 
	\begin{align*} 
	&q^{*}(K_{W'}+B_{W'}+E_{W'})\\
	=&q^{*}(K_{W'}+B_{W'}+E_{W'}+(a+c)F_{W'})\\
	\ge &p^{*}(K_W+B_W+E_W+(a+c)F_W),
	\end{align*}
	and 
	$$a(F,K_{W'}+B_{W'}+E_{W'})>a(F,W,B_W+E_W+(a+c)F_W),$$
	where $p:\tilde{W}\to W$, $q:\tilde{W}\to W'$ is a resolution of $W\dashrightarrow W'$. In particular, $K_{W'}+B_{W'}+E_{W'}$ is not lc over a neighborhood of $x$. This is a contradiction as $K_{W'}+B_{W'}+E_{W'}+G_{W'}\sim_{\Rr,X} 0$ and $(W',B_{W'}+E_{W'}+G_{W'})$ is lc over a neighborhood of $x$, where $G_{W'}$ is the strict transform of $G_W$ on $W'$. The claim is proved.
	
	Since $(W/X\ni x,B_W+E_W+cF_W)$ is $\Rr$-complementary, 
	$(W'/X\ni x,B_{W'}+E_{W'}+cF_{W'})$ is also $\Rr$-complementary. By the construction of $\epsilon$, 
	$({W'}/X\ni x,B_{W'}+E_{W'}+(a+c)F_{W'})$ is lc. Hence $({W'}/X\ni x,B_{W'}+E_{W'}+(a+c)F_{W'})$ is $\Rr$-complementary as $-(K_{W'}+B_{W'}+E_{W'}+(a+c)F_{W'})$ is nef over $X$ and $W'$ is of Fano type over a neighborhood of $x$. Let  $(W'/X\ni x,B_{W'}+E_{W'}+(a+c)F_{W'}+D_{W'})$ be an $\Rr$-complement of $({W'}/X\ni x,B_{W'}+E_{W'}+(a+c)F_{W'})$. We define
	$$K_X+\Delta:=f_*g_{*}p_{*}q^{*}(K_{W'}+B_{W'}+E_{W'}+(a+c)F_{W'}+D_{W'}),$$
	then $(X\ni x,\Delta)$ is an $\Rr$-complement of $(X\ni x,B)$. Since $F_W$ and $E$ are lc places of $(X\ni x,\Delta)$ over $X\ni x$, $(X\ni x,B)$ is not exceptional, a contradiction.
\end{proof}

\subsection{Bounded families}
\begin{defn}\label{defn: bdd}
	A \emph{couple} consists of a normal projective variety $X$ and a divisor $D$ on $X$ such that $D$ is reduced. Two couples $(X,D)$ and $(X',D')$ are \emph{isomorphic} if there exists an isomorphism $X\rightarrow X'$ mapping $D$ onto $D'$.
	
	A set $\mathcal{P}$ of couples is \emph{bounded} if there exist finitely many projective morphisms $V^i\rightarrow T^i$ of varieties and reduced divisors $C^i$ on $V^i$ such that for each $(X,D)\in\mathcal{P}$, there exists $i$, a closed point $t\in T^i$, and two couples $(X,D)$ and $(V^i_t,C^i_t)$ are isomorphic, where $V^i_t$ and $C^i_t$ are the fibers over $t$ of the morphisms $V^i\rightarrow T^i$ and $C^i\rightarrow T^i$ respectively. 
	
	A set $\mathcal{C}$ of projective pairs $(X,B)$ is said to be \emph{log bounded} if the set of the corresponding set of couples $\{(X,\Supp B)\}$ is bounded. A set $\mathcal{D}$ of projective varieties $X$ is said to be \emph{bounded} if the corresponding set of couples $\{(X,0)\}$ is bounded. A log bounded (resp. bounded) set is also called a \emph{log bounded family} (resp. \emph{bounded family}).
	
\end{defn}

The following theorem was known as Borisov-Alexeev-Borisov conjecture, and is proved by Birkar.
\begin{thm}[BBAB Theorem, {\cite[Theorem 1.1]{Bir16}}]\label{thm: BAB}
	Let $d$ be a positive integer and $\epsilon$ a positive real number. Then the projective varieties $X$ such that 
	\begin{enumerate}
		\item $(X,B)$ is $\epsilon$-lc of dimension $d$ for some boundary $B$, and
		\item $-(K_X+B)$ is big and nef
	\end{enumerate}
	form a bounded family.
\end{thm}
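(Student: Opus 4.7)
The plan is to follow Birkar's strategy, which breaks the problem into two pillars: (i) \emph{effective birationality} of $|-mK_X|$ with $m = m(d,\epsilon)$ uniform, and (ii) passage from effective birationality to log boundedness via the theory of complements. Before either step, I would reduce to the case where $X$ is a $\Qq$-factorial $\epsilon$-lc Fano variety. Indeed, since $B \ge 0$ and $(X,B)$ is $\epsilon$-lc, $X$ itself is $\epsilon$-lc; running a $(-K_X)$-MMP on a small $\Qq$-factorialization does not worsen the $\epsilon$-lc condition and terminates on a Fano model. Hence it suffices to bound the family of $\epsilon$-lc $\Qq$-factorial Fano varieties of dimension $d$.

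The heart of the proof is effective birationality. Fixing a very general point $x \in X$, I would construct a non-klt center through $x$ of some pair $(X,\Gamma)$ with $\Gamma \sim_{\Qq} -m'K_X$ and $m'$ bounded in terms of $d,\epsilon$. Using tie-breaking together with cutting down the non-klt center, an induction on dimension produces a lower-dimensional non-klt center $S$ carrying an adjoint pair $(S,B_S)$. Applying Theorem \ref{thm:Bir19thm1.8} on $S$ furnishes a controlled $n$-complement, which one lifts by a Kawamata--Viehweg-type vanishing argument to a section of $|-mK_X|$ vanishing to prescribed order at $x$; varying $x$ yields birationality. The principal obstacle is that the different produced by adjunction generally has coefficients outside any fixed finite set, so Theorem \ref{thm:Bir19thm1.8} cannot be applied directly; the ACC for lc thresholds (Theorem \ref{thm: acc lct}) and the global ACC are used to stabilize the coefficient set along the inductive step, which is the deepest technical point in the argument.

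Once effective birationality is in hand, pulling back a general member of $|-mK_X|$ under a resolution and using that $-K_X$ is big and nef yields a uniform upper bound $\vol(-K_X) \le C(d,\epsilon)$. To conclude log boundedness, I would invoke Theorem \ref{thm:Bir19thm1.8} to produce an $n$-complement $(X,B^+)$ whose coefficients lie in a fixed finite set of rationals, with $n(K_X + B^+) \sim 0$; the Cartier index of $-K_X$ is then bounded. The bounded $(-K_X)$-volume together with the $\epsilon$-lc condition and the bounded Cartier index forces $(X,B^+)$ into a log bounded family, by standard Hilbert-scheme arguments combined with Hacon--McKernan--Xu-type boundedness for $\epsilon$-lc pairs of bounded volume and bounded coefficient set. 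This yields the desired boundedness of $X$. The main obstacle throughout remains the effective birationality step, where controlling the coefficients produced by adjunction is the crucial technical hurdle.
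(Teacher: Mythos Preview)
The paper does not prove this theorem at all: it is quoted verbatim as \cite[Theorem~1.1]{Bir16} and used as a black box throughout (e.g.\ in Lemma~\ref{lem: bddvolacc}, Proposition~\ref{prop: acc ld kc}, Theorem~\ref{thm: nv acc 2}). There is therefore no ``paper's own proof'' to compare your attempt against; the authors treat BBAB as established input, on a par with Theorem~\ref{thm: acc lct} or Theorem~\ref{thm:Bir19thm1.8}.

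Your outline is a plausible high-level summary of Birkar's two-paper program, but note two issues. First, the reduction step is not as clean as you state: from $(X,B)$ $\epsilon$-lc with $-(K_X+B)$ big and nef one cannot simply run a $(-K_X)$-MMP and preserve the $\epsilon$-lc condition, since discrepancies of $X$ alone need not improve along a $(-K_X)$-MMP; Birkar's actual reduction is more delicate. Second, invoking Theorem~\ref{thm:Bir19thm1.8} inside the proof is logically permissible (the complements paper \cite{Bir19} precedes \cite{Bir16}), but the effective birationality statement you need is itself the main theorem of \cite{Bir19}, not a corollary of the complement theorem; your sketch compresses two separate deep theorems into one inductive loop. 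In any case, for the purposes of this paper no proof is expected here.
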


\begin{lem}\label{lem: bddvolacc}
	Let $d$ be a positive integer and $\epsilon$ a positive real number. Then there exists a positive real number $v$ depending only on $d$ and $\epsilon$ satisfying the following. Suppose that
	\begin{enumerate}
		\item $(X,\Delta)$ is projective $\epsilon$-lc of dimension $d$, and
		\item $-(K_X+\Delta)$ is big and nef,
	\end{enumerate}
	then $\vol(-(K_X+B))\le v$ for any pair $(X,B)$.
\end{lem}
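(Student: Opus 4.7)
The plan is to deduce the bound from the BBAB Theorem cited just before the lemma, combined with monotonicity of the volume function for effective Weil perturbations.

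First, I would observe that the hypotheses (1) and (2) are precisely those of the BBAB Theorem, so the set $\mathcal{P}$ of varieties $X$ for which some such $\Delta$ exists is a bounded family. In particular, there is a projective morphism of finite-type schemes $V\to T$ (a finite disjoint union of families) whose fibers realize every $X\in\mathcal{P}$, and we may fix a relatively very ample invertible sheaf $\mathcal{L}$ on $V/T$.

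Second, I would show that there exists a constant $v=v(d,\epsilon)>0$ such that
\[
\vol(-K_X) := \limsup_{m\to\infty}\frac{d!}{m^d}\, h^0\!\bigl(X,\mathcal{O}_X(\lfloor -mK_X\rfloor)\bigr) \le v
\qquad\text{for every }X\in\mathcal{P}.
\]
This is a standard consequence of boundedness: on each of the finitely many families $V^i\to T^i$, the reflexive sheaves $\mathcal{O}_{V^i_t}(\lfloor -mK_{V^i_t}\rfloor)$ admit, fiberwise, a uniform polynomial upper bound of degree $d$ on their global sections, obtained by comparing with the fixed relative polarization $\mathcal{L}$ and using Noetherianity of the base.

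Third, for any pair $(X,B)$ with $X\in\mathcal{P}$, since $B\ge 0$ we have $\lfloor -m(K_X+B)\rfloor \le \lfloor -mK_X\rfloor$ as Weil divisors for every $m\ge 1$, which gives an inclusion of rank-one reflexive sheaves
\[
\mathcal{O}_X\!\bigl(\lfloor -m(K_X+B)\rfloor\bigr)\hookrightarrow \mathcal{O}_X\!\bigl(\lfloor -mK_X\rfloor\bigr).
\]
Taking global sections, multiplying by $d!/m^d$, and passing to $\limsup$, I conclude
\[
\vol\bigl(-(K_X+B)\bigr)\;\le\;\vol(-K_X)\;\le\; v.
\]

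The main obstacle is the uniform bound in Step 2, since in general $K_X$ is merely a Weil divisor on $X\in\mathcal{P}$ and one cannot directly invoke $\Rr$-Cartier intersection theory. The argument must therefore be phrased in terms of reflexive hulls $\mathcal{O}_X(\lfloor -mK_X\rfloor)$ and their Hilbert functions on the bounded family, rather than by an intersection-number computation involving $(-K_X)^d$.
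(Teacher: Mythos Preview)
Your strategy---reduce to bounding $\vol(-K_X)$ and then use $B\ge 0$ to get $\vol(-(K_X+B))\le\vol(-K_X)$---coincides with the paper's, but the execution of the key bound differs. The paper does not invoke BBAB here: instead it passes to a small $\mathbb{Q}$-factorialization (so $K_X$ becomes $\mathbb{Q}$-Cartier and volumes are preserved), runs a $(-K_X)$-MMP over a point (permissible since $X$ is of Fano type) to reach a model $Y$ with $-K_Y$ nef, and then cites Birkar's volume bound \cite[Theorem~1.6]{Bir19} to get $\vol(-K_X)=\vol(-K_Y)=(-K_Y)^d\le v$. This sidesteps entirely the issue you flag about $K_X$ being only Weil.

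Your route via BBAB is also valid, but Step~2 needs more than ``Noetherianity of the base''. Boundedness gives a very ample $A$ with $A^d$ and $-K_X\cdot A^{d-1}$ uniformly bounded (the latter intersection makes sense for Weil divisors, by slicing with a general curve in $|A|^{d-1}$), and one then needs a Matsusaka-type estimate of the shape $h^0(X,D)\le C(d,A^d)\,(D\cdot A^{d-1})^d$ for effective Weil $D$, applied to $D=\lfloor -mK_X\rfloor$. Both ingredients are standard but are not supplied by semicontinuity/Noetherianity alone. So your proof is correct in outline; the paper's MMP approach is cleaner precisely because it converts the problem into a nef self-intersection number, for which a ready-made bound is available.
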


\begin{proof}
Let $f: (X',\Delta')\rightarrow (X,\Delta)$ be a small $\Qq$-factorialization, and $K_{X'}+B':=f^*(K_X+B)$. Possibly replacing $(X,\Delta)$ with $(X',\Delta')$ and $B$ with $B'$, we may assume that $X$ is $\Qq$-factorial.
	
	Since $X$ is of Fano type, we may run a $(-K_X)$-MMP and reach a model $Y$ on which $-K_Y$ is nef. By \cite[Theorem 1.6]{Bir19}, there exists a positive real number $v$ depending only on $d$ and $\epsilon$, such that
	$$\vol(-(K_X+B))\le\vol(-K_X)=\vol(-K_Y)\le v.$$
\end{proof}

\begin{lem}\label{lem: bddvolfin}
	Let $d$ be a positive integer, $\epsilon$ a positive real number and $\Ii_0\subset[0,1]$ a finite set. Then there exists a finite set $\Ii'$ depending only on $d,\epsilon$ and $\Ii$ satisfying with the following. 
	
	If $(X,B)$ is projective $\epsilon$-lc of dimension of $d$, $B\in\Ii$, and $-(K_X+B)$ is big and nef, then $\vol(-(K_X+B))\in \Ii'$.
\end{lem}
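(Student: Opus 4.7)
The plan is to combine Theorem \ref{thm: BAB} with log boundedness of the pairs and deformation invariance of volumes in bounded families.

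First I would apply Theorem \ref{thm: BAB} to the pair $(X,B)$ itself (taking $\Delta := B$), which is $\epsilon$-lc of dimension $d$ with $-(K_X+B)$ big and nef. This shows the underlying varieties $X$ form a bounded family, so there are finitely many projective morphisms $V^i \to T^i$ with relatively very ample divisors $H^i$ of bounded relative degree, such that every such $X$ is isomorphic to some fiber $V^i_t$.

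Next I would upgrade this to log boundedness of $\{(X,\Supp B)\}$. Since $-(K_X+B)$ is nef and $H^i_t := H^i|_{V^i_t}$ is ample,
\[ B\cdot (H^i_t)^{d-1} \le -K_X \cdot (H^i_t)^{d-1}, \]
and the right hand side is bounded in the bounded family. Because the nonzero coefficients of $B$ lie in the finite set $\Ii_0 \subset [0,1]$, they are bounded below by $\delta := \min(\Ii_0 \setminus \{0\}) > 0$; hence both the number of components of $B$ and the $H^i_t$-degree of each component are bounded. So $(X,\Supp B)$ belongs to a bounded set of couples, and after refining we may assume there are finitely many projective morphisms $V^i \to T^i$ with reduced divisors $C^i = \sum_k C^i_k$ (each $C^i_k$ flat over $T^i$) such that every $(X,\Supp B)$ as in the statement is isomorphic to some fiber $(V^i_t, C^i_t)$.

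For each such $i$, the divisor $B$ is then determined by a coefficient vector $(b_k) \in (\Ii_0 \cup \{0\})^{k_i}$, where $k_i$ is the number of components of $C^i$; there are only finitely many such vectors. Fixing $i$ and $(b_k)$, since $-(K_X+B)$ is nef and big we have $\vol(-(K_X+B)) = (-(K_X+B))^d$, which equals the top self-intersection of the $\Rr$-Cartier divisor $-(K_{V^i_t} + \sum_k b_k C^i_{k,t})$ on the fiber. By Noetherian induction on $T^i$, I would stratify it into finitely many locally closed subsets on which the relative divisor $-(K_{V^i/T^i} + \sum_k b_k C^i_k)$ is $\Rr$-Cartier in a uniform way; on each connected component of each stratum, the top self-intersection of the restriction to a fiber is constant. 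Aggregating over the finitely many families, finitely many coefficient vectors, and finitely many strata yields the desired finite set $\Ii'$. The main technical point is the deformation invariance of the volume, which requires care with the $\Rr$-Cartier property of the relative divisor and the stratification argument; the remaining ingredients are direct consequences of Theorem \ref{thm: BAB} and the finiteness of $\Ii_0$.
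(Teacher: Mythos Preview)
Your proposal is correct and follows essentially the same approach as the paper: BBAB gives boundedness of $X$, a degree count gives log boundedness of $(X,\Supp B)$, and then finiteness of $\Ii_0$ together with deformation invariance of the top self-intersection in the family yields finiteness of the volumes. The paper presents this as a contradiction argument on a sequence, but the content is the same; the only places where the paper is more explicit are (i) passing first to a small $\Qq$-factorialization of $X$, (ii) invoking \cite[Proposition~2.4]{HX15} to arrange flatness and $\Qq$-Cartierness of $K_V$ and the $C_j$ on the total space (this is what your ``stratify so the relative divisor is $\Rr$-Cartier in a uniform way'' amounts to), and (iii) handling possibly irrational coefficients by approximating with rationals and using asymptotic Riemann--Roch plus invariance of Euler characteristics to compute $(-(K_X+B))^d$ across fibers.
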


\begin{proof}
	Suppose that there exists a sequence of pairs $(X_i,B_i)$, such that $(X_i,B_i)$ is $\epsilon$-lc, $B_i\in\Ii$, $-(K_{X_i}+B_i)$ is big and nef, and $\vol(-(K_{X_i}+B_i))$ is strictly increasing or strictly decreasing.
	
	Let $(X_i',B_i')\rightarrow (X_i,B_i)$ be a  small $\Qq$-factorialization. Possibly replacing $(X_i,B_i)$ with $(X_i',B_i')$, we may assume that $X_i$ is $\Qq$-factorial. By Theorem \ref{thm: BAB}, $(X_i,B_i)$ belongs to a log bounded family.

	Possibly passing to a subsequence of $(X_i,\Supp B_i)$, we may assume that there exist a projective morphism $V\to T$ of varieties, a reduced divisor $C=\sum C_j$ on $V$, and a dense set of closed points $t_i\in T$ such that $X_i$ is the fiber of $V\to T$ over $t_i$, and each component of $\Supp B_i$ is a fiber of $C_j\to T$ over $t_i$ for some $j$. Since $X_i$ is normal, possibly replacing $V$ with its normalization and replacing $C$ with its inverse image with reduced structure, we may assume that $V$ is normal. Possibly shrinking $T$, using Noetherian induction, passing to a subsequence of $(X_i,\Supp B_i)$, and according to \cite[Proposition 2.4]{HX15}, we may assume that $V\to T$ is flat, $C_j\to T$ is flat, $C_j$ is $\Qq$-Cartier and $K_V$ is klt for any $j$ and $t\in T$.
	
	We may assume that $B_i=\sum_{j}b_i^j C_j|_{t_i}$, where $b_i^j\in\Ii$. Possibly passing to a subsequence, we may assume that $b_i^j$ is a constant for each $j$.
	
	For each $j$, let $a_i^j$ be a sequence of increasing rational numbers, such that $a_i^j\le b_1^j$ and $\lim_{i\to+\infty}a_i^j=b_1^j$. Let $B_1^i:=\sum_{j} a_i^j C_j|_{t_1}$ and $B_2^i:=\sum_{j} a_i^j C_j|_{t_2}$. By the asymptotic Riemann-Roch theorem and the invariance of Euler characteristic in a flat family, we have
	\begin{align*}
	&\vol(-(K_{X_1}+B_1))=(-(K_{X_1}+B_1))^d=\lim_{i\to+\infty}(-(K_{X_1}+B_1^i))^d\\ =&\lim_{i\to+\infty}(-(K_{X_2}+B_2^i))^d
	=(-(K_{X_2}+B_2))^d=\vol(-(K_{X_2}+B_2)),               
	\end{align*}
	a contradiction.
\end{proof}
We need the following lemma, which may be well-known to experts. For a proof, see \cite[Lemma 2.5]{Jia18}, \cite[Lemma 4.2]{DS16}, \cite[Lemma 2.1]{CDHJS18}.
\begin{lem}\label{lem: volineq}
	Let $X$ be a normal projective variety, $D$ an $\Rr$-Cartier
	$\Rr$-divisor, and $S$ a base-point free normal Cartier prime divisor. 
	Then for any positive real number $q$,
	\[
	\vol(X,D+qS)\leq \vol(X, D)+q(\dim X) \vol(S, D|_S+qS|_S).
	\]
\end{lem}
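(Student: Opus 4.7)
The plan is to reduce the inequality to a statement about dimensions of linear systems, pass to an asymptotic limit, and then exploit the nefness of $S$. By continuity of the volume function on $N^1(X)_\Rr$ together with the homogeneity $\vol(kD')=k^d\vol(D')$, it suffices to treat the case in which $D$ is an integral Cartier divisor and $q$ is a positive integer; set $d:=\dim X$.

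With $D$ and $S$ both Cartier, for each integer $k\ge 1$ multiplication by the defining section of $S$ gives a short exact sequence
\[
0\to\Oo_X(mD+(k-1)S)\to\Oo_X(mD+kS)\to\Oo_S((mD+kS)|_S)\to 0.
\]
Taking $h^0$ and telescoping over $k=1,\dots,mq$, we obtain
\[
h^0(X,m(D+qS))\le h^0(X,mD)+\sum_{k=1}^{mq}h^0(S,(mD+kS)|_S).
\]
Multiplying both sides by $d!/m^d$, the left-hand side converges to $\vol(X,D+qS)$ and the first term on the right to $\vol(X,D)$. The remaining sum rewrites as a Riemann sum
\[
\frac{d}{m}\sum_{k=1}^{mq}\frac{(d-1)!}{m^{d-1}}h^0\!\left(S,m\bigl(D+(k/m)S\bigr)|_S\right),
\]
which converges to $d\int_0^q\vol(S,(D+tS)|_S)\,dt$ by the uniform convergence of $(d-1)!\,h^0(S,mF)/m^{d-1}$ to $\vol(S,F)$ as $F$ varies over a bounded subset of $N^1(S)_\Rr$.

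Since $S$ is base-point free, $S$, and hence $S|_S$, is nef; for $0\le t\le q$ the divisor $(q-t)S|_S$ is therefore nef on $S$, and adding a nef class cannot decrease the volume, so
\[
\vol(S,D|_S+tS|_S)\le\vol(S,D|_S+qS|_S).
\]
Integrating over $[0,q]$ yields $\int_0^q\vol(S,(D+tS)|_S)\,dt\le q\cdot\vol(S,D|_S+qS|_S)$, and combining with the previous step produces the claimed inequality. The main technical obstacle is the uniform convergence needed to identify the Riemann sum with its integral; this rests on the standard bound $h^0(S,mF)\le C\,m^{d-1}$ for $F$ in a fixed compact subset of $N^1(S)_\Rr$ (together with continuity of $\vol$ on that space), and on the observation that the rationals $k/m$ fill $[0,q]$ equi-distributed as $m\to\infty$.
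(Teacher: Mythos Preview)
The paper does not give its own proof of this lemma; it simply cites \cite{Jia18}, \cite{DS16}, and \cite{CDHJS18}. Your argument follows the same standard strategy as those references: reduce to integral divisors, telescope the short exact sequences, and pass to the limit.

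One simplification is worth noting. Your route through the Riemann sum and the integral $\int_0^q\vol(S,(D+tS)|_S)\,dt$ is correct but leans on a uniform-convergence statement that, while true, is not entirely trivial. The cited references avoid this step: since $S$ is base-point free, so is $S|_S$, and in particular $(mq-k)S|_S$ is linearly equivalent to an effective divisor for every $1\le k\le mq$. Multiplication by a section gives an injection $H^0(S,(mD+kS)|_S)\hookrightarrow H^0(S,m(D+qS)|_S)$, so each summand is bounded by $h^0(S,m(D+qS)|_S)$, and the sum is at most $mq\cdot h^0(S,m(D+qS)|_S)$. Dividing by $m^d/d!$ and letting $m\to\infty$ gives the result directly, with no Riemann sum and no need for uniform convergence.
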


The proof of the following proposition was suggested by Christopher Hacon and Chen Jiang.

\begin{prop}\label{prop: bddvolacc}
	Let $d$ be an integer, $\epsilon$ a positive real number, and $\Ii\subset[0,1]$ a DCC set. Then there exists an ACC set $\Ii'$ depending only on $d,\epsilon$ and $\Ii$ satisfying the following. 
	
	If $(X,B)$ is projective $\epsilon$-lc of dimension $d$, $B\in\Ii$, and $-(K_X+B)$ is big and nef, then $\vol(-(K_X+B))\in \Ii'$. 
\end{prop}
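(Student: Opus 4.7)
I argue by contradiction in the spirit of Lemma~\ref{lem: bddvolfin}, upgrading the finite-coefficient argument to the DCC setting via monotone subsequences. Suppose there is a strictly increasing subsequence $v_i:=\vol(-(K_{X_i}+B_i))$. After small $\mathbb Q$-factorializations and the BBAB Theorem (Theorem~\ref{thm: BAB}), the couples $(X_i,\Supp B_i)$ form a log bounded family. Using Noetherian induction (shrinking $T$ and passing to a subsequence), I realize the pairs as fibers of a single flat family $V\to T$ with a reduced divisor $C=\sum C_j$, arranging that $V\to T$ and each $C_j\to T$ are flat, each $C_j$ is $\Qq$-Cartier, and $K_V$ is $\Qq$-Cartier with klt fibers. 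I write $B_i=\sum_j b_i^j\,C_j|_{t_i}$ with $b_i^j\in\Ii\cup\{0\}$. By Ramsey's theorem I pass to a further subsequence along which each $b_i^j$ is monotone in $i$; since $\Ii$ satisfies DCC, any non-increasing such sequence stabilizes, so I may assume each $b_i^j$ is either eventually constant or strictly increasing.

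\textbf{Key computation.} By asymptotic Riemann--Roch together with invariance of Euler characteristic in a flat family, the top self-intersection $(-(K_{V_t}+\sum_j x^j C_j|_{V_t}))^d$ is given by a polynomial $P(x^1,\ldots,x^k)\in\mathbb Q[x_1,\ldots,x_k]$ independent of $t\in T$. Since $-(K_{X_i}+B_i)$ is big and nef, $v_i=(-(K_{X_i}+B_i))^d=P(b_i^1,\ldots,b_i^k)$. If every $b_i^j$ eventually stabilizes, $v_i$ is eventually constant, contradicting strict increase. Otherwise $b_1\le b_2$ coordinatewise, with strict inequality in at least one coordinate. I transport the boundary of $X_2$ to $X_1$: set $B_2^{(1)}:=\sum_j b_2^j\,C_j|_{t_1}$. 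By flat invariance, $(-(K_{X_1}+B_2^{(1)}))^d=P(b_2)=v_2$. Assuming $-(K_{X_1}+B_2^{(1)})$ is big and nef on $X_1$, this top power equals its volume, and the monotonicity of $\vol(-(K_X+B))$ under adding an effective divisor to $B$ (here $B_2^{(1)}-B_1\ge 0$) gives $v_2=\vol(-(K_{X_1}+B_2^{(1)}))\le\vol(-(K_{X_1}+B_1))=v_1$, contradicting $v_2>v_1$.

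\textbf{Main obstacle.} The subtle step is ensuring $-(K_{X_1}+B_2^{(1)})$ remains big and nef on $X_1$: adding an effective divisor to a big and nef one preserves bigness but can destroy nefness. I expect this to be resolved either by refining the Noetherian reduction so that the relative nef cone of $V/T$ is trivialized (making nefness of $-(K_{V_t}+\sum_j b^j\,C_j|_{V_t})$ a $t$-independent condition on $(b^j)$), or by replacing the final monotonicity step with an application of Lemma~\ref{lem: volineq} to bound the difference $\vol(-(K_{X_1}+B_1))-\vol(-(K_{X_2}+B_2))$ in terms of $(d-1)$-dimensional volumes on the components $C_j|_{t_i}$, which are controlled by induction on $d$ together with the DCC hypothesis on $\Ii$.
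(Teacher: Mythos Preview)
You correctly locate the obstacle: there is no reason for $-(K_{X_1}+B_2^{(1)})$ to be nef on $X_1$, and without nefness the identity $\vol=(-(K_{X_1}+B_2^{(1)}))^d$ fails, so the whole comparison collapses. Neither of your suggested fixes closes the gap. Trivializing the relative nef cone is not a standard reduction and does not help here: even if the fibers were mutually isomorphic with identified Picard groups, the point is that you are \emph{changing the divisor class} when you pass from $B_1$ to $B_2^{(1)}$; the nefness of $-(K_{X_2}+B_2)$ on $X_2$ says nothing about a \emph{different} class on $X_1$ once $b_2^j>b_1^j$. Your second suggestion, invoking Lemma~\ref{lem: volineq} with induction on $d$, compares volumes on a single variety and does not bound a difference across two distinct fibers; as stated it does not lead anywhere.

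The paper's proof avoids this trap by a different reduction. Rather than transporting coefficients across fibers, it compares each $(X_i,B_i)$ with the pair $(X_i,\overline{B_i})$ having the \emph{limit} coefficients $\overline{b_j}:=\lim_i b_{i,j}\in\bar\Ii$, on the \emph{same} variety $X_i$. The key step (Claim~\ref{claim: effective acc volume}) uses log boundedness together with Lemma~\ref{lem: volineq} to find $\epsilon_i\to 0$ with $\epsilon_i(-(K_{X_i}+B_i))-(\overline{B_i}-B_i)\ge 0$, which sandwiches $(1-\epsilon_i)^d\vol(-(K_{X_i}+B_i))\le\vol(-(K_{X_i}+\overline{B_i}))\le\vol(-(K_{X_i}+B_i))$. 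One then runs a $-(K_{X_i}+\overline{B_i})$-MMP to reach a model on which it is nef, checks $\epsilon$-lc via \cite[Corollary~1.2]{Bir16} and a log resolution in the bounded family, and applies the \emph{finite}-coefficient case (Lemma~\ref{lem: bddvolfin}) to conclude that $\vol(-(K_{X_i}+\overline{B_i}))$ is eventually constant. The sandwich then forces $\vol(-(K_{X_i}+B_i))$ to stabilize. The missing idea in your approach is precisely this passage to limit coefficients (reducing to finite $\Ii$) combined with an MMP to restore nefness, rather than trying to preserve nefness under a coefficient increase.
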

\begin{proof}
	Suppose that there exists a sequence of pairs $(X_i,B_i)$, such that $(X_i,B_i)$ is $\epsilon$-lc, $B_i\in\Ii$, $-(K_{X_i}+B_i)$ is big and nef, and $\vol(-(K_{X_i}+B_i))$ is strictly increasing. 
	
	Let $(\tilde X_i,\tilde B_i)\rightarrow (X_i,B_i)$ be a  small $\Qq$-factorialization. Possibly replacing $(X_i,B_i)$ with $(\tilde X_i,\tilde B_i)$, we may assume that $X_i$ is $\Qq$-factorial. By Theorem \ref{thm: BAB}, $(X_i,B_i)$ belongs to a log bounded family. In particular, the number of irreducible components of $B_i$ is bounded from above. Possibly passing to a subsequence, we may assume that
	$$B_i=\sum_{j=1}^m b_{i,j}B_{i,j},$$
    where $m$ is a non-negative integer, $B_{i,j}$ are the irreducible components of $B_i$, $b_{i,j}\in\Ii$, and $\{b_{i,j}\}_{i=1}^{\infty}$ is an increasing sequence for every $1\leq j\leq m$. Let
	$$\overline{b_j}:=\lim_{i\to +\infty}b_{i,j},\text{ and }\overline{B_i}:=\sum_{j=1}^m \overline{b_j}B_{i,j}.$$

	\begin{claim}\label{claim: effective acc volume} There exists a sequence of positive real numbers $\epsilon_i$, such that $\lim_{i\to+\infty}\epsilon_i=0$, and
		$$\epsilon_i(-(K_{X_i}+B_i))-(\overline{B_i}-B_i)$$
		is effective.
	\end{claim}
	
	Suppose that the claim is true. Possibly passing to a subsequence, we may assume that $\epsilon_i<1$ for every $i$. Then 
	$$-(K_{X_i}+\overline{B_i})-(1-\epsilon_i)(-(K_{X_i}+B_i))=\epsilon_i(-(K_{X_i}+B_i))-(\overline{B_i}-B_i)$$
	is effective
	and $-(K_{X_i}+\overline{B_i})$ is big. Thus
	\begin{align*}
	\vol(-(K_{X_i}+\overline{B_i}))
	\ge&(1-\epsilon_i)^d\vol(-(K_{X_i}+B_i))\\
	\ge&(1-\epsilon_i)^d\vol(-(K_{X_i}+\overline{B_i})).
	\end{align*}	 
	We may run a $-(K_{X_i}+\overline{B_i})$-MMP, and reach a minimal model $X_i\dashrightarrow X_i'$. Since $-(K_{X_i}+B_i)$ is big and nef, there exists $D_i\ge0$, such that $K_{X_i}+B_i+D_i\sim_{\Rr}0$ and $(X_i,B_i+D_i)$ is $\epsilon$-lc. Thus $K_{X_i'}+{B_i}'+D_i'\sim_{\Rr}0$, and
	$(X_i',B_i'+D_i')$ is $\epsilon$-lc, where $B_i'$ and $D_i'$ are the strict transforms of $B_i$ and $D_i$ on $X_i'$. By \cite[Corollary 1.2]{Bir16}, $(X_i',B_i')$ belongs to a log bounded family. By taking a log resolution on the log bounded family, and possibly passing to a subsequence, we may assume that $(X_i',\overline{B_i}')$ is $\epsilon$-lc, where $\overline{B_i}'$ is the strict transform of $\overline{B_i}$ on $X_i'$. By Lemma \ref{lem: bddvolfin}, 	 
	$$\vol(-(K_{X_i}+\overline{B_i}))=\vol(-(K_{X_i}'+\overline{B_i}'))$$ belongs to a finite set. Possibly passing to a subsequence, we may assume that $\vol(-(K_{X_i}+\overline{B_i}))=C$ is a constant. Hence 
	$$
	\frac{C}{(1-\epsilon_i)^d}
	\ge\vol(-(K_{X_i}+B_i))\\
	\ge C.
	$$	
	Since $\epsilon_i\to 0$ and $\vol(-(K_{X_i}+B_i))$ is increasing, we deduce that $\vol(-(K_{X_i}+B_i))$ belongs to a finite set, a contradiction.
\end{proof}

\begin{proof}[Proof of Claim \ref{claim: effective acc volume}] 
	Let $A_i$ be a very ample divisor on $X_i$, such that 
	$$\delta_i A_i-(\overline{B_i}-B_i)$$
	is effective for some positive real number $\delta_i$, $\lim_{i\to +\infty}\delta_i=0$, and $(-(K_{X_i}+B_i))^{d-1}\cdot A_i<a$, where $a$ is a positive real number depending only on $d,\epsilon$ and $\Ii$. There exists a positive real number $b$, such that $$b<\frac{(-(K_{X_i}+B_i))^{d}}{d(-(K_{X_i}+B_i))^{d-1}\cdot A_i}=\frac{\vol(-(K_{X_i}+B_i))}{d(-(K_{X_i}+B_i))^{d-1}\cdot A_i}.$$
	
	By Lemma \ref{lem: volineq}, we have
	\begin{align*}
	&\vol(-(K_{X_i}+B_i)-bA_i)\\
	\ge&     \vol(-(K_{X_i}+B_i))-bd\vol(-(K_{X_i}+B_i)|_{A_i})\\
	>&\vol(-(K_{X_i}+B_i))-\frac{\vol(-(K_{X_i}+B_i))}{(-(K_{X_i}+B_i))^{d-1}\cdot A_i}\cdot ((-(K_{X_i}+B_i))^{d-1}\cdot A_i)\\
	=&0,
	\end{align*}
	which implies that 
	$$-(K_{X_i}+B_i)-bA_i$$
	is effective.
	
	Let $\epsilon_i:=\frac{\delta_i}{b}$, then
	$$\epsilon_i(-(K_{X_i}+B_i))-(\overline{B_i}-B_i)=(\epsilon_i(-(K_{X_i}+B_i))-b\epsilon_iA_i)+(\delta_iA_i-(\overline{B_i}-B_i))$$
	is effective, and the claim is proved.
\end{proof}

\begin{lem}\label{lem: indexofL}
	Let $d$ be a positive integer and $\epsilon,\epsilon_0$ two positive real numbers. Then there exists a positive integer $m$ depending only on $d,\epsilon$ and $\epsilon_0$ satisfying the following. Suppose that
	\begin{enumerate}
		\item $(X,\Delta)$ is projective $\epsilon$-lc of dimension $d$,
		\item $-(K_X+\Delta)$ is big and nef,
		\item $L$ is a nef Weil divisor on $X$, and
		\item $-(K_X+B)-\epsilon_0 L$ is pseudo-effective for some $\Rr$-divisor $B\geq 0$,
	\end{enumerate}
	then $mL$ is Cartier.
\end{lem}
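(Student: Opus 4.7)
The plan is to combine the boundedness of $X$ provided by BBAB with a degree bound on $L$ coming from the pseudo-effectivity hypothesis, and then conclude a uniform bound on the Cartier index of $L$.

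First, I reduce to the $\mathbb{Q}$-factorial case. Let $f:\tilde X\to X$ be a small $\mathbb{Q}$-factorialization. All the hypotheses transfer to $\tilde X$: $(\tilde X,\tilde\Delta)$ is still $\epsilon$-lc with $-(K_{\tilde X}+\tilde\Delta)$ big and nef, the strict transform $\tilde L$ is a nef $\mathbb{Q}$-Cartier divisor, and $-(K_{\tilde X}+\tilde B)-\epsilon_0\tilde L$ remains pseudo-effective. If $m\tilde L$ is Cartier on $\tilde X$, then $f_*(m\tilde L)$ is reflexive and coincides with $mL$ in codimension one, so $mL$ is Cartier on $X$ because $f$ is small. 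Hence we may assume $X$ is $\mathbb{Q}$-factorial and $L$ is $\mathbb{Q}$-Cartier.

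Next, I would bound $L\cdot H^{d-1}$ for a suitable very ample $H$. By Theorem \ref{thm: BAB}, $X$ belongs to a bounded family $\mathcal X\to T$ depending only on $d$ and $\epsilon$. Choosing a relatively very ample Cartier divisor $\mathcal H$ on $\mathcal X$ and setting $H:=\mathcal H|_X$, both $H^d$ and $-K_X\cdot H^{d-1}$ are bounded above by a constant $C_0=C_0(d,\epsilon)$. Intersecting the pseudo-effective class $-(K_X+B)-\epsilon_0 L$ with the nef class $H^{d-1}$ and using $B\ge 0$ gives
\[
\epsilon_0\,L\cdot H^{d-1}\;\le\;-(K_X+B)\cdot H^{d-1}\;\le\;-K_X\cdot H^{d-1}\;\le\;C_0,
\]
so $L\cdot H^{d-1}\le C_0/\epsilon_0$.

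Finally, I would pass from this bounded degree to a bounded Cartier index. By Noetherian induction, I stratify $T$ so that $\mathcal X\to T$ admits a simultaneous log resolution over each stratum; on each piece of the family, the Picard number of the fibers is locally constant and hence bounded, and since $X$ is $\mathbb{Q}$-factorial with $\epsilon$-klt singularities, the torsion of $\mathrm{Cl}(X)/{\equiv}$ is uniformly bounded (a family version of Xu's finiteness of local algebraic fundamental groups, closely parallel to Theorem \ref{thm: bdd local index exceptional}). Combined with the degree bound $L\cdot H^{d-1}\le C_0/\epsilon_0$, the nef class $L$ lies in one of finitely many bounded subsets of $N^1(X)_{\mathbb Q}$, and each such class admits a uniform Cartier multiplier $m=m(d,\epsilon,\epsilon_0)$. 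The main obstacle is this last implication: extracting a uniform Cartier index from the bounded family, which crucially relies on the bounded torsion of class groups in families of $\epsilon$-klt Fano type varieties.
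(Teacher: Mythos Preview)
Your outline matches the paper's up to the degree bound $L\cdot H^{d-1}\le C_0/\epsilon_0$, but the last two steps both have genuine gaps.

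\textbf{The descent step is not correct as stated.} It is false that a Cartier divisor on a small $\mathbb{Q}$-factorialization pushes down to a Cartier divisor: on the cone $X$ over a smooth quadric surface, a ruling $L$ pulls back to a Cartier divisor on the small resolution $\tilde X$, yet $L$ itself is not Cartier on $X$. The paper handles this by noting that $\tilde X\to X$ is a $(K_{\tilde X}+\tilde\Delta+H')$-negative contraction of an extremal face for a suitable klt pair, and then invoking the cone/contraction theorem, which does guarantee that Cartier divisors descend.

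\textbf{The final step is circular and incomplete.} You appeal to ``a family version of Xu's finiteness of local algebraic fundamental groups, closely parallel to Theorem~\ref{thm: bdd local index exceptional}'' to control the torsion of $\mathrm{Cl}(X)$. But in this paper Theorem~\ref{thm: bdd local index exceptional} is proved \emph{using} Lemma~\ref{lem: indexofL} (via Proposition~\ref{prop:CartierindexofEE} and Theorem~\ref{thm: nv acc 2}), so you cannot invoke it here. More importantly, bounding the Picard number and the torsion of the class group does not by itself bound Cartier indices of individual Weil divisors; you would still need to control the lattice $\mathrm{Cl}(X)$ inside $N^1(X)_{\mathbb R}$ uniformly in families, and you have not done this. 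The paper bypasses all of this by citing \cite[Lemma~2.25]{Bir19}, which says precisely that on an $\epsilon$-lc variety bounded by $(A^d\le r,\ -K_X\cdot A^{d-1}\le r)$, any nef Weil divisor $L$ with $L\cdot A^{d-1}$ bounded has bounded Cartier index. That lemma is the missing ingredient you need.
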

\begin{proof}
	Let $(X',\Delta')\to (X,\Delta)$ be a small $\Qq$-factorialization, $K_{X'}+B'$ the pullback of $K_X+B$, and $L'$ the pullback of $L$. By Theorem \ref{thm: BAB},
	$X'$ is bounded. In particular, there exists a very ample divisor $A'$ on $X'$ and a positive real number $r$ depending only on $d$ and $\epsilon$, such that ${A'}^d\le r$ and $-K_{X'}\cdot {A'}^{d-1}\le r$. We have 
	$$\epsilon_0 L'\cdot {A'}^{d-1}\le -(K_{X'}+B')\cdot {A'}^{d-1}\le -K_{X'}\cdot {A'}^{d-1}\le r.$$
	By \cite[Lemma 2.25]{Bir19}, there exists a positive integer $m$ depending only on $d,r$ and $\epsilon_0$, such that $mL'$ is Cartier. 
	
	Since $X'\to X$ is a blow up, there exists a $\Qq$-divisor $H'\ge0$ on $X'$, such that $-H'$ is ample over $X$. In particular, $K_{X'}+\Delta'+H'$ is antiample over $X$. Rescaling it we can assume that $(X',\Delta'+H')$ is klt. Then $X'\to X$ is a $(K_{X'}+\Delta'+H')$-negative contraction of an extremal face of the Mori-Kleiman cone of $X'$. According to the cone theorem, $mL$ is Cartier. 
\end{proof}
It is expected that the following more general statement holds.
\begin{ques}
	Suppose that projective klt pairs $(X,B)$ belong to a log bounded family. Is there a positive integer $I$ such that the Cartier index of any $\Qq$-Cartier Weil divisor on $X$ is bounded from above by $I$?
\end{ques}
\begin{rem}
	If $X$ is a fixed projective variety, then the above question has a positive answer (cf. \cite[Lemma 7.14]{CH20}). By applying the methods and results in \cite{KM92,dFH11}, we may show the above question has a positive answer for weak log Fano pairs.
\end{rem}

\subsection{Normalized volumes}
\begin{defn}[Normalized volumes]\label{defn: nv}
	Let $(X\ni x,B)$ be a klt germ. Let $E$ be the reduced component of a plt blow-up $f: Y\rightarrow X$ of $(X\ni x,B)$. We define the \emph{normalized volume of $E$} with respect to $(X\ni x,B)$ as
	$$ \widehat{\vol}(E,X,B):=(-(K_Y+E+f^{-1}_*B)|_{E})^{d-1}\cdot a(E,X,B),$$
	where $d:=\dim X$. The \emph{normalized volume of $(X\ni x,B)$} is defined as
	$$\widehat{\vol}(X\ni x,B):=\inf_{E}\widehat{\vol}(E,X,B),$$
	where the above infimum takes over all the possible reduced components of $(X\ni x,B)$. 
\end{defn}

\begin{lem}[{\cite[Proposition 5.20]{KM98}}]\label{lem: ramification cover}
	Let $(X\ni x,B)$ be an lc germ and $h: X'\ni x'\rightarrow X\ni x$ a finite morphism, where $x'\in X'$ is a closed point and $h(x')=x$. Suppose that there exists an $\mathbb R$-divisor $B'\ge0$ such that $K_{X'}+B'=h^*(K_X+B)$. Then
	\begin{enumerate}
		\item for any prime divisor $E$ over $X$, $a(E',X',B')=ra(E,X,B)$, where $E'$ is the corresponding prime divisor of $E$ over $X'$, $0<r\leq\deg h$ is an integer, and
		\item if $(X\ni x,B)$ is a klt (resp, $\epsilon$-lc) germ, then $(X'\ni x',B')$ is a klt (resp. $\epsilon$-lc) germ.
	\end{enumerate}
\end{lem}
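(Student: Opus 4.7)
The plan is to prove both parts via base change along $h$ together with the Riemann--Hurwitz ramification formula. For part (1), choose a birational model $f\colon Y\to X$ on which the prime divisor $E$ appears (further blowing up if needed), and write $K_Y+B_Y:=f^{*}(K_X+B)$. Let $W$ be the normalization of the main component of the fiber product $Y\times_X X'$; this gives a finite surjective morphism $h'\colon W\to Y$ of degree $\deg h$ together with a birational morphism $f'\colon W\to X'$ satisfying $h\circ f'=f\circ h'$. Since $Y$ is smooth, the ramification formula for the finite morphism $h'$ of normal varieties in characteristic zero yields $K_W=h'^{*}K_Y+R_{h'}$, where $R_{h'}=\sum_{D}(e_D-1)D$ sums over prime divisors $D\subset W$ with ramification index $e_D\geq 1$ along their image in $Y$. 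Defining $B_W$ by $K_W+B_W:=f'^{*}(K_{X'}+B')$, the chain of equalities
\[ f'^{*}(K_{X'}+B')=f'^{*}h^{*}(K_X+B)=h'^{*}f^{*}(K_X+B)=h'^{*}(K_Y+B_Y), \]
combined with the ramification formula, gives $B_W=h'^{*}B_Y-R_{h'}$.

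For a prime divisor $E'$ on $W$ dominating $E$ with ramification index $e$, the identities $\mult_{E'}(h'^{*}E)=e$ and $\mult_{E'}R_{h'}=e-1$ yield
\[ \mult_{E'}B_W=e\bigl(1-a(E,X,B)\bigr)-(e-1)=1-e\cdot a(E,X,B), \]
and hence $a(E',X',B')=e\cdot a(E,X,B)$. This proves part (1) with $r:=e$. The bound $r\leq\deg h$ follows because the ramification indices over a fixed divisor on $Y$ sum to at most $\deg h'=\deg h$.

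For part (2), let $E'$ be any prime divisor over $X'\ni x'$. I would realize $E'$ as a prime divisor on the base change $W$ by choosing $Y\to X$ sufficiently high: given any birational model $V'\to X'$ on which $E'$ appears, pick $Y$ so that $W\to X'$ dominates $V'$ birationally. Part (1) then gives $a(E',X',B')=r\cdot a(E,X,B)$, where $E$ is the image of $E'$ on $Y$, which is a prime divisor over $X\ni x$ because $h(x')=x$ forces $\Center_X E=x$. The klt (resp.\ $\epsilon$-lc) hypothesis on $(X\ni x,B)$ supplies $a(E,X,B)>0$ (resp.\ $a(E,X,B)\geq\epsilon$), so $a(E',X',B')>0$ (resp.\ $a(E',X',B')\geq r\epsilon\geq\epsilon$), completing the proof. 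The main technical point is the realization step in (2), namely arranging that every prime divisor over $X'\ni x'$ arises on the normalized base change of a suitable $Y\to X$; this is a standard fact of birational geometry but is worth stating carefully, since the clean ramification computation in (1) only directly handles divisors on $W$.
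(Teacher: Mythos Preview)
The paper does not give its own proof of this lemma; it simply records the citation to \cite[Proposition 5.20]{KM98}. Your argument is the standard one underlying that reference: base-change a model of $X$ along $h$, normalize, and apply Riemann--Hurwitz to compare coefficients. The computation in part (1) is correct, and your derivation $a(E',X',B')=e\cdot a(E,X,B)$ is exactly what is needed.

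Two small remarks. First, you invoke ``since $Y$ is smooth'' without having required it; just take $Y$ to be a log resolution from the outset so that Riemann--Hurwitz applies cleanly to $h'\colon W\to Y$. Second, the realization step you flag in part (2) is genuinely the only delicate point: not every birational model of $X'$ arises as the normalized base change of a model of $X$, so one must argue via valuations (restrict the divisorial valuation of $E'$ on $k(X')$ to $k(X)$, check it is again divisorial with center $x$, and then choose $Y$ extracting that divisor) rather than by naively dominating a given $V'$. You correctly identify this as the crux, and with that adjustment the proof is complete.
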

\begin{lem}\label{lem: plt blow-up rami}
	Let $(X\ni x,B)$ and $(X'\ni ,B')$ be klt germs. Suppose that there exists a finite morphism $h: (X'\ni x',B')\rightarrow (X\ni x,B)$, such that $h^*(K_X+B)=K_{X'}+B'$. Let $E$ be the reduced component of a plt blow-up $f: Y\rightarrow X$ of $(X\ni x,B)$, $Y'$ the main component of $Y\times_{X}X'$, $f': Y'\rightarrow X'$ the induced morphism, and $E'$ the exceptional divisor of $f'$. Then $E'$ is the reduced component of $(X'\ni x',B')$. In particular, if $f$ is an $\epsilon$-plt blow-up of $(X\ni x,B)$, then $f'$ is an $\epsilon$-plt blow-up of $(X'\ni x',B')$.
	
	Moreover, 	
	$$\deg f\cdot\widehat{\vol}(E,X,B)=\widehat{\vol}(E',X',B').$$
\end{lem}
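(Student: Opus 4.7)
The plan is to organize the proof around the commutative square
\[
\begin{CD}
Y' @>g>> Y \\
@Vf'VV @VVfV \\
X' @>h>> X
\end{CD}
\]
in which $g$ is the projection from the main component of $Y\times_X X'$, finite of degree $m=\deg h$. Working in sufficiently small neighborhoods of $x$ and $x'$ so that $h^{-1}(x)=\{x'\}$, we have $(f')^{-1}(x')=g^{-1}(\Supp E)$ set-theoretically. My first task is to show that this preimage has a single component, say $E'$ with $g^*E=rE'$ for a unique integer $r\geq 1$; I would argue this by passing to the normalization of the main component if necessary and using the uniqueness of the prime of the function field of $Y'$ specializing to the generic point of $E$, or equivalently a decomposition-group argument after taking the Galois closure of $h$. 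Once $E'$ is identified, $-E'=-\frac{1}{r}g^*E$ is $f'$-ample because $g$ is finite.

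Step~2 is bookkeeping via Lemma~\ref{lem: ramification cover}. Along $E'$, $a(E',Y',g^*(K_Y+B_Y+E))=r\cdot a(E,Y,B_Y+E)=0$, so setting $B_{Y'}=(f')^{-1}_*B'$, one has $g^*(K_Y+B_Y+E)=K_{Y'}+B_{Y'}+E'$ with $B_{Y'}\geq 0$. For every exceptional prime divisor $F'$ over $Y'$ mapping to $F$ over $Y$ with ramification index $r_F\geq 1$, Lemma~\ref{lem: ramification cover} gives $a(F',Y',B_{Y'}+E')=r_F\cdot a(F,Y,B_Y+E)>\epsilon\,r_F\geq\epsilon$. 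Hence $(Y',B_{Y'}+E')$ is $\epsilon$-plt (resp.\ plt when the hypothesis is plt) near $E'$, making $f'$ an $\epsilon$-plt blow-up of $(X'\ni x',B')$ with reduced component $E'$.

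For the numerical identity, set $d=\dim X$ and $a=a(E,X,B)$. From $K_Y+B_Y+E=f^*(K_X+B)+aE$, restricting to $E$ (on which $f$ is constant) yields $-(K_Y+B_Y+E)|_E=-aE|_E$ and hence $\widehat{\vol}(E,X,B)=a^d(-E|_E)^{d-1}$; symmetrically $\widehat{\vol}(E',X',B')=a(E',X',B')^d(-E'|_{E'})^{d-1}$. Combining $a(E',X',B')=ra$, $\deg(g|_{E'})=m/r$, and the projection formula
\[
r^{d-1}(-E'|_{E'})^{d-1}=((g|_{E'})^*(-E|_E))^{d-1}=\deg(g|_{E'})\cdot(-E|_E)^{d-1},
\]
substitution gives $\widehat{\vol}(E',X',B')=m\cdot\widehat{\vol}(E,X,B)=\deg h\cdot\widehat{\vol}(E,X,B)$, matching the claim (reading the statement's ``$\deg f$'' as $\deg h$, since $f$ is birational of degree~$1$). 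The main obstacle I expect is the irreducibility of $E'$: after a general finite base change the exceptional locus of $f'$ could in principle split, and rigorously ruling this out requires using the plt/normality hypotheses together with a careful treatment of the main-component construction.
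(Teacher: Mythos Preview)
Your argument is essentially what the paper's one-line proof defers to: the paper simply cites \cite[Lemma~2.9, Lemma~2.10]{LX16} together with Lemma~\ref{lem: ramification cover}, and your Steps~2 and~3 reproduce those arguments. Your normalized-volume computation via the projection formula is correct and is precisely the content of \cite[Lemma~2.10]{LX16}; the observation that ``$\deg f$'' in the displayed formula should read ``$\deg h$'' is right (cf.\ the use of this lemma in Proposition~\ref{prop: finite degree rami}).

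The one point you flag as incomplete---irreducibility of $E'$---can be closed without passing to a Galois closure. After replacing $Y'$ by the normalization of the main component, $g$ is finite and $f'$ is a proper birational morphism between normal varieties, so $(f')^{-1}(x')$ is connected by Zariski's Main Theorem. Since $-E$ is $f$-ample we have $\Supp E=f^{-1}(x)$, hence $\Supp(g^{*}E)=g^{-1}(f^{-1}(x))=(f')^{-1}(x')$ is connected. On the other hand, every irreducible component $E'_i$ of $g^{-1}(E)$ appears in $g^{*}(K_Y+B_Y+E)=K_{Y'}+B_{Y'}+\sum E'_i$ with coefficient~$1$ (by Lemma~\ref{lem: ramification cover}, $a(E'_i,Y',B_{Y'}+\sum E'_j)=r_i\cdot a(E,Y,B_Y+E)=0$), and plt-ness forces the components of $\lfloor B_{Y'}+\sum E'_i\rfloor$ to be pairwise disjoint. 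A connected set which is a disjoint union of closed subsets has only one piece, so $E'$ is prime. This is the argument underlying \cite[Lemma~2.9]{LX16}.
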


\begin{proof}
	The lemma follows from \cite[Lemma 2.9]{LX16}, Lemma \ref{lem: ramification cover} and \cite[Lemma 2.10]{LX16}.
\end{proof}

\begin{defn}\label{name rami reduced component} Assumptions and notation as in Lemma \ref{lem: plt blow-up rami}. We say that $E'$ is the \textit{component corresponding} to $E$ with respect to $h$.
\end{defn}

\section{Boundedness of local algebraic fundamental groups}

The goal of this section is to prove Theorem \ref{thm: bdd local index exceptional}.

\subsection{Boundedness of log discrepancies and normalized volumes} 

\begin{lem}\label{lem: bdd lemma}
	Let $d$ be a positive integer and $\epsilon,\delta$ two positive real numbers. Suppose that $(X\ni x,B)$ is a germ of dimension $d$, such that
	\begin{enumerate}
		\item all the (nonzero) coefficients of $B$ are $\geq\delta$,
		\item $(X\ni x,\Delta)$ is a klt germ for some boundary $\Delta$,
		\item there exists either an $\epsilon$-plt blow-up or a $\Qq$-factorial weak $\epsilon$-plt blow-up $f: Y\rightarrow X$ of $(X\ni x,\Delta)$ with the reduced component $E$, and
		\item $-(K_Y+B_Y+E)|_{E}$ is pseudo-effective, where $B_Y$ is the strict transform of $B$ on $Y$.
	\end{enumerate}
	Then $(E, {\Diff}_{E}(B_Y))$ belongs to a log bounded family. Moreover, if all the coefficients of $B$ belong to a DCC (resp. finite) set $\Ii\subset[0,1]$, then all the coefficients of $ {\Diff}_{E}(B_Y)$ belong to a DCC (resp. finite) set.	
\end{lem}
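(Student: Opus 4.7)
\noindent\emph{Plan.} The strategy is to combine adjunction with the BBAB theorem and hypothesis (4). I would begin by reducing to the case where $f$ is a $\Qq$-factorial weak $\epsilon$-plt blow-up via Lemma~\ref{lem: existence qfact weak plt blow up}; in the plt case one composes with a small $\Qq$-factorialization $g\colon W\to Y$ to get a new reduced component $E_W$, and log boundedness of $(E_W,\Diff_{E_W}(B_W))$ descends to log boundedness of $(E,\Diff_E(B_Y))$ via the birational contraction $g|_{E_W}\colon E_W\to E$ (similarly for the coefficient sets, which are invariant under small modifications at codimension-one points). After this reduction $Y$ is $\Qq$-factorial, so every component of $B_Y$ is $\Rr$-Cartier; the different decomposes as $\Diff_E(B_Y)=\Diff_E(0)+B_Y|_E$, and Theorem~\ref{thm: adjunction} applied to the $\epsilon$-plt pair $(Y,E)$ gives at each codimension-one point $V$ of $E$ a coefficient of the form $(m-1)/m+\sum n_ib_i/m$ with $m\le\lfloor 1/\epsilon\rfloor$ and $n_i\in\Zz_{\ge 0}$.

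\medskip

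To bound $E$, note that since $(Y,\Delta_Y+E)$ is $\epsilon$-plt so is $(Y,E)$, and Theorem~\ref{thm: adjunction} gives $(E,\Diff_E(\Delta_Y))$ is $\epsilon$-klt. Writing $K_Y+\Delta_Y+E=f^*(K_X+\Delta)+a(E,X,\Delta)E$ and restricting to $E$, one sees $-(K_E+\Diff_E(\Delta_Y))$ is a positive multiple of $-E|_E$, which is big (by the weak $\epsilon$-plt blow-up property) and nef ($-E$ is nef over $X$ and $E$ contracts to $x$). BBAB (Theorem~\ref{thm: BAB}) then places $E$ in a bounded family, and I fix an ample Cartier divisor $A$ from the bounding data so that $A^{\dim E}$ and $(-K_E)\cdot A^{\dim E-1}$ are bounded and $V\cdot A^{\dim E-1}\ge 1$ for every prime divisor $V\subset E$.

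\medskip

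For log boundedness, intersect $K_E+\Diff_E(B_Y)=(K_Y+B_Y+E)|_E$ with $A^{\dim E-1}$. Since $-(K_Y+B_Y+E)|_E$ is pseudo-effective by hypothesis (4) and $A^{\dim E-1}$ is nef,
\[
\Diff_E(B_Y)\cdot A^{\dim E-1}\le(-K_E)\cdot A^{\dim E-1},
\]
which is bounded. Next, every nonzero coefficient of $\Diff_E(B_Y)$ is bounded below by a uniform $\delta_0>0$: if $V\subset\Supp B_Y$ then some $n_i\ge 1$ forces the coefficient $\ge\delta/\lfloor 1/\epsilon\rfloor$, while otherwise a nonzero coefficient forces $m\ge 2$ giving $(m-1)/m\ge 1/\lfloor 1/\epsilon\rfloor$. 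Setting $\delta_0:=\min(\delta,1)/\lfloor 1/\epsilon\rfloor$, the bound $\deg_A\Supp\Diff_E(B_Y)\le\delta_0^{-1}(\Diff_E(B_Y)\cdot A^{\dim E-1})$ is then bounded; combined with $E$ in a bounded family this gives that $(E,\Diff_E(B_Y))$ is log bounded.

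\medskip

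Finally, because $V\cdot A^{\dim E-1}\ge 1$ for each prime $V\subset E$, the degree bound forces every coefficient of $\Diff_E(B_Y)$ to be uniformly bounded above, hence $\sum n_ib_i\le M$ for a uniform $M$. Since each $b_i\ge\delta$ this bounds $\sum n_i$, so the tuples $(n_i)$ and the values of $m$ lie in finite sets; consequently $(m-1)/m+\sum n_ib_i/m$ ranges over a DCC set when $\Ii$ is DCC and over a finite set when $\Ii$ is finite. The main obstacle I foresee is justifying the adjunction formula when $(Y,B_Y+E)$ is not itself plt; the $\Qq$-factorial reduction is precisely what validates the decomposition $\Diff_E(B_Y)=\Diff_E(0)+B_Y|_E$ with the local Weil-class order $m$ extracted from the $\epsilon$-plt pair $(Y,E)$ alone.
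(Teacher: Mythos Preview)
Your overall strategy matches the paper's: bound $E$ via BBAB applied to the $\epsilon$-klt weak log Fano pair $(E,\Diff_E(\Delta_Y))$, then use hypothesis~(4) together with a uniform lower bound on the nonzero coefficients of $\Diff_E(B_Y)$ to bound the degree of its support.

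The one substantive difference is your reduction to the $\Qq$-factorial weak case, which is both unnecessary and not fully justified. The descent you invoke---that log boundedness of $(E_W,\Diff_{E_W}(B_W))$ passes to $(E,\Diff_E(B_Y))$ through the birational morphism $g|_{E_W}$---is not automatic: even though $g$ is small, $g|_{E_W}$ can have divisorial exceptional locus, and the pushforward of a bounding very ample class need not remain very ample. The paper sidesteps this entirely by working on $E$ from the start: the ``Furthermore'' clause of Theorem~\ref{thm: adjunction} already delivers the coefficient formula $\mult_V\Diff_E(B_Y)=\frac{m-1+\sum n_ib_i}{m}$ once $(Y,\Delta_Y+E)$ is dlt (it is plt) and $K_Y+B_Y+E$ is $\Rr$-Cartier (which follows since $K_X+B$ and $K_X+\Delta$ are $\Rr$-Cartier, hence so is $E$). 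No $\Qq$-factoriality of $Y$ is needed, and BBAB applies directly to $E$.

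Two minor remarks. The paper's coefficient lower bound is cleaner: after assuming $\delta\le\tfrac12$, every nonzero coefficient of $\Diff_E(B_Y)$ is simply $\ge\delta$, so $\Supp\Diff_E(B_Y)\cdot H^{d-2}\le\delta^{-1}(-K_E)\cdot H^{d-2}$. Conversely, your explicit bounding of $\sum n_i$ via the degree bound makes the finite-$\Ii$ conclusion more transparent than the paper's terse ``$m$ belongs to a finite set, hence\ldots''.
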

\begin{rem} We remark that $(E, {\Diff}_{E}(B_Y))$ may not be lc.
\end{rem}

\begin{proof} Let $\Delta_Y$ be the strict transform of $\Delta$ on $Y$, and 
	$$K_E+{\rm Diff}_{E}(\Delta_Y):=(K_Y+\Delta_Y+E)|_{E}.$$
	By Theorem \ref{thm: adjunction}, $(E,{\rm Diff}_{E}(\Delta_Y))$ is $\epsilon$-klt. By Theorem \ref{thm: BAB}, $E$ belongs to a bounded family. Hence there exists a very ample divisor $H$ on $E$, such that both $H^{d-1}$ and $(-K_E)\cdot H^{d-2}$ are bounded from above.
	
	Since $-(K_Y+B_Y+E)|_{E}$ is pseudo-effective, 
	$$-(K_{E}+{\Diff}_{E}(B_Y))\cdot H^{d-2}\ge 0.$$
	We may assume that $\delta\le \frac{1}{2}$. Since all the nonzero coefficients of $B$ are $\geq \delta$, by Theorem \ref{thm: adjunction}, we have
	$${\Diff}_{E}(B_Y)\geq\delta\Supp({\Diff}_{E}(B_Y)).$$	
	Thus
	$$\Supp{\Diff}_{E}(B_Y)\cdot H^{d-2}\le {\Diff}_{E}(B_Y)\cdot\frac{1}{\delta}H^{d-2} \le \frac{1}{\delta}(-K_E)\cdot H^{d-2}$$
	is bounded from above, and $(E, {\Diff}_{E}(B_Y))$ belongs to a log bounded family.
	
	Let $V$ be any irreducible component of ${\Diff}_{E}(B_Y)$. By Theorem \ref{thm: adjunction}, 
	$$\mult_V{\Diff}_{E}(B_Y)=\frac{m-1+\sum_i n_ib_i}{m}$$
	and
	$$\mult_V{\Diff}_{E}(\Delta_Y)=\frac{m-1+\sum_i n_i'\delta_i}{m}$$
	for some positive integer $m$, non-negative integers $n_i,n_i'$, and $b_i,\delta_i\in\Ii$, where $b_i$ and $\delta_i$ are coefficients of $B_Y$ and $\Delta_Y$ respectively. Since $(E,{\rm Diff}_{E}(\Delta_{Y}))$ is $\epsilon$-klt, $m\le\frac{1}{\epsilon}$. Hence $m$ belongs to a finite set. In particular, if $\Ii$ is a finite set, then all the coefficients of $ {\Diff}_{E}(B_Y)$ belong to a finite set.	
\end{proof}

\begin{prop}\label{prop: acc ld kc}
	Let $d$ be a positive integer, $\epsilon$ a positive real number, and $\Ii\subset [0,1]$ a DCC set. Assume that
	\begin{itemize}
		\item $(X\ni x,B), (X\ni x,\Delta)$ are two klt germs of dimension $d$, and
		\item there exists either an $\epsilon$-plt blow-up or a $\Qq$-factorial weak $\epsilon$-plt blow-up $f: Y\rightarrow X$ of $(X\ni x,\Delta)$ with the reduced component $E$.
	\end{itemize}      
	Then
	\begin{enumerate}
		\item $a(E,X,B)$ is bounded from above,
		\item if $B\in\Ii$, then $\{a(E,X,B)\}$ is an ACC set depending only on $d$, $\epsilon$ and $\Ii$, and
		\item if $B\in\Ii$ and we additionally assume that $\Ii$ is finite, then the only possible accumulation point of $\{a(E,X,B)\}$ is $0$.
	\end{enumerate}

\end{prop}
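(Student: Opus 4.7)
The plan is to push the problem from $(X\ni x,B)$ down to the exceptional divisor $E$ via adjunction, apply BBAB to the $\epsilon$-klt weak log Fano pair $(E,\Diff_E(\Delta_Y))$, and bootstrap through Lemma~\ref{lem: indexofL} to decouple $a(E,X,B)$ from $(-E|_E)^{d-1}$. First, by Lemma~\ref{lem: existence qfact weak plt blow up} I may assume $f$ is $\Qq$-factorial weak $\epsilon$-plt, so that $E$ is $\Qq$-Cartier. Writing $f^{*}(K_X+B)=K_Y+B_Y+(1-a(E,X,B))E$ and restricting to $E$ (using $f(E)=\{x\}$), Theorem~\ref{thm: adjunction} gives the key numerical equivalence
\[
-(K_E+\Diff_E(B_Y))\equiv a(E,X,B)\cdot(-E|_E),
\]
and similarly with $\Delta$ in place of $B$. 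Since $(E,\Diff_E(\Delta_Y))$ is $\epsilon$-klt weak log Fano, Theorem~\ref{thm: BAB} places it in a bounded family depending only on $d$ and $\epsilon$.

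For Part~(1), the inequality $a(E,X,B)\le a(E,X,0)$ reduces the problem to bounding $a(E,X,0)$. Lemma~\ref{lem: bdd lemma} with $B=0$ shows $(E,\Diff_E(0))$ is log bounded with coefficients $\tfrac{m-1}{m}$ ($m\le\lfloor 1/\epsilon\rfloor$) in a finite set, so Lemma~\ref{lem: bddvolfin} forces the product $a(E,X,0)^{d-1}(-E|_E)^{d-1}=(-(K_E+\Diff_E(0)))^{d-1}$ into a finite set depending only on $d,\epsilon$. Setting $r:=\lfloor 1/\epsilon\rfloor!$ and $L:=r(-E|_E)$, which is Weil by Theorem~\ref{thm: adjunction}(4), I then bootstrap: either $a(E,X,0)\le r$ and we are done, or $-(K_E+\Diff_E(0))-L=(a(E,X,0)-r)(-E|_E)$ is pseudo-effective, and Lemma~\ref{lem: indexofL} (with $\epsilon_0=1$) yields a uniform $m=m(d,\epsilon)$ with $mL$ Cartier, whence $(mL)^{d-1}\ge 1$ gives $(-E|_E)^{d-1}\ge (mr)^{-(d-1)}$, and the finite bound on the product caps $a(E,X,0)$.

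For Parts~(2) and~(3), Lemma~\ref{lem: bdd lemma} applied with the DCC (resp.\ finite) set $\Ii$ shows $(E,\Diff_E(B_Y))$ is log bounded with $\Diff_E(B_Y)$-coefficients in a DCC (resp.\ finite) set. A flat-family and invariance-of-Euler-characteristic argument paralleling the proofs of Proposition~\ref{prop: bddvolacc} (resp.\ Lemma~\ref{lem: bddvolfin})---with the log-boundedness of $(E,\Diff_E(B_Y))$ replacing the BBAB step---shows
\[
(-(K_E+\Diff_E(B_Y)))^{d-1}=a(E,X,B)^{d-1}(-E|_E)^{d-1}
\]
lies in an ACC (resp.\ finite) set. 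The Part~(1) Cartier-index bound forces $(-E|_E)^{d-1}\in (mr)^{-(d-1)}\mathbb Z_{>0}$, a discrete set; intersected with the finite upper bound on $(-E|_E)^{d-1}$ (coming from the same bootstrap applied to $B=0$) it is finite. Dividing out then extracts the desired ACC (resp.\ finite, accumulating only at $0$) structure of $\{a(E,X,B)\}$.

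The main obstacle will be the decoupling of $a(E,X,B)$ from $(-E|_E)^{d-1}$ in the volume formula, since only their product is directly controlled by the log-boundedness of $E$ supplied by Lemma~\ref{lem: bdd lemma} and BBAB. Lemma~\ref{lem: indexofL}, applied via the Part~(1) bootstrap to the Weil divisor $L=\lfloor 1/\epsilon\rfloor!\,(-E|_E)$, is exactly what promotes log-boundedness into an effective Cartier-index bound on $-E|_E$, individually constraining each factor and thereby closing the argument.
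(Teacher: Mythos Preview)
Your proof contains a genuine gap in Parts~(2)--(3): the claimed ``finite upper bound on $(-E|_E)^{d-1}$'' does not exist. For the cone over a rational normal curve of degree $n$, the blow-up of the vertex is an $\epsilon$-plt blow-up of the klt germ $(X\ni x,0)$ for every $\epsilon<1$, yet $(-E|_E)^{d-1}=-E^2=n$ is unbounded. Your Part~(1) bootstrap produces a \emph{lower} bound $(-E|_E)^{d-1}\ge (mr)^{-(d-1)}$, not an upper one, and the finiteness of the product $a(E,X,0)^{d-1}(-E|_E)^{d-1}$ cannot be inverted because $a(E,X,0)$ has no uniform positive lower bound. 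Relatedly, the Cartier-index bound you quote from Lemma~\ref{lem: indexofL} is obtained in Part~(1) only on the branch $a(E,X,0)\ge r$; on the other branch you established nothing, so even the discreteness of $(-E|_E)^{d-1}$ is not uniformly in hand. (There is also a minor issue: $K_X$ need not be $\Qq$-Cartier under the hypotheses, so $a(E,X,0)$ may be undefined.)

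The paper circumvents all of this by working linearly instead of with self-intersections. Choosing a very ample $H$ on the bounded variety $E$ and setting $C:=H_1\cap\cdots\cap H_{d-2}$, one has
\[
a(E,X,B)=\frac{-(K_E+B_E)\cdot H^{d-2}}{-E|_E\cdot H^{d-2}}.
\]
Because $rE|_E$ is a Weil divisor (Theorem~\ref{thm: adjunction}(4)) and $H$ is Cartier, the denominator $-E|_E\cdot C$ lies in $\tfrac{1}{r}\Nn^{+}$ automatically---no Cartier-index bound on $E|_E$ is needed, and no lower bound on $a$ enters. The numerator is bounded by $-K_E\cdot H^{d-2}$ and, via the log-boundedness of $(E,B_E)$ and the DCC (resp.\ finiteness) of its coefficients, lies in an ACC (resp.\ finite) set; the ACC for $a$ then follows. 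Your volume approach can be repaired, but only by reorganizing: after assuming a strictly increasing sequence $a_i\ge a_1>0$, feed $\epsilon_0=a_1/r$ into Lemma~\ref{lem: indexofL} (with $B_E$, not $\Diff_E(0)$) to get discreteness and an upper bound $V/a_1^{d-1}$ on $(-E_i|_{E_i})^{d-1}$ \emph{for that sequence}; this is a different argument from the one you wrote.
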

\begin{proof} 
	Let $a:=a(E,X,B)$, $B_Y$ the strict transform of $B$ on $Y$, and
	$$K_E+B_E:=(K_Y+B_Y+E)|_{E}.$$	
	Since $f$ is either an $\epsilon$-plt blow up or a $\Qq$-factorial weak $\epsilon$-plt blow-up of $(X\ni x,\Delta)$, by Theorem \ref{thm: BAB}, $E$ belongs to a bounded family, and if $B\in\Ii$, then by Lemma \ref{lem: bdd lemma}, $(E,B_E)$ is log bounded. Thus, we may pick a very ample Cartier divisor $H$ on $E$, such that $-K_E\cdot H^{d-2}$ is bounded from above. In particular, there are only finitely many possibilities of $K_E\cdot H^{d-2}$. Moreover, $B_E\cdot H^{d-2}\le-K_E\cdot H^{d-2}$ is bounded from above. If $B\in\Ii$, then by Lemma \ref{lem: bdd lemma}, all the coefficients of $B_E$ belong to a DCC set (finite set if $\Ii$ is finite), hence $B_E\cdot H^{d-2}$ belongs to a DCC set (finite set if $\Ii$ is finite), and $-(K_E+B_E)\cdot H^{d-2}$ belongs to an ACC set (finite set if $\Ii$ is finite).

	Let $H_1,\ldots,H_{d-2}$ be general elements in $|H|$, $C:=E\cap H_1\cap H_2\cdots \cap H_{d-2}$, and $r:=\lfloor\frac{1}{\epsilon}\rfloor!$. Since $(Y,\Delta_Y+E)$ is $\epsilon$-plt near $E$, by Theorem \ref{thm: adjunction}, $rE|_{E}$ is a $\Qq$-Cartier Weil divisor. In particular, $-E\cdot C\in\frac{1}{r}\mathbb N^+$ belongs to a discrete set. We have
	\begin{align*}
	\frac{a}{r}\le &-aE\cdot C=-(K_Y+B_Y+E)\cdot C\\
	=&-(K_E+B_E)\cdot H^{d-2}\le -K_E\cdot H^{d-2}.
	\end{align*}
	In particular, $a$ is bounded from above. 
	
	Since $(K_Y+B_Y+(1-a)E)\cdot C=0$ and $-E\cdot C>0$, we have 
	$$a=\frac{-(K_Y+B_Y+E)\cdot C}{-E\cdot C}.$$
	Suppose that $B\in\Ii$. Then $-(K_Y+B_Y+E)\cdot C=-(K_E+B_E)\cdot H^{d-2}$ belongs to an ACC set (finite set if $\Ii$ is finite), and $a$ belongs to an ACC set (an ACC set whose only possible accumulation point is $0$ if $\Ii$ is finite). 
\end{proof}

\begin{prop}\label{prop:CartierindexofEE}
	Let $d$ be a positive integer and $\epsilon_0,\epsilon$ two positive real numbers. Then there exists a positive integer $I$ depending only on $d,\epsilon_0$ and $\epsilon$ satisfying the following. For any germ $(X\ni x,B)$ of dimension $d$, if 
	\begin{enumerate}
		\item $a(E,X,B)\ge\epsilon_0$, 
		\item $(X\ni x,\Delta)$ is a klt germ for some boundary $\Delta$, and
		\item there exists either an $\epsilon$-plt blow-up or a $\Qq$-factorial weak $\epsilon$-plt blow-up $f: Y\rightarrow X$ of $(X\ni x,\Delta)$ with the reduced component $E$,
	\end{enumerate}
	then $IE|_{E}$ is Cartier.
\end{prop}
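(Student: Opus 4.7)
The plan is to reduce to \emph{Lemma \ref{lem: indexofL}} applied on the reduced component $E$, viewed as a projective variety of dimension $d-1$. Write $B_Y := f_*^{-1}B$, $\Delta_Y := f_*^{-1}\Delta$, set $r := \lfloor 1/\epsilon\rfloor!$, and put $L := -rE|_E$. I will verify that $(E,\Diff_E(\Delta_Y))$, $L$, and the effective $\mathbb{R}$-divisor $\Diff_E(B_Y)$ satisfy the four hypotheses of Lemma \ref{lem: indexofL} in dimension $d-1$ with parameters $\epsilon$ and $\epsilon_0/r$. This will produce a positive integer $m$ depending only on $d,\epsilon,\epsilon_0$ such that $mL = -mrE|_E$ is Cartier on $E$, so taking $I := mr$ finishes the proof.

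Hypotheses (1)--(3) of Lemma \ref{lem: indexofL} follow from the general structure. By \emph{Theorem \ref{thm: adjunction}}(1)(2), $(E,\Diff_E(\Delta_Y))$ is $\epsilon$-klt, hence $\epsilon$-lc; it is projective because $E$ is proper over the closed point $x$ (in the plt case $f$ contracts $E$ to $x$, and in the $\mathbb{Q}$-factorial weak $\epsilon$-plt case $f^{-1}(x) = \Supp E$). The divisor $-(K_E + \Diff_E(\Delta_Y)) = -(K_Y + \Delta_Y + E)|_E$ is big and nef: in the plt case, $K_Y + \Delta_Y + (1-a')E = f^*(K_X+\Delta)$ with $a' := a(E,X,\Delta) > 0$ and $-E$ ample over $X$, so the restriction is ample; in the weak case it is big and nef by definition. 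Finally, $rE|_E$ is a Weil divisor by \emph{Theorem \ref{thm: adjunction}}(4) applied to the $\epsilon$-plt pair $(Y,\Delta_Y+E)$, and $L = -rE|_E$ is nef on $E$ because $-E$ is nef over $X\ni x$.

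Hypothesis (4) is the key point. Assumption (1) gives $K_Y + B_Y + (1-a)E = f^*(K_X+B)$ with $a := a(E,X,B) \geq \epsilon_0$. Restricting to $E$ and using that $f^*(K_X+B)|_E$ is numerically trivial on $E$ (since $f(E) = \{x\}$, so every curve in $E$ has zero intersection with $f^*(K_X+B)$), we obtain
\[
K_E + \Diff_E(B_Y) \equiv aE|_E = -\tfrac{a}{r}L \quad \text{on } E.
\]
The different $\Diff_E(B_Y)$ is effective: at any codimension-one point $V$ of $E$, the pair $(Y,E)$ is plt (inherited from the $\epsilon$-plt pair $(Y,\Delta_Y+E)$), so the formula of \emph{Theorem \ref{thm: adjunction}}(3) (or equivalently its dlt version in the last clause) expresses the coefficient of $\Diff_E(B_Y)$ at $V$ as $(m_V - 1 + \sum n_i b_i)/m_V$ with $m_V \geq 1$, integers $n_i \geq 0$, and $b_i \geq 0$ the coefficients of $B_Y$. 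Consequently,
\[
-(K_E + \Diff_E(B_Y)) - \tfrac{\epsilon_0}{r}L \equiv \tfrac{a-\epsilon_0}{r}L,
\]
which is nef on $E$ since $a \geq \epsilon_0$ and $L$ is nef, in particular pseudo-effective. Now \emph{Lemma \ref{lem: indexofL}} applies and yields the desired integer $I := mr$.

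The main delicate point is hypothesis (4), specifically confirming effectivity of $\Diff_E(B_Y)$ even though $(Y,B_Y+E)$ itself is not assumed plt. This reduces, as above, to a purely codimension-one check on $E$ that uses only the plt structure of $(Y,\Delta_Y+E)$ together with $B_Y \geq 0$. The remaining computations are routine consequences of adjunction, the identity $K_Y + B_Y + (1-a)E = f^*(K_X+B)$, and the fact that $E$ lies over a single point of the base, which make the numerical identity $K_E + \Diff_E(B_Y) \equiv -\tfrac{a}{r}L$ immediate.
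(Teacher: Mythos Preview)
Your proof is correct and follows essentially the same approach as the paper: both reduce to Lemma~\ref{lem: indexofL} on $E$ with $L=-rE|_E$ for $r=\lfloor 1/\epsilon\rfloor!$, using the relation $K_E+\Diff_E(B_Y)\equiv aE|_E$ to verify pseudo-effectivity of $-(K_E+\Diff_E(B_Y))-\tfrac{\epsilon_0}{r}L$. You are in fact more explicit than the paper about why $\Diff_E(B_Y)\ge 0$ (via the dlt clause of Theorem~\ref{thm: adjunction}, since $(Y,B_Y+E)$ is not assumed lc) and why $E$ is projective; the paper's proof leaves these points implicit.
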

\begin{proof}
	Let $a:=a(E,X,B)$ and $B_Y$ the strict transform of $B$ on $Y$. 
	We have
	$$K_Y+B_Y+(1-a)E=f^*(K_X+B).$$
	Let
	$$K_E+B_E:=(K_Y+B_Y+E)|_{E}\sim_{\Rr} aE|_{E},$$
	and $r:=\lfloor\frac{1}{\epsilon}\rfloor!$. Since $f$ is an $\epsilon$-plt blow up of $(X\ni x,\Delta)$, by Theorem \ref{thm: adjunction}, $L:=-rE|_{E}$ is a Weil divisor on $E$. Since 
	$$-(K_E+B_E)-\frac{\epsilon_0}{r}L\sim_{\mathbb R}-(a-\epsilon_0)E|_{E}$$
	is nef, by Lemma \ref{lem: indexofL}, there exists a positive integer $m$ depending only on $d,\epsilon_0$ and $\epsilon$, such that $mL$ is Cartier. We get the desired $I$ by letting $I:=mr$.
\end{proof}

\begin{thm}\label{thm: nv acc 2} Let $d$ be a positive integer, $\epsilon$ a positive real number, and $\Ii\subset [0,1]$ a DCC (resp. finite) set. Then for any klt germ $(X\ni x,B)$ of dimension $d$, if
	\begin{enumerate}
	\item there exists a plt blow-up $f:Y\rightarrow X$ of $(X\ni x,B)$ with the reduced component $E$, and
	\item $f$ is an $\epsilon$-plt blow-up for some klt germ $(X\ni x,\Delta)$,

	\end{enumerate}
	then $\widehat{\vol}(E,X,B)$ is bounded from above, and $\widehat{\vol}(E,X,B)$ is bounded from below by a positive real number if and only if $a(E,X,B)$ is bounded from below by a positive real number. 
	
	Moreover, if $B\in \Ii$,
	then $\widehat{\vol}(E,X,B)$ belongs to an ACC set (resp. ACC set whose only possible accumulation point is $0$) depending only on $d,\epsilon$ and $\Ii$.
	\end{thm}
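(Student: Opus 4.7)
Set $a := a(E,X,B)$ and define boundaries on $E$ via adjunction:
$$K_E + B_E := (K_Y + f^{-1}_*B + E)|_E, \qquad K_E + \Delta_E := (K_Y + f^{-1}_*\Delta + E)|_E.$$
Since $-E$ is $f$-ample and $f(E) = \{x\}$, the restriction $-E|_E$ is ample on $E$, and from $K_Y + f^{-1}_*B + (1-a)E = f^*(K_X+B)$ we read off $-(K_E+B_E) = -a\,E|_E$, which is nef (and ample when $a>0$). In particular $(-(K_E+B_E))^{d-1} = \vol(-(K_E+B_E))$, so
$$\widehat{\vol}(E,X,B) \;=\; a\cdot\vol(-(K_E+B_E)) \;=\; a^{d}\cdot(-E|_E)^{d-1}.$$
By Theorem \ref{thm: adjunction}, $(E,\Delta_E)$ is an $\epsilon$-klt pair of dimension $d-1$ with $-(K_E+\Delta_E) = a(E,X,\Delta)\cdot(-E|_E)$ big and nef.

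For the upper bound I would apply Lemma \ref{lem: bddvolacc} to the reference pair $(E,\Delta_E)$ with auxiliary divisor $B_E$, obtaining $\vol(-(K_E+B_E))\le v=v(d-1,\epsilon)$; combining with the universal upper bound on $a$ from Proposition \ref{prop: acc ld kc}(1) yields the desired upper bound on $\widehat{\vol}(E,X,B)$. For the equivalence, one direction is immediate: $\widehat{\vol}\le v\cdot a$ forces a lower bound on $\widehat{\vol}$ to give one on $a$. For the other, if $a\ge\epsilon_0>0$, Proposition \ref{prop:CartierindexofEE} (with reference pair $(X\ni x,\Delta)$) produces $I=I(d,\epsilon_0,\epsilon)\in\mathbb Z_{>0}$ such that $IE|_E$ is Cartier; then $-IE|_E$ is an ample Cartier divisor on the $(d-1)$-dimensional $E$, so $(-IE|_E)^{d-1}\in\mathbb Z_{>0}$ gives $(-E|_E)^{d-1}\ge I^{-(d-1)}$, whence $\widehat{\vol}\ge \epsilon_0^d\cdot I^{-(d-1)}>0$.

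For the ACC statement under $B\in\Gamma$: Proposition \ref{prop: acc ld kc}(2),(3) places $\{a(E,X,B)\}$ in an ACC set (with $0$ the only possible accumulation point when $\Gamma$ is finite), and Lemma \ref{lem: bdd lemma} makes $(E,B_E)$ log bounded with coefficients of $B_E$ in a DCC set (finite if $\Gamma$ is) depending only on $d,\epsilon,\Gamma$. The plan is to deduce from this that $\vol(-(K_E+B_E))$ itself belongs to an ACC set (resp.\ a finite set), and then to multiply. A short case analysis on converging subsequences shows that the product of two bounded ACC sets of non-negative reals is ACC, and the product of a bounded non-negative set with $0$ as unique accumulation point with a finite non-negative set again has $0$ as unique accumulation point; applying these to $\widehat{\vol} = a\cdot\vol(-(K_E+B_E))$ concludes the proof.

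The main obstacle is the ACC (resp.\ finiteness) for $\vol(-(K_E+B_E))$ itself: in general $(E,B_E)$ is only klt, not $\epsilon'$-lc for any uniform $\epsilon'>0$, so Proposition \ref{prop: bddvolacc} does not apply verbatim. However, the $\epsilon$-lc hypothesis there is used solely to invoke Theorem \ref{thm: BAB} for log boundedness and to preserve log boundedness through an MMP step. In our setting log boundedness is furnished directly by Lemma \ref{lem: bdd lemma}, and since $-(K_E+B_E)=-a\,E|_E$ is already nef no MMP is needed. Thus the flat family / invariance of Euler characteristic argument of Proposition \ref{prop: bddvolacc} (respectively Lemma \ref{lem: bddvolfin} in the finite case) can be re-run on $(E,B_E)$ directly, yielding the required ACC (resp.\ finiteness).
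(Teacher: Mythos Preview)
Your proof is correct and follows essentially the same route as the paper: both establish the upper bound via Lemma~\ref{lem: bddvolacc} and Proposition~\ref{prop: acc ld kc}, the equivalence via Proposition~\ref{prop:CartierindexofEE}, and the ACC claim by combining the ACC for $a(E,X,B)$ with an ACC/finiteness statement for $\vol(-(K_E+B_E))$.

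You are in fact more careful than the paper on one point: the paper invokes Lemma~\ref{lem: bddvolfin} and Proposition~\ref{prop: bddvolacc} for $(E,B_E)$ directly, whereas those statements literally require $(E,B_E)$ to be $\epsilon$-lc, which is not given here (only $(E,\Delta_E)$ is $\epsilon$-klt). Your observation that the $\epsilon$-lc hypothesis in those proofs serves only to supply log boundedness, and that Lemma~\ref{lem: bdd lemma} provides log boundedness of $(E,B_E)$ independently, is the right fix. One small imprecision: your remark that ``no MMP is needed since $-(K_E+B_E)$ is already nef'' does not quite address the MMP step in the proof of Proposition~\ref{prop: bddvolacc}, which is run on $-(K_{E}+\overline{B_E})$ (with the limiting coefficients), not on $-(K_E+B_E)$. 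This can be handled either by noting that the post-MMP log boundedness argument there also only needs the log boundedness already furnished by Lemma~\ref{lem: bdd lemma}, or more directly by observing that once $\widehat{\vol}$ is bounded below one gets $a\ge\epsilon_0>0$, whence Proposition~\ref{prop:CartierindexofEE} forces $(-E|_E)^{d-1}$ into a finite set, and $\widehat{\vol}=a^{d}\cdot(-E|_E)^{d-1}$ is then a product of an ACC set with a finite set.
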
 

\begin{proof}
	Let $a:=a(E,X,B)$, and $B_Y$, $\Delta_Y$ the strict transforms of $B$, $\Delta$ on $Y$ respectively. We have
	$$K_Y+B_Y+(1-a)E=f^*(K_X+B).$$
	Let
	$$K_E+B_E:=(K_Y+B_Y+E)|_{E}\sim_{\Rr} aE|_{E},$$
	and
	$$K_E+\Delta_E:=(K_Y+\Delta_Y+E)|_{E},$$
	then by Theorem \ref{thm: adjunction}, $(E,\Delta_E)$ is $\epsilon$-klt. By Lemma \ref{lem: bddvolacc}, $\vol(-(K_{E}+B_E))$ is bounded from above. By Proposition \ref{prop: acc ld kc}, $a$ is bounded from above. Thus
	$$\widehat{\vol}(E,X,B)=a\vol(-(K_{E}+B_E))$$
	is bounded from above.
	
	Suppose that $a$ is bounded from below by a positive real number $\epsilon_0$. By Proposition \ref{prop:CartierindexofEE}, there exists a positive integer $I$ depending only on $d,\epsilon_0$ and $\epsilon$, such that $IE|_{E}$ is Cartier. Thus
	\begin{align*}
	\widehat{\vol}(E,X,B)=&a\vol(-(K_{E}+B_E))=a\vol(-aE|_{E})\\
	=&\frac{a^d}{I^{d-1}}\vol(-IE|_{E})\ge\frac{a^d}{I^{d-1}}\geq \frac{\epsilon_0^d}{I^{d-1}}>0
	\end{align*}
	is bounded from below by a positive real number.
	
	Now suppose that $\widehat{\vol}(E,X,B)$ is bounded from below by a positive real number. Since $\vol(-(K_{E}+B_E))$ has an upper bound, 
	$$a=\frac{\widehat{\vol}(E,X,B)}{\vol(-(K_{E}+B_E))}$$
	is bounded from below by a positive real number.
	
	If $B\in \Ii$, then by Proposition \ref{prop: acc ld kc}, $a$ belongs to an ACC set (resp. an ACC set whose only possible accumulation point is $0$). By Lemma \ref{lem: bddvolfin} and Proposition \ref{prop: bddvolacc}, $\vol(-(K_{E}+B_E))$ belongs to an ACC set (resp. finite set). Thus
	$$ \widehat{\vol}(E,X,B)=a\vol(-(K_{E}+B_E))$$
	belongs to an ACC set (resp. ACC set whose only possible accumulation point is $0$).
\end{proof}

\subsection{Proof of Theorem \ref{thm: bdd local index exceptional}}

In this subsection, we prove Theorem \ref{thm: bdd local index exceptional}. First we show the following proposition.

\begin{prop}\label{prop: finite degree rami}
	Let $d$ be a positive integer and $\epsilon_0,\epsilon$ two positive real numbers. Assume that
	\begin{enumerate}
	    \item $(X\ni x,B)$ is an $\epsilon_0$-lc germ of dimension $d$, and
	    \item $(X\ni x,B)$ admits an $\epsilon$-plt blow-up.
	\end{enumerate} 
	Then for any finite morphism $g: X'\ni x'\rightarrow X\ni x$ such that $K_{X'}+B'=g^*(K_X+B)$ for some $\Rr$-divisor $B'\geq 0$, $\deg g$ belongs to a finite set depending only on $d,\epsilon_0$ and $\epsilon$.
\end{prop}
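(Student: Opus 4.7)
The plan is to follow the sketch given in Section 2 and reduce the boundedness of $\deg g$ to the boundedness from above and below of the normalized volume of the reduced component. First, let $f:Y\to X$ be an $\epsilon$-plt blow-up of $(X\ni x,B)$ with reduced component $E$. Since $(X\ni x,B)$ is $\epsilon_0$-lc, we have $a(E,X,B)\ge\epsilon_0$. Moreover, $f$ itself witnesses the hypotheses of Theorem \ref{thm: nv acc 2} (taking $\Delta=B$), so $\widehat{\vol}(E,X,B)$ is bounded from above by some constant $V=V(d,\epsilon)$, and bounded from below by a positive constant $v=v(d,\epsilon_0,\epsilon)$, using the fact that the lower-bound part of Theorem \ref{thm: nv acc 2} is equivalent to $a(E,X,B)$ being bounded from below by a positive number (which holds here since $a(E,X,B)\ge\epsilon_0$).

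Next, given any finite morphism $g:X'\ni x'\to X\ni x$ with $K_{X'}+B'=g^*(K_X+B)$ and $B'\ge0$, Lemma \ref{lem: ramification cover} guarantees that $(X'\ni x',B')$ is again $\epsilon_0$-lc. Applying Lemma \ref{lem: plt blow-up rami} to the $\epsilon$-plt blow-up $f$, we obtain an induced $\epsilon$-plt blow-up $f':Y'\to X'$ whose reduced component $E'$ is the component corresponding to $E$ with respect to $g$, and moreover
\[
\deg g\cdot\widehat{\vol}(E,X,B)=\widehat{\vol}(E',X',B').
\]
Now $(X'\ni x',B')$ satisfies exactly the hypotheses (1) and (2) of the proposition with the same constants $d,\epsilon_0,\epsilon$, so the upper bound from Theorem \ref{thm: nv acc 2} gives $\widehat{\vol}(E',X',B')\le V$.

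Combining the two bounds,
\[
\deg g=\frac{\widehat{\vol}(E',X',B')}{\widehat{\vol}(E,X,B)}\le\frac{V}{v},
\]
so $\deg g$ is a positive integer bounded from above by a constant depending only on $d,\epsilon_0$ and $\epsilon$, hence belongs to a finite set.

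The main conceptual content is already packaged in Theorem \ref{thm: nv acc 2} and Lemma \ref{lem: plt blow-up rami}; the only subtlety to verify is that the $\epsilon$-plt property and the $\epsilon_0$-lc property are preserved under the finite cover, which is precisely the content of Lemma \ref{lem: ramification cover} and Lemma \ref{lem: plt blow-up rami}. Thus no serious obstacle is expected beyond a careful bookkeeping of the two bounds and a check that $E'$ is indeed a reduced component of an $\epsilon$-plt blow-up of $(X'\ni x',B')$ so that Theorem \ref{thm: nv acc 2} applies to $\widehat{\vol}(E',X',B')$.
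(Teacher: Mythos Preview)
Your proof is correct and follows essentially the same approach as the paper: both use Lemma \ref{lem: ramification cover} and Lemma \ref{lem: plt blow-up rami} to transport the $\epsilon_0$-lc and $\epsilon$-plt blow-up data to the cover, then invoke Theorem \ref{thm: nv acc 2} to bound $\widehat{\vol}(E,X,B)$ from below (using $a(E,X,B)\ge\epsilon_0$) and $\widehat{\vol}(E',X',B')$ from above, concluding via $\deg g=\widehat{\vol}(E',X',B')/\widehat{\vol}(E,X,B)$.
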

\begin{proof} 
	By Lemma \ref{lem: ramification cover}, $(X'\ni x',B')$ is an $\epsilon_0$-lc germ. Let $f: Y\rightarrow X$ be an $\epsilon$-plt blow-up of $(X\ni x,B)$ with the reduced component $E$. By Lemma \ref{lem: plt blow-up rami},
	$$\deg g=\frac{\widehat{\vol}(E',X',B')}{\widehat{\vol}(E,X,B)},$$
	where $E'$ is the component corresponding to $E$ with respect to $g$, and the induced blow-up $f': Y'\rightarrow X'$ is an $\epsilon$-plt blow-up.
	
	By Theorem \ref{thm: nv acc 2}, $\widehat{\vol}(E,X,B)$ is bounded from below by a positive real number and $\widehat{\vol}(E',X',B')$ is bounded from above. Thus $\deg g$ belongs to a finite set. 
\end{proof}
\begin{proof}[Proof of Theorem \ref{thm: bdd local index exceptional}] This follows from Proposition \ref{prop: finite degree rami}.  
\end{proof}

\section{$(n,\Ii_0)$-decomposable $\Rr$-complements}

The goal of this section is to show Theorem \ref{thm: existence ni1i2 complement}, i.e., the existence of $(n,\Ii_0)$-decomposable $\Rr$-complements (Definition \ref{defn: nI1I2complmenet}).

We use an example to illustrate our motivation.
\begin{ex}\label{ex: irr complement}
	The existence of monotonic $n$-complements is useful in many applications. However, monotonic $n$-complements may not exist even for $\mathbb P^1$.
	
	Consider the pair
	$$\left(X:=\mathbb P^1,B:=\frac{2-\sqrt{2}}{2}(p_1+p_2)+\frac{\sqrt{2}}{2}(p_3+p_4)\right),$$
	where $p_1,p_2,p_3,p_4$ are four different points on $\mathbb P^1$. Then $K_X+B\sim_{\mathbb R}0$. However, since the coefficients of $B$ are not rational, $n(K_X+B)\not\sim0$ for any positive integer $n$. In particular, for any $B^+\geq B$, $n(K_X+B^+)\not\sim0$. Hence there does not exist any monotonic $n$-complement of $(X,B)$. 

	On the other hand, we may write $$K_X+B=a_1(K_X+B_1)+a_2(K_X+B_2),$$
	as an $\Rr$-linear sum of two lc pairs with rational coefficients, where $(a_1,a_2):=(\frac{2-\sqrt{2}}{2},\frac{\sqrt{2}}{2})$, and $(B_1,B_2):=(p_1+p_2,p_3+p_4)$.
	Moreover, both $(X,B_1)$ and $(X,B_2)$ are monotonic $1$-complements of themselves respectively. In this sense, $(X,B=a_1B_1+a_2B_2)$ could be regarded as an ``irrational monotonic $1$-complement'' of itself, and we say $(X,B)$ is an \emph{$(1,\{a_1,a_2\})$-decomposable $\Rr$-complement} of itself.
\end{ex}

As we have seen in Example \ref{ex: irr complement}, the non-existence of monotonic $n$-complement is due to the irrational coefficients of the pair. We will show the existence of uniform lc rational polytopes and uniform $\Rr$-complementary rational polytopes in this section. 


\subsection{Uniform lc rational polytopes}

We need the following useful result of Nakamura which is related to accumulation points of log canonical thresholds, and the proof is based on some ideas in \cite{HMX14}.
\begin{thm}[{\cite[Theorem 1.6]{Nak16}}]\label{thm: Nak16}
	Fix three positive integers $d,c,m$. Let $r_0:=1$, $r_1,\ldots,r_{c}$ be positive real numbers which are linearly independent over $\Qq$. Let $s_1,\ldots, s_m: \Rr^{c+1}\rightarrow\Rr$ be $\Qq$-affine functions. Then there exists a positive real number $\epsilon$ depending only on $d,c,m$, $r_1,\ldots, r_{c}$ and $s_1,\ldots, s_m$ satisfying the following. For any $\Qq$-Gorenstein normal variety $X$ of dimension $d$ and $\Qq$-Cartier Weil divisors $D_1,\ldots,D_m\ge0$ on $X$, if 
	$$(X,\sum_{i=1}^ms_i(r_0,\ldots,r_{c})D_i)$$ is lc, then
	$$(X,\sum_{i=1}^ms_i(r_0,\ldots,r_{c-1},t)D_i)$$
	is lc for any $t$ such that $|t-r_{c}|\leq\epsilon$.
\end{thm}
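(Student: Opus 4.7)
The plan is to argue by contradiction, combining the ACC for log canonical thresholds (Theorem \ref{thm: acc lct}) with the linear independence of $1, r_1, \ldots, r_c$ over $\mathbb{Q}$. Suppose no such $\epsilon$ works. Then there exist $\mathbb{Q}$-Gorenstein normal varieties $X_j$ of dimension $d$, effective $\mathbb{Q}$-Cartier Weil divisors $D_{1,j}, \ldots, D_{m,j}$ on $X_j$, and reals $t_j \to r_c$ such that, writing $B_j(t) := \sum_i s_i(r_0, \ldots, r_{c-1}, t) D_{i,j}$, the pair $(X_j, B_j(r_c))$ is lc while $(X_j, B_j(t_j))$ is not. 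After passing to a subsequence I would assume $t_j > r_c$ for all $j$ (the other case is symmetric), and after replacing $t_j$ by the supremum of $t \in [r_c, t_j]$ for which $(X_j, B_j(t))$ is lc, and invoking lower semicontinuity of log discrepancies, further assume that $(X_j, B_j(t_j))$ is lc but not klt.

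The next step is to reduce to a one-sided perturbation. Write each $\mathbb{Q}$-affine function as $s_i(x_0,\ldots,x_c) = \sum_k \alpha_{ik} x_k$ with $\alpha_{ik} \in \mathbb{Q}$ (absorbing constants via $r_0 = 1$), and set
\[
G_j^+ := \sum_{\alpha_{ic} > 0} \alpha_{ic} D_{i,j}, \qquad G_j^- := \sum_{\alpha_{ic} < 0} |\alpha_{ic}| D_{i,j}.
\]
Then $B_j(t_j) = B_j(r_c) + (t_j - r_c)(G_j^+ - G_j^-) \leq B_j(r_c) + (t_j - r_c) G_j^+$, so the enlarged pair is also non-lc. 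Hence
\[
\operatorname{lct}_j := \lct\bigl(X_j,\, B_j(r_c);\, G_j^+\bigr) < t_j - r_c \longrightarrow 0.
\]
The coefficients of $B_j(r_c)$ lie in the fixed finite set $\{s_i(r_0,\ldots,r_c)\}$ and those of $G_j^+$ in the fixed finite rational set $\{\alpha_{ic} : \alpha_{ic}>0\}$. Theorem \ref{thm: acc lct} then forces $\{\operatorname{lct}_j\}$ to satisfy the ACC.

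The main obstacle is that ACC alone does not exclude a decreasing sequence $\operatorname{lct}_j \searrow 0$, so one must extract a rationality constraint from the linear independence of $1, r_1, \ldots, r_c$. Pick a prime divisor $F_j$ over $X_j$ which is an lc place of $(X_j, B_j(r_c) + \operatorname{lct}_j G_j^+)$; by the defining property of the threshold, $M_j^+ := \mult_{F_j} G_j^+ \in \mathbb{Q}_{>0}$. Setting $m_{i,j} := \mult_{F_j} D_{i,j}$ and $M_{j,k} := \sum_i \alpha_{ik} m_{i,j} \in \mathbb{Q}$, the vanishing of log discrepancy at $F_j$ gives
\[
a(F_j, X_j) \;=\; \sum_{k=0}^{c} r_k M_{j,k} + \operatorname{lct}_j M_j^+,
\]
with $a(F_j, X_j) \in \mathbb{Q}$. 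Rearranging,
\[
\operatorname{lct}_j M_j^+ \;=\; a(F_j, X_j) - \sum_{k=0}^{c} r_k M_{j,k} \;\in\; \mathbb{Q} \oplus \mathbb{Q} r_1 \oplus \cdots \oplus \mathbb{Q} r_c,
\]
and the decomposition in this basis is unique.

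To close the argument I would proceed by induction on $c$. The base case $c = 0$ has all data rational, and Theorem \ref{thm: acc lct} together with a bookkeeping of Cartier indices forces $\{\operatorname{lct}_j\}$ into a finite set of positive rationals, contradicting $\operatorname{lct}_j \to 0$. For the inductive step, the identity above pins $\operatorname{lct}_j$ to a specific value determined by the rational tuple $(M_{j,0}, \ldots, M_{j,c}, a(F_j, X_j), M_j^+)$; combining this rigidity with a Noetherian reduction on these rational profiles and applying the inductive hypothesis to the perturbed pairs $(X_j, B_j(r_c) + \operatorname{lct}_j G_j^+)$ — whose coefficients are themselves $\mathbb{Q}$-affine in $1, r_1, \ldots, r_{c-1}, r_c + \operatorname{lct}_j$ — should allow one to extract a uniform positive lower bound on $\operatorname{lct}_j$. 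The main technical hurdle, which I expect to be the crux, is ensuring that the rational profiles can in fact be controlled uniformly in $j$; it is precisely at this step that the hypothesis that $\epsilon$ may depend on the specific reals $r_1, \ldots, r_c$ (and not just on $d, m$ and the $s_i$) enters essentially.
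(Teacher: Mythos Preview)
The paper does not prove this statement; it is quoted verbatim from \cite[Theorem~1.6]{Nak16} and used as a black box. So there is no in-paper proof to compare against, and the relevant question is whether your sketch stands on its own.

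It does not. The argument is fine up through the observation that $\lct_j:=\lct(X_j,B_j(r_c);G_j^{+})\le t_j-r_c\to 0$ and that the ACC for log canonical thresholds applies; you also correctly diagnose that ACC alone does not rule out $\lct_j\searrow 0$. The gap is precisely where you place it: the ``rationality'' identity
\[
\lct_j\cdot M_j^{+}=a(F_j,X_j)-\sum_{k=0}^{c}r_kM_{j,k}
\]
carries no force, because the rational data $a(F_j,X_j),M_{j,0},\dots,M_{j,c},M_j^{+}$ depend on the unspecified lc place $F_j$ over the varying $X_j$ and are completely uncontrolled in $j$. Knowing that each $\lct_j$ lies in $\Qq+\Qq r_1+\cdots+\Qq r_c$ is vacuous, since this set is dense. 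Your proposed ``Noetherian reduction on rational profiles'' has nothing to bite on, and the appeal to an inductive hypothesis in $c$ for the pairs $(X_j,B_j(r_c)+\lct_j G_j^{+})$ is circular: those coefficients involve the moving parameter $\lct_j$, not a fixed irrational, so the inductive statement does not apply.

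Nakamura's actual mechanism is different in kind: it is an induction on the \emph{dimension} $d$, using the HMX accumulation-point theorem that limits of $\LCT$'s in dimension $d$ are themselves realized in dimension $d-1$ (with coefficients in the derived set of the original data). One tracks not an arbitrary lc place but the structured coefficient set coming from adjunction, and shows that any accumulation point of the relevant thresholds must lie in $\Span_{\Qq}(\{1,r_1,\dots,r_{c-1}\})$, contradicting the $\Qq$-linear independence of $r_c$. This is the same circle of ideas visible in Theorem~\ref{thm:nakacc} and its use in Theorem~\ref{thm: pepolytope linear maps}. Your sketch never invokes this dimensional descent, and without it there is no way to pin down the limit.
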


In Theorem \ref{thm: Nak16}, we will show that $r_1,\ldots,r_c$ are not necessarily positive, $X$ is not necessarily $\Qq$-Gorenstein, and $D_1,\ldots,D_c$ are not necessarily $\Qq$-Cartier. See Corollary \ref{cor: strong Nakamura approximation} for more details.

\medskip

We first show the following lemma which should be known to experts.

\begin{lem}\label{lem: R boundary Q Cartier}
	Let $X$ be a normal variety, $f:X\rightarrow Z$ a contraction, and $D$ an $\mathbb R$-Cartier $\mathbb R$-divisor on $X$. Suppose that there exist a positive integer $n$, real numbers $r_1,\ldots,r_n$, and $\mathbb Q$-divisors $D_1,\ldots,D_n$ on $X$, such that
	\begin{enumerate}
		\item $D=\sum_{i=1}^nr_iD_i$, 
		\item $r_1,\ldots,r_n$ are linearly independent over $\Qq$, and
		\item $D\sim_{\mathbb R,Z}0$,
	\end{enumerate}
	then for any $1\leq i\leq n$, $D_i\sim_{\mathbb Q,Z}0$. In particular, $D_i$ is $\mathbb Q$-Cartier. 
\end{lem}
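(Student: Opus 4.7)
The plan is to use the $\mathbb{Q}$-linear independence of the $r_i$ together with the fact that $D_i$, principal divisors, and divisors pulled back from $Z$ are all $\mathbb{Q}$-divisors, so one can match coefficients as in the standard trick used throughout the theory of $\mathbb{R}$-complements.

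First I would unpack the assumption $D \sim_{\mathbb{R}, Z} 0$: there exist rational functions $\varphi_1, \dots, \varphi_m$ on $X$, real numbers $s_1, \dots, s_m$, and an $\mathbb{R}$-Cartier $\mathbb{R}$-divisor $G$ on $Z$ with
\[
D \;=\; \sum_{j=1}^m s_j\, \mathrm{div}(\varphi_j) \;+\; f^{*}G.
\]
After choosing a decomposition $G = \sum_{k=1}^{p} u_k G_k$ with $G_k$ a $\mathbb{Q}$-Cartier $\mathbb{Q}$-divisor on $Z$ and $u_k \in \mathbb{R}$, I have expressed $D$ as an $\mathbb{R}$-linear combination of $\mathbb{Q}$-divisors on $X$.

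Next, since $r_1, \dots, r_n$ are $\mathbb{Q}$-linearly independent, I would extend them to a $\mathbb{Q}$-basis
$\{r_1, \dots, r_n, t_1, \dots, t_\ell\}$
of the finite-dimensional $\mathbb{Q}$-vector subspace of $\mathbb{R}$ spanned by $\{r_i\} \cup \{s_j\} \cup \{u_k\}$, and write
\[
s_j \;=\; \sum_{i} a_{ji} r_i + \sum_q b_{jq} t_q, \qquad u_k \;=\; \sum_i c_{ki} r_i + \sum_q d_{kq} t_q,
\]
with $a_{ji}, b_{jq}, c_{ki}, d_{kq} \in \mathbb{Q}$. Substituting into the equality $\sum_i r_i D_i = \sum_j s_j \mathrm{div}(\varphi_j) + \sum_k u_k f^{*}G_k$ and regrouping yields
\[
\sum_i r_i\bigl(D_i - A_i\bigr) \;=\; \sum_q t_q\, B_q,
\]
where
\[
A_i := \sum_j a_{ji}\, \mathrm{div}(\varphi_j) + \sum_k c_{ki}\, f^{*}G_k, \qquad B_q := \sum_j b_{jq}\, \mathrm{div}(\varphi_j) + \sum_k d_{kq}\, f^{*}G_k
\]
are $\mathbb{Q}$-divisors on $X$.

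The final step is to compare coefficients of each prime divisor $P$ on the two sides: the resulting identity is an equation between rational numbers with coefficients $r_i, t_q$, which by $\mathbb{Q}$-linear independence of the extended basis forces $D_i = A_i$ for every $i$. Hence each $D_i$ is a $\mathbb{Q}$-linear combination of principal divisors on $X$ and pullbacks of $\mathbb{Q}$-Cartier $\mathbb{Q}$-divisors from $Z$, so $D_i \sim_{\mathbb{Q}, Z} 0$, which in particular makes $D_i$ $\mathbb{Q}$-Cartier. The only real obstacle is bookkeeping around the definition of $\sim_{\mathbb{R}, Z}$ and the handling of the pullback term $f^{*}G$; once the $\mathbb{Q}$-basis extension is set up, matching coefficients is automatic.
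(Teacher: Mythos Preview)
Your proof is correct and follows essentially the same strategy as the paper: unpack $D\sim_{\mathbb R,Z}0$ as an $\mathbb R$-linear combination of principal divisors and pullbacks of $\mathbb Q$-Cartier divisors from $Z$, extend $r_1,\dots,r_n$ to a $\mathbb Q$-basis of the span of all coefficients involved, and then match coefficients prime-by-prime using $\mathbb Q$-linear independence. The paper's version is organized slightly differently (it absorbs the pullback and principal pieces into a single list $D_{n+1}',\dots,D_{m'}'$ before extending the basis), but the content is identical.
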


\begin{proof}
	Since $D\sim_{\mathbb R,Z}0$, we may write
	$$D=-\sum_{i=n+1}^{m'}r_i'D_i',$$
	where for every $n+1\leq i\leq m'$, either $D_i'=f^*B_i'$ for some $\mathbb Q$-Cartier Weil divisor $B_i$ on $Z$, or $D_i'=\text{Div}(f_i)$ for some rational function $f_i$ on $X$. Therefore, there exist an integer $n\leq m\leq m'$ and real numbers $r_{n+1},\dots,r_m$ which are linearly independent over $\Qq$, such that
	$$D=-\sum_{j=n+1}^{m} r_jD_j,$$
	where $D_j$ is a $\Qq$-linear combination of $D_{n+1}',\dots,D'_{m'}$ for every $n+1\leq j\leq m$. Possibly reordering $j$ for $n+1\le j\le m$, we may assume that $\{r_1,\ldots,r_k\}$ is a basis of $\Span_{\Qq}(\{r_1,\ldots,r_m\})$ over $\Qq$ for some $k\ge n$.
	
	There exist $a_{j,i}\in\Qq$ for $k+1\le j\le m,1\le i\le k$, such that 
	$$r_j=\sum_{i=1}^k a_{j,i}r_i.$$
	We have 
	$$\sum_{i=1}^k r_i(D_i+\sum_{j=k+1}^m a_{j,i}D_j)=0.$$
	Thus $D_i+\sum_{j=k+1}^m a_{j,i}D_j=0$ for every $i$, which implies that $D_i\sim_{\mathbb Q,Z}0$ and $D_i$ is $\mathbb Q$-Cartier for every $1\le i\le n$.
\end{proof}

\begin{lem}\label{lem: dltmodelQfactorialcoeff1}
	Let $c,m$ be two positive integers, $r_1,\dots,r_c$ real numbers, $s_1,\dots,s_m$ $\Qq$-linear functions: $\Rr^{c+1}\rightarrow\Rr$, $X$ a normal variety and $D_1,\dots,D_m$ Weil divisors on $X$, such that $r_0:=1,r_1,\dots,r_c$ are linearly independent over $\Qq$ and $(X,\sum_{j=1}^ms_j(r_0,\dots,r_c)D_j)$ is a pair. Then
	\begin{enumerate}
	    \item $K_X+\sum_{j=1}^ms_j(1,x_1,\dots,x_c)D_j$ is $\Rr$-Cartier for any real numbers $x_1,\dots,x_c$, 
	    \item for every prime divisor $E$ over $X$, if $a(E,X,\sum_{j=1}^ms_j(r_0,\dots,r_c)D_j)=0$, then $a(E,X,\sum_{j=1}^ms_j(1,x_1,\dots,x_c)D_j)=0$ for any real numbers $x_1,\dots,x_c$, and
	    \item if $X\to Z$ is a contraction, and $K_X+\sum_{j=1}^ms_j(r_0,\dots,r_c)D_j\sim_{\Rr,Z}0$, then $K_X+\sum_{j=1}^ms_j(1,x_1,\dots,x_c)D_j\sim_{\Rr,Z}0$ for any real numbers $x_1,\dots,x_c$.
	\end{enumerate}
\end{lem}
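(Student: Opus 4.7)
The plan is to exploit the $\mathbb{Q}$-linear independence of $r_0=1, r_1, \ldots, r_c$ to decompose the $\mathbb{R}$-divisor along this basis, after which all three parts follow by coefficient matching. Since each $s_j$ is $\mathbb{Q}$-linear, write $s_j(y_0,\ldots,y_c) = \sum_{k} q_{jk} y_k$ with $q_{jk}\in\mathbb{Q}$ and set $E_k := \sum_j q_{jk} D_j$. These are $\mathbb{Q}$-Weil divisors, independent of the parameters, and one has $\sum_j s_j(y_0,\ldots,y_c) D_j = \sum_k y_k E_k$ for every choice of $y_0, \ldots, y_c$. The three assertions then reduce to showing that $K_X+E_0$ and $E_1,\ldots,E_c$ are individually $\mathbb{Q}$-Cartier (plus their functorial consequences for discrepancies and linear equivalence over $Z$).

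The main work is part (1). The pair hypothesis provides an $\mathbb{R}$-Cartier expression $D := K_X + E_0 + \sum_{k\ge 1} r_k E_k = \sum_\ell \alpha_\ell C_\ell$ with $C_\ell$ Cartier and $\alpha_\ell\in\mathbb{R}$. I will extend $\{1,r_1,\ldots,r_c\}$ to a $\mathbb{Q}$-basis $\{1, r_1, \ldots, r_c, r'_{c+1}, \ldots, r'_N\}$ of $\operatorname{Span}_{\mathbb{Q}}\{1, r_1, \ldots, r_c, \alpha_1, \ldots\}$, expand each $\alpha_\ell$ in this basis, and collect the $C_\ell$'s according to which basis vector they are attached to. This produces $\mathbb{Q}$-Cartier divisors $P^{(k)}$ with $D = P^{(0)} + \sum_{k=1}^c r_k P^{(k)} + \sum_{k>c} r'_k P^{(k)}$. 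Matching this with $D = (K_X + E_0) + \sum_{k=1}^c r_k E_k$ coefficient-by-coefficient at each prime Weil divisor produces a $\mathbb{Q}$-linear relation among the extended basis, which must be trivial. This forces $K_X + E_0 = P^{(0)}$ and $E_k = P^{(k)}$ for $1\le k\le c$ (with $P^{(k)}=0$ for $k>c$), so each is $\mathbb{Q}$-Cartier, and hence $K_X + \sum_k x_k E_k = P^{(0)} + \sum_{k=1}^c x_k P^{(k)}$ is $\mathbb{R}$-Cartier for all $x_1, \ldots, x_c$.

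Once (1) is in hand, (2) and (3) are essentially formal. For (2), pick any birational model $\phi: W \to X$ on which the divisor $E$ appears; using the $\mathbb{Q}$-Cartier pieces one has $\phi^*(K_X + \sum_k y_k E_k) - K_W = F_0 + \sum_{k=1}^c y_k G_k$ for fixed $\mathbb{Q}$-divisors $F_0, G_k$ on $W$, so $a(E, X, \sum_k y_k E_k)$ is a $\mathbb{Q}$-affine function of $(y_1,\ldots,y_c)$ with rational coefficients; its vanishing at $(r_1,\ldots,r_c)$ kills every coefficient by the $\mathbb{Q}$-linear independence of $\{1,r_1,\ldots,r_c\}$, and so it vanishes identically. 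For (3), with $K_X+E_0$ and each $E_k$ now $\mathbb{Q}$-Cartier, Lemma \ref{lem: R boundary Q Cartier} applied to $D = 1\cdot(K_X+E_0) + \sum_{k=1}^c r_k E_k \sim_{\mathbb{R},Z} 0$ with $\mathbb{Q}$-linearly independent coefficients $1, r_1, \ldots, r_c$ gives $K_X + E_0 \sim_{\mathbb{Q},Z} 0$ and $E_k \sim_{\mathbb{Q},Z} 0$, whence the desired $\mathbb{R}$-linear triviality of $K_X + \sum_k x_k E_k$ over $Z$ for any $x_k$. The main obstacle is setting up the basis extension in part (1) carefully so that coefficient comparison genuinely pins down the $E_k$'s; once that is done, (2) and (3) are mechanical applications of $\mathbb{Q}$-affine rigidity and of Lemma \ref{lem: R boundary Q Cartier}.
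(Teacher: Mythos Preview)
Your proof is correct and follows essentially the same approach as the paper. The only cosmetic difference is that for part (1) you unpack the basis-extension argument by hand, whereas the paper simply invokes Lemma~\ref{lem: R boundary Q Cartier} (applied with the identity $X\to X$, so that $D\sim_{\mathbb{R},X}0$ is exactly ``$D$ is $\mathbb{R}$-Cartier'') to conclude at once that $K_X+E_0$ and each $E_k$ are $\mathbb{Q}$-Cartier; your direct argument is precisely the content of that lemma's proof. Parts (2) and (3) match the paper exactly.
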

\begin{proof}
For any $1\leq j\leq m$, suppose that $s_j(x_0,\dots,x_c)=\sum_{i=0}^cm_{i,j}x_i$ for some rational numbers $m_{0,j},\dots,m_{i,j}$. Then 
$$K_X+\sum_{j=1}^ms_j(1,x_1,\dots,x_c)D_j=K_X+\sum_{j=1}^mm_{0,j}D_j+\sum_{i=1}^cx_i(\sum_{j=1}^mm_{i,j}D_j)$$
for any real numbers $x_1,\dots,x_c$. (1) and (3) follow from Lemma \ref{lem: R boundary Q Cartier} immediately.

By Lemma \ref{lem: R boundary Q Cartier}, $\sum_{j=1}^mm_{i,j}D_j$ is $\Qq$-Cartier for any $1\leq i\leq c$. Therefore, for any prime divisor $E$ over $X$ such that $a(E,X,\sum_{j=1}^ms_j(r_0,\dots,r_c)D_j)=0$, we have
\begin{align*}
&a(E,X,\sum_{j=1}^ms_j(1,x_1,\dots,x_c)D_j)\\
=&a(E,X,\sum_{j=1}^mm_{0,j}D_j)-\sum_{i=1}^{c}x_{i}\mult_{E}(\sum_{j=1}^mm_{i,j}D_j)
\end{align*}
for any real numbers $x_1,\dots,x_c$. In particular,
\begin{align*}
    0&=a(E,X,\sum_{j=1}^ms_j(r_0,\dots,r_c)D_j)\\
    &=a(E,X,\sum_{j=1}^mm_{0,j}D_j)-\sum_{i=1}^{c}r_{i}\mult_{E}(\sum_{j=1}^mm_{i,j}D_j).
\end{align*}
Thus $a(E,X,\sum_{j=1}^mm_{0,j}D_j)=\mult_{E}(\sum_{j=1}^mm_{i,j}D_j)=0$ for any $i$, which implies (2).
\end{proof}

\begin{cor}\label{cor: strong Nakamura approximation}
	Fix three positive integers $d, c, m$ and two non-negative real numbers $\delta>\delta'\ge0$. Let $r_0:=1$, $r_1,\ldots,r_{c}$ be real numbers which are linearly independent over $\Qq$. Let $s_1,\ldots,s_m: \Rr^{c+1}\rightarrow\Rr$ be $\Qq$-linear functions. Then there exist a positive real number $\epsilon$ depending only $d,c,m,r_1,\ldots, r_{c}, s_1,\ldots, s_m$, and a positive real number $\epsilon'$ depending only on $d,c,m, r_1,\ldots, r_{c}, s_1,\ldots, s_m,\delta,\delta'$ satisfying the following. For any normal variety $X$ of dimension $d$ and Weil divisors $D_1,\ldots,D_c\geq 0$ on $X$, let $$\Delta(t):=\sum_{i=1}^ms_i(r_0,\ldots,r_{c-1},t)D_i,$$
	then
	\begin{enumerate}
	\item	 if $(X,\Delta(r_{c}))$ is lc, then $(X,\Delta(t))$ is lc for any $t$ such that $|t-r_{c}|\leq\epsilon$, and
	\item	if $(X,\Delta(r_{c}))$ is $\delta$-lc (resp. $\delta$-klt, $\delta$-plt), then $(X,\Delta(t))$ is $\delta'$-lc (resp. $\delta'$-klt, $\delta'$-plt) for any $t$ such that $|t-r_{c}|\leq\epsilon'$.
\end{enumerate}
\end{cor}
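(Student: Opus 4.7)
The plan is to reduce Corollary \ref{cor: strong Nakamura approximation} to Theorem \ref{thm: Nak16} by passing to a $\Qq$-factorial dlt modification. First I would take a $\Qq$-factorial dlt modification $g\colon Y\to X$ of the lc pair $(X,\Delta(r_c))$; since $Y$ is $\Qq$-factorial of dimension $d$, it is $\Qq$-Gorenstein and every Weil divisor on $Y$ is $\Qq$-Cartier. By Lemma \ref{lem: dltmodelQfactorialcoeff1}(1), $K_X+\Delta(t)$ is $\Rr$-Cartier for every real $t$, so I would define $B_Y(t)$ by $K_Y+B_Y(t)=g^*(K_X+\Delta(t))$.

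The key structural observation is that the exceptional prime divisors of $g$ contribute fixed coefficients to $B_Y(t)$. Indeed, for each exceptional prime divisor $E_k$ of $g$, $a(E_k,X,\Delta(r_c))=0$ by the definition of a dlt modification; Lemma \ref{lem: dltmodelQfactorialcoeff1}(2) then forces $a(E_k,X,\Delta(t))=0$ for every real $t$, so $E_k$ has constant coefficient $1$ in $B_Y(t)$. Consequently
\[
B_Y(t)=\sum_{i=1}^{m} s_i(r_0,\ldots,r_{c-1},t)\,g^{-1}_*D_i+\sum_k E_k,
\]
which presents $(Y,B_Y(t))$ in the form required by Theorem \ref{thm: Nak16}: $m+1$ effective $\Qq$-Cartier Weil divisors on the $\Qq$-Gorenstein variety $Y$, with $\Qq$-linear coefficient functions drawn from $\{s_1,\ldots,s_m,\pi_0\}$, where $\pi_0(x_0,\ldots,x_c):=x_0$ evaluates to $r_0=1$ and is constant in $t$. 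Since $(Y,B_Y(r_c))$ is dlt and hence lc, Theorem \ref{thm: Nak16} yields the desired $\epsilon$, and pulling back through $g$ gives (1).

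For (2), the same reduction applies. When $\delta>0$ the pair $(X,\Delta(r_c))$ has no lc places, so $g$ is a small $\Qq$-factorialization and no exceptional terms appear in $B_Y(t)$; in the $\delta$-plt case the components of $\lfloor\Delta(r_c)\rfloor$ (which, by $\Qq$-linear independence of $1,r_1,\ldots,r_c$, must correspond to coefficient functions identically equal to $1$) are again automatically constant in $t$. I would then invoke the straightforward $\delta$-lc analog of Theorem \ref{thm: Nak16}, obtained by the same argument with the threshold $1$ replaced by $1-\delta$ and $1-\delta'$: each relevant coefficient is at most $1-\delta$ at $t=r_c$ and varies $\Qq$-affinely in $t$, so the gap $\delta-\delta'>0$ combined with the bounded $x_c$-slopes of $s_1,\ldots,s_m$ produces the required $\epsilon'$ with the stated dependence.

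The main obstacle is the uniformity of $\epsilon$ and $\epsilon'$ in $X$ (and in the choice of $g$). Without Lemma \ref{lem: dltmodelQfactorialcoeff1}(2), the coefficients of the exceptional divisors $E_k$ in $B_Y(t)$ would be arbitrary $\Qq$-affine functions depending on $(X,g)$, entering Theorem \ref{thm: Nak16} as additional, unbounded affine data; Lemma \ref{lem: dltmodelQfactorialcoeff1}(2) pins them all at the constant function $1$, so the effective set of $\Qq$-linear functions reduces to $\{s_1,\ldots,s_m,\pi_0\}$, which depends only on the original data. This is precisely what allows the reduction to Theorem \ref{thm: Nak16} to yield a uniform $\epsilon$.
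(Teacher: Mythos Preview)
Your argument for (1) is essentially the paper's argument, with one omission: Theorem~\ref{thm: Nak16} requires $r_1,\ldots,r_c$ to be \emph{positive}, while the corollary allows arbitrary real $r_i$. The paper fixes this by replacing each $s_i(x_0,\ldots,x_c)=\sum_j m_{i,j}x_j$ with $s_i'(x_0,\ldots,x_c):=\sum_j \tfrac{|r_j|}{r_j}m_{i,j}x_j$, so that $s_i'(|r_0|,\ldots,|r_c|)=s_i(r_0,\ldots,r_c)$ and Nakamura's theorem applies to the positive tuple $(|r_1|,\ldots,|r_c|)$. This is easy but should be said.

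Your argument for (2) has a genuine gap. You propose a ``$\delta$-lc analog of Theorem~\ref{thm: Nak16}'' justified by the observation that boundary coefficients are at most $1-\delta$ and vary with bounded slope in $t$. But $\delta$-lc is a condition on \emph{all} log discrepancies, not just on boundary coefficients; controlling the $x_c$-slopes of $s_1,\ldots,s_m$ bounds only the latter. Nakamura's proof is not a coefficient argument, and there is no ``same argument with threshold $1$ replaced by $1-\delta$'' available.

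The paper instead deduces (2) from (1) in one line: take $\epsilon':=\tfrac{\delta-\delta'}{\delta}\epsilon$. The point is that for every prime divisor $F$ over $X$, the function $t\mapsto a(F,X,\Delta(t))$ is affine (by Lemma~\ref{lem: dltmodelQfactorialcoeff1}(1), each $\sum_j m_{i,j}D_j$ is $\Qq$-Cartier). Part (1) gives $a(F,X,\Delta(r_c\pm\epsilon))\ge 0$, and $a(F,X,\Delta(r_c))\ge\delta$, so the slope of this affine function has absolute value at most $a(F,X,\Delta(r_c))/\epsilon$. Hence for $|t-r_c|\le\epsilon'$ one gets $a(F,X,\Delta(t))\ge a(F,X,\Delta(r_c))\cdot(1-\epsilon'/\epsilon)=a(F,X,\Delta(r_c))\cdot\tfrac{\delta'}{\delta}\ge\delta'$, with strict inequality preserved for the klt and plt variants.
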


\begin{proof}

We first show (1). Let $f:Y\rightarrow X$ be a dlt modification of $(X,\Delta(r_{c}))$, such that  
$$K_Y+\sum_{i=1}^ms_i(r_0,\ldots,r_{c})D_{Y,i}+E=f^{*}(K_X+\Delta(r_{c})),$$
where $D_{Y,i}$ is the strict transform of $D_i$ on $Y$ for any $1\leq i\leq m$, and $E$ is the sum of the reduced exceptional divisors of $f$. 

By Lemma \ref{lem: dltmodelQfactorialcoeff1},
$K_X+\Delta(t)$ is $\Rr$-Cartier and
$$K_Y+\sum_{i=1}^ms_i(r_0,\ldots,r_{c-1},t)D_{Y,i}+E=f^*(K_X+\Delta(t))$$
for any $t\in\Rr$. 

For any $i$, suppose that $s_i(x_0,\dots,x_c)=\sum_{j=0}^cm_{i,j}x_j$. Let 
$$s_i'(x_0,\dots,x_c):=\sum_{j=0}^c(\frac{|r_j|}{r_j}\cdot m_{i,j})x_j.$$
Then $s_i'$ is a $\Qq$-linear function, and $s_i(r_0,\ldots,r_{c})=s_i'(|r_0|,\ldots,|r_{c}|)$ for every $i$.
Since $(Y,\sum_{i=1}^m s_i'(|r_0|,\ldots,|r_{c}|)D_{Y,i}+E)$ is $\Qq$-factorial dlt, by Theorem \ref{thm: Nak16}, there exists a positive real number $\epsilon$ depending only on $d,c,m$, $r_1,\ldots, r_{c}$ and $s_1,\ldots, s_m$, such that
$(Y,\sum_{i=1}^ms_i(r_0,\ldots,r_{c-1},t)D_{Y,i}+E)$ is lc for any $t$ such that $|t-r_{c}|\leq\epsilon$.
Thus $(X,\Delta(t))$ is lc for any $t$ such that $|t-r_{c}|\leq\epsilon$.

\medskip

(2) follows from (1) by taking $\epsilon':=\frac{\delta-\delta'}{\delta}\epsilon$.
\end{proof}

The next theorem shows the existence of uniform lc rational polytopes, which follows from Corollary \ref{cor: strong Nakamura approximation}.

Recall that we say $V\subset\Rr^m$ is the \emph{rational envelope} of $\bm{v}\in\Rr^m$ if $V$ is the smallest affine subspace containing $\bm{v}$ which is defined over the rationals.

\begin{thm}[Uniform lc rational polytopes]\label{thm: Uniform perturbation of lc pairs}
Let $d,m$ be two positive integers, $\delta>\delta'\geq 0$ two real numbers, $\bm{v}=(v_1,\ldots,v_m)\in\Rr^m$ a point, and $V\subset\mathbb R^m$ the rational envelope of $\bm{v}$. Then there exists an open set $U\ni\bm{v}$ of $V$ depending only on $d,m,\delta,\delta'$ and $\bm{v}$ satisfying the following.  

Let $X$ be a normal variety of dimension $d$, and $B_1,\ldots,B_m\geq 0$ Weil divisors on $X$. If $(X,\sum_{i=1}^m v_iB_{i})$ is lc, then $(X,\sum_{i=1}^m v_i^0B_{i})$ is lc for any point $(v_1^0,\ldots,v_m^0)\in U$.	

Moreover, if $(X,\sum_{i=1}^m v_iB_{i})$ is $\delta$-plt (resp. $\delta$-klt, $\delta$-lc), then $(X,\sum_{i=1}^m v_i^0B_{i})$ is $\delta'$-plt (resp. $\delta'$-klt, $\delta'$-lc).
\end{thm}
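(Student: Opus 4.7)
The plan is to reduce the statement to Corollary \ref{cor: strong Nakamura approximation} via a $\Qq$-affine parameterization of $V$ and then to combine the resulting one-dimensional perturbations into a full neighborhood of $\bm{v}$ in $V$ by convexity. First I would set up coordinates: since $V$ is a $\Qq$-affine subspace of $\Rr^m$, it contains a rational point, and by linear algebra over $\Qq$ one may choose $\bm{v}_0 \in V \cap \Qq^m$ and $\bm{w}_1,\ldots,\bm{w}_c \in \Qq^m$ forming a $\Qq$-basis of $V-\bm{v}_0$, where $c:=\dim V$. Expanding $\bm{v}=\bm{v}_0+\sum_{j=1}^c r_j\bm{w}_j$, the minimality of $V$ as a rational affine subspace containing $\bm{v}$ forces $1,r_1,\ldots,r_c$ to be linearly independent over $\Qq$. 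Defining the $\Qq$-linear functions $s_i(x_0,x_1,\ldots,x_c):=v_{0,i}x_0+\sum_j w_{j,i}x_j$, we have $v_i=s_i(1,r_1,\ldots,r_c)$, and every point of $V$ has the form $\bm{v}(\bm{t}):=(s_1(1,\bm{t}),\ldots,s_m(1,\bm{t}))$ for some $\bm{t}=(t_1,\ldots,t_c)\in\Rr^c$.

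Next, for each $j\in\{1,\ldots,c\}$, I would relabel the variables so that $r_j$ becomes the last coordinate (the relabeled tuple $1,r_1,\ldots,\widehat{r_j},\ldots,r_c,r_j$ is still linearly independent over $\Qq$) and apply Corollary \ref{cor: strong Nakamura approximation}(1) and (2). This produces constants $\epsilon_j,\epsilon_j'>0$, depending only on $d,m,\bm{v}$ (and on $\delta,\delta'$ for $\epsilon_j'$), with the following uniform property: for any normal $X$ of dimension $d$ and any Weil divisors $B_1,\ldots,B_m\geq 0$, perturbing only the $j$-th coordinate of $\bm{r}$ by at most $\epsilon_j$ (resp.\ $\epsilon_j'$) preserves the lc property (resp.\ the $\delta'$-lc, $\delta'$-klt, $\delta'$-plt property). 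Set $\epsilon:=\min_j\epsilon_j$ and $\epsilon':=\min_j\epsilon_j'$.

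Finally, I would invoke Lemma \ref{lem: dltmodelQfactorialcoeff1}(1) to conclude that $K_X+\sum_i s_i(1,\bm{t})B_i$ is $\Rr$-Cartier for every $\bm{t}\in\Rr^c$. Consequently, for each prime divisor $E$ over $X$, the log discrepancy $a(E,X,\sum_i s_i(1,\bm{t})B_i)$ is an affine function of $\bm{t}$, so the locus of $\bm{t}$ where the pair is lc (resp.\ $\delta'$-lc, $\delta'$-klt, $\delta'$-plt) is an intersection of affine half-spaces, hence convex. Since this convex locus contains the $2c$ axial points $\bm{r}\pm\epsilon\bm{e}_j$ (resp.\ $\bm{r}\pm\epsilon'\bm{e}_j$), it contains their convex hull, which is the $\ell^1$-ball of radius $\epsilon$ (resp.\ $\epsilon'$) around $\bm{r}$, and in particular the open $\ell^\infty$-ball of radius $\epsilon/c$ (resp.\ $\epsilon'/c$). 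Taking $U$ to be the image of this open ball under $\bm{t}\mapsto\bm{v}(\bm{t})$ gives the desired open neighborhood of $\bm{v}$ in $V$, depending only on $d,m,\delta,\delta',\bm{v}$.

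The main obstacle is that a naive iteration of Corollary \ref{cor: strong Nakamura approximation} one coordinate at a time does not work: after the first perturbation the ``base point'' of the iteration changes, and the constants of the corollary a priori depend on this new base point, so one loses the required uniformity. The key observation that sidesteps this difficulty is that one need not iterate at all—by the affine dependence of log discrepancies on $\bm{t}$ (guaranteed by the $\Rr$-Cartier property throughout $V$ coming from Lemma \ref{lem: dltmodelQfactorialcoeff1}), the singular conditions carve out convex subsets of $V$, and the $2c$ axial perturbations can be combined by taking their convex hull. This reduces the multidimensional perturbation problem to $c$ independent one-dimensional perturbations, each directly handled by Corollary \ref{cor: strong Nakamura approximation}.
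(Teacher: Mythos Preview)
Your argument is correct and takes a genuinely different route from the paper's. The paper proceeds by induction on $c$: for $c\ge 2$ it applies Corollary \ref{cor: strong Nakamura approximation} to perturb the last coordinate $r_c$ to two \emph{rational} values $r_c+\epsilon_1,\,r_c-\epsilon_2$ (using the intermediate threshold $(\delta+\delta')/2$ for the ``moreover'' part), so that the remaining irrationality is spanned by $r_1,\ldots,r_{c-1}$, and then invokes the inductive hypothesis at these two rational endpoints; the final $U_c$ is obtained as a product $U_{c-1}\times(r_c-\epsilon_2,r_c+\epsilon_1)$. Your approach avoids induction entirely: you apply Corollary \ref{cor: strong Nakamura approximation} once along each of the $c$ coordinate directions at the original point $\bm{r}$, and then invoke the affineness of log discrepancies (coming from Lemma \ref{lem: dltmodelQfactorialcoeff1}(1)) to conclude that the lc/$\delta'$-lc/$\delta'$-klt/$\delta'$-plt locus in the parameter space is convex, hence contains the convex hull of the $2c$ axial perturbations. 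This is more direct and sidesteps the need for the intermediate threshold $(\delta+\delta')/2$ in the inductive step. The paper's approach, on the other hand, never needs to argue convexity of the full singular locus and instead reduces $c$ step by step, which is perhaps more in the spirit of how Corollary \ref{cor: strong Nakamura approximation} itself is proved. One minor point: to get a single $U$ valid for both the lc statement and the ``moreover'' statement, take the ball of radius $\min(\epsilon,\epsilon')/c$.
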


\begin{rem}
If $v_i\in\Qq$ for any $i$, then $U=V=\{\bm{v}\}$, and the theorem is trivial. The key point of Theorem \ref{thm: Uniform perturbation of lc pairs} is that $U$ does not depend on $X$.
\end{rem}
\begin{proof}[Proof of Theorem \ref{thm: Uniform perturbation of lc pairs}]

There exist real numbers $r_0:=1,r_1,\ldots,r_c$, such that $\{r_0,\ldots,r_{c}\}$ is a basis of $\Span_{\Qq}(\{1,v_1,\ldots,v_{m}\})$ over $\Qq$ for some $0\le c\le m$. When $c=0$, $U=V=\{\bm{v}\}$, and the theorem holds. In the following, we may assume that $c\geq 1$. 

There exist $\Qq$-linear functions $s_i:\Rr^{c+1}\to \Rr$, such that $s_i(r_0,\ldots,r_{c})=v_i$ for $1\le i\le m$, and the map 
$$(s_1(1,x_1,\ldots,x_{c}),\ldots,s_m(1,x_1,\ldots,x_{c})):\Rr^{c}\to V$$
is one-to-one. It suffices to show that there exists an open set $U_c$ of $\Rr^{c}$, such that $(r_1,\ldots,r_c)\in U_c$, and $(X,\sum_{i=1}^m s_i(1,x_1,\ldots,x_{c})B_{i})$ is lc (resp. $\delta$-plt, $\delta$-klt, $\delta$-lc) for any $(x_1,\ldots,x_c)\in U_c$. 

We prove the theorem by induction on $c$. When $c=1$, the theorem follows from Corollary \ref{cor: strong Nakamura approximation}.

Assume that $c\ge 2$. For simplicity, for every $1\leq i\leq m$, we let
$$s_i(t):=s_i(r_0,\ldots,r_{c-1},t).$$ 
By Corollary \ref{cor: strong Nakamura approximation}, there 
exist two positive real numbers $\epsilon_1,\epsilon_2$ depending only on $d,c,m$, $r_1,\ldots,r_c$, $s_1,\ldots,s_m$, $\delta'$ and $\delta$, such that $r_c+\epsilon_1,r_c-\epsilon_2\in\Qq$, and both  $(X,\sum_{i=1}^m s_i(r_c+\epsilon_1)B_i)$ and $(X,\sum_{i=1}^m s_i(r_c-\epsilon_2)B_i)$ are lc (resp. $\frac{\delta+\delta'}{2}$-plt, $\frac{\delta+\delta'}{2}$-klt, $\frac{\delta+\delta'}{2}$-lc). 

By induction, there exists an open set $U_{c-1}$ of $\Rr^{c-1}$, such that $(r_1,\ldots,r_{c-1})\in U_{c-1}$, and both $$(X,\sum_{i=1}^m s_i(1,x_1,\ldots,x_{c-1},r_{c}+\epsilon_1)B_{i})$$
and 
$$(X,\sum_{i=1}^m s_i(1,x_1,\ldots,x_{c-1},r_{c}-\epsilon_2)B_{i})$$
are lc (resp. $\delta'$-plt, $\delta'$-klt, $\delta'$-lc) for any $(x_1,\ldots,x_{c-1})\in U_{c-1}$.

We get the desired $U_c$ by letting $U_c=U_{c-1}\times (r_{c}-\epsilon_2,r_{c}+\epsilon_1)$.
\end{proof}

\subsection{Uniform $\Rr$-complementary rational polytopes}

\begin{defn}\label{defn: r affine functional divisor}
Let $X$ be a normal variety, $D_i$ distinct prime divisors, and $d_i(t)$ $\Rr$-affine functions: $\Rr\to\Rr$. Then we call the formal finite sum $\sum d_i(t)D_i$ an \emph{$\Rr$-affine functional divisor}.
 
\end{defn}

\begin{defn}
Let $c$ be a non-negative real number, and $\Ii\subset[0,+\infty)$ a set of real numbers. For any $\Rr$-affine functional divisor $\Delta(t)=\sum_{i} d_i(t)D_i$, we write $\Delta(t)\in\mathcal{D}_c(\Ii)$ when the following conditions are satisfied.
\begin{enumerate}
	\item For any $i$, either $d_i(t)=1$, or $d_i(t)$ is of the form $\frac{m-1+f+kt}{m}$, where $m\in\mathbb{N}^{+}$, $f\in\{0\}\cup\{\sum_{1\leq j\leq l}i_j\mid i_j\in\Ii,l\in\mathbb N^+\}$, $k\in\mathbb{Z}$, and
	\item $f+kt$ above can be written as $f+kt=\sum_{j}(f_j+k_jt)$, where $f_j\in\Ii\cup\{0\}$, $k_j\in\mathbb{Z}$, and $f_j+k_jc\ge0$ holds for any $j$.
\end{enumerate}
\end{defn}

\begin{defn}
Let $d$ be a positive integer and  $\Ii\subset[0,+\infty)$ a set of real numbers. We define $\mathcal{B}_{d}(\Ii)\subset [0,+\infty)$ as follows: $c\in\mathcal{B}_{d}(\Ii)$ if and only if there exist a $\Qq$-factorial normal projective variety $X$ and an $\Rr$-affine functional divisor $\Delta(t)$ on $X$ satisfying the following.
\begin{enumerate}
	\item $\dim X\le d$, 
	\item $\Delta(t)\in\mathcal{D}_{c}(\Ii)$,
	\item $(X,\Delta(c))$ is lc,
	\item $K_X+\Delta(c)\equiv0$, and
	\item $K_X+\Delta(c')\not\equiv0$ for some $c'\neq c$.
\end{enumerate}
\end{defn}
We need the following result of Nakamura.
\begin{thm}[{\cite[Theorem 3.8]{Nak16}}]\label{thm:nakacc}
Let $d\ge2$ be an integer and $\Ii\subset[0,+\infty)$ a finite set. The accumulation points of $\mathcal{B}_d(\Ii)$ belong to $\mathcal{B}_{d-1}(\Ii)$. In particular, the accumulation points of $\mathcal{B}_d(\Ii)$ belong to $\Span_{\Qq}(\Ii\cup\{1\})$. 
\end{thm}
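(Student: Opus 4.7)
The plan is to argue by contradiction via induction on $d$, following the HMX14 template: find a prime component whose coefficient function tends to $1$ along the approaching sequence, then apply adjunction to descend to dimension $d-1$. The base case $d=1$ reduces to analyzing $\bP^1$-pairs (the elliptic case is ruled out by the non-triviality condition), where $\deg \Delta(c) = 2$ yields a single linear equation in $c$ with $\Qq$-coefficients, forcing $c \in \Span_{\Qq}(\Ii \cup \{1\})$ and precluding accumulation.

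For the inductive step, suppose $c_\infty$ is an accumulation point of $\mathcal{B}_d(\Ii)$ and pick $c_i \to c_\infty$ with $c_i \neq c_\infty$, each realized by $\Qq$-factorial $(X_i, \Delta_i(t) = \sum_j d_{i,j}(t) D_{i,j})$; the case $\dim X_i < d$ along a subsequence is handled by induction, so assume $\dim X_i = d$. By the global ACC of \cite{HMX14} applied to $\{d_{i,j}(c_i)\}$ and passing to a subsequence, each coefficient $d_{i,j}(c_i)$ either stabilizes at some value strictly less than $1$ or strictly increases to $1$. The first key step is to show the latter occurs for some $j_i$. Otherwise all coefficients stabilize, and combining Theorem \ref{thm: BAB} (boundedness of $X_i$), a bound on the number of components of $\Delta_i$, and the non-triviality condition (which forces some $k_{i,j} \neq 0$), the numerical equation $K_{X_i} + \Delta_i(c_i) \equiv 0$ becomes a non-degenerate linear equation in $c_i$ whose coefficients come from a finite set, pinning $c_i$ to finitely many values and contradicting $c_i \to c_\infty$ with $c_i \neq c_\infty$.

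Having chosen $S_i := D_{i, j_i}$ with $d_{i, j_i}(c_i) \to 1$, set $\Delta_i'(t) := \Delta_i(t) + (1 - d_{i, j_i}(t)) S_i$, which still lies in $\mathcal{D}_{c_i}(\Ii)$ since coefficient $1$ is permitted. For $i$ large, $(X_i, \Delta_i'(c_i))$ remains lc (the added coefficient tends to $0$), and running an MMP on $-(K_{X_i} + \Delta_i'(c_i)) \equiv (d_{i,j_i}(c_i) - 1)S_i$ followed by a $\Qq$-factorial dlt modification, we may assume $S_i$ is normal and $K_{X_i} + \Delta_i'(c_i) \equiv 0$. Applying adjunction via Theorem \ref{thm: adjunction}(3), we obtain $K_{S_i} + \Delta_{i,S_i}(c_i) = (K_{X_i}+\Delta_i'(c_i))|_{S_i} \equiv 0$, where the hyperstandard form of the restricted coefficients $(m'-1 + \sum_l n_l d_{i,l}(t))/m'$ guarantees $\Delta_{i,S_i}(t) \in \mathcal{D}_{c_i}(\Ii)$.

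The main obstacle is verifying the non-triviality condition $K_{S_i} + \Delta_{i,S_i}(c') \not\equiv 0$ for some $c' \neq c_i$: the restriction could collapse the $t$-dependence if no $t$-varying component of $\Delta_i(t)$ meets $S_i$. I would address this by selecting $S_i$ among components that share a positive intersection with a $t$-varying component of $\Delta_i(t)$ (available for a suitable $j_i$ by intersection-theoretic fullness on the Mori-dream-space $X_i$), or by descending to another lc center from the dlt modification if the first attempt fails. Once this is established, $c_i \in \mathcal{B}_{d-1}(\Ii)$ for $i$ large, and the inductive hypothesis yields $c_\infty \in \mathcal{B}_{d-1}(\Ii)$, completing the main statement. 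The ``in particular'' clause is then immediate: the equation $K_X + \Delta(c) \equiv 0$ rewrites as $[L_0] + c[L_1] \equiv 0$ in $N^1(X)_{\Rr}$ with $[L_0]$ having coefficients in $\Span_{\Qq}(\Ii \cup \{1\})$ and $[L_1]$ having $\Qq$-coefficients, and non-triviality forces $[L_1] \neq 0$, so $c \in \Span_{\Qq}(\Ii \cup \{1\})$.
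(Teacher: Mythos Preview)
The paper does not contain a proof of this statement; it is quoted from \cite[Theorem 3.8]{Nak16} and used as a black box, so there is no in-paper argument to compare against. Your sketch follows the correct overall template (the HMX14 accumulation-point machine as adapted by Nakamura), and your identification of the post-adjunction non-triviality as the main obstacle is accurate.

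There is, however, a genuine gap in your first key step. You invoke Theorem~\ref{thm: BAB} to bound the $X_i$ when all coefficients $d_{i,j}(c_i)$ stabilize at values strictly below $1$. But BAB requires the pair $(X_i,\Delta_i(c_i))$ to be $\epsilon$-lc and $-(K_{X_i}+\Delta_i(c_i))$ to be big and nef; here you only have an lc pair with $K_{X_i}+\Delta_i(c_i)\equiv 0$, and coefficients bounded away from $1$ does not force $\epsilon$-lc on a $\Qq$-factorial variety with unrestricted singularities. The actual route in \cite{Nak16} (following \cite{HMX14}) does not use BAB at this stage: one first reduces to log smooth models, where log-birational boundedness in the sense of \cite{HMX14} and the resulting bound on the number of components turn $K+\Delta(c)\equiv 0$ into a finite linear system in $c$.

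Your handling of the non-triviality condition after adjunction is also too vague to count as a proof. Saying you would select $S_i$ to meet a $t$-varying component, or else descend to another lc center, hides the real work: one must track which components carry nonzero $k$ through the MMP and the dlt modification, and verify that the hyperstandard adjunction formula in Theorem~\ref{thm: adjunction}(3) genuinely produces some coefficient on $S_i$ with nonzero $t$-slope. This is where most of the technical content of \cite[Theorem 3.8]{Nak16} lies, and it would need to be written out rather than gestured at.
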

\begin{rem}
Theorem \ref{thm:nakacc} is about the accumulation points of numerically trivial pairs for $\Rr$-affine functional divisors $\Delta(t)$. We remark that the global ACC also holds for $\Rr$-affine functional divisors $\Delta(t)$. We refer the readers to \cite{HLQ17} for more details.
\end{rem}

\begin{lem}\label{lem:dltmodel linearmap linear functional div} Let $d,c,m$ be three positive integers, $X$ a normal quasi-projective variety of dimension $d$, and $r_0:=1$, $r_1,\ldots,r_{c}$ real numbers which are linearly independent over $\Qq$. Let $s_1,\ldots,s_m: \Rr^{c+1}\rightarrow\Rr$ be $\Qq$-linear functions, $D_1,\ldots,D_m\geq 0$ Weil divisors on $X$, and  $$\Delta(t):=\sum_{i=1}^ms_i(r_0,\ldots,r_{c-1},t)D_i.$$
Suppose that $(X/Z,\Delta(r_c))$ is $\Rr$-complementary and $(X,\Delta(t_0))$ is lc for some real number $t_0$. Then there exist a birational morphism $f:Y\to X$, a non-negative integer $m'\le m$, and an $\Rr$-affine functional divisor $\Delta_{Y}(t):=\sum_{i=1}^{m'}s_i(r_0,\ldots,r_{c-1},t)D_{Y,i}+E$ on $Y$ satisfying the following.
\begin{enumerate}
\item $D_{Y,i}$ is the strict transform of $D_i$ on $Y$ for any $1\le i\le m'$,
\item $E$ is a reduced divisor,
	\item $(Y/Z,\Delta_Y(r_c))$ is $\Rr$-complementary,
	\item $K_Y+\Delta_{Y}(t_0)\ge f^{*}(K_X+\Delta(t_0))$,
	\item $-(K_Y+\Delta_Y(t_0))\sim_{\Rr,Z} u(K_Y+\Delta)$ for some positive real number $u$ and $\Qq$-factorial dlt pair $(Y/Z,\Delta)$, and
	\item if $X$ is of Fano type over $Z$, then $Y$ is of Fano type over $Z$.
\end{enumerate}
In particular, if $-(K_X+\Delta(t_0))$ is not pseudo-effective over $Z$, then $-(K_Y+\Delta_{Y}(t_0))$ is not pseudo-effective over $Z$, and if $(Y/Z,\Delta_{Y}(t_0))$ is $\Rr$-complementary, then $(X/Z,\Delta(t_0))$ is $\Rr$-complementary.
\end{lem}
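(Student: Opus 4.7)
The plan is to take a $\Qq$-factorial dlt modification of a fixed $\Rr$-complement of $(X/Z,\Delta(r_c))$ and to verify the six conditions on it; conditions (3) and (5) are the ones that use the hypotheses in an essential way.

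First I would fix any $\Rr$-complement $(X/Z, B^+)$ of $(X/Z, \Delta(r_c))$ (guaranteed by the $\Rr$-complementary hypothesis), and apply \cite[Corollary 1.4.3]{BCHM10} to obtain a $\Qq$-factorial dlt modification $f : Y \to X$ of $(X, B^+)$ with $E$ the reduced sum of exceptional prime divisors, so that $(Y, (B^+)_Y + E)$ is $\Qq$-factorial dlt and $K_Y + (B^+)_Y + E = f^*(K_X + B^+) \sim_{\Rr, Z} 0$. Setting $m' = m$ and $\Delta_Y(t) := \sum_{i=1}^m s_i(r_0, \ldots, r_{c-1}, t) D_{Y,i} + E$ gives conditions (1) and (2) immediately. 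Condition (3) follows because $(B^+)_Y + E \geq \Delta_Y(r_c)$ (from $B^+ \geq \Delta(r_c)$), making $(Y, (B^+)_Y + E)$ an $\Rr$-complement of $(Y/Z, \Delta_Y(r_c))$. For (4) I would compute $f^*(K_X + \Delta(t_0)) = K_Y + f^{-1}_*\Delta(t_0) + F$ with $F$ exceptional of coefficients in $[0,1]$ by the lc hypothesis on $(X, \Delta(t_0))$, whence $F \leq E$ gives the required inequality. Condition (6) follows from Lemma \ref{lem:Fanotypedlt} since all divisors extracted by $f$ have log discrepancy $0 < 1$ with respect to $(X, B^+)$.

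For (5), my proposed choice is $\Delta := (1 + \lambda)((B^+)_Y + E) - \lambda\,\Delta_Y(t_0)$ for sufficiently small $\lambda > 0$, which yields $K_Y + \Delta \sim_{\Rr, Z} -\lambda(K_Y + \Delta_Y(t_0))$ and hence the desired relation with $u = \lambda^{-1} > 0$. The hard part is checking that $(Y, \Delta)$ is dlt: the coefficient of each exceptional $F_j$ is $1$, and for a non-exceptional $D_{Y,i}$ it is $(1+\lambda) b_i^+ - \lambda\, s_i(\ldots, t_0)$, where $b_i^+ = \mult_{D_i} B^+$. Using the $\Qq$-linear independence of $r_0,\ldots,r_c$, I would observe that $s_i(\ldots, r_c) = 0$ forces $s_i \equiv 0$ (so the coefficient is $0$), and $s_i(\ldots, r_c) = 1$ forces $s_i \equiv x_0$ (so the coefficient is $1$); so for small $\lambda$ the only potential obstruction is $b_i^+ = 1$ together with $s_i(\ldots, r_c) < 1$, where the coefficient can exceed $1$. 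I expect this to be the main obstacle, and I would resolve it by first replacing $B^+$ with a convex combination of $\Rr$-complements chosen so that the resulting $\tilde B^+$ satisfies $\mult_{D_i}\tilde B^+ < 1$ for every such problematic $D_i$, after which the small-$\lambda$ argument goes through uniformly.

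Finally, for the ``in particular'' assertions: the non-pseudo-effective descent follows directly from (4), since $K_Y + \Delta_Y(t_0) = f^*(K_X + \Delta(t_0)) + (E - F)$ with $E - F \geq 0$, so $-(K_Y + \Delta_Y(t_0))$ is the pullback $f^*(-(K_X + \Delta(t_0)))$ minus an effective divisor, and the difference of a non-pseudo-effective divisor and an effective one remains non-pseudo-effective (otherwise their sum would be pseudo-effective). For the $\Rr$-complementary pushdown, given an $\Rr$-complement $\Delta_Y(t_0)^+$ of $(Y/Z, \Delta_Y(t_0))$, I would set $\Delta(t_0)^+ := f_* \Delta_Y(t_0)^+$; the inequalities $\Delta(t_0)^+ \geq \Delta(t_0)$ and $K_X + \Delta(t_0)^+ \sim_{\Rr, Z} 0$ are then immediate, and the negativity lemma applied to the $f$-exceptional $\Rr$-divisor $f^*(K_X + \Delta(t_0)^+) - (K_Y + \Delta_Y(t_0)^+)$, which is $f$-numerically trivial since both terms are $\sim_{\Rr, Z} 0$, forces this difference to vanish, giving crepancy and hence the lc-ness of $(X, \Delta(t_0)^+)$.
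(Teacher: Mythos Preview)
Your overall strategy---take a $\Qq$-factorial dlt modification of a fixed $\Rr$-complement $B^+$ of $(X/Z,\Delta(r_c))$---is exactly the paper's. The gap is in your handling of condition~(5). Your proposed resolution of the obstruction ``$\mult_{D_i}B^+=1$ with $s_i(r_0,\dots,r_c)<1$'' by passing to a convex combination of complements is not justified: nothing in the hypotheses rules out a prime divisor (either some $D_i$ with $s_i(r_0,\dots,r_c)<1$, or---a case you do not discuss---a component of $B^+-\Delta(r_c)$) that appears with coefficient $1$ in \emph{every} $\Rr$-complement of $(X/Z,\Delta(r_c))$. Since the lemma does not assume $X$ is of Fano type over $Z$, you have no a~priori flexibility to perturb the complement away from such components.

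The paper avoids this issue entirely by exploiting the freedom $m'<m$ that you set aside. Writing $B^+=\Delta(r_c)+G$ and $R:=\lfloor \Delta(r_c)+G\rfloor$, the paper takes $E$ to be the reduced $f$-exceptional divisor \emph{together with} the strict transform of $R$; the functional part is then the strict transform $\tilde{\Delta}(t)$ of $\Delta(t)-\Delta(r_c)\wedge R$ (so any $D_i$ landing in $\Supp R$ is absorbed into $E$ and dropped from the sum, whence $m'\le m$). By construction $\lfloor\tilde{\Delta}(r_c)+E+\tilde G\rfloor=E$, where $\tilde G$ is the strict transform of $G-G\wedge R$. The paper then sets
\[
\Delta:=\frac{\tilde{\Delta}(r_c)-\epsilon\,\tilde{\Delta}(t_0)}{1-\epsilon}+\frac{\tilde G}{1-\epsilon}+E
\]
for $0<\epsilon\ll 1$: since all coefficients of $\tilde{\Delta}(r_c)+\tilde G$ outside $E$ are strictly less than $1$, the same holds for $\Delta$, so $(Y,\Delta)$ is dlt; and the identity $-\epsilon(K_Y+\Delta_Y(t_0))\sim_{\Rr,Z}(1-\epsilon)(K_Y+\Delta)$ gives (5) with $u=(1-\epsilon)/\epsilon$. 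Your arguments for (1)--(4), (6), and the ``in particular'' clause are fine and essentially match the paper.
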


\begin{proof}
	Let $(X/Z,\Delta(r_c)+G)$ be an $\Rr$-complement of $(X/Z,\Delta(r_c))$ and $R:=\lfloor \Delta(r_c)+G\rfloor$.
    Let $f: Y\rightarrow X$ be a dlt modification of $(X,\Delta(r_c)+G)$, 
    $$K_Y+\tilde{\Delta(r_c)}+E+\tilde{G}:=f^{*}(K_X+\Delta(r_c)+G),$$
    where $\tilde{\Delta(t)}$ and $\tilde{G}$ are the strict transforms of $\Delta(t)-\Delta(r_c)\wedge R$ and $G-G\wedge R$ on $Y$ respectively, and $E$ is the sum of the reduced exceptional divisors of $f$ and the strict transform of $R$. 
    
   	Let $\Delta_{Y}(t):=\tilde{\Delta(t)}+E$. Then $(Y/Z,\Delta_Y(r_c)+\tilde{G})$ is an $\Rr$-complement of $(Y/Z,\Delta_Y(r_c))$, and $K_Y+\Delta_{Y}(t_0)\ge f^{*}(K_X+\Delta(t_0))$.
   	
   	Since $(Y,\tilde{\Delta(r_c)}+E+\tilde{G})$ is dlt and $\lfloor\tilde{\Delta(r_c)}+E+\tilde{G}\rfloor=E$, there exists a positive real number $\epsilon<1$, such that $\tilde{\Delta(r_c)}-\epsilon\tilde{\Delta(t_0)}\ge0$ and $(Y,\Delta)$ is $\Qq$-factorial dlt, where $\Delta:=\frac{\tilde{\Delta(r_c)}-\epsilon\tilde{\Delta(t_0)}}{1-\epsilon}+\frac{\tilde{G}}{1-\epsilon}+E$. Then 
   	\begin{align*}
   	-\epsilon(K_Y+\Delta_Y(t_0))&=-\epsilon (K_Y+\tilde{\Delta(t_0)}+E)\\
   	                  &\sim_{\Rr,Z} (1-\epsilon)(K_Y+\frac{\tilde{\Delta(r_c)}-\epsilon\tilde{\Delta(t_0)}}{1-\epsilon}+\frac{\tilde{G}}{1-\epsilon}+E)\\
   	                  &=(1-\epsilon)(K_Y+\Delta).
   	\end{align*}
   	
	If $X$ is of Fano type over $Z$, then by Lemma \ref{lem:Fanotypedlt}, $Y$ is of Fano type over $Z$. 
	\end{proof}

\begin{conj}[Existence of good minimal models]\label{conj: exist gmm}
    Let $d$ be a positive integer. Let $(X,B)$ be a $\Qq$-factorial dlt pair of dimension $d$ and $X\to Z$ a contraction. Suppose that $K_X+B$ is pseudo-effective over $Z$. Then there exists a good minimal model $(Y,B_Y)$ of $(X,B)$ over $Z$, that is, $(Y,B_Y)$ is a minimal model of $(X,B)$ over $Z$ and $K_Y+B_Y$ is semiample over $Z$.
\end{conj}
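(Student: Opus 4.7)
The statement is the classical Existence of Good Minimal Models for $\Qq$-factorial dlt pairs, which in full generality is one of the central open problems in birational geometry. My ``proof plan'' is therefore a reduction strategy rather than a self-contained argument, combining the known cases of the MMP with the Abundance Conjecture.

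The first step is to run a $(K_X+B)$-MMP over $Z$ with scaling of a general ample divisor $H$. When $K_X+B$ is big over $Z$, the desired good minimal model is produced directly by BCHM. In the general pseudo-effective case one invokes Non-Vanishing to find an effective divisor $D\sim_{\Rr,Z} K_X+B$, and then follows Birkar's scheme: by perturbing the boundary to $B+\epsilon D$ one replaces the pseudo-effective problem by a big one, runs the MMP in the perturbed setup, and transfers the conclusion back to $(X,B)$ using special termination along lc centers, which itself relies on the conjecture in lower dimension. Throughout, one must verify that the MMP does not contract a component of $\lfloor B\rfloor$ prematurely, which is handled by standard length-of-extremal-ray arguments together with the dlt hypothesis.

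Once a minimal model $(Y,B_Y)$ is reached and $K_Y+B_Y$ is nef over $Z$, the second step is to deduce that $K_Y+B_Y$ is semiample over $Z$. This is the Log Abundance Conjecture, which is known in dimension $\leq 3$ (Shokurov, Kawamata, Keel--Matsuki--McKernan, Fujino), when the numerical dimension of $K_Y+B_Y$ is either $0$ or maximal, and when the boundary part is big over $Z$; the general statement in dimension $\geq 4$ is open. The standard reduction is to the klt case via dlt adjunction and then to non-vanishing plus the Base-Point-Free Theorem of Kawamata--Shokurov applied to an appropriate effective representative of $K_Y+B_Y$.

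The principal obstacle is thus the pair of longstanding open problems---termination of log flips and log abundance---in dimensions $\geq 4$. In the present paper the conjecture is used only as a hypothesis, invoked via its established cases; in particular, in the Fano-type setting that drives the applications (where $X$ is of Fano type over $Z$ and one may run an MMP on any $\Rr$-Cartier divisor that terminates by \cite[Corollary 2.9]{PS09}), Conjecture~\ref{conj: exist gmm} is unconditional and presents no obstacle.
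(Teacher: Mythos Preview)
The statement you were asked to ``prove'' is labeled as a \emph{Conjecture} in the paper (Conjecture~\ref{conj: exist gmm}), not a theorem; the paper offers no proof of it whatsoever. It is invoked only as a standing hypothesis in the non--Fano-type parts of Theorems~\ref{thm: pepolytope linear maps}(3), \ref{thm: Uniform perturbation of R complements}, and \ref{thm: dcc limit divisor}(7), while the Fano-type cases used in the main results proceed unconditionally via \cite[Corollary 2.9]{PS09}. So there is no ``paper's own proof'' to compare against.

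Given that, your write-up is a reasonable summary of the standard reduction picture (non-vanishing plus special termination to reach a minimal model, then log abundance for semiampleness), and you correctly flag that both termination and abundance are open in dimension $\geq 4$. The only misstep is one of framing: presenting a ``proof plan'' for a statement that the paper explicitly treats as conjectural. Your final sentence---that in the Fano-type setting the conjecture is unconditional---is exactly how the paper uses it, and that is the only content needed here.
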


\begin{thm}\label{thm: pepolytope linear maps}
	Fix three positive integers $d, c, m$. Let $r_0:=1,r_1,\ldots,r_{c}$ be real numbers which are linearly independent over $\Qq$. Let $s_1,\ldots, s_m: \Rr^{c+1}\rightarrow\Rr$ be $\Qq$-linear functions. Then there exists a positive real number $\epsilon$ depending only on $d,c,m$, $r_1,\ldots, r_{c}$ and $s_1,\ldots, s_m$ satisfying the following. 
	
  Let $X$ be a normal quasi-projective variety of dimension $d$, $D_1,\ldots,D_m\ge0$ Weil divisors on $X$, and $X\to Z$ a contraction. If $(X/Z,\Delta(r_{c}))$ is $\Rr$-complementary, where
	$$\Delta(t):=\sum_{i=1}^ms_i(r_0,\ldots,r_{c-1},t)D_i,$$	
	then
	\begin{enumerate}
		\item $(X,\Delta(t))$ is lc and $-(K_X+\Delta(t))$ is pseudo-effective over $Z$ for any $t$ such that $|t-r_{c}|\leq\epsilon$, 
		\item if $X$ is of Fano type over $Z$, then $(X/Z,\Delta(t))$ is $\Rr$-complementary for any $t$ such that $|t-r_{c}|\leq\epsilon$, and
		\item suppose that Conjecture \ref{conj: exist gmm} holds in dimension $d$, then $(X/Z,\Delta(t))$ is $\Rr$-complementary for any $t$ such that $|t-r_{c}|\leq\epsilon$. 
	\end{enumerate}
\end{thm}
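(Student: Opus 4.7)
The plan is to establish the three parts in order, using Corollary \ref{cor: strong Nakamura approximation} for the lc claim, the rational decomposition provided by Lemma \ref{lem: R boundary Q Cartier}, and the dlt reduction of Lemma \ref{lem:dltmodel linearmap linear functional div} for the remaining claims. The lc assertion in (1) is immediate from Corollary \ref{cor: strong Nakamura approximation} applied to the one-parameter family $\Delta(t)$, which yields some $\epsilon_1 > 0$ depending only on $d, c, m, r_1, \ldots, r_c, s_1, \ldots, s_m$ with the required property.

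For the pseudo-effectivity assertion in (1), I would let $G \geq 0$ be an $\Rr$-complement of $(X/Z, \Delta(r_c))$, so $K_X + \Delta(r_c) + G \sim_{\Rr} 0$ over $Z$. Extending $\{1, r_1, \ldots, r_c\}$ to a $\Qq$-basis of the $\Qq$-span of $\{1, r_1, \ldots, r_c\}$ together with the coefficients of $G$ by adjoining some $\alpha_1, \ldots, \alpha_k$, one decomposes $G = \sum_\ell r_\ell \bar{G}_\ell + \sum_j \alpha_j \tilde{G}_j$ with $\bar{G}_\ell$ and $\tilde{G}_j$ being $\Qq$-divisors. Applying Lemma \ref{lem: R boundary Q Cartier} to $K_X + \Delta(r_c) + G \sim_{\Rr} 0$ over $Z$ yields $\bar{D}_c + \bar{G}_c \sim_{\Qq} 0$ over $Z$, where $\bar{D}_c := \sum_i m_{i,c} D_i$ collects the $r_c$-dependence of $\Delta(t)$. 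Consequently $-(K_X + \Delta(t)) \sim_{\Rr} G + (t - r_c)\bar{G}_c$ over $Z$, and since $\Supp \bar{G}_c \subseteq \Supp G$, this divisor stays effective (hence pseudo-effective) for $|t - r_c|$ sufficiently small.

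The main obstacle is that this naive bound on $|t - r_c|$ depends on $G$ via the ratios $\mult_P G / |\mult_P \bar{G}_c|$; to obtain a uniform $\epsilon$ I would argue by contradiction combined with Nakamura's accumulation theorem (Theorem \ref{thm:nakacc}). A hypothetical sequence of counterexamples $(X_k, Z_k, \Delta_k(t))$ with $t_k \to r_c$ and $-(K_{X_k} + \Delta_k(t_k))$ not pseudo-effective over $Z_k$ would, after passage to dlt models via Lemma \ref{lem:dltmodel linearmap linear functional div} and extraction of boundary values of the pseudo-effective locus, produce a sequence in $\mathcal{B}_d(\Ii)$ for a suitable finite set $\Ii$ containing $\{r_1, \ldots, r_{c-1}\}$ (and the rational data of the $s_i$) accumulating at $r_c$. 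This would contradict Theorem \ref{thm:nakacc}, since $r_c \notin \Span_{\Qq}\{1, r_1, \ldots, r_{c-1}\}$ by the assumed linear independence of $r_0, r_1, \ldots, r_c$ over $\Qq$.

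Parts (2) and (3) reduce to (1) with an MMP input. For (2), since $X$ is of Fano type over $Z$, the standard fact that an lc pair $(X, B)$ with $-(K_X + B)$ pseudo-effective over $Z$ is $\Rr$-complementary (cf.\ \cite{Bir19}) combines with (1) to give the result. For (3), Lemma \ref{lem:dltmodel linearmap linear functional div}(5) supplies a $\Qq$-factorial dlt pair $(Y, \Delta)$ with $-(K_Y + \Delta_Y(t)) \sim_{\Rr} u(K_Y + \Delta)$ over $Z$; by (1) applied on $Y$ via Lemma \ref{lem:dltmodel linearmap linear functional div}, $K_Y + \Delta$ is pseudo-effective over $Z$, so Conjecture \ref{conj: exist gmm} yields a good minimal model from which the semiampleness of the log canonical divisor produces an $\Rr$-complement of $(Y/Z, \Delta_Y(t))$; the final clause of Lemma \ref{lem:dltmodel linearmap linear functional div} then descends this to an $\Rr$-complement of $(X/Z, \Delta(t))$.
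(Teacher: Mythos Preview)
Your strategy aligns with the paper's, but two steps need more than you give them.

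For the pseudo-effectivity part of (1), the phrase ``extraction of boundary values of the pseudo-effective locus'' does not name the actual mechanism that puts you into $\mathcal{B}_d(\Ii)$. After replacing $X_k$ via Lemma~\ref{lem:dltmodel linearmap linear functional div} (which requires $(X_k,\Delta_k(t_k))$ lc, supplied by Corollary~\ref{cor: strong Nakamura approximation}), one runs a $-(K_{X_k}+\Delta_k(t_k))$-MMP with scaling; since this divisor is not pseudo-effective, the MMP ends in a Mori fiber space $Y_k\to Z_k'$. On a general fiber $F_k$ there is then $\eta_k$ between $t_k$ and $r_c$ with $(K_{Y_k}+\Delta_{Y_k}(\eta_k))|_{F_k}\equiv 0$, and the $\Rr$-complement of $\Delta_{Y_k}(r_c)$ together with Corollary~\ref{cor: strong Nakamura approximation} shows $(F_k,\Delta_{F_k}(\eta_k))$ is lc. Only now does $\eta_k\to r_c$ contradict Theorem~\ref{thm:nakacc}. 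Your ``naive'' effectivity computation via Lemma~\ref{lem: R boundary Q Cartier} is correct but, as you note, irrelevant to the uniform statement.

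For (2) and (3) the genuine gap is the lc check on the minimal model. A $-(K_X+B)$-MMP step need not preserve the lc property of $(X,B)$: a $-(K_X+B)$-negative divisorial contraction is $(K_X+B)$-positive, and one computes that log discrepancies of $(X,B)$ strictly drop across such a step. So your ``standard fact'' in (2) is not immediate, and in (3) semiampleness on the good minimal model $Y_{t_0}$ does not by itself yield an $\Rr$-complement of $(Y_{t_0}/Z,\Delta_{Y_{t_0}}(t_0))$ without first knowing this pair is lc. The paper's remedy, uniform for (2) and (3), is to carry the $\Rr$-complement of $\Delta(r_c)$ through the MMP: since $K_X+\Delta(r_c)+G\sim_{\Rr,Z}0$, the pair $(Y_{t_0},\Delta_{Y_{t_0}}(r_c)+G_{Y_{t_0}})$ is crepant to $(X,\Delta(r_c)+G)$ and hence lc, so $(Y_{t_0},\Delta_{Y_{t_0}}(r_c))$ is lc; then Corollary~\ref{cor: strong Nakamura approximation} applied on $Y_{t_0}$ gives $(Y_{t_0},\Delta_{Y_{t_0}}(t_0))$ lc, and now semiampleness finishes.
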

\begin{proof}We first show (1). Suppose it does not hold. Then by Corollary \ref{cor: strong Nakamura approximation}, for any positive integer $k$, there exist a normal quasi-projective variety $X_k$ of dimension $d$, Weil divisors $D_1^{k},\ldots,D_{m}^k$ on $X_{k}$, and $t_k\in \Rr$ such that $(X_k/Z_k,\Delta_{k}(r_{c}))$ is $\Rr$-complementary, $(X_k,\Delta_k(t_k))$ is lc, $-(K_{X_k}+\Delta_k(t_k))$ is not pseudo-effective over $Z_k$, and $\lim_{k\to +\infty} t_k=r_c$, where
$$\Delta_k(t):=\sum_{i=1}^m s_i(r_0,\ldots,r_{c-1},t)D_{i}^k.$$
	
	By Lemma \ref{lem:dltmodel linearmap linear functional div} and Corollary \ref{cor: strong Nakamura approximation}, possibly passing to a subsequence of $X_k$ and replacing $X_k$ with a birational model, we may assume that  $-(K_{X_k}+\Delta_{k}(t_k))\sim_{\Rr,Z_k} u_k(K_{X_k}+\Delta_k)$, where each $u_k$ is a positive real number which may depend on $X_k$, and each $({X_k},\Delta_k)$ is $\Qq$-factorial dlt.

	Thus we may run a $-(K_{X_k}+\Delta_{k}(t_k))$-MMP over $Z_k$ with scaling of an ample divisor over $Z_k$, and reach a Mori fiber space $Y_k\to Z_k'$ over $Z_k$, such that $-(K_{Y_{k}}+\Delta_{Y_k}(t_k))$ is antiample over $Z_k'$, where $\Delta_{Y_k}(t)$ is the strict transform of $\Delta_{k}(t)$ on $Y_k$ for any $t$. Since $-(K_{X_{k}}+\Delta_{k}(r_c))$ is pseudo-effective over $Z_k$, $-(K_{Y_{k}}+\Delta_{Y_k}(r_c))$ is nef over $Z_k'$.
	
	Thus there exist real numbers $\eta_k$, such that $\min\{t_k,r_c\}\le \eta_k\le \max\{t_k,r_c\}$, $\lim_{k\to+\infty}\eta_k=r_c$, and 
	$$K_{F_k}+\Delta_{F_k}(\eta_k):=(K_{Y_k}+\Delta_{Y_k}(\eta_k))|_{F_k}\equiv 0,$$
	where $F_k$ is a general fiber of $Y_k\to Z_k'$. Since $(X_k/Z_k,\Delta_{k}(r_{c}))$ is $\Rr$-complementary, $({Y_k},\Delta_{Y_k}(r_c))$ is lc. Possibly passing to a subsequence of $t_k$, by Corollary \ref{cor: strong Nakamura approximation}, we may assume that $({Y_k},\Delta_{Y_k}(t_k))$ is lc for any $k$. Thus $({F_k},\Delta_{F_k}(\eta_k))$ is lc. Since $\lim_{k\rightarrow+\infty} \eta_k=r_{c}$, by Theorem \ref{thm:nakacc}, $r_{c}\in\Span_{\Qq}(\{r_0,\ldots,r_{c-1}\})$, a contradiction.
	
	\medskip 
	
	We now show (2)--(3). For any $t_0\in [r_c-\epsilon,r_c+\epsilon]$, by Lemma \ref{lem:dltmodel linearmap linear functional div}, possibly replacing $X$ with a birational model, we may assume that $-(K_{X}+\Delta_{k}(t_0))\sim_{\Rr,Z} u(K_{X}+\Delta)$ for some positive real number $u$ and $\Qq$-factorial dlt pair $({X},\Delta)$.
	
	Suppose that either $X$ is of Fano type over $Z$ or Conjecture \ref{conj: exist gmm} holds in dimension $d$. Then there exists a good minimal model 
	$-(K_{Y_{t_0}}+\Delta_{Y_{t_0}}(t_0))$ of $-(K_X+\Delta(t_0))$ over $Z$, where $\Delta_{Y_{t_0}}(t)$ is the strict transform of $\Delta(t)$ on $Y_{t_0}$ for any $t$. Since $(X/Z,\Delta(r_c))$ is $\Rr$-complementary, $(Y_{t_0},\Delta_{Y_0}(r_c))$ is lc. Therefore, by Corollary \ref{cor: strong Nakamura approximation}, $(Y_{t_0},\Delta_{Y_{t_0}}(t_0))$ is lc. Since $-(K_{Y_{t_0}}+\Delta_{Y_{t_0}}(t_0))$ is semiample over $Z$, $(Y_{t_0}/Z,\Delta_{Y_{t_0}}(t_0))$ is $\Rr$-complementary. Hence $(X/Z,\Delta(t_0))$ is $\Rr$-complementary. 
\end{proof}

\begin{thm}[Uniform $\Rr$-complementary rational polytopes]\label{thm: Uniform perturbation of R complements}
Let $d,m$ be two positive integers, $\bm{v}=(v_1,\ldots,v_m)\in\Rr^m$ a point, and $V\subset\mathbb R^s$ the rational envelope of $\bm{v}$. Then there exists an open set $U\ni\bm{v}$ of $V$ depending only on $d,m$ and $\bm{v}$ satisfying the following.  

Let $(X,B:=\sum_{i=1}^m v_iB_i)$ be an lc pair of dimension $d$ and $X\to Z$ a contraction, such that each $B_i\geq 0$ is a Weil divisor, and $(X/Z,B)$ is $\Rr$-complementary. Suppose that either $X$ is of Fano type over $Z$ or Conjecture \ref{conj: exist gmm} holds in dimension $d$. Then $(X/Z,\sum_{i=1}^m v_i^0B_{i})$ is $\Rr$-complementary for any point $(v_1^0,\ldots,v_m^0)\in U$.

\end{thm}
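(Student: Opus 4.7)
The plan is to mimic the inductive argument used for Theorem \ref{thm: Uniform perturbation of lc pairs}, but with Theorem \ref{thm: pepolytope linear maps} serving as the input in place of Corollary \ref{cor: strong Nakamura approximation}. First I would choose real numbers $r_0:=1, r_1, \ldots, r_c$ forming a $\Qq$-basis of $\Span_{\Qq}(\{1, v_1, \ldots, v_m\})$, and pick $\Qq$-linear functions $s_i:\Rr^{c+1}\to\Rr$ with $s_i(r_0,\ldots,r_c)=v_i$ such that the map $(x_1,\ldots,x_c)\mapsto (s_1(1,x_1,\ldots,x_c),\ldots,s_m(1,x_1,\ldots,x_c))$ parametrizes $V$ bijectively onto an affine copy of $\Rr^c$. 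It suffices to produce an open neighborhood $U_c\subset\Rr^c$ of $(r_1,\ldots,r_c)$ such that for every $(x_1,\ldots,x_c)\in U_c$ the pair $(X/Z,\sum_i s_i(1,x_1,\ldots,x_c)B_i)$ is $\Rr$-complementary. The case $c=0$ is trivial since $\bm{v}\in\Qq^m$.

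For the inductive step, fix the last coordinate and write $\Delta(t):=\sum_{i=1}^m s_i(r_0,\ldots,r_{c-1},t)B_i$, so that $\Delta(r_c)=B$. Apply Theorem \ref{thm: pepolytope linear maps}(2)--(3) to produce a positive real number $\epsilon>0$, depending only on $d,c,m,r_1,\ldots,r_c$ and $s_1,\ldots,s_m$, such that $(X/Z,\Delta(t))$ is $\Rr$-complementary for every $t$ with $|t-r_c|\le \epsilon$ (using the Fano type hypothesis, or alternatively Conjecture \ref{conj: exist gmm}). Next, choose rational numbers $t^-:=r_c-\epsilon_2$ and $t^+:=r_c+\epsilon_1$ inside this interval with $\epsilon_1,\epsilon_2>0$. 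For each of these two rational values, the span $\Span_{\Qq}(\{1, s_1(r_0,\ldots,r_{c-1},t^\pm),\ldots, s_m(r_0,\ldots,r_{c-1},t^\pm)\})$ has $\Qq$-dimension at most $c$ (one less basis element $r_c$ is needed), so the inductive hypothesis applied to the lc pair $(X/Z,\Delta(t^\pm))$ yields an open neighborhood $U_{c-1}^\pm\subset\Rr^{c-1}$ of $(r_1,\ldots,r_{c-1})$ such that $\Rr$-complementarity is preserved for all choices of the first $c-1$ coordinates in $U_{c-1}^\pm$. Set $U_c:=(U_{c-1}^+\cap U_{c-1}^-)\times (t^-,t^+)$.

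Finally, for an arbitrary point $(x_1,\ldots,x_{c-1},x_c)\in U_c$, write $x_c=\lambda t^-+(1-\lambda)t^+$ for some $\lambda\in(0,1)$. The $\Rr$-divisor $\sum_i s_i(1,x_1,\ldots,x_c)B_i$ is then the convex combination $\lambda\bigl(\sum_i s_i(1,x_1,\ldots,x_{c-1},t^-)B_i\bigr)+(1-\lambda)\bigl(\sum_i s_i(1,x_1,\ldots,x_{c-1},t^+)B_i\bigr)$. Since both endpoints are already known, by induction, to give $\Rr$-complementary pairs over $Z$, convex combination of their $\Rr$-complements (after passing to a common model, which is automatic from the construction using Lemma \ref{lem:dltmodel linearmap linear functional div}) yields an $\Rr$-complement for $(X/Z,\sum_i s_i(1,x_1,\ldots,x_c)B_i)$; lc-ness of the interpolated pair follows either from Theorem \ref{thm: Uniform perturbation of lc pairs} after shrinking $U_c$, or more directly from the convexity of the lc condition in $t$.

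The main obstacle I anticipate is ensuring that Theorem \ref{thm: pepolytope linear maps} is applied in a form that preserves the Fano type / good-minimal-model hypothesis through the inductive step: the theorem as stated produces a birational model $Y$ to run the argument on, so one must either keep the birational model uniform across varying coordinates or verify that the Fano type (resp.\ Conjecture \ref{conj: exist gmm}) hypothesis transfers. This is handled by Lemma \ref{lem:dltmodel linearmap linear functional div}(6), which shows that a suitable dlt model $Y$ of $(X,\Delta(t_0))$ is again of Fano type over $Z$ whenever $X$ is, so the inductive hypothesis indeed applies at $t=t^\pm$ after replacing $X$ by $Y$; checking this carefully is the only nontrivial bookkeeping point.
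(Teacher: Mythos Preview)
Your proposal is correct and follows essentially the same inductive argument as the paper: pass to coordinates $r_1,\ldots,r_c$, use Theorem \ref{thm: pepolytope linear maps} to move to rational values of $r_c$, then induct on $c$ and take a product neighborhood. The paper's proof leaves the convex-combination step at the end implicit, but it is exactly the observation you spell out: if $(X/Z,B_1^+)$ and $(X/Z,B_2^+)$ are $\Rr$-complements of the two endpoints, then $\lambda B_1^+ + (1-\lambda)B_2^+$ is an $\Rr$-complement of the convex combination.

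Your final paragraph of concern is unnecessary. Theorem \ref{thm: pepolytope linear maps}(2)--(3) yields the conclusion directly on the original $X$ (the birational model appears only inside its proof), so in the inductive step you apply the induction hypothesis to the \emph{same} variety $X$, which is still of Fano type over $Z$; no bookkeeping with Lemma \ref{lem:dltmodel linearmap linear functional div} is required at this stage.
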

The proof of Theorem \ref{thm: Uniform perturbation of R complements} is of the same lines as the proof of Theorem \ref{thm: Uniform perturbation of lc pairs}. For readers' convenience, we give a full proof here.
\begin{proof}
There exist real numbers $r_0:=1,r_1,\ldots,r_c$, such that $\{r_0,\ldots,r_{c}\}$ is a basis of $\Span_{\Qq}(\{1,v_1,\ldots,v_{m}\})$ over $\Qq$ for some $0\le c\le m$.  When $c=0$, $U=V=\{\bm{v}\}$, and the theorem holds. In the following, we may assume that $c\geq 1$.

There exist $\Qq$-linear functions $s_1,\dots, s_m:\Rr^{c+1}\to \Rr$, such that $s_i(r_0,\ldots,r_{c})=v_i$ for any $1\le i\le m$, and the map 
$$(s_1(1,x_1,\ldots,x_{c}),\ldots,s_m(1,x_1,\ldots,x_{c})):\Rr^{c}\to V$$
is one-to-one. It suffices to show there exists an open set $U_c$ of $\Rr^{c}$, such that $(r_1,\ldots,r_c)\in U_c$ and $(X/Z,\sum_{i=1}^m s_i(1,x_1,\ldots,x_{c})B_{i})$ is $\Rr$-complementary for any $(x_1,\ldots,x_c)\in U_c$. 

We prove the theorem by using induction on $c$. If $c=1$, then the theorem follows from Theorem \ref{thm: pepolytope linear maps}.

Assume that $c\ge 2$. For every $1\leq i\leq m$, we let
$$s_i(t):=s_i(r_0,\ldots,r_{c-1},t).$$ 
By Theorem \ref{thm: pepolytope linear maps}, there 
exist two positive real numbers $\epsilon_1,\epsilon_2$ depending only on $d,c,m$, $r_1,\ldots,r_c$, and $s_1,\ldots,s_m$, such that $r_c+\epsilon_1,r_c-\epsilon_2\in\Qq$, and both  $(X/Z,\sum_{i=1}^m s_i(r_c+\epsilon_1)B_i)$ and $(X/Z,\sum_{i=1}^m s_i(r_c-\epsilon_2)B_i)$ are $\Rr$-complementary. 

By induction, there exists an open set $U_{c-1}$ of $\Rr^{c-1}$, such that $(r_1,\ldots,r_{c-1})\in U_{c-1}$, and for any $(x_1,\ldots,x_{c-1})\in U_{c-1}$, both $$(X/Z,\sum_{i=1}^m s_i(1,x_1,\ldots,x_{c-1},r_{c}+\epsilon_1)B_{i})$$
and 
$$(X/Z,\sum_{i=1}^m s_i(1,x_1,\ldots,x_{c-1},r_{c}-\epsilon_2)B_{i})$$
are $\Rr$-complementary respectively.

We get the desired $U_c$ by letting $U_c=U_{c-1}\times (r_{c}-\epsilon_2,r_{c}+\epsilon_1)$.
\end{proof}


\subsection{Proof of Theorem \ref{thm: existence ni1i2 complement}}
Theorem \ref{thm: dcc limit lc divisor} and Theorem \ref{thm: dcc limit divisor} are inspired by \cite[\S 4]{PS09} and \cite[Proposition 2.50]{Bir19}. 
\begin{lem}\label{lem: accdcc projection}
Let $\alpha$ be a positive real number, $M\geq 1$ a real number, $\Ii\subset [0,M]$ a DCC set, and $\Ii''=\bar{\Ii''}$ an ACC set. Then there exist a finite set $\Ii'\subset \bar\Ii$ and a projection $g:\bar\Ii\to \Ii'$ (i.e., $g\circ g=g$) depending only on $\alpha,M$, $\Ii$ and $\Ii''$ satisfying the following properties:
 \begin{enumerate}
 	\item $\gamma+\alpha \ge g(\gamma)\ge \gamma$ for any $\gamma\in \Ii$, 
 	\item $g(\gamma')\ge g(\gamma)$ for any $\gamma'\ge \gamma$ such that $\gamma,\gamma'\in\Ii$, and
 	\item for any $\beta\in \Ii''$ and $\gamma\in \Ii$, if $\beta\ge \gamma$, then $\beta\ge g(\gamma)$.
 \end{enumerate}
\end{lem}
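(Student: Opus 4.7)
The plan is to set $g(\gamma) := \min(\Ii' \cap [\gamma,+\infty))$ once a suitable finite $\Ii' \subset \bar\Ii$ is in hand. This ceiling projection is automatically non-decreasing, equals the identity on $\Ii'$, and is well-defined on $\bar\Ii$ provided $\max\bar\Ii \in \Ii'$; conditions (2) and $g\circ g = g$ are then free. Write $m(\gamma) := \min(\Ii'' \cap [\gamma,+\infty))$, which is an honest minimum since $\Ii''$ is bounded above (an ACC set has a maximum), so $\Ii''\cap[\gamma,+\infty)$ is compact. Conditions (1) and (3) then collapse into the single requirement that for every $\gamma \in \Ii$ the window $[\gamma,\ \min(\gamma+\alpha,\,m(\gamma))]$ meets $\Ii'$.

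The main step is the finiteness of the image $S := \{m(\gamma) : \gamma \in \Ii,\ m(\gamma)<+\infty\}$. Because $\Ii''$ is ACC, $S \subset \Ii''$ admits no strictly increasing infinite sequence. On the other hand, if $\beta_1 > \beta_2 > \cdots$ lie in $S$ with $m(\gamma_n) = \beta_n$, then $\gamma_n \le \beta_n$ and $\Ii'' \cap [\gamma_n, \beta_n) = \emptyset$ force $\gamma_n > \max(\Ii'' \cap (-\infty,\beta_n)) \ge \beta_{n+1} \ge \gamma_{n+1}$, producing a strictly decreasing infinite sequence in $\Ii$ that violates DCC. A bounded real set lacking either type of monotone infinite subsequence is finite, so I can enumerate $S = \{\beta_1^* < \cdots < \beta_N^*\}$.

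I then partition $\Ii = \Ii_0 \sqcup \bigsqcup_{i=1}^N \Ii_i$ via $\Ii_i := m^{-1}(\beta_i^*)$ and $\Ii_0 := m^{-1}(+\infty)$; each $\Ii_i$ lies inside $(\beta_i^{*-},\beta_i^*]$, where $\beta_i^{*-}$ denotes the largest element of $\Ii'' \cap (-\infty,\beta_i^*)$ (or $-\infty$ if empty). Each piece is handled by the following \emph{sub-lemma}, which I prove by a greedy top-down recursion: for any DCC set $\Delta \subset [0,M]$ there is a finite $\Delta' \subset \bar\Delta$ containing $\max \bar\Delta$ such that $[\delta,\delta+\alpha] \cap \Delta' \ne \emptyset$ for every $\delta \in \Delta$. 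I put $c_1 := \max \bar\Delta$ and $c_{k+1} := \max(\bar\Delta \cap [0,\,c_k-\alpha])$ as long as this intersection is nonempty; since $c_{k+1} \le c_k - \alpha$, the process terminates in at most $\lceil M/\alpha\rceil+1$ steps, and the maximality together with the stopping rule force any $\delta \in \Delta$ lying in $(c_{k+1},c_k]$ (or in $[0,c_K]$ at the last step) to satisfy $\delta > c_k - \alpha$, hence $c_k \in [\delta,\delta+\alpha]$.

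Applying the sub-lemma to each $\Ii_i$ (a DCC subset of $[0,M]$) yields a finite $\Ii_i' \subset \overline{\Ii_i} \subset \bar\Ii$ whose maximum satisfies $\max \Ii_i' = \max\overline{\Ii_i} \le \beta_i^*$, so the $m$-constraint $g(\gamma) \le \beta_i^* = m(\gamma)$ holds automatically on $\Ii_i$. Setting $\Ii' := \bigcup_{i=0}^N \Ii_i'$ produces a finite subset of $\bar\Ii$ with $\max\Ii' = \max\bar\Ii$, and the ceiling projection $g$ associated to this $\Ii'$ satisfies (1), (2), (3). The only place where DCC and ACC genuinely interact is the finiteness of $S$; after that, the partition and greedy covering within each piece are routine.
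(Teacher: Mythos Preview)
Your argument is correct. Both proofs hinge on the auxiliary function $f(\gamma)=m(\gamma)=\min(\Ii''\cap[\gamma,+\infty))$, but the organization differs. The paper partitions $[0,M]$ into $N=\lceil M/\alpha\rceil$ equal subintervals $\Ii_k$ and defines directly $g(\gamma):=\max\{\beta\in\Ii_k:\beta\le f(\gamma)\}$ for $\gamma\in\Ii_k$; finiteness of the image is then deduced a posteriori by observing that $g(\Ii)$ is both DCC (as a subset of $\bar\Ii$) and ACC (since a strictly increasing $g(\gamma_j)$ forces, after passing to a subsequence with $f(\gamma_j)$ decreasing, a contradiction). You instead isolate the DCC/ACC interaction up front by proving that $S=m(\Ii)\setminus\{+\infty\}$ is finite, partition $\Ii$ into the fibers $m^{-1}(\beta_i^*)$, and cover each fiber by a greedy $\alpha$-net inside its closure; the global $g$ is then the ceiling into the union $\Ii'$. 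Your route is a bit longer but more modular: the single nontrivial step (finiteness of $S$) is cleanly separated from the routine covering, and monotonicity of $g$ is automatic for a ceiling map, whereas in the paper's construction one must check it across interval boundaries. The paper's route is shorter and avoids the explicit greedy recursion by letting the uniform partition do that work.
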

\begin{proof}	
	We may replace $\Ii$ with $\bar\Ii$ and therefore assume that $\Ii=\bar\Ii$. Let $N:=\lceil\frac{M}{\alpha}\rceil,\Ii_0:=\Ii\cap [0,\frac{1}{N}]$, and
	$$\Ii_k:=\Ii\cap (\frac{k}{N},\frac{k+1}{N}]$$
	for any $1\leq k\leq N-1$.
	
	For any $\gamma\in\Ii$, there exists a unique $0\leq k\leq N-1$ such that $\gamma\in\Ii_{k}$. If $\gamma\in\Ii$ and $\gamma>\max_{\beta\in\Ii''}\{\beta\}$, then $g(\gamma):=\max_{\beta\in \Ii_k}\{\beta\}$ has the required properties. In the following, we may assume that for any $\gamma\in\Ii$, $\gamma\le\max_{\beta\in\Ii''}\{\beta\}$.
	
	Since $\Ii=\bar\Ii$ and $\Ii''=\bar\Ii''$, we may define $f:\Ii\to\Ii'', g:\Ii\to\Ii$ in the following ways:
	$$f(\gamma):=\min\{\beta\in\Ii''\mid \beta\geq \gamma\},\text{and}\ g(\gamma):=\max\{\beta\in\Ii_k\mid\beta\leq f(\gamma),\gamma\in\Ii_k\}.$$
	For any $\gamma\in\Ii$, it is clear that 
	$$0\le g(\gamma)-\gamma\leq\frac{1}{N}\leq\alpha.$$
	Since $f(\gamma)\ge g(\gamma)\geq \gamma$, 
	$f(g(\gamma))=f(\gamma)$ and $g(g(\gamma))=g(\gamma)$. We will show that $\Ii':=\{g(\gamma)\mid\gamma\in\Ii\}$ has the required properties.
	
	It suffices to show that $\Ii'$ is a finite set. Since $\Ii'\subseteq\Ii$, $\Ii'$ satisfies the DCC. We only need to show that $\Ii'$ satisfies the ACC. Suppose that there exists a strictly increasing sequence $g(\gamma_1)<g(\gamma_2)<\ldots,$ where $\gamma_j\in \Ii$. Since $f(\gamma_j)$ belongs to the ACC set $\Ii''$, possibly passing to a subsequence, we may assume that $f(\gamma_j)$ is decreasing. Thus $g(\gamma_j)$ is decreasing, and we get a contradiction. Therefore $\Ii'$ satisfies the ACC. 	
\end{proof}

\begin{thm}\label{thm: dcc limit lc divisor}
Let $d$ be a positive integer, $\alpha$ a positive real number, and $\Ii\subset [0,1]$ a DCC set. Then there exist a finite set $\Ii'\subset \bar\Ii$ and a projection $g:\bar\Ii\to \Ii'$ depending only on $d,\alpha$ and $\Ii$ satisfying the following. 

Let $(X,B:=\sum_{i=1}^s b_iB_i)$ be an lc pair of dimension $d$, such that each $b_i\in\Ii$ and each $B_i\geq 0$ is a $\Qq$-Cartier Weil divisor. Then


\begin{enumerate}
	\item  $\gamma+\alpha \ge g(\gamma)\ge \gamma$ for any $\gamma\in\Ii$, 
\item $g(\gamma')\ge g(\gamma)$ for any $\gamma,\gamma'\in\Ii$ such that $\gamma'\ge \gamma$, and
	\item  $(X,\sum_{i=1}^s g(b_i)B_i)$ is lc.
\end{enumerate}
\end{thm}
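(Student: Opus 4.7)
The plan is to apply Lemma \ref{lem: accdcc projection} with an ACC set built from the ACC for log canonical thresholds (Theorem \ref{thm: acc lct}), and then deduce (3) by an inductive coefficient-replacement argument. After a harmless reduction, I may assume each $B_i$ is a distinct prime divisor, since the statement only concerns log canonicity of the modified pair.

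\medskip

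First I would set
$$\Ii'' := \overline{\LCT(\bar\Ii,\{1\},d)\cap[0,1]}.$$
The closure $\bar\Ii\subset[0,1]$ is itself DCC: any strictly decreasing sequence in $\bar\Ii$ can be approximated term by term by a strictly decreasing sequence in $\Ii$, contradicting the DCC property of $\Ii$. By Theorem \ref{thm: acc lct}, $\LCT(\bar\Ii,\{1\},d)$ satisfies the ACC; intersecting with $[0,1]$ and taking closure preserves the ACC property, so $\Ii''$ is a closed ACC subset of $[0,1]$. Apply Lemma \ref{lem: accdcc projection} with inputs $\alpha$, $M=1$, $\Ii$, and $\Ii''$ to obtain a finite set $\Ii'\subset\bar\Ii$ and a projection $g:\bar\Ii\to\Ii'$. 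Properties (1) and (2) of the theorem are then immediate from properties (1) and (2) of Lemma \ref{lem: accdcc projection}.

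\medskip

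For (3), I would induct on $k$ to show that $\bigl(X,\,\sum_{i\leq k}g(b_i)B_i+\sum_{i>k}b_iB_i\bigr)$ is lc, with base case $k=0$ being the given pair $(X,B)$. Writing $D_k := \sum_{i<k}g(b_i)B_i+\sum_{i>k}b_iB_i$, the inductive hypothesis gives that $(X,D_k+b_kB_k)$ is lc, and hence so is $(X,D_k)$. The threshold
$$t_k := \lct(X,D_k;B_k) \geq b_k$$
lies in $\LCT(\bar\Ii,\{1\},d)$ because the coefficients of $D_k$ belong to $\Ii\cup\Ii'\subset\bar\Ii$ and $B_k$ is prime. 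If $t_k\geq 1$, then $g(b_k)\in\bar\Ii\subset[0,1]$ yields $g(b_k)\leq 1\leq t_k$; otherwise $t_k\in\Ii''$, and property (3) of Lemma \ref{lem: accdcc projection} applied to $t_k\geq b_k$ gives $t_k\geq g(b_k)$. In either case $(X,D_k+g(b_k)B_k)$ is lc, completing the induction and establishing (3).

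\medskip

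The main subtlety is to ensure that all the log canonical thresholds appearing throughout the induction lie in one fixed ACC set depending only on $d$ and $\Ii$. This is resolved by enlarging the coefficient set from $\Ii$ to $\bar\Ii$ when invoking Theorem \ref{thm: acc lct}, together with the fact that Lemma \ref{lem: accdcc projection} produces $\Ii'\subset\bar\Ii$, so every intermediate coefficient in the replacement process remains inside $\bar\Ii$ and the threshold sets line up cleanly.
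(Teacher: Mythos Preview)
Your proposal is correct and follows essentially the same route as the paper: build the ACC set $\Ii''$ from the ACC for log canonical thresholds, feed it into Lemma~\ref{lem: accdcc projection} to produce $g$, and then verify (3) by replacing the coefficients $b_i$ by $g(b_i)$ one at a time. The paper's version is nearly identical, except that it sets $\Ii'':=\overline{\LCT(\Ii,\mathbb N,d)}$ (after replacing $\Ii$ by $\bar\Ii$) rather than $\overline{\LCT(\bar\Ii,\{1\},d)\cap[0,1]}$, which lets the step divisor $B_{j+1}$ be an arbitrary Weil divisor and thereby sidesteps your reduction to distinct primes; the inductive step is otherwise the same argument phrased as a proof by contradiction.
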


\begin{proof}

We may replace $\Ii$ with $\bar\Ii$ and assume that $\Ii=\bar\Ii$. Let $\Ii'':=\overline{\LCT(\Ii,\mathbb N,d)}$. By Theorem \ref{thm: acc lct}, $\Ii''$ is an ACC set. 

By Lemma \ref{lem: accdcc projection}, there exist a finite set $\Ii'\subset\bar{\Ii}$, and a projection $g:\bar\Ii\to \Ii'$ satisfying Lemma \ref{lem: accdcc projection}(1)--(3).

It suffices to show that $(X,\sum_{i=1}^s g(b_i)B_i)$ is lc. Otherwise, there exists some $0\le j\le s-1$, such that $(X,\sum_{i=1}^j g(b_i)B_i+\sum_{i=j+1}^s b_iB_i)$ is lc, and $(X,\sum_{i=1}^{j+1} g(b_i)B_i+\sum_{i=j+2}^s b_iB_i)$ is not lc. Let 
$$\beta:=\lct(X,\sum_{i=1}^{j} g(b_i)B_i+\sum_{i=j+2}^s b_iB_i;B_{j+1}).$$
Then $g(b_{j+1})> \beta\ge b_{j+1}$. Since $g(b_i),b_i\in\Ii$ for any $i$, we have $\beta\in \Ii''$ which contradicts Lemma \ref{lem: accdcc projection}(3).
\end{proof}

Lemma \ref{lem:dltmodel linearmap DCC coeff} is similar to Lemma \ref{lem:dltmodel linearmap linear functional div}. Recall that for any $\Rr$-divisor $B$, $\Coeff(B)$ stands for the set of all the coefficients of $B$.
\begin{lem}\label{lem:dltmodel linearmap DCC coeff}
Let $(X,B)$ and $(X,B')$ be two lc pairs such that $\Supp B=\Supp B'$, and $X\to Z$ a contraction. Suppose that $(X/Z,B)$ is $\Rr$-complementary. Then there exist a birational morphism $f:Y\to X$, and two $\Qq$-factorial lc pairs $(Y,B_Y)$ and $(Y,B'_Y)$ satisfying the following.
\begin{enumerate}
	\item $\Coeff(B_Y)\subset \Coeff(B)\cup\{1\}$, $\Coeff(B_Y')\subset \Coeff (B')\cup\{1\}$,
	\item $(Y/Z,B_Y)$ is $\Rr$-complementary,
		\item $K_Y+B'_Y\ge f^{*}(K_X+B')$,
	\item $-(K_Y+B_Y')\sim_{\Rr,Z} u(K_Y+\Delta)$ for some positive real number $u$ and dlt pair $(Y,\Delta)$, and
	\item if $X$ is of Fano type over $Z$, then $Y$ is of Fano type over $Z$.
\end{enumerate}
In particular, if $-(K_X+B')$ is not pseudo-effective over $Z$, then $-(K_Y+B'_Y)$ is not pseudo-effective over $Z$, and if $(Y/Z,B'_Y)$ is $\Rr$-complementary, then $(X/Z,B')$ is $\Rr$-complementary.
\end{lem}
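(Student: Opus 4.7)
The plan is to follow the pattern of Lemma \ref{lem:dltmodel linearmap linear functional div}, with the pair $(B, B')$ playing the role of $(\Delta(r_c), \Delta(t_0))$. First I fix an $\Rr$-complement $(X/Z, B+G)$ of $(X/Z, B)$, set $R := \lfloor B+G \rfloor$, and let $f: Y \to X$ be a $\Qq$-factorial dlt modification of $(X, B+G)$. Denote by $\tilde B, \tilde B', \tilde G$ the strict transforms of $B - B \wedge R$, $B' - B' \wedge R$, $G - G \wedge R$ respectively, and let $E$ be the sum of all $f$-exceptional prime divisors together with the strict transform of $R$. The candidates for the desired pairs are $B_Y := \tilde B + E$ and $B'_Y := \tilde B' + E$; note that $\Supp \tilde B = \Supp \tilde B'$ by the hypothesis $\Supp B = \Supp B'$, and that $\tilde B$ has strictly positive coefficients on this common support.

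Properties (1), (2), (3), and (5) are then straightforward. (1) is immediate from the construction, since the coefficients of $\tilde B$ (resp.\ $\tilde B'$) lie in $\Coeff(B)$ (resp.\ $\Coeff(B')$) and $E$ is reduced. For (2), the identity
$$K_Y + B_Y + \tilde G = f^*(K_X + B + G) \sim_{\Rr,Z} 0,$$
combined with the fact that $(Y, B_Y + \tilde G)$ is dlt, exhibits $(Y/Z, B_Y + \tilde G)$ as an $\Rr$-complement of $(Y/Z, B_Y)$. For (3), one computes
$$K_Y + B'_Y - f^*(K_X + B') = f^{-1}_{*}(R - B' \wedge R) + \sum_j a(E_j, X, B')\, E_j \geq 0,$$
which is non-negative because $R - B' \wedge R \geq 0$ (all coefficients of $B'$ are $\leq 1$) and $(X, B')$ is lc. Property (5) follows from Lemma \ref{lem:Fanotypedlt} applied to $f$, using that any component $D \notin R$ satisfies $\mult_D(B+G) < 1$, so $\tilde B$ and $\tilde G$ have coefficients in $[0,1)$ on their supports. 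The two ``in particular'' assertions then follow by standard pushforward arguments from (3).

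The main content is property (4). I choose $0 < \epsilon \ll 1$ small enough so that $\tilde B - \epsilon \tilde B' \geq 0$ (possible since $\tilde B$ has strictly positive coefficients on $\Supp \tilde B = \Supp \tilde B'$) and $(Y, \Delta)$ is dlt, where
$$\Delta := \frac{\tilde B - \epsilon \tilde B'}{1-\epsilon} + E + \frac{\tilde G}{1-\epsilon}.$$
The identity
$$(1-\epsilon)(K_Y + \Delta) + \epsilon(K_Y + B'_Y) = K_Y + \tilde B + E + \tilde G \sim_{\Rr,Z} 0$$
then gives $-\epsilon(K_Y + B'_Y) \sim_{\Rr,Z} (1-\epsilon)(K_Y + \Delta)$, so I set $u := (1-\epsilon)/\epsilon > 0$. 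The main obstacle is confirming that $(Y, \Delta)$ is genuinely dlt: this rests on the fact that $\lfloor \tilde B + E + \tilde G \rfloor = E$ (by definition of $R$, the non-$E$ components have coefficients in $[0,1)$), so for $\epsilon$ sufficiently small the coefficients of $\Delta$ on non-$E$ components remain in $[0,1)$, the coefficient of $E$ stays $1$, and the snc structure near all lc centers is preserved; openness of the dlt condition under such small perturbations then completes the argument.
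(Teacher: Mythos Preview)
Your proof is correct and follows essentially the same approach as the paper's own proof: the same dlt modification of an $\Rr$-complement $(X,B+G)$, the same definitions of $\tilde B,\tilde B',\tilde G,E$, the same choice of $\Delta=\frac{\tilde B-\epsilon\tilde B'}{1-\epsilon}+E+\frac{\tilde G}{1-\epsilon}$, and the same computation giving $-\epsilon(K_Y+B_Y')\sim_{\Rr,Z}(1-\epsilon)(K_Y+\Delta)$. If anything, you supply slightly more detail than the paper does, e.g.\ the explicit formula for $K_Y+B_Y'-f^*(K_X+B')$ in (3) and the justification for why $(Y,\Delta)$ remains dlt.
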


\begin{proof}
	Let $(X/Z,B+G)$ be an $\Rr$-complement of $(X/Z,B)$ and $R:=\lfloor B+G\rfloor$.
    Let $f: Y\rightarrow X$ be a dlt modification of $(X,B+G)$, such that
    $$K_Y+\tilde{B}+E+\tilde{G}=f^{*}(K_X+B+G),$$
    where $\tilde{B}$ and $\tilde{G}$ are the strict transforms of $B-B\wedge R$ and $G-G\wedge R$ on $Y$ respectively, and $E$ is the sum of the reduced exceptional divisors of $f$ and the strict transform of $R$. 
    
   	Let $\tilde{B}'$ be the strict transform of $\tilde{B}'-\tilde{B}'\wedge R$ on $Y$, $B_Y:=\tilde{B}+E$, and $B'_Y:=\tilde{B}'+E$. Then $\Coeff(B_Y)\subset \Coeff(B)\cup\{1\}$, $\Coeff(B_Y')\subset \Coeff(B')\cup\{1\}$,  $(Y/Z,B_Y+\tilde{G})$ is an $\Rr$-complement of $(Y/Z,B_Y)$, and $K_Y+B'_Y\ge f^{*}(K_X+B')$.
   	
   	Since $\Supp \tilde{B}'=\Supp \tilde{B}$, $(Y,\tilde{B}+E+\tilde{G})$ is dlt and $\lfloor\tilde{B}+E+\tilde{G}\rfloor=E$, there exists a positive real number $\epsilon<1$, such that $\tilde{B}-\epsilon\tilde{B}'\ge0$, and $(Y,\Delta)$ is dlt, where $\Delta:=\frac{\tilde{B}-\epsilon\tilde{B}'}{1-\epsilon}+\frac{\tilde{G}}{1-\epsilon}+E$. Then 
   	\begin{align*}
   	-\epsilon(K_Y+B_Y')&=-\epsilon (K_Y+\tilde{B}'+E)\\
   	                  &\sim_{\Rr,Z} (1-\epsilon)(K_Y+\frac{\tilde{B}-\epsilon\tilde{B}'}{1-\epsilon}+\frac{\tilde{G}}{1-\epsilon}+E)\\
   	             &=(1-\epsilon)(K_Y+\Delta).
   	\end{align*}
    
   	
	If $X$ is of Fano type over $Z$, then by Lemma \ref{lem:Fanotypedlt}, $Y$ is of Fano type over $Z$. 
\end{proof}

\begin{thm}\label{thm: dcc limit divisor}
	Let $d$ be a positive integer and $\Ii\subset [0,1]$ a DCC set. Then there exist a finite set $\Ii'\subset \bar\Ii$ and a projection $g:\bar\Ii\to \Ii'$ depending only on $d$ and $\Ii$ satisfying the following.

Let $(X,B:=\sum b_iB_i)$ be an lc pair of dimension $d$ and $X\to Z$ a contraction, such that each $b_i\in\Ii$ and each $B_i\geq 0$ is a $\mathbb Q$-Cartier Weil divisor. If $(X/Z,B)$ is $\Rr$-complementary, then
	\begin{enumerate}
		 \item  $\gamma+\alpha \ge g(\gamma)\ge \gamma$ for any $\gamma\in\Ii$, 
\item $g(\gamma')\ge g(\gamma)$ for any $\gamma,\gamma'\in\Ii$ such that $\gamma'\ge \gamma$,
		\item  $(X,\sum_{i} g(b_i)B_i)$ is lc, 
		\item $-(K_X+\sum_{i} g(b_i)B_i)$ is pseudo-effective over $Z$,
		\item if $Z$ is a closed point, then there exists an $\Rr$-divisor $D\ge0$, such that $-(K_X+\sum_{i} g(b_i)B_i)\sim_{\Rr} D$,
		\item if $X$ is of Fano type, then $(X/Z,\sum_{i} g(b_i)B_i)$ is $\Rr$-complementary, and
		\item suppose that Conjecture \ref{conj: exist gmm} holds in dimension $d$, then $(X/Z,\sum_{i} g(b_i)B_i)$ is $\Rr$-complementary. 
	\end{enumerate}
\end{thm}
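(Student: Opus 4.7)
The plan is to mirror the proof of Theorem \ref{thm: dcc limit lc divisor} while enlarging the ambient ACC set so that it also controls the coefficients forced by the pseudo-effectivity condition. Concretely, set $\Ii''_{\rm lct}:=\overline{\LCT(\Ii,\Nn,d)}$ and let $\Ii''_{\rm gACC}$ denote the closure of the set of coefficients $c\in[0,1]$ that appear in some lc pair $(F,\Delta)$ with $\dim F\le d$, $\Coeff(\Delta)\subset\bar\Ii$, and $K_F+\Delta\equiv 0$. By Theorem \ref{thm: acc lct} and the global ACC of \cite[Theorem 1.5]{HMX14}, $\Ii'':=\overline{\Ii''_{\rm lct}\cup\Ii''_{\rm gACC}}\subset[0,1]$ is ACC. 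Apply Lemma \ref{lem: accdcc projection} with input $(\bar\Ii,\Ii'')$ (and any fixed $\alpha>0$) to produce a finite set $\Ii'\subset\bar\Ii$ together with a projection $g:\bar\Ii\to\Ii'$ satisfying conclusions (1)--(3) of that lemma. Conclusions (1) and (2) of the theorem are then immediate, and (3) is proved exactly as in Theorem \ref{thm: dcc limit lc divisor} using the $\Ii''_{\rm lct}$-component of $\Ii''$.

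For (4), argue by contradiction along the lines of Theorem \ref{thm: pepolytope linear maps}(1). Suppose $-(K_X+\sum g(b_i)B_i)$ is not pseudo-effective over $Z$. Apply Lemma \ref{lem:dltmodel linearmap DCC coeff} to the two lc pairs $(X,B)$ and $(X,\sum g(b_i)B_i)$, which share support, to pass to a $\Qq$-factorial dlt model $f:Y\to X$ on which both $(Y,B_Y)$ and $(Y,g(B)_Y)$ are lc, $(Y/Z,B_Y)$ remains $\Rr$-complementary, $K_Y+g(B)_Y\ge f^{*}(K_X+g(B))$, and $-(K_Y+g(B)_Y)\sim_\Rr u(K_Y+\Delta)$ for some $\Qq$-factorial dlt $(Y,\Delta)$ and $u>0$. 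Run the corresponding $(K_Y+\Delta)$-MMP with scaling over $Z$; since $-(K_Y+g(B)_Y)$ is not pseudo-effective, this terminates with a Mori fiber contraction $Y'\to Z'/Z$ on whose general fiber $F$ the divisor $-(K_F+g(B)_F)$ is ample. Using relative Picard rank one together with the fact that $-(K_{Y'}+B_{Y'})$ remains pseudo-effective over $Z$ (inherited from the $\Rr$-complementarity of $B_Y$ via the compatibility of the MMP for $t_k$ close to the endpoints, as in Theorem \ref{thm: pepolytope linear maps}(1)), one finds $\eta\in[0,1]$ with $K_F+B(\eta)_F\equiv 0$, where $B(t):=(1-t)B_{Y'}+tg(B)_{Y'}$. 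The coefficients of $B(\eta)_F$ are of the form $(1-\eta)b_i+\eta g(b_i)$; by the global ACC for $\Rr$-affine functional divisors (cf.\ \cite{HLQ17} and the remark after Theorem \ref{thm:nakacc}) together with the defining property of $\Ii''_{\rm gACC}\subset\Ii''$, each such nontrivial coefficient lies in $\Ii''$. Lemma \ref{lem: accdcc projection}(3) then forces $(1-\eta)b_i+\eta g(b_i)\ge g(b_i)$ for every index $i$ with $g(b_i)>b_i$, hence $\eta\ge 1$, contradicting the strict ampleness of $-(K_F+g(B)_F)$.

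Conclusion (5) is obtained by lifting, via the dlt model $f:Y\to X$ and the MMP of Step~(4), an effective representative of $-(K_{Y'}+g(B)_{Y'})$ on the output good minimal model back to $X$ when $Z$ is a point. For (6) and (7), combine (3) and (4) with the MMP arguments in the proofs of Theorem \ref{thm: pepolytope linear maps}(2)--(3): once $(X,g(B))$ is known lc with $-(K_X+g(B))$ pseudo-effective over $Z$, existence of a good minimal model for $K_Y+\Delta$ (automatic in the Fano-type case, granted under Conjecture \ref{conj: exist gmm} in general) produces a semiample representative of $-(K_X+g(B))$, and hence the desired $\Rr$-complement of $(X/Z,g(B))$. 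The main obstacle is (4); the delicate point is that the interpolated coefficients $(1-\eta)b_i+\eta g(b_i)$ do not a priori lie in any prescribed DCC set, so the classical global ACC of \cite{HMX14} does not apply directly and one must appeal to its generalization for $\Rr$-affine functional divisors, while ensuring that $\Ii''$ is chosen large enough to absorb every such threshold.
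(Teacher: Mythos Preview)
Your construction of $g$ upfront via Lemma \ref{lem: accdcc projection} is natural, and parts (1)--(3), (6), (7) go through essentially as you describe. The genuine gap is in (4).

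You interpolate \emph{all} coefficients simultaneously by a single parameter $\eta$, obtaining on the Mori fibre $F$ a numerically trivial pair whose coefficients are $(1-\eta)b_i+\eta g(b_i)$. You then assert that each such coefficient lies in your set $\Ii''$. But your $\Ii''_{\rm gACC}$ is defined as the closure of coefficients appearing in numerically trivial lc pairs with $\Coeff(\Delta)\subset\bar\Ii$; the interpolated values $(1-\eta)b_i+\eta g(b_i)$ are \emph{not} in $\bar\Ii$ for $0<\eta<1$, so the hypothesis of that definition is not met (for \emph{any} of the coefficients, not just one). The appeal to a ``global ACC for $\Rr$-affine functional divisors'' does not repair this: even in that setting one needs the linear functions to be fixed in advance, whereas here both endpoints $b_i,g(b_i)$ range over infinite sets. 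Consequently the key implication $(1-\eta)b_i+\eta g(b_i)\in\Ii''\Rightarrow (1-\eta)b_i+\eta g(b_i)\ge g(b_i)$ via Lemma \ref{lem: accdcc projection}(3) is unsupported.

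The paper sidesteps this by \emph{not} fixing $g$ in advance. It argues by contradiction: for each $k$ take the projection $g_k$ from Theorem \ref{thm: dcc limit lc divisor} with $\alpha=1/k$, and a witness pair $(X_k,B_{(k)})$ where (4) fails. After the dlt replacement and MMP, the paper moves from $g_k(b_{k,i})$ to $b_{k,i}$ \emph{one coefficient at a time}, so on the Mori fibre all coefficients except one lie in $\bar\Ii$, and the exceptional one $b_k^+$ is squeezed in $[b_{k,k_j},g_k(b_{k,k_j}))$ with $g_k(b_{k,k_j})-b_{k,k_j}<1/k$. Lemma \ref{lem: strictly increasing seq b_k} then makes $\{b_k^+\}$ strictly increasing, so the coefficients lie in the genuine DCC set $\bar\Ii\cup\{b_k^+\}_{k}$, and the ordinary global ACC of \cite{HMX14} gives the contradiction. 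Your approach could be rescued by (i) changing coefficients one at a time rather than via a global $\eta$, and (ii) redefining $\Ii''_{\rm gACC}$ as the set of $\beta$ appearing in numerically trivial lc pairs with $\Coeff(\Delta)\subset\bar\Ii\cup\{\beta\}$; but as written the argument does not close.

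A smaller point: for (5) you need non-vanishing, not just pseudo-effectivity; the paper invokes \cite[Theorem 1.2]{Has17} after reducing to a $\Qq$-factorial dlt Calabi--Yau type pair via Lemma \ref{lem:dltmodel linearmap DCC coeff}, which your sketch omits.
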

\begin{proof}We first show that there exist a finite set $\Ii'\subset \bar\Ii$ and a projection $g:\bar\Ii\to \Ii'$ satisfying (1)--(4). Otherwise, by Theorem \ref{thm: dcc limit lc divisor}, there exist a sequence of lc pairs $(X_{k},B_{(k)}:=\sum_{i} b_{k,i}B_{k,i})$ of dimension $d$ and a sequence of projections $g_{k}:\bar\Ii\to \bar{\Ii}$, such that for any $k,i$, $b_{k,i}\in\Ii$, $B_{k,i}\geq 0$ is a $\mathbb Q$-Cartier Weil divisor, $(X_{k}/Z_k,B_{(k)})$ is $\Rr$-complementary, $(X_{k},B_{(k)}':=\sum_{i} g_k(b_{k,i})B_{k,i})$ is lc, $b_{k,i}+\frac{1}{k}\ge g_k(b_{k,i})\ge b_{k,i}$, and $-(K_{X_{k}}+B_{(k)}')$ is not pseudo-effective over $Z_k$. 
	
	By Lemma \ref{lem:dltmodel linearmap DCC coeff} and Theorem \ref{thm: dcc limit lc divisor}, possibly passing to a subsequence of $X_k$ and replacing $X_k$ with a birational model, we may assume that $-(K_{X_{k}}+B_{(k)}')\sim_{\Rr,Z_k}u_k(K_{X_k}+\Delta_k)$, where each $u_k$ is a positive real number which may depend on $X_k$, and each $(X_k,\Delta_k)$ is a $\Qq$-factorial dlt pair.
	
	Thus we may run a $-(K_{X_{k}}+B_{(k)}')$-MMP over $Z_k$ with scaling of an ample divisor over $Z_k$, and reach a Mori fiber space $Y_k\to Z_k'$ over $Z_k$, such that $-(K_{Y_k}+B_{(Y_{k})}')$ is antiample over $Z_k'$, where $B_{(Y_{k})}'$ is the strict transform of $B_{(k)}'$ on $Y_k$. Since $-(K_{X_k}+B_{(k)})$ is pseudo-effective over $Z_k$, $-(K_{Y_k}+B_{(Y_{k})})$ is nef over $Z_k'$, where $B_{(Y_{k})}$ is the strict transform of $B_{(k)}$ on $Y_k$. 
	
	For each $k$, there exist a positive integer $k_j$ and a positive real number $0\le b_k^{+}\le 1$, such that $b_{k,k_j}\le b_k^{+}< g_k(b_{k,k_j})$ and $K_{F_k}+B_{F_k}^{+}\equiv 0$, where
	$$K_{F_k}+B_{F_k}^{+}:=\left(K_{Y_{k}}+(\sum_{i<k_j}g_k(b_{k,i})B_{Y_k,i})+b_k^{+}B_{Y_k,k_j}+(\sum_{i>k_j}b_{k,i}B_{Y_k,i})\right)|_{F_k},$$
	$B_{Y_k,i}$ is the strict transform of $B_{k,i}$ on $Y_k$ for any $i$, and $F_k$ is a general fiber of $Y_k\to Z_k'$. Since $(X_{k}/Z_k,B_{(k)})$ is $\Rr$-complementary, $(Y_{k},B_{(Y_{k})})$ is lc. Possibly passing to a subsequence, by Theorem \ref{thm: dcc limit lc divisor}, we may assume that $(Y_{k},B_{(Y_{k})}')$ is lc for any $k$. Thus $(F_k,B_{F_k}^{+})$ is lc.
	
	Since $g_k(b_{k,k_j})$ belongs to the DCC set $\bar{\Ii}$ for any $k$, possibly passing to a subsequence, we may assume that $g_k(b_{k,k_j})$ is increasing. Since $g_k(b_{k,k_j})-b_k^{+}>0$ and 
	$$\lim_{k\to +\infty} (g_k(b_{k,k_j})-b_k^{+})=0,$$ 
	by Lemma \ref{lem: strictly increasing seq b_k} below, passing to a subsequence again, we may assume that $b_{k}^{+}$ is strictly increasing.

	Now $K_{F_k}+B_{F_k}^{+}\equiv0$, the coefficients of $B_{F_k}^{+}$ belong to the DCC set $\bar{\Ii}\cup\{b_k^{+}\}_{k=1}^{\infty}$, and $b_{k}^{+}$ is strictly increasing. This contradicts the global ACC \cite[Theorem 1.4]{HMX14}.
	
	\medskip
	
	Next we show (5). If $Z$ is a closed point, then by Lemma \ref{lem:dltmodel linearmap DCC coeff}, possibly replacing $X$ with a birational model, we may assume $-(K_X+\sum_{i} g(b_i)B_i)\sim_{\Rr} u(K_X+\Delta)$ for some positive real number $u$ and $\Qq$-factorial dlt pair $(X,\Delta)$. Since $X$ is of Calabi–Yau type, by \cite[Theorem 1.2]{Has17}, there exists an $\Rr$-divisor $D\ge0$, such that $-(K_X+\sum_{i} g(b_i)B_i)\sim_{\Rr} D$.
	
	\medskip

	It suffices to show (6)--(7). Suppose that either $X$ is of Fano type over $Z$ or Conjecture \ref{conj: exist gmm} holds in dimension $d$. Then there exists a good minimal model 
	$-(K_Y+\sum g(b_i)B_{Y,i})$ of $-(K_X+\sum g(b_i)B_i)$ over $Z$, where $B_{Y,i}$ are strict transforms of $B_i$ on $Y$. Since $(X/Z,\sum b_iB_i)$ is $\Rr$-complementary, $(Y,\sum b_iB_{Y,i})$ is lc. Therefore, $(Y,\sum g(b_i)B_{Y,i})$ is lc by the construction of $g$. Since $-(K_Y+\sum g(b_i)B_{Y,i})$ is semiample over $Z$, $(Y/Z,\sum g(b_i)B_{Y,i})$ is $\Rr$-complementary. Hence $(X/Z,\sum g(b_i)B_i)$ is $\Rr$-complementary.
	\end{proof}

The proof of the following lemma is elementary.
\begin{lem}\label{lem: strictly increasing seq b_k}
Let $\{a_k\}_{k=1}^{\infty}$ be an increasing sequence of real numbers, and $\{b_k\}_{k=1}^{\infty}$ a sequence of real numbers, such that $b_k<a_k$ for any $k$, and $\lim_{k\to+\infty}(a_k-b_k)=0$. Then passing to a subsequence, we may assume that $b_k$ is strictly increasing.
\end{lem}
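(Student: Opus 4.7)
The plan is to reduce the statement to the following standard fact about sequences of real numbers: if a sequence $\{c_k\}$ converges to some $c \in \Rr \cup \{+\infty\}$ (in the extended real line) with $c_k < c$ for all $k$, then it admits a strictly increasing subsequence. This extraction is immediate by induction: given $c_{k_j}$, pick $\epsilon_j > 0$ with $c_{k_j} < c - \epsilon_j$ (in the case $c = +\infty$, use $c_{k_j} < c_{k_j} + 1 < c$), and then use $c_k \to c$ to find $k_{j+1} > k_j$ with $c_{k_{j+1}} > c - \epsilon_j/2 > c_{k_j}$ (or $c_{k_{j+1}} > c_{k_j} + 1$ in the infinite case).

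Next, I would set $a := \sup_k a_k \in \Rr \cup \{+\infty\}$. Since $\{a_k\}$ is increasing, it converges to $a$. I claim the sequence $\{b_k\}$ satisfies the hypotheses of the fact above with limit $a$. Indeed, $b_k < a_k \leq a$ for every $k$, and $b_k \to a$: if $a \in \Rr$, this follows from $b_k = a_k - (a_k - b_k)$ with $a_k \to a$ and $a_k - b_k \to 0$; if $a = +\infty$, then for $k$ large enough $a_k - b_k < 1$, so $b_k > a_k - 1 \to +\infty$.

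Applying the extraction fact to $\{b_k\}$ then produces the required strictly increasing subsequence. The argument is entirely elementary and does not interact with any of the algebraic-geometric content of the paper; the only mild subtlety is handling the unbounded case uniformly, which is why it is convenient to work in the extended real line from the start.
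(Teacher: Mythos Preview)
Your proof is correct. It differs slightly from the paper's argument, which is even more direct: the paper never computes the limit of $\{b_k\}$ or splits into bounded/unbounded cases. Instead, it sets $c_j := a_j - b_j > 0$ and, for any $k$, uses $c_j \to 0$ to find $k' > k$ with $c_{k'} < c_k$; then $b_{k'} = a_{k'} - c_{k'} \geq a_k - c_{k'} > a_k - c_k = b_k$. Your route---identifying $a := \sup_k a_k$, showing $b_k \to a$ with $b_k < a$, and invoking the general extraction fact---is a bit more conceptual and packages the argument via a reusable principle, at the cost of handling the case $a = +\infty$ separately. Both approaches are elementary and essentially of the same length; the paper's version avoids any case distinction, while yours isolates a standard sub-lemma about sequences approaching their limit from below.
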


\begin{proof}
     It suffices to show that for any $k\ge 1,$ there exists an integer $k'>k$ such that $b_{k'}>b_k.$ Let $c_j=a_j-b_j$ for any $j,$ then $c_j>0$ and $\lim_{j\to +\infty}c_j=0$ by assumption. Thus there exists an integer $k'>k,$ such that $c_{k'}<c_k.$ Since $a_{k'}>a_k,b_{k'}=a_{k'}-c_{k'}>a_{k}-c_{k}=b_k.$
\end{proof}

Now we are ready to prove Theorem \ref{thm: existence ni1i2 complement}.

\begin{proof}[Proof of Theorem \ref{thm: existence ni1i2 complement}]
Possibly replacing $(X,B)$ with its dlt modification, we may assume that $(X,B)$ is $\Qq$-factorial dlt. Write $B=\sum b_iB_i$, where $B_i$ are distinct prime divisors. Possibly shrinking $Z$ to a neighborhood of $z$, according to Theorem \ref{thm: dcc limit divisor}, there exist a finite set $\Ii'\subset\bar{\Ii}$ and a projection $g:\bar\Ii\to \Ii'$ depending only on $d$ and $\Ii$, such that $(X/Z,B':=\sum g(b_i)B_i)$ is $\Rr$-complementary and $B'\ge B$. 

By Theorem \ref{thm: Uniform perturbation of R complements}, there exist a finite set $\Ii_0=\{a_1,a_2,\dots,a_k\}\subset (0,1]$ and a finite set $\Ii_1$ of non-negative rational numbers depending only on $d$ and $\Ii'$, such that $\sum_{i=1}^ka_i=1$, and $$K_{X}+B'=\sum_{i=1}^k a_i(K_{X}+B'_{i})$$
for some $\Qq$-divisors $B_1',\dots,B_k'\in\Ii_1$ such that $({X}/Z,B'_{i})$ is $\Rr$-complementary for every $i$. Moreover, if $\bar\Ii\subset\mathbb Q$, then we may pick $\Ii_0=\{1\}$ and $B_1'=B'$.

 For each $i$, we may run a $-(K_X+B_i')$-MMP over $Z$ and reach a minimal model $X\dashrightarrow X_i$, such that $-(K_{X_i}+B_{X_i}')$ is nef over $Z$, where $B_{X_i}'$ is the strict transform of $B_i'$ on $X_i$. Since $(X/Z,B_i')$ is $\Rr$-complementary, $(X_i,B_{X_i}')$ is lc. By Theorem \ref{thm:Bir19thm1.8}, $(X_i/Z,B_{X_i}')$ has a monotonic $n$-complement. Therefore, $(X/Z,B_i')$ has a monotonic $n$-complement $(X/Z,B_i^+)$ for any $i$. We get the desired $B^+$ by letting $B^{+}:=\sum_{i=1}^k a_iB_i^{+}$.
\end{proof}

\section{Existence of $n$-complements for pairs with DCC coefficients}

In this section, we will show Theorem \ref{thm: dcc existence n complement}. We first show that Theorem \ref{thm: dcc existence n complement with part two} below implies Theorem \ref{thm: dcc existence n complement}, and then prove Theorem \ref{thm: dcc existence n complement with part two}, which follows from Theorem \ref{thm: existence ni1i2 complement} and Diophantine approximation.

\begin{thm}\label{thm: dcc existence n complement with part two}
	Let $d,p,s$ be three positive integers, $\epsilon$ a positive real number, $\Ii\subset [0,1]$ a DCC set, $||.||$ a norm of $\mathbb R^s$, $\bm{v}=(v_1,\dots,v_s)\in\mathbb R^s$ a point such that $\bm{v}\notin\Qq^s$, $V\subset\mathbb R^s$ the rational envelope of $\bm{v}$, and $\bm{e}=(e_1,\dots,e_s)\in V$ a unit vector, i.e. $||\bm{e}||=1$. Then there exist a positive integer $n$ and a point $\bm{v}_n\in V$ depending only on $d,p,\epsilon,\Ii,||.||,\bm{v}$ and $\bm{e}$ satisfying the following. 
	
	Assume that  $(X,B)$ is an lc pair of dimension $d$, and $X\to Z$ is a contraction, such that
	\begin{itemize}
		\item $X$ is of Fano type over $Z$,
		\item $B\in\Ii$, and
		\item $(X/Z,B)$ is $\Rr$-complementary.
	\end{itemize}
	Then for any point $z\in Z$,
	\begin{enumerate}
	    \item (existence of $n$-complements) there exists an $n$-complement $(X/Z\ni z,B^{+})$ of $(X/Z\ni z,B)$, moreover, if $\bar{\Ii}\subseteq \Qq$, then we may pick $B^{+}\ge B$,
		\item (divisibility) $p\mid n$,
		\item (rationality) $n\bm{v}_{n}\in \mathbb Z^s$,
		\item (approximation) $||\bm{v}-\bm{v}_n||<\frac{\epsilon}{n}$, and
		\item (anisotropic)
		$$||\frac{\bm{v}-\bm{v}_n}{||\bm{v}-\bm{v}_n||}-\bm{e}||<\epsilon.$$
	\end{enumerate}	
\end{thm}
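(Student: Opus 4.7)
The plan is to combine the existence of $(n_0,\Ii_0)$-decomposable $\Rr$-complements (Theorem \ref{thm: existence ni1i2 complement}) with simultaneous Diophantine approximation inside the rational envelope $V$, producing both the integer $n$ and the rational point $\bm{v}_n$ at once. First, I would apply Theorem \ref{thm: existence ni1i2 complement} to $(d,\Ii)$ to obtain a positive integer $n_0$ (which I enlarge so that $p\mid n_0$) and a finite set $\Ii_0=\{a_1,\ldots,a_k\}\subset(0,1]$ with $\sum a_i=1$, such that every admissible pair $(X/Z\ni z,B)$ admits an $(n_0,\Ii_0)$-decomposable $\Rr$-complement $B^+=\sum a_iB_i^+$, where each $(X/Z\ni z,B_i^+)$ is an $n_0$-complement of itself. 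In particular, $n_0(K_X+B_i^+)\sim 0$ near $z$, and the coefficients of each $B_i^+$ are rationals with denominators dividing $n_0$.

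Next, I would perform the Diophantine approximation step. Since $V$ is the smallest affine subspace over $\Qq$ containing $\bm{v}$ and $\bm{v}\notin\Qq^s$, the translate $V-\bm{v}$ has a nonzero rational linear subspace structure, and the orbit of $\bm{v}$ under rational translations is dense in $V$. By a Kronecker--Weyl type equidistribution argument applied to the projection of $m\bm{v}$ into $V/(V\cap\Qq^s)$, for any $\bm{e}\in V-\bm{v}$ with $\|\bm{e}\|=1$ and any $\delta>0$, I can find arbitrarily large positive integers $m$ and rational points $\bm{w}_m\in V$ with $m\bm{w}_m\in\Zz^s$ satisfying $\|\bm{v}-\bm{w}_m\|<\delta/m$ together with the anisotropic bound on $(\bm{v}-\bm{w}_m)/\|\bm{v}-\bm{w}_m\|-\bm{e}$. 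I then select such an $m$ divisible by $n_0p$ and set $n:=m$, $\bm{v}_n:=\bm{w}_m$. Conditions (2)--(5) are immediate from this construction.

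Finally, I would construct the $n$-complement. The coefficients of each $B_i^+$ lie in $\Qq[1,v_1,\ldots,v_s]$ because $B_i^+$ has rational coefficients (denominators dividing $n_0$), and after substituting $\bm{v}_n$ for $\bm{v}$ in the $\Qq$-linear expressions that define the $(n_0,\Ii_0)$-decomposition, I obtain rational replacements $a_{i,n}$ of the $a_i$ with $\sum a_{i,n}=1$ and common denominators dividing $n/n_0$. Setting $B^{++}:=\sum a_{i,n}B_i^+$ gives a divisor with $n(K_X+B^{++})\sim 0$ over a neighborhood of $z$. The closeness $\|\bm{v}-\bm{v}_n\|<\epsilon/n$ together with Theorem \ref{thm: Uniform perturbation of lc pairs} applied to $(X,B^+)$ ensures that $(X,B^{++})$ remains lc once $\epsilon$ is chosen small enough; and the anisotropic bound on the direction of $\bm{v}-\bm{v}_n$ is used to guarantee the floor inequality $B^{++}\ge\lfloor B\rfloor+\tfrac{1}{n}\lfloor(n+1)\{B\}\rfloor$, as well as the monotonicity $B^{++}\ge B$ in the case $\bar\Ii\subseteq\Qq$ (where $V=\{\bm{v}\}$, $\bm{v}_n=\bm{v}$, and $B^{++}=B^+$).

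The chief difficulty will be coordinating the direction-controlled Diophantine approximation with the $n$-complement floor condition: one must verify that, for the prescribed $\bm{e}$, the induced perturbation $\bm{v}\mapsto\bm{v}_n$ shifts each relevant coefficient of $B^+$ in the direction needed to dominate $\lfloor B\rfloor+\tfrac{1}{n}\lfloor(n+1)\{B\}\rfloor$ componentwise. This ultimately reduces to a linear-algebraic compatibility between the sign pattern of $\bm{e}$ and the $\Qq$-linear maps expressing coefficients in terms of $\bm{v}$, and is where the flexibility in choosing $\bm{e}$ and the uniformity in Theorem \ref{thm: Uniform perturbation of lc pairs} are most heavily used.
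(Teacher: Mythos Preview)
There is a genuine gap in how you produce the rational weights $a_{i,n}$. You write that ``after substituting $\bm{v}_n$ for $\bm{v}$ in the $\Qq$-linear expressions that define the $(n_0,\Ii_0)$-decomposition, I obtain rational replacements $a_{i,n}$ of the $a_i$.'' But the $a_i\in\Ii_0$ are constants depending only on $d$ and $\Ii$ via Theorem~\ref{thm: existence ni1i2 complement}; they are \emph{not} $\Qq$-linear functions of $\bm{v}$, which is an entirely independent input to the statement. So there is no mechanism by which approximating $\bm{v}$ alone yields any approximation of the $a_i$, and without rational $a_i'$ close to $a_i$ with bounded denominator you cannot make $n(K_X+B^{++})\sim 0$ while controlling the floor inequality. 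In the paper's proof this is exactly the point of Lemma~\ref{lem: vectordiufantu}: one runs a \emph{joint} Kronecker--Weyl approximation for $(\bm{v},\bm{a})$, obtaining $n_0$, $\bm{v}'$, and $\bm{a}'=(a_1',\dots,a_k')$ simultaneously with $n_0\bm{a}'\in\Zz^k$, $\sum a_i'=1$, $\|\bm{a}-\bm{a}'\|_\infty<\epsilon_0/n_0$, and the anisotropic control only on the $\bm{v}$-component. One then sets $n=n_0n_1$ and $B^+=\sum a_i' B^i$.

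You also misassign the role of the anisotropic condition~(5). In the paper it is purely an output of the construction, used later (in the proof of Theorem~\ref{thm: dcc existence n complement}) to force monotonicity under the hypothesis $\Span_{\Qq_{\ge0}}(\bar\Ii\setminus\Qq)\cap(\Qq\setminus\{0\})=\emptyset$; it plays no part in verifying the floor inequality $B^+\ge\lfloor B\rfloor+\tfrac{1}{n}\lfloor(n+1)\{B\}\rfloor$. That inequality is obtained from the closeness of $\bm{a}'$ to $\bm{a}$ together with a carefully chosen $\epsilon_0<\min T_{>0}$, where $T$ is a finite set built from the $a_i$'s and $n_1$. Finally, the appeal to Theorem~\ref{thm: Uniform perturbation of lc pairs} for lc-ness is unnecessary: once $a_i'>0$ and $\sum a_i'=1$, $(X,\sum a_i'B^i)$ is lc by convexity, since each $(X,B^i)$ is lc.
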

At the first glance, (2)--(5) in Theorem \ref{thm: dcc existence n complement with part two} seem to be technical. However, each of them will be used in the proof of Theorem \ref{thm: dcc existence n complement}. It is expected that Theorem \ref{thm: dcc existence n complement with part two} also holds for the existence of $(\epsilon',n)$-complements (cf. \cite{CH20}) when $\epsilon'$ is a rational number. 

\begin{proof}[Proof of Theorem \ref{thm: dcc existence n complement}] By Theorem \ref{thm: dcc existence n complement with part two}, it suffices to prove the moreover part of Theorem 1.10. 

By Theorem \ref{thm: existence ni1i2 complement}, possibly replacing $\Ii$ with a finite subset of $\bar{\Ii}$ and $(X/Z,B)$ with an $(n,\Ii_0)$-decomposable $\Rr$-complement of $(X/Z,B)$, we may assume that $\Ii=\{v_1,\ldots,v_s\}$ is a finite set. 

Since $\Span_{\Qq_{\geq 0}}(\Ii\backslash\Qq)\cap\mathbb Q\backslash\{0\}=\emptyset,$ by Lemma \ref{lem:finitesetmonotonic}, there exist real numbers $r_0:=1,r_1,\ldots,r_c$ which are linearly independent over $\Qq$, such that $\Ii\subset\Span_{\Qq_{\ge0}}(\{r_0,r_1,\ldots,r_c\})$. If $\Ii\subset \Qq$, i.e., $c=0$, then the theorem follows from Theorem \ref{thm: dcc existence n complement with part two}. If $c\ge 1$, then let $\Ii_{>0}:= \Ii\cap (0,+\infty)$, $\bm{v}:=(v_1,\ldots,v_s)$, $V\subset\Rr^s$ the rational envelope of $\bm{v}$, and $||.||_{\infty}$ the maximal norm on $\Rr^s$. There exist a unit vector $\bm{e}\in V$ and a positive real number $\epsilon<\min \Ii_{>0}$, such that 
$v_1^{+}\ge v_1,\ldots,v_s^{+}\ge v_s$ for any $\bm{v}^{+}=(v_1^{+},\ldots,v_s^{+})\in V$ satisfying
	$$||\frac{\bm{v}-\bm{v}^{+}}{||\bm{v}-\bm{v}^{+}||_{\infty}}-\bm{e}||_{\infty}<\epsilon.$$


By Theorem \ref{thm: dcc existence n complement with part two}, there exist a positive integer $n$ and a point $\bm{v}^{+}=(v_1^{+},\ldots,v_s^{+})\in V$ depending only on $d,p$ and $\Ii$, such that
	\begin{enumerate}
	    \item there exists an $n$-complement $(X/Z\ni z,B^{+})$ of $(X/Z\ni z,B)$, 
	    \item $p\mid n$,
		\item $n\bm{v}^{+}\in \mathbb Z^s$,
		\item $||\bm{v}-\bm{v}^{+}||_{\infty}<\frac{\epsilon}{n}$, and
		\item 		$$||\frac{\bm{v}-\bm{v}^{+}}{||\bm{v}-\bm{v}^{+}||_{\infty}}-\bm{e}||_{\infty}<\epsilon.$$
	\end{enumerate}	
	Let $B:=\sum b_iB_i$ and $B^{+}:=\sum b_i^{+}B_i$, where $B_i$ are distinct prime divisors.
It suffices to show that $b_i^{+}\ge b_i$ for any $i$. For any given $i$, possibly reordering $i$, we may assume that $v_i=b_i$. 
Since $nb_i^{+},nv_i^{+}\in \Zz$ and $v_i^{+}\ge v_i=b_i$, it is enough to show that $nb_i^{+}-nv_i^{+}>-1$ which follows from
$$nb_i^{+}-nv_i^{+}>(n+1)b_i-1-nv_i^{+}=v_i+n(v_i-v_i^{+})-1> v_i-\epsilon-1>-1.$$
\end{proof}

\begin{lem}\label{lem: convexset n vectors}
	Let $\mathcal{D}$ be a compact convex set in $\Rr^n$, then there exist $n+1$ points $\bm{v}_1,\ldots,\bm{v}_{n+1}$ in $\Rr^n$, such that $\mathcal{D}$ is contained in the interior of the convex hull of $\bm{v}_1,\ldots,\bm{v}_{n+1}$.
	
	Moreover, there exists a positive real number $\epsilon$, such that for any $\bm{v}_1',\ldots,\bm{v}_{n+1}'\in\Rr^n$, if $||\bm{v}_i-\bm{v}_i'||<\epsilon$ for any $1\le i\le n+1$, then $\mathcal{D}$ is contained in the convex hull of $\bm{v}_1',\ldots,\bm{v}_{n+1}'$.
\end{lem}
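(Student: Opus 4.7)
The plan is to first construct an explicit $n$-simplex containing $\mathcal{D}$ in its interior, and then to use a continuity argument on the defining linear inequalities of the simplex to handle small perturbations of the vertices.

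For the first part, since $\mathcal{D}$ is compact it is bounded, so there exists $R > 0$ with $\mathcal{D} \subseteq B(0,R)$. Let $\bm{e}_1,\ldots,\bm{e}_n$ be the standard basis of $\Rr^n$, and for a parameter $M > 0$ to be chosen, set
$$\bm{v}_i := M\bm{e}_i \quad (1 \le i \le n), \qquad \bm{v}_{n+1} := -M(\bm{e}_1 + \cdots + \bm{e}_n).$$
The vectors $\bm{v}_1 - \bm{v}_{n+1},\ldots,\bm{v}_n - \bm{v}_{n+1}$ are linearly independent, so the convex hull $S_M$ of $\bm{v}_1,\ldots,\bm{v}_{n+1}$ is a non-degenerate $n$-simplex with the origin in its interior (all barycentric coordinates equal $\tfrac{1}{n+1}$). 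Since $S_M = M\cdot S_1$ by scaling, its inradius equals $M$ times the positive inradius $r_0$ of $S_1$, so choosing $M > R/r_0$ forces $\mathcal{D} \subseteq B(0,R) \subset \mathrm{int}(S_M)$.

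For the moreover part, I would present $S_M$ as an intersection of $n+1$ closed half-spaces,
$$S_M = \{\bm{x} \in \Rr^n : \ell_j(\bm{x}) \le c_j,\ 1 \le j \le n+1\},$$
where $\ell_j$ is the affine functional defining the facet opposite to $\bm{v}_j$, normalized so that $\ell_j(\bm{v}_i) = c_j$ for $i \ne j$ and $\ell_j(\bm{v}_j) < c_j$. The coefficients of $\ell_j$ and the number $c_j$ depend continuously on $(\bm{v}_1,\ldots,\bm{v}_{n+1})$ (they are solutions of an invertible linear system in the vertex coordinates). Compactness of $\mathcal{D}$ together with $\mathcal{D} \subset \mathrm{int}(S_M)$ yields an $\eta > 0$ such that $\ell_j(\bm{x}) \le c_j - \eta$ for every $\bm{x} \in \mathcal{D}$ and every $j$. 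Hence if $\|\bm{v}_i - \bm{v}_i'\| < \epsilon$ for $1 \le i \le n+1$, the perturbed functionals $\ell_j'$ and constants $c_j'$ differ from $\ell_j,c_j$ by $O(\epsilon)$ uniformly on the bounded set $\mathcal{D}$; for $\epsilon$ sufficiently small (depending only on $\mathcal{D}$ and the original $\bm{v}_i$'s) this perturbation is absorbed by $\eta$, so $\ell_j'(\bm{x}) \le c_j'$ remains valid on $\mathcal{D}$ for every $j$, giving $\mathcal{D}$ in the perturbed convex hull. The only subtlety is ensuring that the perturbed vertices are still affinely independent so that the $\ell_j'$ are defined; but affine independence is an open condition, so this holds automatically after shrinking $\epsilon$ once more. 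I do not anticipate any serious obstacle here—the argument is purely convex-geometric.
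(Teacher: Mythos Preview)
Your proof is correct and essentially parallels the paper's argument: both construct the same explicit simplex (the paper uses vertices $3M\bm{e}_i$ and $(-3M,\ldots,-3M)$, yours differ only by the scaling factor), and both conclude the perturbation statement by a continuity argument. The only cosmetic difference is that you phrase the second part via the facet inequalities $\ell_j(\bm{x})\le c_j$ depending continuously on the vertices, whereas the paper invokes continuity of the single scalar $\operatorname{dist}(\partial\mathcal{V},\mathcal{D})$ in the vertices; these are equivalent formulations of the same idea.
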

\begin{proof}
	Since $\mathcal{D}$ is a bounded set, there exists a positive real number $M$, such that  
	$$\mathcal{D}\subseteq \{(x_1,\ldots,x_n)\in\Rr^n\mid n\sum_{i=1}^n|x_i|<M\}.$$
	Let $\bm{v}_{n+1}:=(-3M,\ldots,-3M)$ and $\bm{v}_i:=(0,\ldots,3M,\ldots,0)$ for $1\le i\le n$, where $3M$ is the $i$-th coordinate.
	
	For any point $\bm{b}=(b_1,\ldots,b_n)\in\mathcal{D}$, there exists a positive real number $0<a<\frac{1}{3n}$, such that $3aM+b_i\ge 0$ for any $1\le i\le n$. We have 
	$$\bm{b}=\sum_{i=1}^n\frac{3aM+b_i}{3M}\bm{v}_{i}+a\bm{v}_{n+1}+(1-a-\sum_{i=1}^n\frac{3aM+b_i}{3M})(\frac{1}{n+1}\sum _{i=1}^{n+1} \bm{v}_i).$$
	Since 
	$$a+\sum_{i=1}^n\frac{3aM+b_i}{3M}< \frac{1}{3}+\frac{2M}{3M}=1,$$
	$\mathcal{D}$ is contained in the interior of convex hull $\mathcal{V}$ of $\bm{v}_1,\ldots,\bm{v}_{n+1}$. Let $d:=dist(\partial\mathcal{V},\mathcal{D})>0$. Since $d$ is a continuous function of $\bm{v}_1,\ldots,\bm{v}_{n+1}$, there exists a positive real number $\epsilon$, such that for any  $||\bm{v}_i-\bm{v}_i'||<\epsilon$, $d':=dist(\partial\mathcal{V}',\mathcal{D})>0$, where $\mathcal{V}'$ is the convex hull of $ \bm{v}_1',\ldots,\bm{v}_{n+1}'$. In particular, $\mathcal{D}$ in contained in the convex hull of $\bm{v}_1',\ldots,\bm{v}_{n+1}'$.
\end{proof}
\begin{lem}\label{lem:finitesetmonotonic}
Let $\Ii$ be a set of non-negative real numbers. Then the following are equivalent.
\begin{enumerate}
		\item For any finite set $\Ii_0$ of $\Ii$, there exist real numbers $r_0:=1,r_1,\ldots,r_c$ which are linearly independent over $\Qq$, such that $$\Ii_0\subset \Span_{\Qq_{\ge0}}(\{r_0,r_1,\ldots,r_c\}),$$
	\item $\Span_{\Qq_{\ge0}}(\Ii\backslash\Qq)\cap (\Qq\backslash\{0\})=\emptyset$.
\end{enumerate}
\end{lem}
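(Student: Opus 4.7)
The plan is to prove $(1)\Rightarrow(2)$ directly by contradiction and $(2)\Rightarrow(1)$ via a convex-geometric construction inside the $\Qq$-vector space $\Span_\Qq(\{1\}\cup\Ii_0)$.

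For $(1)\Rightarrow(2)$, suppose that $\sum_{i=1}^k q_ia_i = q$ for some $q_i\in\Qq_{>0}$, $a_i\in\Ii\setminus\Qq$, and $q\in\Qq\setminus\{0\}$ (after dropping zero terms we may assume every $q_i>0$). Apply $(1)$ to $\Ii_0:=\{a_1,\ldots,a_k\}$ to obtain $\Qq$-linearly independent reals $r_0=1,r_1,\ldots,r_c$ with $a_i=\sum_{j=0}^c s_{ij}r_j$, $s_{ij}\in\Qq_{\ge 0}$. Substituting, $q\cdot r_0=\sum_{j=0}^c\bigl(\sum_i q_is_{ij}\bigr)r_j$, and $\Qq$-linear independence forces $\sum_i q_is_{ij}=0$ for each $j\ge 1$. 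Since each $a_i$ is irrational, for every $i$ there exists some $j\ge 1$ with $s_{ij}>0$; combined with $q_i>0$ this contradicts the vanishing.

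For $(2)\Rightarrow(1)$, fix a finite $\Ii_0\subset\Ii$ and separate its elements into rational ones (which lie in $\Qq_{\ge 0}\cdot 1$ since $\Ii\subset[0,\infty)$) and irrational ones $a_1,\ldots,a_k$. Set $V:=\Span_\Qq(\{1,a_1,\ldots,a_k\})$, let $\pi\colon V\to V/(\Qq\cdot 1)$ be the projection, and consider
$$\bar C:=\Span_{\Qq_{\ge 0}}\{\pi(a_1),\ldots,\pi(a_k)\}\subset V/(\Qq\cdot 1).$$
The crucial step is that $(2)$ forces $\bar C$ to be a pointed rational polyhedral cone: if $v,-v\in\bar C$, writing $v=\sum\alpha_i\pi(a_i)$ and $-v=\sum\beta_i\pi(a_i)$ with $\alpha_i,\beta_i\in\Qq_{\ge 0}$, we get $\sum(\alpha_i+\beta_i)a_i\in\Qq$; by $(2)$ this rational must be zero, and then positivity of each $a_i$ forces $\alpha_i+\beta_i=0$, hence $v=0$.

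Since $\bar C$ is pointed in the $c$-dimensional space $V/(\Qq\cdot 1)$, its dual cone has full dimension, so we may select $c$ linearly independent rational functionals inside it; let $\bar r_1,\ldots,\bar r_c$ be the dual basis. Then $\pi(a_i)=\sum_j s_{ij}\bar r_j$ with $s_{ij}\in\Qq_{\ge 0}$. Lifting $\bar r_j$ arbitrarily to $r_j\in V$ gives $a_i=p_i+\sum_j s_{ij}r_j$ for some $p_i\in\Qq$, but the $p_i$ may be negative. The final adjustment is to replace $r_j$ by $r_j':=r_j-M$ for a sufficiently large $M\in\Qq_{>0}$: irrationality of $a_i$ forces $\pi(a_i)\neq 0$, hence $\sum_j s_{ij}>0$, and $M$ can be chosen to make every $p_i+M\sum_j s_{ij}$ non-negative simultaneously. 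The base-change matrix is unipotent triangular, so $\{1,r_1',\ldots,r_c'\}$ is still $\Qq$-linearly independent, and by construction $\Ii_0\subset\Span_{\Qq_{\ge 0}}\{1,r_1',\ldots,r_c'\}$. The only genuinely non-routine input is the pointedness argument for $\bar C$; everything else is standard linear algebra and convex duality over $\Qq$.
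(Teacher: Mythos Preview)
Your proof is correct and follows essentially the same route as the paper. Both arguments hinge on the same key observation: condition (2) forces the cone generated by the ``irrational parts'' of the $a_i$ (your $\pi(a_i)$, the paper's $\bm{a}_i'$) to be pointed---the paper phrases this as ``$\bm{0}$ is not in the convex hull,'' which is equivalent---and then both use convex duality to enclose this pointed cone in a simplicial one (you via a basis of functionals in the full-dimensional dual cone, the paper via Hahn--Banach plus Lemma~\ref{lem: convexset n vectors}), followed by the same shift $r_j\mapsto r_j-M$ to repair the rational parts. Your intrinsic formulation in $V/(\Qq\cdot 1)$ is a bit cleaner than the paper's coordinate computation, but the mathematical content is the same.
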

\begin{proof}
Suppose that $\Ii$ satisfies (1). For any $a\in\Span_{\Qq_{\ge0}}(\Ii\backslash\Qq)\cap(\Qq\backslash\{0\})$, there exist a positive integer $m$, positive rational numbers $\lambda_1,\dots,\lambda_m$, and $\alpha_1,\dots,\alpha_m\in\Ii\backslash\Qq$, such that $a=\sum_{i=1}^m\lambda_i\alpha_i$. By (1), there exist a positive integer $c$, positive real numbers $r_0:=1,r_1,\ldots,r_c$ which are linearly independent over $\Qq$, and non-negative rational numbers $d_{i,j}$ for every $1\leq i\leq m$ and $0\leq j\leq c$, such that $\alpha_i=\sum_{j=0}^cd_{i,j}r_j$. By our assumption, for every $1\leq i\leq m$, $d_{i,j}\not=0$ for some $1\leq j\leq c$. Therefore, 
$$a=\sum_{i=1}^m\sum_{j=0}^c\lambda_id_{i,j}r_j=\sum_{j=0}^c(\sum_{i=1}^m\lambda_id_{i,j})r_j\in\Span_{\Qq_{\ge0}}(\{r_0,r_1,\ldots,r_c\})\backslash\Qq,$$
a contradiction.

Suppose that $\Ii$ satisfies (2). For any finite set $\Ii_0=\{\alpha_1,\ldots,\alpha_m\}$ of $\Ii$, there exist positive real numbers $1,r_1',\ldots,r_c'$ which are linearly independent over $\Qq$, such that $\Ii_0\subset \Span_{\Qq}(\{1,r_1',\ldots,r_c'\})$. Let $\bm{r}':=(r_1',\ldots,r_c')$. Then there exist $\bm{a}_1',\ldots,\bm{a}_m'\in\Qq^c$, such that $\alpha_i-{\bm{a}_i}'\cdot{\bm{r}}'\in\Qq$ for any $1\le i\le m$. Possibly reordering the indices, we may assume that $\alpha_1,\ldots,\alpha_{m'}\notin\Qq$ and $\alpha_{m'+1},\ldots,\alpha_{m}\in\Qq$  for some $0\le m'\le m$. Then $\bm{a}'_1,\ldots,\bm{a}'_{m'}\neq \bm{0}$ and $\bm{a}'_{m'+1}=\ldots=\bm{a}'_m=\bm{0}$, where $\bm{0}=(0,0,\ldots,0)$. 

If $m'=0$, then $c=0$, $\Ii_0$ is a finite set of rational numbers, and $r_0:=1$, hence $(1)$ holds. Thus we may assume $m'\ge 1$. 

Suppose that $\bm{0}$ belongs to the convex hull of $\{\bm{a}'_1,\ldots,\bm{a}'_{m'}\}$. There exist non-negative rational numbers $\lambda_i$, such that $\sum_{i=1}^{m'}\lambda_i=1$, and $\sum_{i=1}^{m'} \lambda_i\bm{a}_i'=\bm{0}$. Then $\sum_{i=1}^{m'}\lambda_i\bm{a}_i'\cdot \bm{r}'=\bm{0}\cdot\bm{r}'=0$ and $0\neq\sum_{i=1}^{m'}\lambda_i\alpha_i=\sum_{i=1}^{m'}\lambda_i(\alpha_i-{\bm{a}_i}'\cdot{\bm{r}}')\in\Qq$, which contradicts (2). Hence $\bm{0}$ does not belong to the convex hull of $\{\bm{a}'_1,\ldots,\bm{a}'_{m'}\}$.

By Hahn-Banach theorem, there exist a rational point $\bm{t}\in\Qq^{c}$ and a positive rational number $b$, such that the hyperplane $H:=\{\bm{x}\in\Rr^{c}\mid \bm{t}\cdot\bm{x}=b\}$ intersects the segment $\overrightarrow{\bm{0}\bm{a}'_i}$ for any $1\le i\le m'$. Let $\mathcal{C}$ be the convex cone generated by $\overrightarrow{\bm{0}\bm{a}'_1},\ldots,\overrightarrow{\bm{0}\bm{a}'_{m'}}$. Then $\mathcal{C}\cap H$ is a compact convex set in $H$. By Lemma \ref{lem: convexset n vectors}, there exist rational points $\bm{v}_1,\ldots,\bm{v}_{c}\in H$, such that $\mathcal{C}\cap H$ is contained in the convex hull of $\bm{v}_1,\ldots,\bm{v}_{c}$. Hence $\mathcal{C}$ is contained in the cone generated by $\overrightarrow{\bm{0}\bm{v}_1},\ldots,\overrightarrow{\bm{0}\bm{v}_{c}}$. 
Since $\bm{v}_j$, $\bm{a}_i'$ are rational points, there exist non-negative rational numbers $a_{i,j}$, such that $\bm{a}_i'=\sum_{j=1}^{c} a_{i,j}\bm{v}_j$ for any $1\le i\le m'$. 

Let $r_j=\bm{v}_j\cdot \bm{r}'$ and $\bm{r}=(r_1,\ldots,r_c)$.
Then 
$$\alpha_i-{\bm{a}_i}'\cdot{\bm{r}}'=\alpha_i-\sum_{j=1}^{c} a_{i,j}\bm{v}_j\cdot\bm{r}'=\alpha_i-\sum_{j=1}^{c}a_{i,j}r_j\in\Qq$$
for any $1\le i\le m'$. Possibly replacing $(r_1,\dots,r_c)$ with $(r_1-t,\ldots,r_c-t)$ for some rational numbers $t\gg 0$, we may assume that $\alpha_i-\sum_{j=1}^{c}a_{i,j}r_j\ge 0$ for any $1\le i\le m'$. Therefore $\alpha_i\in\Span_{\Qq_{\ge0}}(\{r_0,\ldots,r_c\})$ for any $1\le i\le m$, and we are done.

\end{proof}

Now we are going to prove Theorem \ref{thm: dcc existence n complement with part two}. 

\begin{lem}\label{lem: equivnorm}
	Let $||.||_1,||$ and $||_2$ be two different norms defined on $\Rr^n$. Then there exists a positive real number $M$, such that
$$||\frac{\bm{a}}{||\bm{a}||_1}-\frac{\bm{b}}{||\bm{b}||_1}||_1\le M||\frac{\bm{a}}{||\bm{a}||_2}-\frac{\bm{b}}{||\bm{b}||_2}||_2$$
for any non-zero vectors $\bm{a},\bm{b}\in \Rr^n$.
\end{lem}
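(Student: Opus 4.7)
The plan is to exploit the well-known fact that all norms on the finite-dimensional space $\Rr^n$ are equivalent, together with a direct computation showing that the normalization map from one unit sphere to another is Lipschitz. Concretely, I would first invoke equivalence of norms to fix positive constants $c,C>0$ such that
$$c\,\|\bm{v}\|_1\le\|\bm{v}\|_2\le C\,\|\bm{v}\|_1$$
for every $\bm{v}\in\Rr^n$. In particular, for any nonzero $\bm{v}$ one has $1/\|\bm{v}\|_1\le C/\|\bm{v}\|_2$.

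Next, after rescaling we may reduce to the case $\|\bm{a}\|_2=\|\bm{b}\|_2=1$, since both sides of the asserted inequality are invariant under replacing $\bm{a}$ and $\bm{b}$ by positive scalar multiples of themselves. Under this normalization the task becomes to bound $\bigl\|\bm{a}/\|\bm{a}\|_1-\bm{b}/\|\bm{b}\|_1\bigr\|_1$ by a constant multiple of $\|\bm{a}-\bm{b}\|_2$. The key identity to use is
$$\frac{\bm{a}}{\|\bm{a}\|_1}-\frac{\bm{b}}{\|\bm{b}\|_1}=\frac{\bm{a}-\bm{b}}{\|\bm{a}\|_1}+\frac{\bm{b}\bigl(\|\bm{b}\|_1-\|\bm{a}\|_1\bigr)}{\|\bm{a}\|_1\,\|\bm{b}\|_1},$$
from which the triangle inequality and the reverse triangle inequality $\bigl|\|\bm{a}\|_1-\|\bm{b}\|_1\bigr|\le \|\bm{a}-\bm{b}\|_1$ give
$$\Bigl\|\frac{\bm{a}}{\|\bm{a}\|_1}-\frac{\bm{b}}{\|\bm{b}\|_1}\Bigr\|_1\le \frac{2\,\|\bm{a}-\bm{b}\|_1}{\|\bm{a}\|_1}.$$

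Finally, I would combine this with the norm equivalence: since $\|\bm{a}\|_2=1$ we have $\|\bm{a}\|_1\ge 1/C$, and since $\|\bm{a}-\bm{b}\|_1\le \|\bm{a}-\bm{b}\|_2/c$, the right hand side is at most $(2C/c)\,\|\bm{a}-\bm{b}\|_2$. Choosing $M:=2C/c$ therefore completes the argument. There is no real obstacle here: once one recognizes that the statement is invariant under positive rescaling of $\bm{a}$ and $\bm{b}$, the proof is essentially a routine exercise in the Lipschitz continuity of the radial projection onto the $\|\cdot\|_1$-sphere, controlled by the compactness of the $\|\cdot\|_2$-sphere.
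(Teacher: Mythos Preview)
Your proposal is correct and essentially identical to the paper's proof: both normalize to $\|\bm{a}\|_2=\|\bm{b}\|_2=1$, split the difference of normalized vectors via the triangle and reverse triangle inequalities to obtain the bound $2\|\bm{a}-\bm{b}\|_1/\|\bm{a}\|_1$ (the paper gets $\|\bm{b}\|_1$ in the denominator via a slightly different splitting, which is immaterial), and then apply equivalence of norms to arrive at the same constant $M=2C/c$ (in the paper's notation $2m_2/m_1$).
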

\begin{proof}Since any two norms defined on $\Rr^n$ are equivalent, there exist two positive real numbers $m_1,m_2$ such that for any $\bm{x}\in\Rr^n$,  $$m_1||\bm{x}||_2\le ||\bm{x}||_1\le m_2||\bm{x}||_2.$$
We claim that we may pick $M=\frac{2m_2}{m_1}$. 

Possibly replacing $\bm{a}$ with $\frac{\bm{a}}{||\bm{a}||_2}$ and $\bm{b}$ with $\frac{\bm{b}}{||\bm{b}||_2}$, we may assume that $||\bm{a}||_2=||\bm{b}||_2=1$. We have
\begin{align*}
||\frac{\bm{a}}{||\bm{a}||_1}-\frac{\bm{b}}{||\bm{b}||_1}||_1&=||\frac{\bm{a}}{||\bm{a}||_1}-\frac{\bm{a}}{||\bm{b}||_1}+\frac{\bm{a}}{||\bm{b}||_1}-\frac{\bm{b}}{||\bm{b}||_1}||_1\\
&\le ||\frac{\bm{a}}{||\bm{a}||_1}-\frac{\bm{a}}{||\bm{b}||_1}||_1+||\frac{\bm{a}}{||\bm{b}||_1}-\frac{\bm{b}}{||\bm{b}||_1}||_1\\
&=\frac{||||\bm{b}||_1-||\bm{a}||_1||_1}{||\bm{b}||_1}+\frac{||\bm{a}-\bm{b}||_1}{||\bm{b}||_1}\\
&\le \frac{2||\bm{a}-\bm{b}||_1}{||\bm{b}||_1}\le \frac{2m_2||\bm{a}-\bm{b}||_2}{m_1||\bm{b}||_2}=M||\bm{a}-\bm{b}||_2.
\end{align*}
\end{proof}

 Recall that the maximum norm on $\Rr^s$ is defined by 
 $||(x_1,\ldots,x_s)||_{\infty}=\max\{ |x_{i}|\mid i=1,2,\ldots,s\}$ for any vector $(x_1,\ldots,x_s)\in\Rr^s$.

\begin{lem}\label{lem: linearindependentdiufantu}
	Let $p_0,l,s_0$ be three positive integers, $k_0$ a non-negative integer, $\epsilon_1$ a positive real number, $r_0:=1,r_1,\ldots,r_{s_0+k_0}$ positive real numbers which are linearly independent over $\Qq$, and $\bm{e}'=(e_1,\ldots,e_{s_0})\in\Rr_{\ge0}^{s_0}$ a nonzero vector. Let $||.||_{\infty}$ be the maximum norm, and $\bm{r}=(r_1,\ldots,r_{s_0+k_0})$. Then there exist a positive integer $n_0$ and a point $\bm{r}'=(r_1',\ldots,r_{s_0+k_0}')\in \Rr^{s_0+k_0}$, such that 
	\begin{enumerate}
		\item $p_0\mid n_0$,
		\item $n_0\bm{r}'\in l\Zz^{s_0+k_0}$,
		\item $||\bm{r}-\bm{r}'||_{\infty}<\frac{\epsilon_1}{n_0},$
		 and
		\item $||\frac{\bm{c}-\bm{d}}{||\bm{c}-\bm{d}||_{\infty}}-\frac{\bm{e}'}{||\bm{e}'||_{\infty}}||_{\infty}<\epsilon_1,$
		where $\bm{c}=(r_1,\ldots,r_{s_0})$ and $\bm{d}=(r_1',\ldots,r_{s_0}').$
	\end{enumerate}	
\end{lem}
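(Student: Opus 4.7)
The plan is to apply Kronecker's simultaneous approximation theorem (equivalently, the density of an irrational rotation orbit in a torus), with extra care taken to control the direction of the error vector in the first $s_0$ coordinates.

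First I would fix two auxiliary constants: a target magnitude $T := \epsilon_1/2$ and a much smaller tolerance, for instance $\delta := T\epsilon_1/4$. The idea is to seek a positive integer of the form $n_0 = p_0 m$ such that, for each $i$, the quantity $n_0 r_i$ is within distance $\delta$ of a prescribed target modulo $l$: namely $T e_i/\|\bm{e}'\|_\infty$ for $i \le s_0$, and $0$ for $i > s_0$.

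Key step. Since $1, r_1, \ldots, r_{s_0+k_0}$ are linearly independent over $\Qq$, so are $1, p_0 r_1/l, \ldots, p_0 r_{s_0+k_0}/l$. By Kronecker's theorem, the orbit
$$\left\{m\cdot(p_0 r_1/l,\ldots,p_0 r_{s_0+k_0}/l) \bmod 1\right\}_{m \ge 1}$$
is dense in $(\Rr/\Zz)^{s_0+k_0}$. Applied to the target point with coordinates $t_i := Te_i/\|\bm{e}'\|_\infty$ for $i\le s_0$ and $t_i := 0$ for $i>s_0$, with accuracy $\delta/l$, this produces $m \in \Nn^+$ and integers $a_1,\ldots,a_{s_0+k_0}$ satisfying $|p_0 m r_i - l a_i - t_i| < \delta$ for every $i$. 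Set $n_0 := p_0 m$ and $r_i' := l a_i/n_0$; then (1) and (2) are immediate. For (3), $|r_i - r_i'| = |n_0 r_i - l a_i|/n_0$ is at most $(T+\delta)/n_0 < \epsilon_1/n_0$ for $i \le s_0$ and at most $\delta/n_0 < \epsilon_1/n_0$ for $i > s_0$. For (4), writing $\bm{u} := n_0(\bm{c}-\bm{d})$ with $u_i = n_0 r_i - l a_i$, we have $\bm{u} = T\bm{e} + \bm{v}$ where $\bm{e} := \bm{e}'/\|\bm{e}'\|_\infty$ has $\|\bm{e}\|_\infty = 1$ and $\|\bm{v}\|_\infty \le \delta$. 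Hence $\|\bm{u}\|_\infty \in [T-\delta, T+\delta]$, and a short triangle-inequality computation yields
$$\left\|\frac{\bm{u}}{\|\bm{u}\|_\infty} - \bm{e}\right\|_\infty \le \frac{2\delta}{T} < \epsilon_1,$$
which is (4).

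The main, but mild, obstacle is to reconcile the three arithmetic constraints $p_0 \mid n_0$, $n_0 r_i' \in l\Zz$, and the direction condition in (4). The first two are absorbed by running Kronecker's rotation with step $p_0$ on the torus of side length $l$, while the direction condition forces $\delta$ to be chosen small compared to $T$; once $\delta$ and $T$ are fixed in this order (both depending only on $\epsilon_1$), Kronecker's theorem supplies the required $m$ and hence the lemma.
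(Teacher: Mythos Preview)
Your proposal is correct and follows essentially the same route as the paper: apply Kronecker's theorem to the rotation $m\mapsto m\cdot(p_0 r_1/l,\ldots,p_0 r_{s_0+k_0}/l)$ on $(\Rr/\Zz)^{s_0+k_0}$, with target proportional to $\bm{e}'/\|\bm{e}'\|_\infty$ in the first $s_0$ coordinates and $0$ elsewhere, so that the error $n_0\bm r - l\bm a$ lands near $T\bm{e}'/\|\bm{e}'\|_\infty$ and the direction condition (4) follows. One small fix: the triangle-inequality bound in (4) should be $2\delta/(T-\delta)$ rather than $2\delta/T$ (since you only know $\|\bm u\|_\infty\ge T-\delta$), and together with the estimate $T+\delta<\epsilon_1$ used in (3) this requires assuming $\epsilon_1<2$, which is harmless since the statement is monotone in $\epsilon_1$.
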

\begin{proof}
	Possibly reordering coordinates, we may assume that $e_1=||\bm{e}'||_{\infty}>0$. Let $\alpha_1:=\frac{1}{2l}\epsilon_1$. Since $0\le e_i\le e_1$ for any $2\le i\le s_0$, we may pick positive real numbers $\alpha_i$, such that $\alpha_i<\alpha_1$, and
$$|\frac{\alpha_i}{\alpha_1}-\frac{e_i}{e_1}|<\epsilon_1$$
for any $2\le i\le s_0$.  

By the continuity of functions $\frac{\alpha_i+x}{\alpha_1-x}$ and $\frac{\alpha_i-x}{\alpha_1+x}$, there exists a positive real number $\epsilon_2$, such that $\epsilon_2<\alpha_i<\alpha_1-2\epsilon_2$ for any $2\le i\le s_0$, and $$|\frac{\alpha_i+\epsilon_2}{\alpha_1-\epsilon_2}-\frac{e_i}{e_1}|<\epsilon_1,\,|\frac{\alpha_i-\epsilon_2}{\alpha_1+\epsilon_2}-\frac{e_i}{e_1}|<\epsilon_1$$
for any $1\le i\le s_0$. 

Let $\alpha_{j}:=0$ for any $s_0+k_0\ge j>s_0$, and $\bm{\alpha}:=(\alpha_1,\ldots,\alpha_{s_0+k_0})$. By Kronecker's theorem or Weyl's equidistribution theorem, there exist a positive integer $n_1$ and a vector $\bm{\beta}:=(\beta_1,\ldots,\beta_{s_0+k_0})\in \Zz^{s_0+k_0}$, such that
$$||n_1\frac{p_0\bm{r}}{l}-\bm{\beta}-\bm{\alpha}||_{\infty}<\epsilon_2.$$
We claim that we may pick $n_0=n_1p_0$ and $\bm{r}'=\frac{l}{n_0}\bm{\beta}$. 

It is clear that $p_0\mid n_0$, $n_0\bm{r}'\in l\Zz^{s_0+k_0}$, and
\begin{align*}
||\bm{r}-\bm{r}'||_{\infty}&=\frac{l}{n_0}||n_1\frac{p_0\bm{r}}{l}-\bm{\beta}||_{\infty}\\
&\le\frac{l}{n_0}(||n_1\frac{p_0\bm{r}}{l}-\bm{\beta}-\bm{\alpha}||_{\infty}+||\bm{\alpha}||_{\infty})\\
&<\frac{l}{n_0}(\epsilon_2+\alpha_1)<\frac{\epsilon_1}{n_0}.
\end{align*}
It suffices to show $(4)$. Since
$$\frac{n_0}{l}|r_i-r_i'|=|\frac{n_0}{l}r_i-\beta_i|\le |\frac{n_0}{l}r_i-\beta_i-\alpha_i|+\alpha_i\le \epsilon_2+\alpha_i<\alpha_1-\epsilon_2$$
for any $s_0\ge i\ge 2$, and
$$\frac{n_0}{l}|r_i-r_i'|\ge \alpha_i-|\frac{n_0}{l}r_i-\beta_i-\alpha_i|\ge \alpha_i-\epsilon_2$$
for any $s_0\ge i\ge 1$, we have
$$\frac{n_0}{l}|r_1-r_1'|>\frac{n_0}{l}|r_i-r_i'|$$ 
for any $s_0\ge i\ge 2$, and
$$||\bm{c}-\bm{d}||_{\infty}=\max\{|r_i-r_i'|\,|\,i=1,2,\ldots,s_0\}=|r_1-r_1'|.$$
Thus
\begin{align*}
&||\frac{\bm{c}-\bm{d}}{||\bm{c}-\bm{d}||_{\infty}}-\frac{\bm{e}'}{||\bm{e}'||_{\infty}}||_{\infty}=\max\{|\frac{r_i-r_i'}{| r_1-r_1'|}-\frac{e_i}{e_1}|\mid i=1,2,\ldots,s_0\}\\
\le& \max\{|\frac{\alpha_i+\epsilon_2}{\alpha_1-\epsilon_2}-\frac{e_i}{e_1}|,|\frac{\alpha_i-\epsilon_2}{\alpha_1+\epsilon_2}-\frac{e_i}{e_1}|\mid i=2,\ldots,s_0\}<\epsilon_1,
\end{align*}
where we use the fact that $\alpha_i-\epsilon_2>0$ for the first equality.
\end{proof}

\begin{lem}\label{lem: vectordiufantu}
Let $p_0,s,k$ be three positive integers, $\epsilon_0$ a positive real number, $||.||$ a norm of $\mathbb R^s$, $||.||_{\infty}$ the maximum norm, and $\bm{a}=(a_1,\ldots,a_k)\in\Rr_{>0}^k$ and $\bm{v}\in\Rr^s$ two points, such that $\sum_{i=1}^k a_i=1$ and $\bm{v}\notin\Qq^s$. Let $V\subset\Rr^s$ be the rational envelope of $\bm{v}$, and $\bm{e}\in V$ a nonzero vector. Then there exist a positive integer $n_0$, and two vectors $\bm{v}'\in V$ and $\bm{a}'=(a_1',\ldots,a_k')\in\Rr_{>0}^{k}$, such that 
\begin{enumerate}
    \item $p_0\mid n_0$,
    \item $\sum_{i=1}^k a_i'=1$,
    \item $n_0(\bm{v}',\bm{a}')\in \Zz^{s+k}$,
    \item $||\bm{v}-\bm{v}'||<\frac{\epsilon_0}{n_0},||\bm{a}-\bm{a}'||_{\infty}<\frac{\epsilon_0}{n_0},$ and
    \item $||\frac{\bm{v}-\bm{v}'}{||\bm{v}-\bm{v}'||}-\frac{\bm{e}}{||\bm{e}||}||<\epsilon_0.$
\end{enumerate}

\end{lem}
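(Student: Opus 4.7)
Since $\bm{v}\notin\Qq^s$, the rational envelope $V$ has positive dimension, and I may write $V=\bm{v}_0+\Span_{\Rr}(\bm{w}_1,\ldots,\bm{w}_{s_0})$ with $\bm{v}_0\in\Qq^s$ and $\bm{w}_1,\ldots,\bm{w}_{s_0}\in\Qq^s$ linearly independent, and decompose $\bm{v}=\bm{v}_0+\sum_{i=1}^{s_0}r_i\bm{w}_i$ so that $1,r_1,\ldots,r_{s_0}$ are linearly independent over $\Qq$. Since $\bm{e}$ must be a direction of $V$ for the alignment condition to make sense, write $\bm{e}=\sum_{i=1}^{s_0}e_i^*\bm{w}_i$. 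By replacing each $\bm{w}_i$ by $\pm\bm{w}_i$ and shifting $r_i$ by integers (absorbed into $\bm{v}_0$), I may arrange that $r_i>0$ for every $i$ and $e_i^*\geq 0$ for every $i$; the vector $\bm{e}':=(e_1^*,\ldots,e_{s_0}^*)$ remains nonzero.

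To accommodate the irrationalities of $\bm{a}$, extend $\{1,r_1,\ldots,r_{s_0}\}$ to a $\Qq$-basis $\{1,r_1,\ldots,r_{s_0+k_0}\}$ of $\Qq\langle 1,r_1,\ldots,r_{s_0},a_1,\ldots,a_{k-1}\rangle$, with $r_{s_0+j}>0$, and expand $a_i=\sum_{j=0}^{s_0+k_0}c_{ij}r_j$ (with $r_0:=1$, $c_{ij}\in\Qq$) for $i<k$, setting $a_k=1-\sum_{i<k}a_i$. Let $D$ be a common positive denominator of the coordinates of $\bm{v}_0$, and let $l$ be a common positive denominator of the coordinates of $\bm{w}_1,\ldots,\bm{w}_{s_0}$ together with all $c_{ij}$. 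Pick $\epsilon_1>0$ small, to be specified below, and apply Lemma \ref{lem: linearindependentdiufantu} with parameters $p_0D,l,s_0,k_0,\epsilon_1$, the $r_i$'s above, and $\bm{e}'$; this produces a positive integer $n_0$ with $p_0D\mid n_0$ and a point $\bm{r}'=(r_1',\ldots,r_{s_0+k_0}')$ with $n_0\bm{r}'\in l\Zz^{s_0+k_0}$, $\|\bm{r}-\bm{r}'\|_\infty<\epsilon_1/n_0$, and direction alignment of $(r_1-r_1',\ldots,r_{s_0}-r_{s_0}')$ with $\bm{e}'$ in the max norm.

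Set $\bm{v}':=\bm{v}_0+\sum_{i=1}^{s_0}r_i'\bm{w}_i\in V$, $a_i':=\sum_{j=0}^{s_0+k_0}c_{ij}r_j'$ for $i<k$, and $a_k':=1-\sum_{i<k}a_i'$; then $\sum_i a_i'=1$ automatically. Divisibility $p_0\mid n_0$, $n_0\bm{v}'\in\Zz^s$, and $n_0\bm{a}'\in\Zz^k$ follow from $p_0D\mid n_0$, $l\bm{w}_i\in\Zz^s$, $lc_{ij}\in\Zz$, and $n_0r_j'\in l\Zz$. The identities $\bm{v}-\bm{v}'=\sum_i(r_i-r_i')\bm{w}_i$ and $a_i-a_i'=\sum_j c_{ij}(r_j-r_j')$ exhibit the errors as fixed linear images of $\bm{r}-\bm{r}'$, so $\|\bm{v}-\bm{v}'\|<\epsilon_0/n_0$, $\|\bm{a}-\bm{a}'\|_\infty<\epsilon_0/n_0$, and the positivity $a_i'>0$ all follow from choosing $\epsilon_1$ small relative to the operator norms of these maps and to $\min_i a_i$. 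For the alignment condition, the linear isomorphism $T:\Rr^{s_0}\to\Span_{\Rr}(\bm{w}_1,\ldots,\bm{w}_{s_0})$, $T(x_1,\ldots,x_{s_0}):=\sum x_i\bm{w}_i$, sends $(r_1-r_1',\ldots,r_{s_0}-r_{s_0}')$ to $\bm{v}-\bm{v}'$ and $\bm{e}'$ to $\bm{e}$; the direction proximity of the preimages in the max norm, given by Lemma \ref{lem: linearindependentdiufantu}, transfers to direction proximity of the images in $\|\cdot\|$ via Lemma \ref{lem: equivnorm} applied to the max norm on $\Rr^{s_0}$ and the pullback of $\|\cdot\|$ through $T$, incurring a fixed multiplicative constant that is absorbed into $\epsilon_1$.

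The main obstacle is the bookkeeping needed to cast the problem in the form of Lemma \ref{lem: linearindependentdiufantu}: one must simultaneously respect the affine $\Qq$-constraint $\bm{v}'\in V$, the barycentric constraint $\sum a_i'=1$, and an alignment condition in an \emph{arbitrary} norm on $\Rr^s$ rather than the max norm native to Lemma \ref{lem: linearindependentdiufantu}. The sign adjustments that enforce $r_i>0$ and $e_i^*\geq 0$, the simultaneous extension of the $\Qq$-basis to absorb the coordinates of $\bm{a}$, and the norm transfer via Lemma \ref{lem: equivnorm} together resolve these compatibilities, with the hypothesis $\bm{v}\notin\Qq^s$ ensuring $s_0\geq 1$ so that the direction alignment condition is nondegenerate.
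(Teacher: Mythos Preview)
Your proposal is correct and follows the same route as the paper: parametrize $V$ rationally, express the coordinates of $\bm{v}$ and $\bm{a}$ as $\Qq$-linear combinations of $1,r_1,\ldots,r_{s_0+k_0}$, reduce to Lemma~\ref{lem: linearindependentdiufantu}, and transfer the direction alignment from the max norm to the given norm via Lemma~\ref{lem: equivnorm}. Your affine parametrization $V=\bm{v}_0+\Span_{\Rr}(\bm{w}_1,\ldots,\bm{w}_{s_0})$ together with the $p_0D\mid n_0$ divisibility trick and the explicit sign/integer shifts to arrange $r_i>0$, $e_i^*\geq 0$ is slightly more careful than the paper's treatment (which takes a rational ``basis of $V$'' as if $V$ were linear), but the argument is otherwise identical.
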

\begin{proof}
Let $\bm{v}_1,\ldots,\bm{v}_{s_0}$ be a basis of $V$, such that $\bm{v}_i\in\Qq^s$ for any $1\leq i\leq s_0$, and $\bm{e}\in\Span_{\Rr_{\ge0}}(\{\bm{v}_1,\ldots,\bm{v}_{s_0}\})$. Then there exist a non-negative integer $k_0$, real numbers $r_1,\ldots,r_{s_0+k_0}$ and  $e_1,\ldots,e_{s_0}\ge0$, such that $r_0:=1,r_1,\ldots,r_{s_0+k_0}$ are 
linearly independent over $\Qq$, $a_i\in \Span_{\Qq}(\{r_0,\ldots,r_{s_0+k_0}\})$ for any $1\le i\le k$, and $$\bm{v}=\sum_{i=1}^{s_0}r_i\bm{v}_i,\bm{e}=\sum_{i=1}^{s_0} e_i\bm{v}_i.$$
Hence there exist $\Qq$-linear functions $a_i(x_0,x_1,\ldots,x_{s_0+k_0})$ and positive integers $l,M_1$, such that for any $i$, $a_i(r_0,r_1,\ldots,r_{s_0+k_0})=a_i$, $la_i(x_0,x_1\ldots,x_{s_0+k_0})$ is a $\Zz$-linear function and 
$$|a_i(x_0,x_1,\ldots,x_{s_0+k_0})|\le M_1\max\{|x_i|\mid i=0,1,\ldots,s_0+k_0\}$$
for any $(x_0,x_1,\ldots,x_{s_0+k_0})\in\Rr^{s_0+k_0+1}$.

We define a norm $||.||_{*}$ on $V$ as follows. For any $\bm{y}\in V$, let
$||\bm{y}||_{*}:=\max\{|y_i|\,|\,i=1,2,\ldots,s_0\}$, where $y_1,\ldots,y_{s_0}$ are unique real numbers such that $\bm{y}=\sum_{i=1}^{s_0} y_i\bm{v}_i$.

By Lemma \ref{lem: equivnorm}, there exists a positive real number $M_2$, such that
$$||\frac{\bm{y}}{||\bm{y}||}-\frac{\bm{z}}{||\bm{z}|||}||\le M_2||\frac{\bm{y}}{||\bm{y}||_{*}}-\frac{\bm{z}}{||\bm{z}||_{*}}||_{*}$$
for any non-zero vectors $\bm{y},\bm{z}\in V$. Moreover, possibly replacing $M_2$ with a larger number, we may assume that  $$||\bm{y}-\bm{z}||\le M_2||\bm{y}-\bm{z}||_{*}$$
for any vectors $\bm{y},\bm{z}\in V$. 

Let $\bm{r}:=(r_1,\ldots,r_{s_0+k_0})$, $a:=\min\{a_1,\ldots,a_k\}$, and $\epsilon_1$ a positive real number such that $$\epsilon_1<\min\{\frac{\epsilon_0}{M_1},\frac{a}{M_1},\frac{\epsilon_0}{M_2}\}.$$ 
Let $\bm{e}':=(e_1,\ldots,e_{s_0})$, and $\bm{c}:=(r_1,\ldots,r_{s_0})$. By Lemma \ref{lem: linearindependentdiufantu}, there exist a positive integer $n_0$ and a point $\bm{r}'\in\Rr^{s_0+k_0}$, such that  
	\begin{itemize}
	\item $p_0\mid n_0$,
	\item $n_0\bm{r}'\in l\Zz^{s_0+k_0}$,
	\item $||\bm{r}-\bm{r}'||_{\infty}<\frac{\epsilon_1}{n_0},$
	and
	\item $||\frac{\bm{c}-\bm{d}}{||\bm{c}-\bm{d}||_{\infty}}-\frac{\bm{e}'}{||\bm{e}'||_{\infty}}||_{\infty}<\epsilon_1,$
	where $\bm{d}=(r_1',\ldots,r_{s_0}')$.
\end{itemize}	
Let $\bm{v}':=\sum_{i=1}^{s_0} r_i'\bm{v}_i$, $a_i':=a_i(1,r_1',\ldots,r_{s_0+k_0}')$ for any $1\le i\le k$, and $\bm{a}':=(a_1',\ldots,a_{k}')$. Since $\sum_{i=1}^k a_i=1$ and $r_0,r_1,\ldots,r_{s_0+k_0}$ are linearly independent over $\Qq$, $\sum_{i=1}^k a_i(1,x_1,\ldots,x_{s_0+k_0})=1$ for any $(x_1,\ldots,x_{s_0+k_0})\in\Rr^{s_0+k_0}$. In particular, $\sum_{i=1}^k a_i'=1$. 

We conclude that $a_i'$ are positive integers since
\begin{align*}
a_i'&\ge a_i-|a_i-a_i'|\ge a_i- M_1||\bm{r}-\bm{r}'||_{\infty}\\
&> a-M_1\cdot\frac{\epsilon_1}{n_0}>0.
\end{align*}
Moreover, we have 
$$n_0a_i'=\frac{1}{l}\cdot ln_0a_i'=\frac{1}{l}\cdot la_i(n_0,n_0r_1',\ldots,n_0r_{s_0+k_0}')\in \Zz,$$

$$||\bm{v}-\bm{v}'||\le M_2||\bm{v}-\bm{v}'||_{*}=M_2||\bm{c}-\bm{d}||_{\infty}\le M_2||\bm{r}-\bm{r}'||<\frac{M_2\epsilon_1}{n_0}<\frac{\epsilon_0}{n_0},$$
$$||\bm{a}-\bm{a}'||_{\infty}\le M_1||\bm{r}-\bm{r}'||_{\infty}<
\frac{M_1\epsilon_1}{n_0}<
\frac{\epsilon_0}{n_0},$$
 and 
 $$||\frac{\bm{v}-\bm{v}'}{||\bm{v}-\bm{v}'||}-\frac{\bm{e}}{||\bm{e}||}||\le M_2||\frac{\bm{c}-\bm{d}}{||\bm{c}-\bm{d}||_{\infty}}-\frac{\bm{e}'}{||\bm{e}'||_{\infty}}||_{\infty}<M_2\epsilon_1<\epsilon_0.$$
\end{proof}

\begin{proof}[Proof of Theorem \ref{thm: dcc existence n complement with part two}]
		By Theorem \ref{thm: existence ni1i2 complement}, there exist a positive integer $n_1$ and a finite set $\Ii_0=\{a_1,\ldots,a_k\}\subseteq (0,1]$ depending only on $d$ and $\Ii$, such that $\sum_{i=1}^{k}a_i=1$, and for any pair $(X,B)$, contraction $X\rightarrow Z$ and $z\in Z$ satisfying the assumptions, there exist an lc pair $(X,\tilde{B})$, distinct reduced divisors $B_1,\dots,B_m$, and lc pairs $(X,B^i:=\sum_{j=1}^m b_{i,j}B_j)$ for any $1\le i\le k$, such that
		\begin{itemize}
		\item $(X/Z\ni z,\tilde B)$ is an $\Rr$-complement of $(X/Z\ni z,B)$,
		\item $\sum_{i=1}^ka_iB^i=\tilde B$, and
		    \item $(X/Z\ni z,B^i)$ is an $n_1$-complement of itself for every $i$. In particular, $b_{i,j}\in\frac{1}{n_1}\mathbb N\cap [0,1]$.
		\end{itemize}
Moreover, if $\bar\Ii\subset\Qq$, we may pick $\Ii_0=\{1\}$ and $\tilde B=B^1$.
We define
		$$T:=\{\frac{a_1}{kn_1^2},\frac{a_2}{kn_1^2},\dots,\frac{a_k}{kn_1^2}\}\cup\{\frac{1}{kn_1}(1-\frac{1}{n_1}\sum_{i=1}^k a_im_{i})\mid m_{i}\in\mathbb N\},$$ 	
	$T_{>0}:=T\cap (0,+\infty)$, and $\epsilon_0$ a positive real number such that
	$$\epsilon_0<\min\{\frac{\epsilon}{n_1},T_{>0}\}.$$
	
		By Lemma \ref{lem: vectordiufantu}, there exist a positive integer $n_0$, $\bm{v}'\in\Rr^{s}$, and $\bm{a}':=(a_1',\ldots,a_k')\in\Rr^k$ such that 
	\begin{itemize}
		\item $p\mid n_0$,
		\item $\sum_{i=1}^k a_i'=1$,
		\item $n_0(\bm{v}',\bm{a}')\in \Zz^{s+k}$,
		\item $||\bm{v}-\bm{v}'||<\frac{\epsilon_0}{n_0}
		,||\bm{a}-\bm{a}'||_{\infty}<\frac{\epsilon_0}{n_0},$ 	    
		and
		\item $||\frac{\bm{v}-\bm{v}'}{||\bm{v}-\bm{v}'||}-\frac{\bm{e}}{||\bm{e}||}||<\epsilon_0.$
	\end{itemize}
   Let $n:=n_0n_1$ and $B^{+}:=\sum_{i=1}^k a_i'B^i$. Then $p\mid n$, $n\bm{v}'\in \Zz^s$, $$||\bm{v}-\bm{v}'||<\frac{\epsilon_0}{n_0}<\frac{\epsilon}{n}
   ,\,||\bm{a}-\bm{a}'||_{\infty}<\frac{\epsilon_0}{n_0}<\frac{\epsilon}{n},$$
   and	
   $$||\frac{\bm{v}-\bm{v}'}{||\bm{v}-\bm{v}'||}-\frac{\bm{e}}{||\bm{e}||}||<\epsilon_0<\epsilon.$$ 
   In particular, when $\bar\Ii\subset\Qq$, $a_1'=1$, $B^+=B^1=\tilde B$, and $(X/Z\ni z,B^+)$ is a monotonic $n$-complement of $(X/Z\ni z,B)$.
   
    We claim that $(X/Z\ni z,B^{+})$ is an $n$-complement of $(X/Z\ni z,\tilde B)$, hence an $n$-complement of $(X/Z\ni z,B)$, and we are done. Possibly shrinking $Z$ to a neighborhood of $z$, we have 
   $$n(K_X+B^{+})=n\sum_{i=1}^k a_i'(K_X+B^i)=\sum_{i=1}^k\frac{a_i'n_0n_1}{n_1}\cdot n_1(K_X+B^i)\sim_{Z}0,$$
   and $(X,B^{+})$ is lc. In particular, $n\sum_{i=1}^k a_i'b_{i,j}$ is an integer for any $1\le j\le m$.
   
   If $\sum_{i=1}^k a_ib_{i,j}=1$ for some $1\le j\le m$, then $b_{i,j}=1$ for any $1\le i\le k$ as $\sum_{i=1}^k a_i=1$ and $0\le b_{i,j}\le 1$. Thus $\sum_{i=1}^k a_i'b_{i,j}=1$.
   
   It suffices to show that if $\sum_{i=1}^k a_ib_{i,j}<1$ for some $1\le j\le m$, then
   $$n\sum_{i=1}^k a_i'b_{i,j}\ge \lfloor (n+1)\sum_{i=1}^k a_ib_{i,j}\rfloor.$$ 
   We may assume that $\sum_{i=1}^k a_i'b_{i,j}<1$ and $\sum_{i=1}^k a_ib_{i,j}>0$. Since $n\sum_{i=1}^k a_i'b_{i,j}\ge \lfloor (n+1)\sum_{i=1}^k a_i'b_{i,j}\rfloor,$  
   we only need to prove 
      $$\lfloor (n+1)\sum_{i=1}^k a_ib_{i,j}\rfloor=\lfloor (n+1)\sum_{i=1}^k a_i'b_{i,j}\rfloor=n\sum_{i=1}^k a_i'b_{i,j},$$ 
      or 
   $$n\sum_{i=1}^k a_i'b_{i,j}+1>(n+1)\sum_{i=1}^k a_ib_{i,j}\ge n\sum_{i=1}^k a_i'b_{i,j}.$$
   The above inequality holds since
   $$n\sum_{i=1}^k(a_i'-a_i)b_{i,j}\le nk\cdot \frac{\epsilon_0}{n_0}=n_1k\epsilon_0\le \sum_{i=1}^k a_ib_{i,j},$$
   and
   $$n\sum_{i=1}^k(a_i-a_i')b_{i,j}\le nk\cdot\frac{\epsilon_0}{n_0}=n_1k\epsilon_0< 1-\sum_{i=1}^k a_ib_{i,j}.$$

	\end{proof}


Example \ref{ex: ex acc no complement} provides a counterexample to Theorem \ref{thm: dcc existence n complement with part two} if we replace the assumption ``$\Ii$ is a DCC set'' with ``$\Ii\backslash\{0\}$ is a set bounded away from $0$''. 

\begin{ex}\label{ex: ex acc no complement}
	Let $p_1,p_2,p_3,p_4$ be four different closed points on $\mathbb P^1$. For any integer $m\geq 2$, we consider the lc pairs 
	$$(X,B_m):=\left(\mathbb P^1, (\frac{1}{2}-\frac{1}{m})(p_1+p_2)+(\frac{1}{2}+\frac{1}{m})(p_3+p_4)\right).$$
	
	If $n=2k+1\ge1$ is odd, suppose that $(X,B_{2k+2})$ has an $n$-complement $(X,B_{2k+2}^{+})$, then 
	$$B_{2k+2}^{+}\ge \frac{k}{2k+1}(p_1+p_2)+\frac{k+2}{2k+1}(p_3+p_4),$$
	hence $\deg B_{2k+2}^{+}>2$, a contradiction.
	
	If $n=2k\ge 2$ is even, suppose that $(X,B_{2k})$ has an $n$-complement $(X,B_{2k}^{+})$, then 
	$$B_{2k}^{+}\ge \frac{k-1}{2k}(p_1+p_2)+\frac{k+2}{2k}(p_3+p_4),$$
	hence $\deg B_{2k}^{+}>2$, a contradiction.
	
	In conclusion, Theorem \ref{thm: dcc existence n complement} does not hold for the set $$\Ii:=\{\frac{1}{2}-\frac{1}{m},\frac{1}{2}+\frac{1}{m}\,|\,2\leq m,m\in\mathbb N^+\}.$$
\end{ex}


Example-Proposition \ref{prop: optimum diophantine approximation}, inspired by \cite{BS03}, shows that the ``approximation" part of Theorem \ref{thm: dcc existence n complement with part two} cannot be improved when $s\ge 2$. We remark that it is not optimal when $s=1$.

\begin{exprop}\label{prop: optimum diophantine approximation}
	Let $||.||_{\infty}$ be the maximal norm on $\mathbb R^2$, $\bm{e}:=(1,1)\in\mathbb R^2$, and $g: \mathbb R_{\ge0}\rightarrow\mathbb R_{\ge0}$ an increasing function, such that $\lim_{x\rightarrow+\infty}g(x)=+\infty$. Let $m$ be a positive integer such that $g(x)\geq 2$ for any $x\ge m$. Then there exists a point $\bm{v}=(r_1,r_2)\in\mathbb R^2$ satisfying the following. 
	
	For any positive integer $n\ge m$ and any point $\bm{v}_n\in\mathbb R^2$ such that $n\bm{v}_n\in\mathbb Z^2$, we have
	\begin{enumerate}
		\item $1,r_1,r_2$ are linearly independent over $\mathbb Q$. In particular, the rational envelope of $\bm{v}$ is $\mathbb R^2$, and
		\item either $$||\bm{v}-\bm{v}_n||_{\infty}>\frac{1}{ng(n)},$$ or
		$$||\frac{\bm{v}-\bm{v}_n}{||\bm{v}-\bm{v}_n|||_{\infty}}-\bm{e}||_{\infty}>1.$$
	\end{enumerate}
\end{exprop}

\begin{proof}
	Since $g(x)$ is an increasing function such that $\lim_{x\rightarrow+\infty}g(x)=+\infty$, we may pick a sequence of positive integers $\{d_i\}_{i=0}^{\infty}$ satisfying the following:
$$\text{1) } d_0:=3, \text{ 2) } d_i\mid d_{i+1}, \text{ 3) } g(d_{i+1})>2d_i, \text{ and 4) } d_{i+1}>2d_i^2.$$

	We show that we may pick $\bm{v}=(r_1,r_2)$, where
	$$r_1:=1-\sum_{k=0}^{+\infty}\frac{1}{d_{4k}}\text{ and }r_2:=1-\sum_{k=0}^{+\infty}\frac{1}{d_{4k+2}}.$$
	
	According to 1) and 4), $r_1,r_2$ are well-defined, and $r_1,r_2>0$.
	
	We first show (1). Suppose that there exist integers $a,b,c$ such that $|a|+|b|+|c|>0$ and $a+br_1+cr_2=0$. Then $|b|+|c|>0$. Pick $l>0$ such that $d_{4l}>|b|+|c|$. Consider the rational approximation
	$$r_{1,l}:=1-\sum_{k=0}^l\frac{1}{d_{4k}} \text{ and } r_{2,l}:=1-\sum_{k=0}^l\frac{1}{d_{4k+2}}.$$
	
	If $|c|>0$, then on the one hand we have
	\begin{align*}
	|a+br_{1,l}+cr_{2,l}|=&|b(r_{1,l}-r_{1})+c(r_{2,l}-r_{2})|\le\frac{2(|b|+|c|)}{d_{4l+4}}\\
	<&\frac{|b|+|c|}{d_{4l+3}^2}<\frac{|b|+|c|}{4d_{4l+2}^2}<\frac{1}{4d_{4l+2}}.
	\end{align*}
	On the other hand, $d_{4l+2}(a+br_{1,l}+cr_{2,l})$ is an integer, and $d_{4l+2}(a+br_{1,l}+cr_{2,l})\neq 0$ since $d_{4l+2}(a+br_{1,l}+cr_{2,l})\equiv -c\not=0\,(\text{mod}\ \frac{d_{4l+2}}{d_{4l}})$, a contradiction.
	
	If $|c|=0$ and $|b|>0$, then on the one hand we have
	$$|a+br_{1,l}+cr_{2,l}|=|b(r_{1,l}-r_{1})|\le\frac{2|b|}{d_{4l+4}}<\frac{|b|}{d_{4l+3}^2}<\frac{1}{4d_{4l}}.$$
	On the other hand, $d_{4l}(a+br_{1,l}+cr_{2,l})$ is an integer, and $d_{4l}(a+br_{1,l}+cr_{2,l})\neq 0$ since $d_{4l}(a+br_{1,l}+cr_{2,l})\equiv -b\not=0\,(\text{mod}\ \frac{d_{4l}}{d_{4l-4}})$, a contradiction. Hence we prove (1).
	
	\medskip
	
	We now prove (2). We may assume that $$||\frac{\bm{v}-\bm{v}_n}{||\bm{v}-\bm{v}_n||_{\infty}}-\bm{e}||_{\infty}\leq 1.$$ 
	Since $\bm{e}=(1,1)$, all the coordinates of $\bm{v}-\bm{v}_n$ are non-negative. In particular, for any $\bm{v}_n:=(v_{n,1},v_{n,2})$ such that $n\bm{v}_n\in\Zz^2$, we have
	\begin{align*}
	||\bm{v}-\bm{v}_n||=&\max\{r_1-v_{n,1},r_2-v_{n,2}\}=\frac{1}{n}\max\{nr_1-nv_{n,1},nr_2-nv_{n,2}\}\\
	\geq&\frac{1}{n}\max\{\{nr_1\},\{nr_2\}\}.
	\end{align*}
	It suffices to show that for any positive integer $n\ge m$, we have $$\max\{\{nr_1\},\{nr_2\}\}>\frac{1}{g(n)}.$$
	
	For any $n\ge m$, there are two possibilities: either $d_{4k+1}\le n\le d_{4k+3}$ or $d_{4k-1}\le n\le d_{4k+1}$ for some $k\ge0$ (here we set $d_{-1}:=1$).
	
	First suppose that $d_{4k+1}\le n\le d_{4k+3}$. We will show that $\{nr_1\}>\frac{1}{g(n)}.$ If $d_{4k}\mid n$, then 
	$$\{nr_1\}=1-\sum_{i=k+1}^{+\infty}\frac{n}{d_{4i}}>1-\frac{1}{2d_{4k+2}}\geq\frac{1}{2}\geq\frac{1}{g(n)}.$$
	
	If $d_{4k}\nmid n$, then
	\begin{align*}
	\{nr_1\}=&\{(n-\sum_{i=0}^{k}\frac{n}{d_{4i}})-\sum_{i=k+1}^{+\infty}\frac{n}{d_{4i}}\}\\
	\geq&\frac{1}{d_{4k}}-\sum_{i=k+1}^{+\infty}\frac{n}{d_{4i}}>\frac{1}{2d_{4k}}>\frac{1}{g(d_{4k+1})}\ge\frac{1}{g(n)}.
	\end{align*}
	
	Now we suppose that $d_{4k-1}\le n\le d_{4k+1}$. We will show that $\{nr_2\}>\frac{1}{g(n)}.$ If either $d_{4k-2}\mid n$ or $k=0$, then
	$$\{nr_2\}=1-\sum_{i=k}^{+\infty}\frac{n}{d_{4i+2}}>1-\frac{1}{2d_{4k}}\geq\frac{1}{2}\geq\frac{1}{g(n)}.$$
	
	If $k\geq 1$ and $d_{4k-2}\nmid n$, then
	\begin{align*}
	\{nr_2\}=&\{(n-\sum_{i=0}^{k-1}\frac{n}{d_{4i+2}})-\sum_{i=k}^{+\infty}\frac{n}{d_{4i+2}}\}\\
	\geq&\frac{1}{d_{4k-2}}-\sum_{i=k}^{+\infty}\frac{n}{d_{4i+2}}>\frac{1}{2d_{4k-2}}>\frac{1}{g(d_{4k-1})}\ge\frac{1}{g(n)}.
	\end{align*}
	
	In conclusion, $\max\{\{nr_1\},\{nr_2\}\}>\frac{1}{g(n)}$, hence we prove (2).
\end{proof}

\section{Proof of the main theorems} 

\subsection{Proof of Theorem \ref{thm: acc ld dcc coefficient bdd}}
The main goal of this subsection is to prove Theorem \ref{thm: acc ld dcc coefficient bdd}. First we need to prove a lemma which is similar to \cite[Proposition 6.2]{PS01} and \cite[Proposition 6.7]{Bir19}. For readers' convenience, we give a full proof here. 

\begin{lem}\label{lem:extendcomplement1}
	Let $n$ be a positive integer, $X\to Z$ a contraction, $(X,B)$ a pair, and $z\in Z$ a closed point. Over a neighborhood of $z$, assume that  
	\begin{enumerate}
		\item $(X,B)$ is lc,
		\item $-(K_X+B)$ is nef over $Z$,
		\item $K_S+B_S:=(K_X+B)|_{S}$,
		\item $nB,nB_{S}$ are Weil divisors,
		\item $(X,C)$ is plt for some $C$,
		\item $-(K_X+C)$ is big and nef over $Z$,
		\item $S=\lfloor C\rfloor\subset\lfloor B\rfloor$ is a prime divisor,
		\item $S$ intersects the fiber of $X\to Z$ over $z$, and 		
		\item there exists a monotonic $n$-complement $(S,B_S^+)$ of $(S,B_S)$.
	\end{enumerate}
	
	Then there exists a monotonic $n$-complement $(X/Z\ni z,B^+)$ of $(X/Z\ni z,B)$ such that $(K_X+B^{+})|_{S}=K_S+B_S^{+}$. 
\end{lem}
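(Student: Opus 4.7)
Set $G_S := B_S^+ - B_S \geq 0$; by assumption (9), $nG_S$ is an effective Weil divisor on $S$ with $nG_S \sim -n(K_S+B_S) = -n(K_X+B)|_S$ near $z$. Let $N := -n(K_X+B)$, a $\Qq$-Cartier Weil divisor on $X$. The plan is to lift $nG_S$ to an effective Weil divisor $nG$ on $X$ satisfying $nG|_S = nG_S$ and $nG \sim N$ near $z$; setting $B^+ := B + G$ will then automatically secure $B^+ \geq B$, $n(K_X+B^+) \sim 0$ over $Z \ni z$, and $(K_X+B^+)|_S = K_S + B_S^+$, leaving only the lc-ness of $(X, B^+)$ to be verified.

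For the lifting, push the short exact sequence
\[
0 \to \Oo_X(N - S) \to \Oo_X(N) \to \Oo_S(N|_S) \to 0
\]
forward to $Z$. By (5) and (7), $\{C\} = C - S$ is effective with $(X, \{C\})$ klt, and one computes
\[
N - S - K_X \;=\; \bigl(-(K_X+C)\bigr) \,+\, n\bigl(-(K_X+B)\bigr) \,+\, \{C\},
\]
where the first summand is big and nef over $Z$ by (6) and the second is nef over $Z$ by (2), so their sum is big and nef over $Z$; the last summand is the fractional part of a klt boundary. Relative Kawamata--Viehweg vanishing therefore yields $R^1 f_* \Oo_X(N-S) = 0$ in a neighborhood of $z$, so $f_*\Oo_X(N) \twoheadrightarrow f_*\Oo_S(N|_S)$ is surjective near $z$. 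By (8), $S$ meets $f^{-1}(z)$, and the section of $\Oo_S(N|_S)$ defining $nG_S$ lifts to a section of $\Oo_X(N)$ cutting out the desired effective Weil divisor $nG$ on $X$. Set $B^+ := B + G$.

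Verifying that $(X, B^+)$ is lc on a neighborhood of $f^{-1}(z)$ is the technical heart of the argument and the main expected obstacle. Near $S$ it is immediate: since $(S, B_S^+)$ is lc and $S$ has coefficient one in $B^+$ by (7), inversion of adjunction yields that $(X, B^+)$ is plt, and in particular lc, on an open neighborhood of $S$. To extend lc-ness to all of $f^{-1}(z)$, set $t := \lct(X, B; G)$ and argue $t \geq 1$ by contradiction: if $t < 1$, the pair $(X, B+tG)$ is lc with a new lc place $E$ whose center, by the same inversion-of-adjunction argument applied to the lc pair $(S, B_S + tG_S)$, must be disjoint from $S$. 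Perturbing to $(X, (1-\epsilon)(B+tG) + \epsilon C)$ for small $\epsilon > 0$ produces a pair whose anti-log-canonical $\Qq$-divisor becomes big and nef over $Z$ while retaining coefficient one on $S$, at which point a Koll\'ar-type connectedness argument, along the lines of \cite[Proposition 6.2]{PS01} and \cite[Proposition 6.7]{Bir19}, contradicts the disjointness of $S$ and the center of $E$. By comparison, the cohomological lifting step is routine; the careful global lc-ness verification is where the bulk of the work lies.
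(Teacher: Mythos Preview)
Your overall strategy matches the paper's: lift the section via a vanishing theorem, then verify lc-ness by inversion of adjunction along $S$ combined with Shokurov--Koll\'ar connectedness. The gap is in the lifting step, where you work directly on the singular variety $X$.

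Concretely, the short exact sequence
\[
0 \to \cO_X(N-S) \to \cO_X(N) \to \cO_S(N|_S) \to 0
\]
is not automatic. Here $N=-n(K_X+B)$ is an integral Weil divisor which is $\Qq$-Cartier but in general not Cartier (nothing in the hypotheses bounds the Gorenstein index of $X$), and $S$ need not be Cartier either. On a singular $X$ the cokernel of $\cO_X(N-S)\hookrightarrow\cO_X(N)$ is a sheaf supported on $S$ which need not agree with $\cO_S(N|_S)$; compare the paper's Lemma~\ref{lem:pltres}, where even in the plt surface case one only gets $\cO_E(\lfloor L|_E\rfloor)$. Without this identification your vanishing of $R^1f_*\cO_X(N-S)$, even if granted, does not give surjectivity onto $f_*\cO_S(N|_S)$, so you cannot lift the prescribed section $nG_S$. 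Likewise, the relative Kawamata--Viehweg vanishing you invoke (citing \cite[Theorem~1-2-5]{KMM87}) is stated for nonsingular varieties; a version for Weil divisors on klt $X$ exists, but proving it amounts to passing to a resolution anyway.

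This is exactly why the paper first takes a log resolution $g:W\to X$ of $(X,B+C)$ and carries out the entire lifting argument on $W$, where all divisors are Cartier and the snc restriction sequence and vanishing are routine. The price is some bookkeeping with $T_W=\lfloor B_W^{\ge 0}\rfloor$, $\Delta_W=B_W-T_W$, the auxiliary divisor $L_W=-nK_W-nT_W-\lfloor(n+1)\Delta_W\rfloor$, and an exceptional correction $P_W$; one then pushes the lifted section down to $X$ and uses that $nB$ is integral to see $G\ge 0$ and $n(K_X+B^+)\sim 0$. Your identity $N-S-K_X=-(K_X+C)+n(-(K_X+B))+\{C\}$ is the right shape, and on $W$ it becomes the computation that makes $(W,\Lambda_W-S_W)$ klt with big and nef complement.

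For the lc-verification, your outline is correct in spirit but the perturbation you propose is slightly off: taking a convex combination with the plt pair $(X,C)$ will typically make the extra lc place $E$ klt again. The paper instead takes $(X,\,aB^++(1-a)C)$ with $a$ close to $1$, so that the pair remains non-lc away from $S$ while being lc near $S$, and then connectedness applied to the big and nef divisor $-(K_X+aB^++(1-a)C)\sim_{\Qq}-(1-a)(K_X+C)$ yields the contradiction.
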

\begin{proof}
	Let $g:W\to X$ be a log resolution of $(X,B+C)$. We define
	$$-N_W:=K_W+B_W:=g^{*}(K_X+B),$$
	and
	$$ K_W+C_W:=g^*(K_X+C).$$
	
	Let $S_W$ be the strict transform of $S$ on $W$. Since $S$ is normal, the induced birational morphism $g|_{S_W}: S_W\to S$ is a birational contraction. Therefore,
	$$-N_W|_{S_W}=(K_W+B_W)|_{S_W}=g|_{S_W}^{*}(K_S+B_S).$$
	By assumption, there exists $R_S\ge0$, such that $(S,B_S+R_S)$ is an $n$-complement of $(S,B_S)$. Let $R_{S_W}$ be the pullback of $R_S$ on $S_W$, we have
	$$nN_W|_{S_W}\sim nR_{S_W}\ge0.$$
	In the following, we want to lift $R_{S_W}$ from $S_W$ to $W$.
	
	Let $T_W:=\lfloor B_W^{\ge0}\rfloor$, $\Delta_W:=B_W-T_W$, and
	$$L_W:=-nK_W-nT_W-\lfloor(n+1)\Delta_W\rfloor.$$ 
	We may replace $C$ with $(1-a)C+aB$ for some $a\in(0,1)$ sufficiently close to 1, and assume that  
	$$||C_W-S_W-\Delta_W+\{(n+1)\Delta_W\}||<1.$$
	
	We claim that there exists a divisor $P_W$ on $W$, such that $(W,\Lambda_W)$ is plt and $\lfloor\Lambda_W\rfloor=S_W$, where
	$$\Lambda_W:=C_W+n\Delta_W-\lfloor(n+1)\Delta_W\rfloor+P_W.$$
	More precisely, we let $\mult_{S_W} P_W:=0$ for each prime divisor $D_W\neq S_W$, and let 
	\begin{align*}
	\mult_{D_W}P_W:&=-\mult_{D_W}\lfloor C_W+n\Delta_W-\lfloor(n+1)\Delta_W\rfloor\rfloor\\
	&=-\mult_{D_W}\lfloor C_W-\Delta_W+\{(n+1)\Delta_W\}\rfloor.
	\end{align*}
	
	This implies that $0\le \mult_{D_W}P_{W}\le 1$. Moreover, $P_W$ is exceptional over $X$. Indeed, suppose that $D_W$ is an irreducible component of $P_W$ which is not exceptional over $X$. Then $D_W\neq S_W$. Since $nB$ is integral, $\mult_{D_W}(n\Delta_W-\lfloor(n+1)\Delta_W\rfloor)=0$, and $\mult_{D_W}P_W=-\mult_{D_W}\lfloor C_W\rfloor=0$, a contradiction.
	
	Since $||n\Delta_{S_W}-\lfloor(n+1)\Delta_{S_W}\rfloor||<1$, we have
	\begin{align*}
	(L_W+P_W)|_{S_W}&=(-nK_W-nT_W-\lfloor(n+1)\Delta_W\rfloor+P_W)|_{S_W}\\
	&=(nN_W+n\Delta_W-\lfloor(n+1)\Delta_W\rfloor+P_W)|_{S_W}\\
	\sim G_{S_W}:&=nR_{S_W}+n\Delta_{S_W}-\lfloor(n+1)\Delta_{S_W}\rfloor+P_{S_W}\ge0,
	\end{align*}
	where $\Delta_{S_W}:=\Delta_W|_{S_W}$ and $P_{S_W}:=P_W|_{S_W}$.
	
	We have 
	\begin{align*}
	L_W+P_W-S_W&=nN_W+n\Delta_W-\lfloor(n+1)\Delta_W\rfloor+P_W-S_W\\
	&=nN_W+\Lambda_W-C_W-S_W\\
	&=K_W+(\Lambda_W-S_W)-(K_W+C_W)+nN_W.
	\end{align*}
	Since $(W,\Lambda_W-S_W)$ is klt and $-(K_W+C_W)+nN_W$ is big and nef over $Z$, $R^1h_{*}(\mathcal{O}_{W}(L_W+P_W-S_W))=0$ by relative Kawanata-Viehweg vanishing theorem \cite[Theorem 1-2-5]{KMM87}, where $h$ is the induced morphism $W\to Z$. From the exact sequence
	$$0\to\mathcal{O}_W(L_W+P_W-S_W)\to \mathcal{O}_W(L_W+P_W)\to \mathcal{O}_{S_W}((L_W+P_W)|_{S_W})\to 0,$$
	we deduce that
	$$H^0(W,L_W+P_W)\rightarrow H^0(S_W,(L_W+P_W)|_{S_W})$$
	is surjective, and there exists $G_W\ge0$ on $W$ such that $G_W|_{S_W}=G_{S_W}$.
	
	Let $G,L,P,\Delta$ be the strict transforms of $G_W,L_W,P_W,\Delta_W$ on $X$ respectively. Since $P_W$ is exceptional over $X$, $P=0$. Since $nB$ is integral, we have $n\Delta=\lfloor(n+1)\Delta\rfloor$ and
	\begin{align*}
	-n(K_X+B)&=-nK_{X}-nT-n\Delta\\
	&=-nK_X-nT-\lfloor(n+1)\Delta\rfloor\\
	&=L=L+P\sim G\ge0.
	\end{align*}
	Let $nR:=G,B^{+}:=B+G$, then $n(K_X+B^{+})\sim 0$.
	
	It suffices to show that $(X,B^{+})$ is lc. First we construct the desired $R_W$ on $W$, such that $R_W|_{S_W}=R_{S_W}$. Let
	\begin{align*}
	nR_{W}:&=G_{W}-P_{W}-n\Delta_W+\lfloor(n+1)\Delta_W\rfloor\\
	&\sim L_W-n\Delta_W+\lfloor(n+1)\Delta_W\rfloor\\
	&=nN_W\sim_{\Qq,X}0,
	\end{align*}
	we get $g_{*}(nR_W)=G=nR$, and $R_W$ is the pullback of $R$ on $W$. Now
	\begin{align*}
	nR_{S_W}&=G_{S_W}-P_{S_W}-n\Delta_{S_W}+\lfloor(n+1)\Delta_{S_W}\rfloor\\
	&=(G_{W}-P_{W}-n\Delta_W+\lfloor(n+1)\Delta_W\rfloor)|_{S_W}=nR_{W}|_{S_W},
	\end{align*}
	which means $R_{S_W}=R_{W}|_{S_W}$ as required. In particular, $R|_{S}=R_S$, and 
	$$n(K_S+B_S^{+})=n(K_X+B^{+})|_{S}.$$ 
	By inversion of adjunction, $(X,B^{+})$ is lc near $S$. Suppose that $(X,B^{+})$ is not lc, then 
	there exists a real number $a\in(0,1)$ which is sufficiently close to 1, such that $(X,aB^{+}+(1-a)C)$ is not lc and is lc near $S$. In particular, the lc locus of $(X,aB^{+}+(1-a)C)$ is not connected. Since
	$$-(K_X+aB^{+}+(1-a)C)=-a(K_X+B^{+})-(1-a)(K_X+C)$$
	is big and nef, we get a contradiction according to the Shokurov--Koll\'{a}r connectedness principle. 
\end{proof}

\begin{lem}\label{lem:extendcomplement}
	Let $I$ be a positive integer, $(X,C)$ a plt pair, $(X,B)$ an lc pair, and $S:=\lfloor C\rfloor\subset \lfloor B\rfloor$ a prime divisor. Let $K_S+B_S:=(K_X+B)|_{S}$. Suppose that
	\begin{enumerate} 
		\item $IB,IB_S$ are Weil divisors, and
		\item $I(K_{S}+B_S)$ is Cartier,
	\end{enumerate}
	then $I(K_X+B)$ is Cartier near $S$.
	
\end{lem}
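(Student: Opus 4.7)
The plan is to work locally at a closed point $x\in S$ and to show that the Weil divisorial sheaf $\mathcal{O}_X(-I(K_X+B))$ is invertible at $x$. Following the set-up of the proof of Lemma~\ref{lem:extendcomplement1}, I would take a log resolution $g:W\to X$ of $(X,B+C)$, write $K_W+B_W:=g^{*}(K_X+B)$ and $K_W+C_W:=g^{*}(K_X+C)$, and let $S_W$ be the strict transform of $S$, which by the plt hypothesis on $(X,C)$ with $\lfloor C\rfloor=S$ is the unique component of $\lfloor C_W\rfloor$ dominating $S$. Setting $T_W:=\lfloor B_W^{\geq 0}\rfloor$, $\Delta_W:=B_W-T_W$ and $L_W:=-IK_W-IT_W-\lfloor(I+1)\Delta_W\rfloor$, I would next replace $C$ by $(1-a)C+aB$ for $a\in(0,1)$ sufficiently close to $1$ so that $\|C_W-S_W-\Delta_W+\{(I+1)\Delta_W\}\|<1$, and produce a $g$-exceptional effective integral divisor $P_W$ exactly as in the proof of Lemma~\ref{lem:extendcomplement1}, with the property that $(W,\Lambda_W)$ is plt and $\lfloor\Lambda_W\rfloor=S_W$, where $\Lambda_W:=C_W+I\Delta_W-\lfloor(I+1)\Delta_W\rfloor+P_W$.

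I would then apply relative Kawamata--Viehweg vanishing to the short exact sequence
\[
0\to\mathcal{O}_W(L_W+P_W-S_W)\to\mathcal{O}_W(L_W+P_W)\to\mathcal{O}_{S_W}\bigl((L_W+P_W)|_{S_W}\bigr)\to 0.
\]
A direct computation from the definitions yields
\[
L_W+P_W-S_W=K_W+(\Lambda_W-S_W)-g^{*}\bigl(I(K_X+B)+(K_X+C)\bigr),
\]
in which $(W,\Lambda_W-S_W)$ is klt and the pullback $-g^{*}(I(K_X+B)+(K_X+C))$ is $g$-numerically trivial, hence $g$-nef. Since $g$ is birational, vanishing then gives $R^{1}g_{*}\mathcal{O}_W(L_W+P_W-S_W)=0$ over a neighborhood of $x$, so that
\[
g_{*}\mathcal{O}_W(L_W+P_W)\twoheadrightarrow(g|_{S_W})_{*}\mathcal{O}_{S_W}\bigl((L_W+P_W)|_{S_W}\bigr)
\]
is surjective on stalks at $x$.

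Because $IB$ and $IB_S$ are integral and $P_W$ is $g$-exceptional, the two pushforwards identify with $\mathcal{O}_X(-I(K_X+B))$ and $\mathcal{O}_S(-I(K_S+B_S))$ near $x$, respectively. By hypothesis~(2), $\mathcal{O}_S(-I(K_S+B_S))$ is invertible at $x$; let $\bar\sigma$ be a local trivializing section and lift it via the surjection to a section $\sigma$ of $\mathcal{O}_X(-I(K_X+B))$ near $x$. The induced morphism $\sigma:\mathcal{O}_X\to\mathcal{O}_X(-I(K_X+B))$ is injective (as $\mathcal{O}_X$ is torsion-free and $\sigma$ is generically nonzero), and its cokernel $Q$ satisfies $Q\otimes_{\mathcal{O}_X}\mathcal{O}_S=0$ near $x$ since $\bar\sigma$ trivializes the restricted sheaf; Nakayama's lemma applied to the coherent sheaf $Q$ at the point $x\in S$ then yields $Q_x=0$. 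Thus $\sigma$ induces an isomorphism of stalks at $x$, and by coherence this extends to an isomorphism on a neighborhood of $x$, showing that $I(K_X+B)$ is Cartier near $S$.

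The main obstacle will be the construction and verification of the auxiliary data $(\Lambda_W,P_W)$ so that $(W,\Lambda_W)$ is plt with reduced part $S_W$, together with the identifications of the pushforwards above via the integrality of $IB,IB_S$ and the $g$-exceptional nature of $P_W$. These parallel the technical heart of the extension argument in \cite[Proposition 6.2]{PS01} and of Lemma~\ref{lem:extendcomplement1}, but adapted to the present setting where no nef-and-big hypotheses on $-(K_X+C)$ or $-(K_X+B)$ are available; the advantage here is that we only need to lift a local trivialization rather than construct a global complement, so $g$-nefness of the pullback divisor alone, which is automatic, suffices for the vanishing step.
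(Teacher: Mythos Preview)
Your approach and the paper's are essentially the same: both apply the lifting machinery of Lemma~\ref{lem:extendcomplement1} with the identity map $X\to X$, where all the relative nef/big hypotheses become vacuous. The paper's proof is literally one line: apply Lemma~\ref{lem:extendcomplement1} to the identity map at any closed point $s\in S$, taking $n=I$ and $B_S^+=B_S$ (which is a local $I$-complement of itself since $I(K_S+B_S)$ is Cartier). This yields a monotonic $I$-complement $B^+\geq B$ with $(K_X+B^+)|_S=K_S+B_S$; then $R:=B^+-B\geq 0$ is $\mathbb Q$-Cartier with $R|_S=0$, which forces $R=0$ in a neighbourhood of $S$, and hence $I(K_X+B)=I(K_X+B^+)\sim 0$ is Cartier near $s$.

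Your more explicit route via sheaf pushforwards and Nakayama has a gap. You assert that $g_*\mathcal O_W(L_W+P_W)\cong\mathcal O_X(-I(K_X+B))$ and $(g|_{S_W})_*\mathcal O_{S_W}((L_W+P_W)|_{S_W})\cong\mathcal O_S(-I(K_S+B_S))$, but this is not justified. Writing $L_W+P_W=IN_W+F_W$ with $F_W:=I\Delta_W-\lfloor(I+1)\Delta_W\rfloor+P_W$ exceptional, the natural argument would require $F_W\geq 0$, and this can fail: for instance if $\mathrm{mult}_E\Delta_W=\tfrac13$ and $I=2$ one computes $\mathrm{mult}_E F_W=\tfrac23-\lfloor 1\rfloor+0=-\tfrac13$ (and the correction $P_W$ vanishes here since $c+e\geq 0$ after your replacement of $C$). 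When $F_W$ is not effective, $(g|_{S_W})_*\mathcal O_{S_W}((L_W+P_W)|_{S_W})$ can be a proper subsheaf of $\mathcal O_S(-I(K_S+B_S))$, so your chosen trivialising section $\bar\sigma$ need not lie in the image of the surjection and cannot be lifted. The paper's route through the complement $B^+$ (and the elementary observation that an effective $\mathbb Q$-Cartier divisor restricting to $0$ on $S$ must vanish near $S$) avoids this issue entirely.
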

\begin{proof}
	Apply Lemma \ref{lem:extendcomplement1} to the identity map $X\to X$ and any closed point $s\in S$, we deduce that $I(K_X+B)$ is Cartier near $s$ for any closed point $s\in S$. 
\end{proof}
By Lemma \ref{lem:extendcomplement}, we can bound the Cartier indices of Weil divisors on $\mathbb Q$-factorial weak $\epsilon$-plt blow ups.
\begin{prop}\label{prop: bdd index near E}
	Let $d$ be a positive integer, $\epsilon$ a positive real number, and $\Ii\subset [0,1]$ a finite set of rational numbers. Then there exists a positive integer $I$ depending only on $d,\epsilon$ and $\Ii$, such that for any normal variety $X$ of dimension $d$ and closed point $x\in X$, if
	\begin{enumerate}
		\item there exists a klt germ $(X\ni x,\Delta)$ and a $\mathbb Q$-factorial weak $\epsilon$-plt blow-up $f: Y\rightarrow X$ of $(X\ni x,\Delta)$ with the reduced component $E$, and
		\item there exists an $\mathbb R$-divisor $B_Y\in\Ii$ on $Y$, such that $(Y/X\ni x,B_Y+E)$ is $\Rr$-complementary, 
	\end{enumerate}
	then $I(K_Y+B_Y+E)$ is Cartier near $E$. Moreover, the Cartier index of any component of $B_Y$ near $E$ is not larger than $I$. 
\end{prop}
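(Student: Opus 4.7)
The approach is to produce a bounded $n$-complement of $(Y/X\ni x,B_Y+E)$ via Corollary~\ref{cor: plt complement} and then descend Cartier-index bounds from $E$ to $Y$ by iterated use of Lemma~\ref{lem:extendcomplement}. To invoke Corollary~\ref{cor: plt complement}, I set $B:=f_*B_Y$ on $X$: since $E$ is the unique $f$-exceptional divisor and $E\not\subset\Supp B_Y$ (for otherwise $B_Y+E$ would fail to be a boundary), $B_Y$ is the strict transform of $B$ with $B\in\Ii$, and pushing forward any $\Rr$-complement of $(Y/X\ni x,B_Y+E)$ shows that $(X\ni x,B)$ is an lc germ. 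The hypotheses of Corollary~\ref{cor: plt complement} are therefore met with the klt germ $(X\ni x,\Delta)$ and the $\Qq$-factorial weak $\epsilon$-plt blow-up $f$, producing a positive integer $n$ depending only on $d$ and $\Ii$ and a $\Qq$-Weil divisor $G_Y\ge 0$ (with $nG_Y$ integral) such that $(Y/X\ni x,B_Y+E+G_Y)$ is an $n$-complement.

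By Theorem~\ref{thm: adjunction}, the pair $(E,\Diff_E(\Delta_Y))$ is $\epsilon$-klt, and since $f$ is a $\Qq$-factorial weak $\epsilon$-plt blow-up $-(K_E+\Diff_E(\Delta_Y))=-(K_Y+\Delta_Y+E)|_E$ is big and nef, so $E$ lies in a bounded family of $\epsilon$-klt weak log Fano pairs by the BBAB theorem (Theorem~\ref{thm: BAB}). On this family the Cartier index of every $\Qq$-Cartier Weil divisor is uniformly bounded by Lemma~\ref{lem: indexofL} (combined with the boundedness of Cartier indices for weak log Fano pairs recorded in the remark following it); in particular, the Cartier indices of $K_E+\Diff_E(B_Y)$, of $K_E+\Diff_E(0)$ and of $K_E+\Diff_E(b_iD_{Y,i})$ for each component $b_iD_{Y,i}$ of $B_Y$ are bounded by a constant depending only on $d$, $\epsilon$ and $\Ii$, while Theorem~\ref{thm: adjunction}(4) bounds the denominators of these $\Diff$ divisors on $E$. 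I then apply Lemma~\ref{lem:extendcomplement} with $S=E$ and the plt pair $C=\Delta_Y+E$ (so that $\lfloor C\rfloor=E$), using successively $B=B_Y+E$, $B=E$ and $B=E+b_iD_{Y,i}$: the first yields a bounded positive integer $I$ with $I(K_Y+B_Y+E)$ Cartier near $E$, which is the main assertion, and subtracting the second from the third (and clearing the bounded denominator of $b_i\in\Ii$) gives $ID_{Y,i}$ Cartier near $E$ for each component $D_{Y,i}$ of $B_Y$, establishing the moreover part.

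The main technical obstacle lies in Step~2, namely obtaining the uniform Cartier-index bound for the specific $\Qq$-Cartier Weil divisors $K_E+\Diff_E(B_Y)$ and $K_E+\Diff_E(b_iD_{Y,i})$ on the bounded family of weak $\epsilon$-log Fano pairs $(E,\Diff_E(\Delta_Y))$. Lemma~\ref{lem: indexofL} is stated only for nef Weil divisors $L$ satisfying a pseudo-effectivity condition, so to apply it I expect to write each divisor of interest as a difference of nef Weil divisors involving $-(K_E+\Diff_E(\Delta_Y))$, and then carefully track the bounded denominators coming from $\Ii$ together with the $\lfloor 1/\epsilon\rfloor!$-torsion of local Weil-class groups provided by Theorem~\ref{thm: adjunction}(4).
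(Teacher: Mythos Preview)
Your overall architecture---bound Cartier indices on $E$ and then lift to $Y$ via Lemma~\ref{lem:extendcomplement}---matches the paper's proof. Two points, however, need correction.

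First, the detour through Corollary~\ref{cor: plt complement} to produce an $n$-complement is never used in the rest of your argument and should be dropped. The lc-ness of $(Y,B_Y+E)$ required for Lemma~\ref{lem:extendcomplement} already follows from the $\Rr$-complementary hypothesis, and pseudo-effectivity of $-(K_Y+B_Y+E)|_E$ follows from any $\Rr$-complement.

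Second, and more substantively, your route to bounding Cartier indices on $E$ is incomplete. Lemma~\ref{lem: indexofL} applies only to \emph{nef} Weil divisors $L$, and the divisors $\pm r(K_E+\Diff_E(B_Y))$, $\pm r(K_E+\Diff_E(b_iD_{Y,i}))$ have no reason to be nef; writing them as differences of nef Weil divisors with the required pseudo-effectivity bound is not available here. The remark following Question~3.22 that you invoke is not a proved statement in this paper. The paper resolves this step differently: it applies Lemma~\ref{lem: bdd lemma} (using that $\Ii$ is finite with positive minimum and that $-(K_Y+B_Y+E)|_E$ is pseudo-effective) to conclude that the pairs $(E,\Diff_E(B_Y))$ and $(E,\Diff_E(b_iB_i))$ are \emph{log bounded} with coefficients in a finite rational set. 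Then \cite[Lemma~2.24]{Bir19}---a Cartier-index bound valid on log bounded families with finite coefficient set, not requiring nefness---gives a bounded $I$ with $I(K_E+\Diff_E(b_iB_i))$ and $I(K_E+\Diff_E(0))$ Cartier. The lift via Lemma~\ref{lem:extendcomplement} then proceeds exactly as you wrote, yielding $I(K_Y+b_iB_i+E)$ and $I(K_Y+E)$ Cartier near $E$, and hence (after a bounded adjustment of $I$) both conclusions.
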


\begin{proof}
	Write $B_Y=\sum b_iB_i$, where $B_i$ are the irreducible components of $B$. By Lemma \ref{lem: bdd lemma}, $(E,B_{E}:={\Diff}_{E}(B_Y))$ and $(E,B_{E,i}:={\Diff}_{E}(b_iB_i))$ are log bounded, and all the coefficients of $B_E$ and $B_{E,i}$ belong to a finite set of rational numbers. By \cite[Lemma 2.24]{Bir19}, there exists a positive integer $I$ depending only on $d,\epsilon$ and $\Ii$ such that both $I(K_E+B_{E,i})$ and $I(K_E+\Diff_{E}(0))$ are Cartier. By Lemma \ref{lem:extendcomplement}, both $I(K_Y+b_iB_i+E)$ and $I(K_Y+E)$ are Cartier near $E$. Possibly replacing $I$ with a bounded multiple, we may assume that  $IB_i$ and $I(K_Y+B_Y+E)$ are Cartier near $E$.
\end{proof}
\begin{prop}\label{prop: bdd index of E qfactorial}
	Let $d$ be a positive integer and $\epsilon_0,\epsilon$ two positive real numbers. Then there exists a positive integer $I$ depending only on $d,\epsilon_0$ and $\epsilon$, such that for any $\epsilon_0$-lc germ $(X\ni x,B)$ of dimension $d$, if there exists either an $\epsilon$-plt blow-up or a $\mathbb Q$-factorial weak $\epsilon$-plt blow-up $f: Y\rightarrow X$ of $(X\ni x,B)$ with the reduced component $E$, then $IE$ is Cartier.
\end{prop}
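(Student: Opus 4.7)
The plan is to combine Theorem \ref{thm: bdd local index exceptional} (bounding the Cartier index of $K_X$ near $x$) with the ingredients used in Proposition \ref{prop: bdd index near E} (bounding the Cartier index of $K_Y + E$ near $E$), then subtract to isolate a bounded Cartier multiple of $aE$, where $a := a(E,X,0)$. The desired bound on the Cartier index of $E$ will follow once $a$ is bounded via Proposition \ref{prop: acc ld kc}.

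First, since $\epsilon_0 > 0$, the pair $(X,B)$ is klt, and since log discrepancies only increase when the boundary is decreased, $f$ is also an $\epsilon$-plt (respectively $\mathbb Q$-factorial weak $\epsilon$-plt) blow-up of $(X \ni x, 0)$. Moreover $a \geq a(E,X,B) \geq \epsilon_0 > 0$. Theorem \ref{thm: bdd local index exceptional} applied to $(X \ni x, 0)$ provides $I_2 = I_2(d,\epsilon_0,\epsilon)$ such that $I_2 K_X$ is Cartier near $x$; pulling back by $f$ gives $I_2(K_Y + (1-a)E) = I_2 f^{*}K_X$ Cartier near $E$.

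Next, I would mimic the proof of Proposition \ref{prop: bdd index near E} with $B_Y = 0$. By Lemma \ref{lem: bdd lemma} (its nonzero-coefficient hypothesis is vacuous, and its pseudo-effectivity hypothesis is satisfied because $-(K_Y + E)|_E \sim_{\mathbb R} -aE|_E$ is either ample on $E$, in the plt blow-up case, or big, in the $\mathbb Q$-factorial weak case), $(E, \Diff_E(0))$ lies in a log bounded family, and \cite[Lemma 2.24]{Bir19} then supplies $I_1 = I_1(d,\epsilon)$ with $I_1(K_E + \Diff_E(0))$ Cartier. Lemma \ref{lem:extendcomplement}, applied on $Y$ near $E$ with $C = B = E$ and $S = E$, lifts this to $I_1(K_Y + E)$ Cartier near $E$. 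Crucially, Lemma \ref{lem:extendcomplement} does not require $Y$ to be $\mathbb Q$-factorial, so it treats both cases of the hypothesis uniformly.

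Subtracting yields that $I_1 I_2(K_Y + E) - I_1 I_2(K_Y + (1-a)E) = I_1 I_2 a E$ is Cartier near $E$, so $I_1 I_2 a$ is a positive integer. By Proposition \ref{prop: acc ld kc}, applied with $B = 0$ and $\Delta = B$, there is a constant $M = M(d,\epsilon)$ with $a \leq M$, hence the Cartier index of $E$ near $E$ divides $I_1 I_2 a \leq I_1 I_2 M$. Setting $I := \lceil I_1 I_2 M \rceil!$ gives $IE$ Cartier near $E$, and since $\operatorname{Supp}(E) = f^{-1}(x)$, $IE$ is trivially Cartier off $\operatorname{Supp}(E)$, so $IE$ is Cartier on all of $Y$. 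The main technical subtlety is obtaining the Cartier-index bound for $K_Y + E$ in the non-$\mathbb Q$-factorial plt case, which is why we invoke Lemma \ref{lem:extendcomplement} directly rather than Proposition \ref{prop: bdd index near E} as stated.
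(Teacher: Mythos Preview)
Your overall strategy is correct and in fact matches the informal sketch the paper gives in Section~2: bound the Cartier index of $K_X$ via Theorem~\ref{thm: bdd local index exceptional}, bound that of $K_Y+E$ via adjunction and log boundedness, subtract to get a Cartier multiple $I_1I_2\,a\,E$, then control $a$ via Proposition~\ref{prop: acc ld kc}. However, the paper's \emph{formal} proof takes a different and shorter route: it first applies Proposition~\ref{prop:CartierindexofEE} to bound the Cartier index of the restriction $E|_E$ on $E$, and then invokes the Appendix result, Theorem~\ref{thm:pltind}, which says that for a plt pair $(Y,\Delta+E)$ and any $\mathbb Q$-Cartier Weil divisor $L$ one has $\ind(y\in Y,L)=\ind(y\in E,L|_E)$. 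This immediately lifts the bound from $E$ to $Y$ without touching $K_X$ or $K_Y+E$ separately.

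There is a genuine gap in your argument as written. Theorem~\ref{thm: bdd local index exceptional} assumes that $(X\ni x,B)$ admits an $\epsilon$-plt blow-up, not merely a $\mathbb Q$-factorial weak $\epsilon$-plt blow-up, so your step ``Theorem~\ref{thm: bdd local index exceptional} applied to $(X\ni x,0)$ provides $I_2$ \ldots'' is unjustified in the weak case. The paper's route via Proposition~\ref{prop:CartierindexofEE} and Theorem~\ref{thm:pltind} handles both cases uniformly, since both results only need $(Y,E)$ plt. Your approach can be repaired: given a $\mathbb Q$-factorial weak $\epsilon$-plt blow-up, one can pass to the ample model of $-E$ over $X$ (this is a $(K_Y+B_Y+E)$-trivial contraction, since $K_Y+B_Y+E\equiv_X aE$, so it preserves $\epsilon$-plt) to obtain an honest $\epsilon$-plt blow-up of $(X\ni x,B)$; but you should make this reduction explicit before invoking Theorem~\ref{thm: bdd local index exceptional}.
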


\begin{proof}
	By Proposition \ref{prop:CartierindexofEE}, there exists a positive integer $I$ depending only on $d,\epsilon_0$ and $\epsilon$, such that $IE|_{E}$ is Cartier. By Theorem \ref{thm:pltind}, $IE$ is Cartier.
\end{proof}

\begin{defn}
    Let $(X,B)$ be a pair and $A$ an $\Rr$-Cartier $\Rr$-divisor on $X$. The $\Rr$-linear system of $A$ is defined by $|A|_{\Rr}=\{L \mid A\sim_{\Rr} L\ge0\}$.
    We define the lc threshold of $|A|_{\Rr}$ with respect to $(X, B)$ as
    $$\lct(X,B;|A|_{\Rr}):= \inf\{\lct(X, B, L)\mid L\in |A|_{\Rr}\}.$$
\end{defn}

The following theorem is proved by Birkar. It would be useful in the proof of Theorem \ref{thm: log discrepancies less than 1}.

\begin{thm}[{\cite[Theorem 1.6]{Bir16}}]\label{thm: lct system lower bound 1.6}
	Let $d,r$ be two positive integers and $\epsilon$ a positive real number. Then there exists a positive real number $t$ depending only on $d,r$ and $\epsilon$ satisfying the following. Assume that 
	\begin{enumerate}
		\item $(X,B)$ is a projective $\epsilon$-lc pair of dimension $d$,
		\item $A$ is a very ample divisor on $X$ such that $A^d\leq r$,
		\item $A-B$ is ample, and 
		\item $M\geq 0$ is an $\mathbb R$-Cartier $\mathbb R$-divisor on $X$ with $|A-M|_{\mathbb R}\not=\emptyset$.
	\end{enumerate}
	
	Then 
	$$\lct(X,B;|M|_{\mathbb R})\geq \lct(X,B;|A|_{\mathbb R})>t.$$
\end{thm}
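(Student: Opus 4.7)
The inequality $\lct(X,B;|M|_{\Rr}) \ge \lct(X,B;|A|_{\Rr})$ is formal: given any $L \in |M|_{\Rr}$, choose $N \in |A-M|_{\Rr}$ (nonempty by hypothesis (4)); then $L+N \in |A|_{\Rr}$ and $L \le L+N$, so $\lct(X,B;L) \ge \lct(X,B;L+N) \ge \lct(X,B;|A|_{\Rr})$. Taking infimum over $L \in |M|_{\Rr}$ gives the claim. The content is the strict lower bound $\lct(X,B;|A|_{\Rr}) > t$, which I would prove by contradiction combined with a boundedness/ACC argument.

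Suppose no such $t$ exists. Then there exist sequences $(X_i,B_i)$, $A_i$, and $L_i \in |A_i|_{\Rr}$ satisfying (1)--(4) with $t_i := \lct(X_i,B_i;L_i) \to 0$. Since $A_i$ is very ample with $A_i^d \le r$, the polarized pairs $(X_i,A_i)$ lie in a bounded family: they embed uniformly into some $\bP^N$ as subvarieties of bounded degree. The ampleness of $A_i - B_i$ forces $B_i \cdot A_i^{d-1} \le A_i^d \le r$, and similarly $L_i \cdot A_i^{d-1} = A_i^d \le r$. Thus both $B_i$ and $L_i$ have bounded $A_i$-degrees, so their supports have bounded numbers of irreducible components of bounded degree. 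After passing to a subsequence and taking a simultaneous log resolution of the universal family, we may assume the pairs $(X_i,\Supp(B_i+L_i))$ are fibers over a dense set of closed points of a single projective family $\Xx \to T$, with $B_i$ and $L_i$ written as $\Rr$-linear combinations of finitely many ``universal'' prime divisors $D_j$ restricted to the fibers.

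For each $i$, pick a prime divisor $E_i$ over $X_i$ computing $t_i$, so that $a(E_i,X_i,B_i+t_iL_i) = 0$. Since $(X_i,B_i)$ is $\epsilon$-lc, $\epsilon \le a(E_i,X_i,B_i) = t_i \cdot \mult_{E_i}L_i$, whence $\mult_{E_i}L_i \ge \epsilon/t_i \to \infty$. The plan is now to contradict this via the ACC for log canonical thresholds (Theorem \ref{thm: acc lct}). The main obstacle, and the technical heart of the argument, is that the coefficients of $L_i$ are real numbers and need not a priori lie in a DCC set. To bypass this, I would use the family structure to write $L_i = \sum \lambda_{i,j} D_j$ and $B_i = \sum \mu_{i,j}D_j$ with uniformly bounded supports, pass to a subsequence making each $\lambda_{i,j}, \mu_{i,j}$ monotone, and then approximate the $\lambda_{i,j}$ from below by rationals in a fixed finite set while tracking lct-estimates (in the spirit of Corollary \ref{cor: strong Nakamura approximation}). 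This reduces the problem to DCC coefficients, where Theorem \ref{thm: acc lct} applies directly and provides a uniform positive lower bound on $\lct(X_i,B_i;L_i)$, contradicting $t_i \to 0$. The hard part is precisely this passage from real to rational coefficients while keeping the lct estimate tight enough to extract a contradiction — this is the crux of Birkar's argument in \cite{Bir16}.
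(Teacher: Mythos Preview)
The paper does not give its own proof of this statement: it is quoted verbatim as \cite[Theorem 1.6]{Bir16} and used as a black box, so there is no in-paper argument to compare against. I can only comment on your proposal on its own merits.

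Your reduction $\lct(X,B;|M|_{\Rr})\ge \lct(X,B;|A|_{\Rr})$ is correct and entirely formal. The gap is in the contentful half. You correctly isolate the obstruction: the $L_i\in|A_i|_{\Rr}$ have arbitrary real coefficients, so the ACC for log canonical thresholds (Theorem~\ref{thm: acc lct}) does not apply. Your proposed fix --- approximate the coefficients $\lambda_{i,j}$ from below by rationals in a fixed finite set and invoke something ``in the spirit of Corollary~\ref{cor: strong Nakamura approximation}'' --- does not work as stated. Corollary~\ref{cor: strong Nakamura approximation} perturbs coefficients inside the rational envelope of a \emph{fixed} finite tuple of irrationals, with the size of the perturbation depending on that tuple; here the coefficient vectors $(\lambda_{i,j})$ vary with $i$ and have no common rational envelope, so that machinery gives no uniform statement. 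Approximating from below in a fixed finite set can change the lct by an amount you do not control, so you cannot conclude that the approximated thresholds still tend to $0$. In short, you have named the difficulty but not resolved it, and you yourself concede at the end that ``this is the crux of Birkar's argument.''

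For orientation: Birkar's proof in \cite{Bir16} does not go through the ACC for lct at all. It is one of the principal technical theorems of that paper, proved by a delicate induction using effective birationality of pluri-anticanonical maps, creation and control of non-klt centres, and bounded Cartier index results --- machinery largely developed in \cite{Bir16} itself. That is precisely why the present paper cites it rather than reproving it.
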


\begin{prop}\label{prop: local lct linear system}
Let $d$ be a positive integer and $\epsilon,\delta$ two positive real numbers. Then there exists a positive real number $t$ depending only on $d,\epsilon$ and $\delta$ satisfying the following. Assume that
\begin{enumerate}
    \item  $(X\ni x,B)$ is a klt germ of dimension $d$,
        \item there exists a  $\mathbb Q$-factorial weak $\epsilon$-plt blow up $f: Y\rightarrow X$ of $(X\ni x,B)$ with the reduced component $E$, and
    \item all the (non-zero) coefficients of $B$ are $\geq\delta$,
\end{enumerate}
then $(Y,(1+t)B_Y+E)$ is plt near $E$, where $B_Y$ is the strict transform of $B$ on $Y$. 
\end{prop}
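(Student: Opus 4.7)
The plan is to reduce the problem, via inversion of adjunction, to a uniform klt-threshold estimate on the reduced component $E$, and then to invoke the uniform lct-of-linear-systems bound (Theorem \ref{thm: lct system lower bound 1.6}). Setting $B_E := \Diff_E(B_Y)$, the adjunction formula $K_E+\Diff_E(D)=(K_Y+D+E)|_E$ gives the key identity
$$\Diff_E((1+t)B_Y) \;=\; \Diff_E(B_Y) + t\,(B_Y|_E) \;=\; B_E + t\,(B_Y|_E),$$
so $(Y,(1+t)B_Y+E)$ will be plt near $E$ provided (a) $(1+t)B_Y$ is a boundary near $E$, and (b) the pair $(E, B_E + t\,(B_Y|_E))$ is klt for a uniform $t>0$.

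For the boundedness input, I would note that $f$ is a $\Qq$-factorial weak $\epsilon$-plt blow-up of $(X\ni x,B)$ itself, so $-(K_Y+B_Y+E)|_E$ is big (hence pseudo-effective), and all nonzero coefficients of $B$ are $\geq\delta$. Lemma \ref{lem: bdd lemma} then shows $(E, B_E)$ lies in a log bounded family, while Theorem \ref{thm: adjunction}(2) shows it is $\epsilon$-klt. From the log boundedness I would extract, uniformly in $d,\epsilon,\delta$, a positive integer $r$ and a very ample divisor $A$ on $E$ with $A^{d-1}\le r$ and $A-B_E$ ample. Since $B_Y|_E = B_E-\Diff_E(0)\le B_E$, we have $A-B_Y|_E\sim_{\Rr}$ effective, so $|A-B_Y|_E|_{\Rr}\neq\emptyset$; Theorem \ref{thm: lct system lower bound 1.6} applied to the $\epsilon$-klt pair $(E,B_E)$ with $M := B_Y|_E$ then yields a uniform $t_0>0$ with $(E, B_E + t_0\,B_Y|_E)$ klt, proving (b).

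For (a), shrink around $E$ so that every prime component $B_i\subset\Supp B_Y$ meets $E$. At any codimension-one component $V\subset B_i\cap E$, Theorem \ref{thm: adjunction}(3) gives $\mult_V B_E=\tfrac{m-1+n_ib_i+\cdots}{m}$ with $m\ge 1$ and $n_i\ge 1$, and the $\epsilon$-kltness of $(E,B_E)$ forces $n_ib_i\le 1-m\epsilon$, hence $b_i\le 1-\epsilon$. Thus taking $t:=\min\{t_0,\,\epsilon/(1-\epsilon)\}$ ensures $(1+t)B_Y$ is a boundary near $E$, and inversion of adjunction applied to (b) completes the proof. The main obstacle I anticipate is the clean derivation of the coefficient bound $b_i\le 1-\epsilon$ from the different formula, together with properly packaging the log boundedness of $(E, B_E)$ into the very ample divisor $A$ satisfying the hypotheses of Theorem \ref{thm: lct system lower bound 1.6}.
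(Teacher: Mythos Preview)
Your proposal is correct and follows the same route as the paper's proof: log-bound $(E,B_E)$ via Lemma \ref{lem: bdd lemma}, extract a very ample $A$ on $E$ with $A^{d-1}\le r$ and $A-B_E$ ample, apply Theorem \ref{thm: lct system lower bound 1.6} with $M=B_Y|_E$ (using $B_E\ge B_Y|_E$) to get a uniform lower bound $\lct(E,B_E;B_Y|_E)>t$, and conclude via inversion of adjunction (Theorem \ref{thm: adjunction}). Your separate coefficient check (a) is extra care the paper omits: once $(E,\Diff_E((1+t)B_Y))=(E,B_E+t\,B_Y|_E)$ is klt, Kawakita's inversion of adjunction already forces $(Y,(1+t)B_Y+E)$ to be lc near $E$ (hence $(1+t)b_i\le 1$ automatically) and then plt near $E$ by the plt-iff-klt direction, so the obstacle you anticipate does not actually arise.
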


\begin{proof}
     Let 
	$$K_E+B_E:=(K_Y+B_Y+E)|_{E}.$$
	By Lemma \ref{lem: bdd lemma}, $(E,B_{E})$ is log bounded. Thus there exists a positive real number $r$ depending only on $d,\epsilon$ and $\delta$, and a very ample Cartier divisor $A$ on $E$, such that $A^{d-1}\leq r$ and $A-B_{E}$ is ample. Since $B_E\geq B_Y|_{E}$, by Theorem \ref{thm: lct system lower bound 1.6}, there exists a positive real number $t$ depending only on $d,\epsilon$ and $\delta$, such that $\lct(E,B_{E};B_{Y}|_{E})>t$. By Theorem \ref{thm: adjunction}, $(Y,(1+t)B_{Y}+E)$ is plt near $E$.
\end{proof}

Now we can prove the main result of this section.

\begin{thm}\label{thm: log discrepancies less than 1}
	Let $d$ be a positive integer, $\epsilon_0,\epsilon,M$ three positive real numbers, and $\Ii\subset [0,1]$ a DCC set. Then there exists an ACC set $\Ii'$ depending only on $d,\epsilon_0,\epsilon,M$ and $\Ii$, and an ACC set $\Ii''$ depending only on $d,\epsilon_0,\epsilon$ and $\Ii$ satisfying the following.	Assume that
	\begin{itemize}
		\item $(X\ni x,B)$ is an $\epsilon_0$-lc germ of dimension $d$,
		\item $B\in \Ii$, 
		\item there exists a $\mathbb Q$-factorial weak $\epsilon$-plt blow-up $f:Y\rightarrow X$ of $(X\ni x,B)$, and
		\item $F$ is a prime divisor over $X\ni x$ such that $a(F,X,B)\leq M$,
		\end{itemize}
		then
		\begin{enumerate}
		\item $a(F,X,B)\in\Ii'$, and
		\item $\mld(X\ni x,B)\in\Ii''$.
		\end{enumerate}
\end{thm}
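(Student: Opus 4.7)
The plan follows the outline in Section 2. Let $E$ be the reduced component of $f$ and $B_Y$ the strict transform of $B$ on $Y$. Since $f$ is a $\mathbb Q$-factorial weak $\epsilon$-plt blow-up and $(X\ni x,B)$ is klt (being $\epsilon_0$-lc), $Y$ is of Fano type over a neighborhood of $x$ by Lemma \ref{lem:Fanotypedlt}, and $(Y/X\ni x,B_Y+E)$ is $\Rr$-complementary. I would first apply Theorem \ref{thm: existence ni1i2 complement} to obtain a positive integer $n$ and a finite set $\Ii_0=\{a_1,\ldots,a_k\}\subset(0,1]$ depending only on $d$ and $\Ii$, together with an $(n,\Ii_0)$-decomposable $\Rr$-complement $(Y/X\ni x,B_Y^++E)$ of $(Y/X\ni x,B_Y+E)$ satisfying $B_Y^+\geq B_Y$ and $B_Y^+=\sum_{i=1}^k a_iB_i$, where each $(Y/X\ni x,B_i+E)$ is a monotonic $n$-complement of itself.

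Write $b:=a(F,Y,B_Y+E)$ and $b':=a(F,Y,B_Y^++E)$. Since each $n(K_Y+B_i+E)\sim_{X\ni x} 0$, we have $a(F,Y,B_i+E)\in \frac{1}{n}\mathbb{Z}_{\geq 0}$, and since
\[
0\leq b'=\sum_{i=1}^k a_ia(F,Y,B_i+E)\leq b\leq M,
\]
each $a(F,Y,B_i+E)$ is bounded above, and hence $b'$ lies in a finite set depending only on $d,\Ii,M$. The core of the argument is to control the discrepancy of $B_Y^+-B_Y$: using Lemma \ref{lem: bdd lemma}, the pair $(E,{\Diff}_E(B_Y^+))$ is log bounded with coefficients in a finite set, so by Proposition \ref{prop: bdd index near E} (after possibly passing to each $B_i$) there is a positive integer $I_1$ depending only on $d,\epsilon$ and $\Ii$ such that the Cartier index of every component of $\Supp(B_Y^+)$ near $E$ is $\leq I_1$. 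Therefore the coefficients of $B_Y^+$ lie in a finite set, and those of $B_Y^+-B_Y$ lie in an ACC set. For new components $D_Y\subset \Supp(B_Y^+)\setminus \Supp(B_Y)$, the bound $\mult_F(D_Y)\leq Mn/\min a_i$ is immediate; for components of $B_Y$ themselves, I would invoke Proposition \ref{prop: local lct linear system} (with $\delta:=\min(\Ii\setminus\{0\})$ if nonempty, else a lower bound coming from the DCC-to-finite reduction via Theorem \ref{thm: dcc limit divisor}) to get a positive $t$ with $(Y,(1+t)B_Y+E)$ plt near $E$, yielding $\mult_F(B_Y)\leq M/t$. Combining these bounds and the equality $b=b'+\mult_F(B_Y^+-B_Y)$ shows that $b$ belongs to an ACC set.

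Next, I would pass from $b$ to $a(F,X,B)$ via the identity
\[
a(F,X,B)=b+a(E,X,B)\cdot \mult_F E,
\]
which follows from $K_Y+B_Y+(1-a(E,X,B))E=f^*(K_X+B)$. Proposition \ref{prop: acc ld kc} places $a(E,X,B)$ in an ACC set depending only on $d,\epsilon,\Ii$. To finish I need $\mult_F E$ in a finite set: since $f$ is also an $\epsilon$-plt blow-up of $(X\ni x,0)$, Proposition \ref{prop: bdd index of E qfactorial} (which rests on Theorem \ref{thm: bdd local index exceptional} through Proposition \ref{prop:CartierindexofEE} and Theorem \ref{thm:pltind}) gives a positive integer $I_2$ depending only on $d,\epsilon_0,\epsilon$ with $I_2E$ Cartier on $Y$, so $\mult_F E\in \frac{1}{I_2}\mathbb{N}$, and the upper bound on $\mult_F E$ provided by $M\geq a(F,X,B)\geq a(E,X,B)\mult_F E$ together with the positive lower bound on $a(E,X,B)$ (from the $\epsilon_0$-lc assumption and Proposition \ref{prop: acc ld kc}) forces $\mult_F E$ into a finite set. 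This proves (1).

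For (2), I would observe that by Proposition \ref{prop: acc ld kc} the log discrepancy $a(E,X,B)$ is bounded above by a constant $M_0$ depending only on $d,\epsilon,\Ii$. If $F$ computes $\mld(X\ni x,B)$, then $a(F,X,B)\leq a(E,X,B)\leq M_0$, so part (1) applied with $M=M_0$ places $\mld(X\ni x,B)$ in an ACC set depending only on $d,\epsilon_0,\epsilon,\Ii$. The main obstacle in the proof is the uniform control on the multiplicity of $F$ along $B_Y^+$ — specifically, obtaining the Cartier index bound $I_1$ for components of $B_Y^+$ that may involve irrational coefficients through the decomposition, and combining it with the lct lower bound from Proposition \ref{prop: local lct linear system} to avoid an accumulation caused by $B_Y^+-B_Y$ having arbitrarily small but nonzero coefficients; both are ultimately controlled through Lemma \ref{lem: bdd lemma} and the uniform $\Rr$-complementary rational polytope built in Theorem \ref{thm: Uniform perturbation of R complements}.
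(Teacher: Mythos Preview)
Your proposal is correct and follows essentially the same approach as the paper: apply Theorem \ref{thm: existence ni1i2 complement} to get the $(n,\Ii_0)$-decomposable $\Rr$-complement, bound Cartier indices of the components of $B_Y^+$ and of $E$ via Propositions \ref{prop: bdd index near E} and \ref{prop: bdd index of E qfactorial}, bound $\mult_F B_Y$ via Proposition \ref{prop: local lct linear system}, and conclude via Proposition \ref{prop: acc ld kc}; the only cosmetic difference is that the paper writes the single decomposition $a(F,X,B)=e\cdot\mult_F E+\sum_i a_i a(F,Y,B_i+E)+\mult_F(B_Y'-B_Y)$ rather than your two-step passage through $b$. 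One small slip: you should apply Proposition \ref{prop: bdd index of E qfactorial} directly to $(X\ni x,B)$ (which is $\epsilon_0$-lc and admits $f$ as a $\mathbb Q$-factorial weak $\epsilon$-plt blow-up by hypothesis) rather than to $(X\ni x,0)$, since $K_X$ need not be $\mathbb Q$-Cartier here.
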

\begin{proof} 
	First we show (1). Let $E$ be the reduced component of $f$ and $B_Y$ the strict transform of $B$ on $Y$. For simplicity, we let $e:=a(E,X,B)$. By Theorem \ref{thm: existence ni1i2 complement}, there exist two positive integers $k,n$, a finite set $\Ii_0=\{a_1,\dots,a_k\}\subset (0,1]$ depending only $d$ and $\Ii$, and $\Qq$-divisors $B_1,\dots,B_k\ge0$ on $Y$, such that
	\begin{itemize}
	\item $\sum_{i=1}^k a_i=1$ and $B_{Y}'+E:=\sum_{i=1}^k a_i(B_{i}+E)\geq B_Y+E$, and
	\item each $(Y/X\ni x,B_i+E)$ is a monotonic $n$-complement of itself.
	\end{itemize}
	By Proposition \ref{prop: bdd index near E} and Proposition \ref{prop: bdd index of E qfactorial}, there exists a positive integer $I$ depending only on $d,\epsilon_0,\epsilon$ and $\Ii$, such that 
	\begin{itemize}
	\item $IE$ is Cartier, and
	\item $ID$ is Cartier near $E$ for every irreducible component $D$ of $B_Y'$.
	\end{itemize}
We have
	\begin{align*}
	M\geq a(F,X,B)&=a(F,Y,B_Y+(1-e)E)=e\mult_{F}E+a(F,Y,B_Y+E)\\
	&=e\mult_{F}E+a(F,Y,B_Y'+E)+\mult_F(B_Y'-B_Y)\\
	&=e\mult_{F}E+\sum_{i=1}^k a_ia(F,Y,B_i+E)+\mult_F(B_Y'-B_Y).
	\end{align*}
	
	It suffices to show that $e\mult_{F}E$, $\sum_{i=1}^k a_ia(F,Y,B_i+E)$, and $\mult_F(B_Y'-B_Y)$ belong to an ACC set.
	
	 Since $IE$ is Cartier and $\mult_FE\leq\frac{M}{e}\leq\frac{M}{\epsilon_0}$, $\mult_FE$ belongs to a finite set. By Proposition \ref{prop: acc ld kc}, $e$ belongs to an ACC set, thus $e\mult_{F}E$ belongs to an ACC set. 
	 
	 Since each $n(K_Y+B_{i}+E)$ is Cartier near $E$, $\sum_{i=1}^k a_ia(F,Y,B_i+E)$ belongs to a finite set. 
	 
	Finally, we show that $\mult_F(B_Y'-B_Y)$ belong to an ACC set. Since the coefficients of $B'_Y-B_Y$ belong to an ACC set of non-negative real numbers and $ID$ is Cartier near $E$ for every irreducible component $D$ of $B'_Y$, it is enough to show that $\mult_FD$ is bounded from above.

	By Proposition \ref{prop: local lct linear system}, there exists a positive real number $t$ depending only $d,\epsilon$ and $\Ii$, such that $(Y,(1+t)B_{Y}+E)$ is plt near $E$. We have
	\begin{align*}
	0&\leq a(F,Y,(1+t)B_{Y}+E)=a(F,Y,B_{Y}+E)-t\mult_FB_Y\\
	&\leq a(F,X,B)-t\mult_FB_Y\leq M-t\mult_FB_Y,
	\end{align*}
	which implies that $\mult_{F}B_{Y}\leq\frac{M}{t}$. Since $\Ii$ is a DCC set, for any irreducible component $D$ of $B_Y$, $\mult_FD$ is bounded from above.

	For any irreducible component $D$ of $B_Y'$ such that $D\not \subseteq \Supp B_Y$, we have
		\begin{align*}
	    M&\geq\mult_F(B_Y'-B_Y)\geq\mult_F\big((\mult_D(B_Y'-B_Y))D\big)\\
	    &=\mult_F\big((\mult_D(B_Y'))D\big)\geq\frac{1}{n}\min_{1\le i\le k}\{a_i\}\mult_F D.
	\end{align*}
	Thus $\mult_FD\leq\frac{Mn}{\min_{1\le i\le k}\{a_i\}}$. 
	
	Hence $\mult_FD$ is bounded from above for every irreducible component $D$ of $B_Y'$, and we prove (1).

	Since $\mld(X\ni x,B)\leq a(E,X,B)$ and $a(E,X,B)$ belongs to an ACC set by Proposition \ref{prop: acc ld kc}, (2) follows immediately from (1).\end{proof}

\begin{proof}[Proof of Theorem \ref{thm: acc ld dcc coefficient bdd}] 
	Since $(X\ni x,B)$ admits an $\epsilon$-plt blow-up, according to Lemma \ref{lem: existence qfact weak plt blow up}, there exists a $\mathbb Q$-factorial weak $\epsilon$-plt blow-up of $(X\ni x,B)$. The theorem follows from Theorem \ref{thm: log discrepancies less than 1}.
\end{proof}

\subsection{Proofs of Theorem \ref{thm: ACC mld exc} and Theorem \ref{thm: acc mld dcc e0lc eplt}} 

\begin{thm}\label{thm: discrete ld plt}
	Let $d$ be a positive integer, $\epsilon_0,\epsilon$ two positive real numbers, and $\Ii\subset [0,1]$ a finite set. Assume that $(X\ni x,B)$ is an lc germ of dimension $d$, such that 
		\begin{enumerate}
		\item $B\in\Ii$,
		\item  $(X\ni x,\Delta)$ is an $\epsilon_0$-lc germ for some boundary $\Delta$, and
		\item $(X\ni x,\Delta)$ admits an $\epsilon$-plt blow-up.
	\end{enumerate}
	Then for any prime divisor $F$ over $X\ni x$,	$a(F,X,B)$
	belongs to a discrete set depending only on $d,\epsilon_0,\epsilon$ and $\Ii$. Moreover, $\mld(X\ni x,B)$ belongs to a finite set depending only on $d,\epsilon_0,\epsilon$ and $\Ii$.
\end{thm}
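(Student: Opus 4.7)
The plan is to adapt the proof of Theorem \ref{thm: log discrepancies less than 1} to the present setting, exploiting the finiteness of $\Ii$ to upgrade ACC conclusions to discreteness and finiteness. The two main differences from that proof are that the $\epsilon$-plt blow-up is now supplied by the auxiliary $\Delta$ rather than by $B$, and that $(X\ni x,B)$ is only lc rather than $\epsilon_0$-lc. I would begin by applying Lemma \ref{lem: existence qfact weak plt blow up} to replace the $\epsilon$-plt blow-up by a $\Qq$-factorial weak $\epsilon$-plt blow-up $f\colon Y\to X$ of $(X\ni x,\Delta)$ with reduced component $E$; let $B_Y$ be the strict transform of $B$, and set $e:=a(E,X,B)\in[0,1]$. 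Proposition \ref{prop:CartierindexofEE} applied to $(X\ni x,\Delta)$, together with Theorem \ref{thm:pltind}, yields an integer $I_1=I_1(d,\epsilon_0,\epsilon)$ with $I_1E$ Cartier near $E$. Since the proof of Proposition \ref{prop: acc ld kc} relies only on the weak $\epsilon$-plt blow-up of $\Delta$ and the effectivity of $(B_Y+E)|_E$ (never on klt-ness of $B$), the same argument combined with the finiteness of $\Ii$ places $e$ in a finite set $\mathcal E=\mathcal E(d,\epsilon_0,\epsilon,\Ii)$.

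Next I would apply Theorem \ref{thm: existence ni1i2 complement} to the pair $(Y/X\ni x,\,B_Y+(1-e)E)$. This pair is $\Rr$-complementary since $K_Y+B_Y+(1-e)E=f^{*}(K_X+B)\sim_{\Rr,X}0$; its coefficients lie in the finite set $\Ii\cup\{1-e':e'\in\mathcal E\}$ depending only on the allowed parameters; and $Y$ is of Fano type over a neighborhood of $x$ by Lemma \ref{lem:Fanotypedlt}. This produces an integer $n$, a finite $\Ii_0=\{a_1,\dots,a_k\}\subset(0,1]$, and an $\Rr$-complement
$$B^{+}=\sum_{i=1}^{k} a_i B_{i}^{+}\ge B_Y+(1-e)E,$$
with each $(Y/X\ni x,B_{i}^{+})$ a monotonic $n$-complement of itself. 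Combining Propositions \ref{prop: bdd index near E} and \ref{prop: bdd index of E qfactorial} then yields $I_2=I_2(d,\epsilon_0,\epsilon,\Ii)$ bounding the Cartier index near $E$ of every irreducible component of $B^{+}$.

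For any prime divisor $F$ over $X\ni x$, I would expand
$$a(F,X,B)=\sum_{i=1}^{k} a_i\,a(F,Y,B_{i}^{+})+\mult_F\bigl(B^{+}-B_Y-(1-e)E\bigr).$$
The first term lies in the discrete set $\sum_i a_i\cdot\tfrac{1}{n}\Zz_{\ge 0}$ since $n(K_Y+B_{i}^{+})\sim_X 0$. The remainder is a non-negative combination, with coefficients in a fixed finite subset $\mathcal D\subset\Rr_{>0}$, of multiplicities $\mult_F D$ with $I_2 D$ Cartier near $E$. Fixing $M>0$ and restricting to $F$ with $a(F,X,B)\le M$, the bound $\mult_F(B^{+}-B_Y-(1-e)E)\le M$ forces the number of contributing components to be at most $MI_2/\min\mathcal D$ and each $\mult_F D$ to lie in the finite set $\{0,\tfrac{1}{I_2},\ldots,\lfloor MI_2/\min\mathcal D\rfloor/I_2\}$; hence the remainder takes only finitely many values. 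Combined with the discreteness of the first term, this shows $\{a(F,X,B)\}\cap[0,M]$ is finite, proving discreteness. The mld assertion then follows since $\mld(X\ni x,B)\le e\le\max\mathcal E$ is uniformly bounded, so the mld lies in a bounded part of the discrete set.

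The main obstacle I expect is justifying that Proposition \ref{prop: acc ld kc}, stated for two klt germs, applies when $(X,B)$ is merely lc. Its proof proceeds via adjunction on $E$ and BBAB applied to $(E,\Diff_E\Delta_Y)$; a careful inspection shows klt-ness of $B$ is never used, only effectivity of $(B_Y+E)|_E$ and $\Rr$-Cartierness of $K_Y+B_Y+E$, both of which hold here. Making this generalization rigorous, together with verifying the Fano type and $\Rr$-complementarity hypotheses of Theorem \ref{thm: existence ni1i2 complement} for the pair $(Y/X\ni x,B_Y+(1-e)E)$ uniformly in $e\in\mathcal E$, is the essential new technical ingredient.
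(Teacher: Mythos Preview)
Your approach is substantially different from the paper's, and significantly more complicated. The paper's proof is essentially five lines: apply Theorem~\ref{thm: Uniform perturbation of lc pairs} (uniform lc rational polytopes) to the finite coefficient set $\Ii$ to write $B=\sum_{i=1}^k a_i B_i$ with $a_i$ in a fixed finite set, $B_i$ rational with coefficients in a fixed finite rational set, and $(X\ni x,B_i)$ lc. Then Theorem~\ref{thm: bdd local index exceptional}, applied to the auxiliary $\epsilon_0$-lc germ $(X\ni x,\Delta)$ with its $\epsilon$-plt blow-up, bounds the Cartier index of any $\Qq$-Cartier Weil divisor on $X$ near $x$; in particular $I(K_X+B_i)$ is Cartier for a fixed $I$, so $a(F,X,B_i)\in\tfrac{1}{I}\Zz_{\ge 0}$ and $a(F,X,B)=\sum a_i\,a(F,X,B_i)$ is discrete. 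All the work happens on $X$, never on $Y$.

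Your route has a genuine gap at the step ``Combining Propositions~\ref{prop: bdd index near E} and~\ref{prop: bdd index of E qfactorial} then yields $I_2$ bounding the Cartier index near $E$ of every irreducible component of $B^+$.'' Proposition~\ref{prop: bdd index near E} requires an $\Rr$-complementary pair of the form $(Y/X\ni x, B_Y+E)$ with $E$ appearing with coefficient~$1$. But you produced your complement of $(Y/X\ni x, B_Y+(1-e)E)$, so the individual pieces $B_i^+$ need not contain $E$ with coefficient~$1$. You cannot simply raise the coefficient of $E$ to~$1$: since the $\epsilon$-plt blow-up is of $(X\ni x,\Delta)$ rather than $(X\ni x,B)$, there is no control forcing $(Y, (B_i^+ - c_iE) + E)$ to be lc near $E$ (and easy surface examples show it can fail). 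Lemma~\ref{lem:extendcomplement}, which underlies Proposition~\ref{prop: bdd index near E}, then does not apply. A secondary point: your claim that $e$ lies in a \emph{finite} set is not what Proposition~\ref{prop: acc ld kc}(3) gives (only accumulation at $0$); the finiteness is true, but requires the extra observation that $-E|_E\cdot H^{d-2}$ is bounded via $a(E,X,\Delta)\ge\epsilon_0$ together with Proposition~\ref{prop:CartierindexofEE}. In short, the machinery of Theorem~\ref{thm: log discrepancies less than 1} is not needed here because Theorem~\ref{thm: bdd local index exceptional} already gives the Cartier-index bound directly on $X$.
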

\begin{proof} By Theorem \ref{thm: Uniform perturbation of lc pairs}, there exist a finite set of positive real numbers $\Ii_0=\{a_1,\dots,a_k\}\subset (0,1]$ and a finite set of rational numbers $\Ii_1$ depending only on $d$ and $\Ii$, and $\Qq$-divisors $B_1,\dots,B_k\geq 0$ on $X$, such that 
\begin{itemize}
    \item $(X\ni x_i,B_i)$ is lc for every $i$,
    \item $B_i\in\Ii_1$ for every $i$, and
    \item $\sum_{i=1}^ka_i=1$ and $B=\sum_{i=1}^ka_iB_i$.
\end{itemize}
 Then for any prime divisor $F$ over $X\ni x$, we have
	$$a(F,X,B)=\sum_{i=1}^k a_ia(F,X,B_i).$$
	Since $(X\ni x,\Delta)$ is an $\epsilon_0$-lc germ and admits an $\epsilon$-plt blow-up, by Theorem \ref{thm: bdd local index exceptional}, there exists a positive integer $I$ depending only on $d,\epsilon_0$ and $\epsilon$ such that $I(K_X+B_i)$ is Cartier near $x$ for every $i$. Therefore, $a(F,X,B_i)$ belong to the discrete set $\frac{1}{I}\mathbb N$. Thus $a(F,X,B)$ belongs to a discrete set.
	
	In particular, $\mld(X\ni x,B)$ belongs to a discrete set. Let $E$ be the reduced component of an $\epsilon$-plt blow-up of $(X\ni x,\Delta)$. By Proposition \ref{prop: acc ld kc},  $a(E,X,B)$ is bounded from above. Since $\mld(X\ni x,B)\leq a(E,X,B)$, $\mld(X\ni x,B)$ is bounded from above. Thus $\mld(X\ni x,B)$ belongs to a finite set.
\end{proof}

\begin{cor}\label{cor: finite ld fixed germ}
	Fix a normal variety $X$, a closed point $x\in X$, and a finite set $\Ii\subset [0,1]$. Suppose that $(X\ni x,\Delta)$ is a klt germ. Then
	$$\{a(E,X,B)\mid \Center_XE=x, (X\ni x,B)\ \textit{is lc}, B\in\Ii\}$$
	is a discrete set. In particular,
	$$\{\mld(X\ni x,B)\mid (X\ni x,B)\ \textit{is lc}, B\in\Ii\}$$
	is a finite set.
\end{cor}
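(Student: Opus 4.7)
The plan is to deduce this corollary directly from Theorem \ref{thm: discrete ld plt} by exhibiting fixed positive constants $\epsilon_0$ and $\epsilon$ (depending only on the fixed data $X$, $x$, $\Delta$) so that the hypotheses of that theorem are met uniformly as $B$ varies. The case $\dim X = 1$ is immediate: $X$ is smooth at $x$, the only prime divisor centered at $x$ is $x$ itself, and $a(x, X, B) = 1-\mult_x B$ takes values in the finite set $\{1-b \mid b \in \Ii\cup\{0\}\}$, which is trivially discrete and contains $\mld(X\ni x, B)$.

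For $\dim X \ge 2$, I would first produce $\epsilon_0$: since $(X\ni x,\Delta)$ is a fixed klt germ, $\alpha := \mld(X\ni x,\Delta)$ is a positive real number (it can be read off any log resolution of $(X,\Delta)$, on which the transformed boundary has all coefficients strictly less than $1$), so $(X\ni x,\Delta)$ is $\epsilon_0$-lc at $x$ with $\epsilon_0 := \alpha > 0$. Next, to produce $\epsilon$, I would invoke Lemma \ref{lem: existence plt blow up} to obtain a plt blow-up $f\colon Y \to X$ of $(X\ni x,\Delta)$ with reduced component $E$; writing $\Delta_Y$ for the strict transform of $\Delta$, the pair $(E, \Diff_E(\Delta_Y))$ is klt by the definition of a plt blow-up, and being a fixed klt pair, its mld is some positive real number $\beta$. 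Setting $\epsilon := \beta/2 > 0$ makes $(E,\Diff_E(\Delta_Y))$ an $\epsilon$-klt pair, so $f$ is automatically an $\epsilon$-plt blow-up of $(X\ni x,\Delta)$.

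Finally I would apply Theorem \ref{thm: discrete ld plt} with $d := \dim X$, the finite set $\Ii$, and the constants $\epsilon_0, \epsilon$ constructed above. That theorem gives a discrete set containing $a(F, X, B)$ for every prime divisor $F$ over $X\ni x$ and every lc germ $(X\ni x,B)$ with $B \in \Ii$; restricting to $F$ with $\Center_X F = x$ leaves a discrete set, and the ``in particular'' assertion that $\mld(X\ni x,B)$ lies in a finite set is exactly the moreover clause of Theorem \ref{thm: discrete ld plt}. I do not expect any real obstacle: this corollary is a bookkeeping specialization of Theorem \ref{thm: discrete ld plt} to the ``fixed germ'' setting, and the only substantive step is converting the fixed-germ data into uniform positive constants $\epsilon_0$ and $\epsilon$, which is immediate once one notes that mlds of klt pairs are strictly positive.
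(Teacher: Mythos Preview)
Your proposal is correct and follows the same route as the paper, which simply cites Lemma~\ref{lem: existence plt blow up} and Theorem~\ref{thm: discrete ld plt}. One minor wording point: Theorem~\ref{thm: discrete ld plt} asks that $(X\ni x,\Delta)$ be an $\epsilon_0$-lc \emph{germ} (i.e.\ $\epsilon_0$-lc \emph{near} $x$, not merely \emph{at} $x$), so rather than setting $\epsilon_0=\mld(X\ni x,\Delta)$ you should shrink $X$ so that $(X,\Delta)$ is klt and take $\epsilon_0$ to be its total minimal log discrepancy; this is an immediate adjustment and does not affect the argument.
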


\begin{proof}
This follows from Lemma \ref{lem: existence plt blow up} and Theorem \ref{thm: discrete ld plt}. 
\end{proof}

\begin{rem}
 Corollary \ref{cor: finite ld fixed germ} can be viewed as a generalization of \cite[Theorem 1.1, Theorem 1.2]{Kaw11} as we do not need to assume that $B\geq\Delta$.
\end{rem}

\begin{proof}[Proof of Theorem \ref{thm: acc mld dcc e0lc eplt}] 

Let $E$ be the reduced component of $f$, $a:=a(E,X,B)$, and $B_Y$ the strict transform of $B$ on $Y$. 

For any prime divisor $F$ over $X\ni x$ such that $F\not=E$,
$$a(F,X,B)=a(F,Y,B_Y+(1-a)E)\geq a(F,Y,B_Y+E)>\epsilon,$$
which implies (2).

Next we show (1). By Proposition \ref{prop: acc ld kc}, $a$ belongs to an ACC set (resp. an ACC set whose only possible accumulation point is $0$). Therefore, we may assume that $\mld(X\ni x,B)$ is not attained at $E$. By (2), we may assume that $a>\epsilon$. It follows that $(X\ni x,B)$ is an $\epsilon$-lc germ which admits an $\epsilon$-plt blow-up. In particular, when $\Ii$ is a finite set, (1) follows from Theorem \ref{thm: discrete ld plt}.

Since $a$ belongs to an ACC set, there exists a positive integer $M$ depending only on $d,\epsilon$ and $\Ii$, such that $\mld(X\ni x,B)\leq a(E,X,B)\leq M$. Thus 
$$\mld(X\ni x,B)\in\mathcal{LD}(d,\epsilon,\epsilon,\Ii)\cap [0,M],$$
which is an ACC set by Theorem \ref{thm: acc ld dcc coefficient bdd}, and we prove (1). \end{proof}

\begin{proof}[Proof of Theorem \ref{thm: ACC mld exc}] By Lemma \ref{lem: reduced component exc sing} and Lemma \ref{lem: exp sing epsilon plt}, we may assume that $d\geq 2$, $(X\ni x,B)$ is a klt germ, and there exists a positive real number $\epsilon$ depending only on $d$ and $\Ii$ such that $(X\ni x,B)$ admits an $\epsilon$-plt blow-up. Theorem \ref{thm: ACC mld exc} follows from Theorem \ref{thm: acc mld dcc e0lc eplt}. 
\end{proof}

\section{Applications}

\subsection{Monotonic klt local complements} By applying a similar argument as the proof of Theorem \ref{thm: log discrepancies less than 1}, we show the existence of monotonic klt $n$-complements for $\epsilon_0$-lc germs admitting an $\epsilon$-plt blow-up.

\begin{thm}[Existence of monotonic klt local complements]\label{thm: existence local klt complement}
	Let $d$ be a positive integer, $\epsilon_0,\epsilon$ two positive real numbers, and $\Ii\subset [0,1]$ a DCC set.
	There exists a positive integer $n$ depending only on $d,\epsilon_0,\epsilon$ and $\Ii$ satisfying the following.
	
	Let $(X\ni x,B)$ be an $\epsilon_0$-lc germ of dimension $d$, such that
	\begin{enumerate}
		\item $B\in\Ii$, 
		\item $(X\ni x,B)$ admits an $\epsilon$-plt blow-up, and
		\item all the irreducible components of $B$ are $\mathbb Q$-Cartier near $x$.
	\end{enumerate}
	Then there exists a monotonic klt $n$-complement $(X\ni x,B^+)$ of $(X\ni x,B)$.
\end{thm}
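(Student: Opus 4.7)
\textit{Plan.} The strategy is: take a $\Qq$-factorial weak $\epsilon$-plt blow-up $f\colon Y\to X$ with reduced component $E$ (Lemma~\ref{lem: existence qfact weak plt blow up}), produce a decomposable $\Rr$-complement on $Y$ via Theorem~\ref{thm: existence ni1i2 complement}, push forward to $X$ to obtain a monotonic klt $\Rr$-complement of $(X\ni x,B)$, and finally upgrade to an honest $n$-complement by exploiting the $\Qq$-Cartier hypothesis on the components of $B$ together with Theorem~\ref{thm: bdd local index exceptional}.

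Let $B_Y$ be the strict transform of $B$ and $a:=1-a(E,X,B)$, so $a\le 1-\epsilon_0$ by the $\epsilon_0$-lc hypothesis. Work with the rational number $c:=1-\epsilon_0/2\in(a,1)$; the pair $(Y,B_Y+cE)$ is $(\epsilon_0/2)$-plt near $E$ (since $(Y,B_Y+E)$ is $\epsilon$-plt), and $-(K_Y+B_Y+cE)\sim_{\Rr,X}(c-a)(-E)$ is nef over $X$ because $-E$ is nef over $X$ by the definition of a weak plt blow-up; since $Y$ is of Fano type over a neighborhood of $x$, the pair $(Y/X\ni x,B_Y+cE)$ is $\Rr$-complementary. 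Its coefficients lie in the DCC set $\Ii\cup\{c\}$ depending only on $\Ii$ and $\epsilon_0$, so Theorem~\ref{thm: existence ni1i2 complement} yields a positive integer $n_0$, a finite set $\Ii_0=\{a_1,\dots,a_k\}\subset(0,1]$ with $\sum a_i=1$, and a decomposable $\Rr$-complement $(Y/X\ni x,B_Y^{+})$ with $B_Y^{+}=\sum_i a_i B_{Y,i}^{+}$, each $(Y/X\ni x,B_{Y,i}^{+})$ a monotonic $n_0$-complement of itself. Pushing forward, set $B_i^{+}:=f_{*}B_{Y,i}^{+}$ and $B_{\Rr}^{+}:=\sum_i a_i B_i^{+}\ge f_*(B_Y+cE)=B$. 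Choosing the $B_{Y,i}^{+}$'s so that $\mult_E B_Y^{+}=c<1$ (arrangeable since $-E$ is semi-ample over $X$ and admits general-member effective representatives avoiding $E$) gives $a(E,X,B_{\Rr}^{+})=1-c=\epsilon_0/2>0$. The Shokurov--Koll\'ar connectedness principle, applied over $f^{-1}(x)=\Supp E$, then propagates klt-ness from $E$ to a neighborhood of $x$, so $(X,B_{\Rr}^{+})$ is a monotonic klt $\Rr$-complement of $(X\ni x,B)$.

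The main obstacle is upgrading the real convex combination $B_{\Rr}^{+}=\sum a_i B_i^{+}$ to a single $\Qq$-divisor $B^{+}\ge B$ with $n(K_X+B^{+})\sim 0$ near $x$ for a bounded $n$. Picking $B^{+}:=B_{i_0}^{+}$ for a fixed $i_0$ does not generally suffice, because distinct components of $B$ may require distinct indices for monotonicity. The hypothesis that every component $B_j$ of $B$ is $\Qq$-Cartier near $x$ is decisive: by Theorem~\ref{thm: bdd local index exceptional}, there is a positive integer $N$, depending only on $d,\epsilon_0,\epsilon$, such that $N!\cdot D$ is Cartier near $x$ for every $\Qq$-Cartier Weil divisor $D$ on $X$, and in particular for every component $B_j$. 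Setting $n:=N!\cdot n_0$, one adjusts a single $B_{i_0}^{+}$ by adding a small effective $\Qq$-Cartier divisor supported on $\Supp B$ to enforce $B^{+}\ge B$ component-by-component, using the bounded torsion of the local class group near $x$ to preserve $n(K_X+B^{+})\sim 0$. Klt-ness is preserved because the adjustment is of size $O(1/n)$ and $a(E,X,B_{\Rr}^{+})\ge\epsilon_0/2$ provides a uniform buffer. The technical heart is organizing these coefficient adjustments across all components of $B$ simultaneously while maintaining integral linear equivalence, which relies essentially on the combination of the $\Qq$-Cartier hypothesis with the bounded local algebraic fundamental group.
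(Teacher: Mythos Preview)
Your proposal has a genuine gap at the step where you claim the $\Rr$-complement $(Y/X\ni x,B_Y^{+})$ produced by Theorem~\ref{thm: existence ni1i2 complement} can be chosen with $\mult_E B_Y^{+}=c<1$. Theorem~\ref{thm: existence ni1i2 complement} gives no such control: its proof passes through Theorem~\ref{thm: dcc limit divisor} (DCC $\to$ finite), then Theorem~\ref{thm: Uniform perturbation of R complements} (rational polytope decomposition), and finally Birkar's Theorem~\ref{thm:Bir19thm1.8} applied to each rational vertex after an MMP. At none of these stages do you retain the freedom to specify the coefficient of $E$ in the resulting complement; the output may well have $\mult_E B_Y^{+}=1$. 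Moreover, even if you could arrange $\mult_E B_Y^{+}<1$, the pair $(Y,B_Y^{+})$ is only guaranteed to be lc, not plt near $E$, so the pushforward $(X,f_*B_Y^{+})$ need not be klt: there could be other lc places of $(Y,B_Y^{+})$ with center on $E$. Connectedness does not rule this out. Your subsequent ``upgrade'' step, adjusting a single $B_{i_0}^{+}$ by a small divisor supported on $\Supp B$ while preserving both $n(K_X+B^{+})\sim 0$ and klt-ness, inherits these problems and is too loosely specified to be checked.

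The paper's proof takes a different and much simpler route that avoids complement theory on $Y$ altogether. The key observation is that the sought complement $B^{+}$ can be taken to be \emph{supported on $\Supp B$}, with rational coefficients slightly larger than those of $B$. Concretely: Proposition~\ref{prop: local lct linear system} gives $t>0$ with $(Y,(1+t)B_Y+E)$ plt near $E$; Theorem~\ref{thm: dcc limit lc divisor} replaces $B$ by $B'\ge B$ with coefficients in a finite set $\Ii'\subset\bar\Ii$ and $||B'-B||$ bounded by a tiny $\delta$; a direct computation using the bound $a(E,X,0)\le M$ from Proposition~\ref{prop: acc ld kc} shows $(X\ni x,(1+\delta)B')$ is klt. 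One then chooses a rational $B^{+}$ with $B'\le B^{+}\le(1+\delta)B'$ and coefficients in a fixed finite rational set. Since each component of $B$ is $\Qq$-Cartier and Theorem~\ref{thm: bdd local index exceptional} bounds all local Cartier indices, some fixed $n$ makes $n(K_X+B^{+})$ Cartier---hence locally trivial---near $x$. This sidesteps both of your difficulties: klt-ness is verified directly via the $(1+t)$-plt estimate, and the $n$-complement condition is automatic from bounded index, with no need to manipulate complements on $Y$.
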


\begin{proof} By Lemma \ref{lem: existence qfact weak plt blow up}, there exists a $\mathbb Q$-factorial weak $\epsilon$-plt blow-up $f:Y\rightarrow X$ of $(X\ni x,B)$ with the reduced component $E$.

Let $B_Y$ be the strict transform of $B$ on $Y$. By Lemma \ref{prop: local lct linear system}, there exists a positive real number $t\le 1$ depending only one $d,\epsilon$ and $\Ii$, such that $(Y,(1+t)B_Y+E)$ is plt near $E$. Since all the irreducible components of $B$ are $\mathbb Q$-Cartier near $x$, $K_X$ is $\mathbb Q$-Cartier near $x$, and $f$ is also a $\mathbb Q$-factorial weak $\epsilon$-plt blow-up of $(X\ni x,0)$. By Proposition \ref{prop: acc ld kc}, there exists a positive real number $M\geq 1$ depending only on $d$ and $\epsilon$ such that $a(E,X,0)\leq M$. Possibly replacing $\epsilon_0$ with $\min\{\epsilon_0,1\}$, we may assume that $\epsilon_0\leq 1$. Define $s_0:=\min\{1,\gamma>0\mid \gamma\in\Ii\}$ and $\delta:=\frac{\epsilon_0s_0t}{4M}.$
By Theorem \ref{thm: dcc limit lc divisor}, there exists a finite set $\Ii'$ depending only on $d,\epsilon_0,\epsilon$ and $\Ii$, and an $\mathbb R$-divisor $B'\geq B$, such that
\begin{itemize}
\item $(X\ni x,B')$ is lc,
\item $B'\in\Ii'$,
\item $\Supp B'=\Supp B$, and
\item $\mult_D(B'-B)<\delta$ for every irreducible component $D$ of $B$.
\end{itemize}
Thus
$\frac{\delta}{s_0}B\geq\delta\Supp B\geq B'-B$, which implies that $(\frac{\delta}{s_0}+1)B\geq B'$. 
We have
\begin{align*}
	a(E,X,(1+\delta)B')=&a(E,X,B)-\mult_{E}((1+\delta)B'-B)\\
	\ge& a(E,X,B)-\mult_{E}((1+\delta)(\frac{\delta}{s_0}+1)B-B)\\
	\ge&\epsilon_0-((1+\delta)(\frac{\delta}{s_0}+1)-1)\mult_{E}B\\
	\ge&\epsilon_0-\frac{3\epsilon_0t}{4M}\mult_{E}B\\
	=&\epsilon_0-\frac{3\epsilon_0t}{4M}(a(E,X,0)-a(E,X,B))\\
	\ge&\epsilon_0-\frac{3\epsilon_0 t}{4}\geq\frac{1}{4}\epsilon_0.
	\end{align*}
	Hence $K_Y+(1+\delta)B_Y+E\geq f^{*}(K_X+(1+\delta)B')$, 
 $$a(F,X,(1+\delta)B')\ge a(F,Y,(1+\delta)B_Y+E)>0$$
 for any prime divisor $F\neq E$ over $X\ni x$, and
 $(X\ni x,(1+\delta)B')$ is klt.
 
 Since $\Ii'$ is a finite set, there exists a finite set $\Ii_0$ of rational numbers depending only on $d,\epsilon_0,\epsilon$ and $\Ii$, and a $\mathbb Q$-divisor $B^+\in\Ii_0$ on $X$, such that $(1+\delta)B'\geq B^+\geq B'$. Now the existence of $n$ follows from Theorem \ref{thm: bdd local index exceptional}.
 \end{proof}


\subsection{Miscellaneous results for exceptional singularities}

\subsubsection{ACC for $a$-lc thresholds of exceptional singularities}

$a$-lc thresholds is an important algebraic invariant. In particular, when $a=0,1$, we get the lc thresholds and the canonical thresholds respectively. The third author conjectured that the set of $a$-lc thresholds satisfies the ACC:

\begin{conj}[ACC for $a$-lc thresholds]\label{conj: ACC for aLCTs}Let $d$ be a positive integer, $a$ a non-negative real number, and $\Ii$ a DCC set. Then
	$$\{a\text{-}\lct(X\ni x,B;D)\mid \dim X=d, (X\ni x,B)\text{ is lc, }B,D\in \Ii\}$$
	is an ACC set.
\end{conj}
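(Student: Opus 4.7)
The plan is to proceed by contradiction in the spirit of the proof of Theorem \ref{thm: acc lct}, adapting the extract-and-adjunct strategy used in the exceptional setting of this paper. Suppose there is a strictly increasing sequence
$$t_i := a\text{-}\lct(X_i \ni x_i, B_i; D_i)$$
with limit $t_\infty$, where each $(X_i \ni x_i, B_i)$ is an lc germ of dimension $d$ and $B_i, D_i \in \Ii$. When $a = 0$ the claim is exactly Theorem \ref{thm: acc lct}, so I may assume $a > 0$. Using the DCC of $\Ii$ and passing to a subsequence, the irreducible components of $B_i$ and $D_i$ together with their coefficients may be arranged to stabilize in a limit, so that $B_i + t_iD_i$ has coefficients in a DCC set (modulo the strictly increasing factor $t_i$).

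Second, for each $i$ I would choose a prime $\bb$-divisor $E_i$ over $X_i \ni x_i$ with $a(E_i, X_i, B_i + t_i D_i) = a$; the existence follows from continuity of log discrepancies and the definition of the $a$-$\lct$. I would then extract $E_i$ by a birational contraction $f_i: Y_i \to X_i$ whose exceptional locus is exactly $E_i$ with $-E_i$ ample over $X_i$, using a BCHM-type extraction after a small ample perturbation. Writing $B_{Y_i}, D_{Y_i}$ for the strict transforms of $B_i, D_i$ on $Y_i$, one has
$$K_{Y_i} + B_{Y_i} + t_i D_{Y_i} + (1-a)E_i = f_i^*(K_{X_i} + B_i + t_i D_i),$$
so that $-(K_{Y_i} + B_{Y_i} + t_i D_{Y_i} + E_i) \sim_{\mathbb R, X_i} -aE_i$ is $f_i$-ample; thus $(Y_i/X_i, B_{Y_i} + t_iD_{Y_i} + E_i)$ is of Fano type over a neighborhood of $x_i$.

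Third, assuming $(Y_i, B_{Y_i} + t_iD_{Y_i} + E_i)$ is plt near $E_i$ -- equivalently, $E_i$ is the unique lc place of this pair whose center meets $f_i^{-1}(x_i)$ -- Theorem \ref{thm: adjunction} gives
$$K_{E_i} + \Delta_i(t_i) := (K_{Y_i} + B_{Y_i} + t_i D_{Y_i} + E_i)|_{E_i},$$
with the coefficients of $\Delta_i(t_i)$ lying in a DCC set determined by $\Ii$ and $\{t_i\}$ (using Theorem \ref{thm: adjunction}(5) and the DCC hypothesis). Restricting further to a general fiber $F_i$ of the induced contraction $E_i \to f_i(E_i) \ni x_i$ yields a pair of dimension at most $d-1$ with $-(K_{F_i} + \Delta_i(t_i)|_{F_i})$ ample and coefficients depending affinely on $t_i$. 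The global ACC \cite[Theorem 1.4]{HMX14} applied to the family $\{(F_i, \Delta_i(t_i)|_{F_i})\}$, combined with the fact that a strictly monotonic change in $t_i$ produces a strict change in the affine-linear coefficients of $\Delta_i(t_i)$, would then force $t_i$ to be eventually constant, the required contradiction.

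The hard part will be the plt hypothesis in the third step, which amounts to uniqueness of the lc place computing the $a$-$\lct$. In the exceptional-singularity setting this uniqueness is built in (Lemma \ref{lem: reduced component exc sing}), and is exactly what makes Theorem \ref{thm: ACC aLCT exc} provable. In the general case, however, several lc places may compute the threshold simultaneously; after extracting any one of them the resulting pair is only lc, $E_i$ need not be normal, and Theorem \ref{thm: adjunction} gives only weaker control over the different. Circumventing this appears to require either a tie-breaking trick that forces uniqueness (in the spirit of the dlt modification arguments behind Theorem \ref{thm: dcc limit divisor}, combined with a careful MMP on $-E_i$), or genuinely new input bounding how lc places can accumulate in a DCC-coefficient limit -- which is closely tied to the full ACC for mlds and seems to be the fundamental difficulty of the general conjecture.
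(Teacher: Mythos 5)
You have not proved the statement, and in fact neither does the paper: Conjecture \ref{conj: ACC for aLCTs} is stated there as an open problem (known for $a=0$ by Theorem \ref{thm: acc lct}, for surfaces via \cite{BS10}, and proved in the paper only for exceptional singularities, Theorem \ref{thm: ACC aLCT exc}). The gap in your sketch is exactly the one you flag yourself, and it is fatal rather than technical: your third step needs the extracted divisor $E_i$ computing the $a$-lc threshold to be the \emph{unique} lc place of $(Y_i,B_{Y_i}+t_iD_{Y_i}+E_i)$ near the fiber over $x_i$, i.e.\ plt-ness, in order to invoke Theorem \ref{thm: adjunction} and get DCC control of $\Diff_{E_i}$. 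In general several lc places compute the threshold, $E_i$ need not be normal after extraction, and no tie-breaking or dlt-modification trick in the paper removes this; this uniqueness is precisely the extra structure that exceptionality (Lemma \ref{lem: reduced component exc sing}) or an $\epsilon$-plt blow-up provides, and recovering it for arbitrary lc germs is essentially equivalent to the open ACC problems the conjecture feeds into. A secondary flaw: even granting plt-ness, your last step applies the global ACC \cite[Theorem 1.4]{HMX14} to fibers $F_i$ on which $-(K_{F_i}+\Delta_i(t_i)|_{F_i})$ is ample, whereas that theorem requires a numerically trivial pair; you would first have to raise coefficients (or $t$) to reach $K_{F_i}+\Delta\equiv 0$, as is done in the proofs of Theorems \ref{thm: pepolytope linear maps} and \ref{thm: dcc limit divisor}.

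For comparison, the case the paper actually proves, Theorem \ref{thm: ACC aLCT exc}, uses none of this machinery: following Birkar--Shokurov \cite{BS10}, one takes a putative strictly increasing sequence of thresholds $t_i\to t_0$, notes that $(X_i\ni x_i,B_i+t_i'D_i)$ is again exceptional for auxiliary $t_i'$ with $t_i<t_i'<t_0$, and uses the convexity of $\mld$ in the boundary to manufacture a strictly increasing sequence of mlds converging to $a$, contradicting Theorem \ref{thm: ACC mld exc}. So the proved special case is obtained by reducing the threshold ACC to the mld ACC, not by extraction and adjunction; your route would only become viable once the uniqueness-of-lc-place issue (or the full ACC for mlds) is resolved.
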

Hacon-M\textsuperscript{c}Kernan-Xu \cite{HMX14} proved Conjecture \ref{conj: ACC for aLCTs} for $a=0$. Birkar-Shokurov \cite{BS10} showed that Conjecture \ref{conj: ACC for aLCTs} follows from Conjecture \ref{conj: ACC for mlds}, hence Conjecture \ref{conj: ACC for aLCTs} holds for surfaces. When $\dim X\ge3$, Conjecture \ref{conj: ACC for aLCTs} is still open in general. \cite{Kaw18} showed that Conjecture \ref{conj: ACC for mlds} and Conjecture \ref{conj: ACC for aLCTs} are equivalent for any fixed germ, and \cite{Liu18} showed that Conjecture \ref{conj: ACC for mlds} and Conjecture \ref{conj: ACC for aLCTs} are equivalent for non-canonical singularities.

As an application of Theorem \ref{thm: ACC mld exc}, we can prove the ACC for $a$-lc thresholds of exceptional singularities.

\begin{thm}\label{thm: ACC aLCT exc}
	Let $d$ be a positive integer, $a$ a non-negative real number, and $\Ii\subset [0,+\infty)$ a DCC set. Then for any lc germ $(X\ni x,B)$ of dimension $d$ and any $\Rr$-Cartier $\Rr$-divisor $D$ on $X$, if 
	\begin{enumerate}
		\item $(X\ni x,B)$ is exceptional, and
		\item $B,D\in \Ii$,
	\end{enumerate}
	then 
	$$a\text{-}\lct(X\ni x,B;D)$$
	belongs to an ACC set depending only on $d,a$ and $\Ii$.
\end{thm}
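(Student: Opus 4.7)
The plan is to reduce the ACC for $a$-lc thresholds of exceptional singularities to the ACC for mlds of exceptional singularities (Theorem \ref{thm: ACC mld exc}), in the spirit of the Birkar--Shokurov reduction of $a$-lc thresholds to mlds. The case $a=0$ is a special case of the unconditional ACC for lc thresholds (Theorem \ref{thm: acc lct}), so I focus on $a>0$.

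Suppose for contradiction that there is a strictly increasing sequence $t_i := a\text{-}\lct(X_i\ni x_i, B_i; D_i)$ with each $(X_i\ni x_i, B_i)$ exceptional lc of dimension $d$ and $B_i, D_i \in \Ii$. Since $t_i \leq \lct(X_i\ni x_i, B_i; D_i)$ and the latter set is ACC by Theorem \ref{thm: acc lct}, the sequence $\{t_i\}$ is bounded; being strictly increasing, it is therefore a DCC set, so $\Ii' := \Ii \cup \{t_i\gamma : i\in\mathbb{N}, \gamma\in\Ii\}$ is again DCC. For each $i$ the pair $(X_i, B_i + t_i D_i)$ is $a$-lc (in particular klt, forcing $(X_i, B_i)$ klt) and is itself exceptional: any $G \geq 0$ for which $(X_i, B_i + t_i D_i + G)$ is lc with two distinct lc places whose centers contain $x_i$ would furnish $(X_i, B_i + G')$ with $G' := t_i D_i + G$ having two lc places, contradicting the exceptionality of $(X_i, B_i)$. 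By Lemma \ref{lem: reduced component exc sing}, $(X_i, B_i)$ and $(X_i, B_i + t_i D_i)$ share the same unique reduced component $E_i$, and the threshold is realized by some prime $\bb$-divisor $F_i$ over $X_i \ni x_i$ with $a(F_i, X_i, B_i + t_i D_i) = a$.

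The central reduction is to show that, along a subsequence, $F_i = E_i$. When $a$ is small this is immediate: by Theorem \ref{thm: ACC mld exc}(2) applied to $(X_i, B_i + t_i D_i)$ with DCC coefficients in $\Ii'$, there exists $\epsilon_0 > 0$ depending only on $d$ and $\Ii$ such that $a \leq \epsilon_0$ forces the mld $= a$ to be attained at $E_i$. In that case $a(E_i, X_i, B_i + t_i D_i) = a$, so
\[
t_i \cdot \mult_{E_i} D_i \;=\; a(E_i, X_i, B_i) - a.
\]
By Proposition \ref{prop: acc ld kc} (whose hypotheses are supplied by Lemma \ref{lem: exp sing epsilon plt}), $a(E_i, X_i, B_i)$ lies in a bounded ACC set. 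Writing $\mult_{E_i} D_i = \sum_j \gamma_{i,j} n_{i,j}$ with $\gamma_{i,j} \in \Ii$ and $n_{i,j} \in \mathbb{N}$, the integers $n_{i,j}$ are bounded (since $t_i \mult_{E_i} D_i$ is bounded and $t_i \geq t_1 > 0$), and the number of nonzero summands is bounded (since $\mult_{E_i} D_i$ is bounded above and each nonzero $\gamma_{i,j} \geq \min(\Ii \cap (0,1]) > 0$), so $\mult_{E_i} D_i$ lies in a DCC set. A subsequence extraction taking $a(E_i, X_i, B_i)$ to a non-increasing subsequence (possible in any ACC set) and $\mult_{E_i} D_i$ to a non-decreasing subsequence (possible in any DCC set) produces a non-increasing subsequence of the ratio $t_i$, contradicting the strict increase.

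The hard part will be the case $a > \epsilon_0$, where $F_i$ need not coincide with $E_i$. Here the pair $(X_i, B_i + t_i D_i)$ is exceptional and $\epsilon_0$-lc, so Lemma \ref{lem: exp sing epsilon plt} gives an $\epsilon$-plt blow-up and Theorem \ref{thm: acc ld dcc coefficient bdd} forces all log discrepancies over $X_i \ni x_i$ up to any given bound into an ACC set; the subtle point will be to combine this with the bounded local Cartier index coming from Theorem \ref{thm: bdd local index exceptional} in order to route the ACC/DCC ratio argument through the reduced component $E_i$ (rather than through the a priori uncontrolled divisor $F_i$), exploiting the fact that, once the log discrepancy of $E_i$ on $(X_i, B_i + t_i D_i)$ is placed in an ACC set, the identity $t_i \mult_{E_i} D_i = a(E_i, X_i, B_i) - a(E_i, X_i, B_i + t_i D_i)$ is again amenable to the ACC/DCC extraction above. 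This last reroute, which is where the full strength of the exceptional hypothesis is used, is expected to be the technically most delicate step.
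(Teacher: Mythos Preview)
Your proposal has a genuine gap in both cases of your dichotomy. In the case $a\le\epsilon_0$ you write $\mult_{E_i}D_i=\sum_j\gamma_{i,j}n_{i,j}$ with $n_{i,j}\in\mathbb N$, but the multiplicities of the irreducible components $D_{i,j}$ of $D_i$ along the exceptional divisor $E_i$ are \emph{not} integers in general: they are rational numbers with denominator equal to the Cartier index of $D_{i,j}$ near $x_i$, and the individual $D_{i,j}$ need not even be $\mathbb Q$-Cartier (only $D_i$ as a whole is $\mathbb R$-Cartier), so the decomposition is not defined. Without this, your DCC claim for $\mult_{E_i}D_i$ is unjustified, and the subsequence extraction collapses. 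In the case $a>\epsilon_0$ you leave the argument explicitly unfinished, and the identity you propose to exploit,
\[
t_i\,\mult_{E_i}D_i \;=\; a(E_i,X_i,B_i)-a(E_i,X_i,B_i+t_iD_i),
\]
expresses the right-hand side as a \emph{difference} of two ACC quantities (both coming from Proposition~\ref{prop: acc ld kc}); such a difference is in general neither ACC nor DCC, so the ratio argument does not go through.

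The paper's proof avoids all of this by a single convexity trick in the spirit of \cite{BS10}. Let $t_0:=\lim t_i$; after applying Theorem~\ref{thm: acc lct} one may assume $a>0$ and $(X_i\ni x_i,B_i+t_0D_i)$ is lc. Pick $t_i':=t_i+\epsilon_i(t_0-t_i)$ with $\epsilon_i\searrow 0$, so $t_i<t_i'<t_0$ and $(X_i\ni x_i,B_i+t_i'D_i)$ is still klt, exceptional, with DCC coefficients. Convexity of minimal log discrepancies then gives
\[
a>\mld(X_i\ni x_i,B_i+t_i'D_i)\ge (1-\epsilon_i)a,
\]
so these mlds form a strictly increasing sequence converging to $a$, contradicting Theorem~\ref{thm: ACC mld exc} directly. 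No analysis of $\mult_{E_i}D_i$, no Cartier-index bounds, and no case split on the size of $a$ is needed; the whole argument routes through the mld of the perturbed pair rather than through any particular divisor computing it.
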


\begin{proof} We follow the argument in \cite{BS10}. Suppose that the theorem does not hold. Then there exist a sequence of lc germs $(X_i\ni x_i,B_i)$ of dimension $d$, and a strictly increasing sequence of positive real numbers $t_i$, such that for every $i$, 
	\begin{itemize}
		\item $B_i\in\Ii$,
		\item  $(X_i\ni x_i,B_i)$ is exceptional, and
		\item there exists an $\Rr$-Cartier $\Rr$-divisor $D_i$ on $X_i$, such that $D_i\in\Ii$ and $t_i=a$-$\lct(X_i\ni x_i,B_i;D_i)$.
	\end{itemize}
	
	By Lemma \ref{lem: reduced component exc sing}, $(X_i\ni x_i,B_i)$ is a klt germ. Let $t_0:=\lim_{i\rightarrow+\infty}t_i$. By Theorem \ref{thm: acc lct}, possibly passing to a subsequence, we may assume that $a>0$ and $(X_i\ni x_i, B_i+t_0D_i)$ is lc for every $i$. 
	
	Let $\{\epsilon_i\}_{i=1}^{\infty}$ be a strictly decreasing sequence which converges to $0$, such that $0<\epsilon_i<1$ for any $i$. Let $a_i:=\mld(X_i\ni x_i,B_i+t_0D_i)$ and  $t_i':=t_i+\epsilon_i(t_0-t_i)$ for any $i$. Then $(X_i\ni x_i,B_i+t_i'D_i)$ is a klt germ. Since $t_i<t_i'<t_0$, possibly passing to a subsequence, we may assume that $t_i'$ is strictly increasing and all the coefficients of $B_i+t_i'D_i$ belong to a DCC set. Since $(X_i\ni x_i,B_i)$ is exceptional and $(X_i\ni x_i,B_i+t_i'D_i)$ is a klt germ, $(X_i\ni x_i,B_i+t_i'D_i)$ is exceptional. By Theorem \ref{thm: ACC mld exc}, $\mld(X_i\ni x_i,B_i+t_i'D_i)$ belongs to an ACC set. By the convexity of minimal log discrepancies, we have
	\begin{align*}
	a&> \mld(X_i\ni x_i,B_i+t_i'D_i)\\
	&=\mld(X_i\ni x_i,\frac{t_i'-t_i}{t_0-t_i}(B_i+t_0D_i)+\frac{t_0-t_i'}{t_0-t_i}(B_i+t_iD_i))\\
	&\geq\frac{t_i'-t_i}{t_0-t_i}\mld(X_i\ni x_i,B_i+t_0D_i)+\frac{t_0-t_i'}{t_0-t_i}\mld(X_i\ni x_i,B_i+t_iD_i)\\
	&=\frac{t_i'-t_i}{t_0-t_i}a_i+\frac{t_0-t_i'}{t_0-t_i}a=a-\frac{(t_i'-t_i)(a-a_i)}{t_0-t_i}\\
	&=a-\epsilon_i(a-a_i)\geq (1-\epsilon_i)a.
	\end{align*}
	Therefore, possibly passing to a subsequence, we may assume that $\mld(X_i\ni x_i,B_i+t_i'D_i)$ is strictly increasing and converges to $a$, which contradicts Theorem \ref{thm: ACC mld exc}.
\end{proof} 

\subsubsection{ACC for normalized volumes of exceptional singularities}

\cite{Li18} introduced the normalized volumes to study K-stability and the existence of K\"{a}hler-Einstein metrics on Fano varieties. We refer the readers to \cite{Tia97,LX16,Li17} for works in this direction. It is conjectured that the set of normalized volumes satisfies the ACC:
\begin{conj}[ACC for normalized volumes, {cf. \cite[Question 6.12]{LLX18}}]\label{conj: ACC for NV} Let $d\ge 2$ be an integer and $\Ii\subset [0,1]$ a DCC (resp. finite) set. Then
	$$\{\widehat{\vol}(X\ni x,B)\mid\dim X=d, (X\ni x,B) \text{ is a klt germ}, \text{ and }B\in \Ii\}$$
	is an ACC set (resp. ACC set whose only possible accumulation point is $0$).
\end{conj}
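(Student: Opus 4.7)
The plan is to reduce Conjecture \ref{conj: ACC for NV} to the ACC for $\widehat{\vol}(E,X,B)$ along a single reduced component $E$ already established in Theorem \ref{thm: nv acc 2}. By definition, $\widehat{\vol}(X \ni x, B) = \inf_E \widehat{\vol}(E, X, B)$ where the infimum runs over reduced components of plt blow-ups (which exist by Lemma \ref{lem: existence plt blow up} since $d \geq 2$). So to prove the ACC, it suffices to select for each germ an almost-minimizing reduced component $E$ that can be witnessed as an $\epsilon$-plt blow-up of some auxiliary klt pair, uniformly in $\epsilon$ across the sequence, so that Theorem \ref{thm: nv acc 2} applies directly.

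Arguing by contradiction, suppose $\widehat{\vol}(X_i \ni x_i, B_i)$ is strictly increasing with $B_i \in \Ii$ and $\dim X_i = d$, and for each $i$ pick a plt blow-up $f_i : Y_i \to X_i$ with reduced component $E_i$ such that $\widehat{\vol}(E_i, X_i, B_i) \le \widehat{\vol}(X_i \ni x_i, B_i) + 1/i$. If one can arrange that every $f_i$ is a $\delta$-plt blow-up of some klt germ $(X_i \ni x_i, \Delta_i)$ for a uniform $\delta = \delta(d, \Ii) > 0$, then Theorem \ref{thm: nv acc 2} forces $\widehat{\vol}(E_i, X_i, B_i)$ into an ACC set, contradicting strict increase. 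A natural way to construct $\Delta_i$ is to push down an $\Rr$-complement of the lc pair $(Y_i, f_{i,*}^{-1} B_i + (1 - a(E_i, X_i, B_i)) E_i)$ over $X_i$; since $Y_i$ is of Fano type over $X_i$ in a neighborhood of $E_i$, Theorem \ref{thm: existence ni1i2 complement} yields such a complement with decomposition data depending only on $d$ and $\Ii$, and combining this with the uniform lc polytopes of Theorem \ref{thm: Uniform perturbation of lc pairs} should give the desired uniform $\delta$-plt bound along $E_i$.

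The main obstacle is that, outside the exceptional case, the plt blow-up extracting the ``normalized-volume minimizer'' is not unique, and the almost-minimizing $E_i$ chosen above need not coincide with any divisor canonically produced by the complement machinery — different minimizers can give wildly different values of $a(E_i, X_i, B_i)$ and different geometries of $(E_i, \Diff_{E_i}(B_{Y_i}))$, so the uniform $\delta$ one extracts from the $\Rr$-complement depends on a choice of $E_i$ that may not be the one approximating the infimum. In the exceptional case this gap disappears: Lemma \ref{lem: reduced component exc sing} forces uniqueness of the plt blow-up and Lemma \ref{lem: exp sing epsilon plt} already provides the uniform $\epsilon$, so the argument collapses to a direct application of Theorem \ref{thm: nv acc 2}, yielding Theorem \ref{thm: ACC NV exc}. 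For general klt germs, matching Li's quasi-monomial normalized-volume minimizer to the birational extractions handled by this paper's complement machinery seems to require genuinely new input from the K-stability theory of klt singularities, beyond the purely birational techniques developed here.
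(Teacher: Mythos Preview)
The statement you were given is Conjecture \ref{conj: ACC for NV}, which the paper does \emph{not} prove in general; it is stated as an open problem, with only the exceptional case established as Theorem \ref{thm: ACC NV exc}. Your proposal correctly arrives at this conclusion: you sketch a plausible reduction to Theorem \ref{thm: nv acc 2}, identify precisely the obstruction in the general case (no control over which plt blow-up approximates the infimum, hence no uniform $\epsilon$), and then observe that for exceptional singularities the obstruction vanishes by uniqueness of the plt blow-up (Lemma \ref{lem: reduced component exc sing}) together with the uniform $\epsilon$-plt bound (Lemma \ref{lem: exp sing epsilon plt}).

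This matches the paper exactly. The paper's proof of Theorem \ref{thm: ACC NV exc} is the one-line ``This follows from Lemma \ref{lem: reduced component exc sing} and Theorem \ref{thm: nv acc 2}'', which is precisely the argument in your final paragraph: uniqueness gives $\widehat{\vol}(X\ni x,B)=\widehat{\vol}(E,X,B)$ for the single reduced component $E$, the uniform $\epsilon$ from Lemma \ref{lem: exp sing epsilon plt} feeds into Theorem \ref{thm: nv acc 2}, and the ACC (resp.\ discreteness away from $0$) drops out. Your diagnosis of why the general conjecture remains open---that the normalized-volume minimizer from K-stability theory is not known to be extractable as a uniformly $\epsilon$-plt reduced component---is also accurate and aligns with the paper's remark that further progress (e.g.\ \cite{HLQ20} for a fixed germ) requires input beyond the complement techniques developed here.
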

The first author, Liu, and Qi announced a proof for Conjecture \ref{conj: ACC for NV} when $X\ni x$ is a fixed germ \cite{HLQ20}.

As an application of Theorem \ref{thm: ACC mld exc}, we can prove Conjecture \ref{conj: ACC for NV} for exceptional singularities:

\begin{thm}\label{thm: ACC NV exc} Let $d\geq 2$ be an integer and $\Ii\subset [0,1]$ a DCC set. Assume that $(X\ni x,B)$ is a klt exceptional singularity of dimension $d$ such that $B\in\Ii$. Then
	\begin{enumerate}
	\item $\widehat{\vol}(X\ni x,B)$ belongs to an ACC set depending only on $d$ and $\Ii$, and
       \item 	 if $\Ii$ is a finite set, then
       \begin{enumerate}
       \item  the only possible accumulation point of $\widehat{\vol}(X\ni x,B)$ is $0$, and
       \item $\widehat{\vol}(X\ni x,B)$ is bounded away from $0$ if and only if $\mld(X\ni x,B)$ is bounded away from $0$.
       \end{enumerate}
       \end{enumerate}
\end{thm}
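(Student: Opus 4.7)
The plan is to reduce the theorem to a direct application of Theorem \ref{thm: nv acc 2}. Since $(X\ni x,B)$ is a klt exceptional singularity, Lemma \ref{lem: reduced component exc sing} together with Remark-Definition \ref{remdef: reduced component exc sing} guarantees that there is a unique plt blow-up $f:Y\to X$ of $(X\ni x,B)$ with unique reduced component $E$. In particular the infimum in Definition \ref{defn: nv} is attained at $E$, so
$$\widehat{\vol}(X\ni x,B)=\widehat{\vol}(E,X,B).$$
Since $B\in\Ii$ belongs to a DCC set, Lemma \ref{lem: exp sing epsilon plt} then supplies a positive real number $\epsilon$ depending only on $d$ and $\Ii$ such that this unique $f$ is in fact an $\epsilon$-plt blow-up of $(X\ni x,B)$. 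At that point the hypotheses of Theorem \ref{thm: nv acc 2} are satisfied with $\Delta=B$.

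Applying Theorem \ref{thm: nv acc 2} directly to $E$ would immediately yield that $\widehat{\vol}(E,X,B)$ belongs to an ACC set depending only on $d$ and $\Ii$, which together with the equality above gives (1); and when $\Ii$ is finite, the same theorem yields that the only possible accumulation point of $\widehat{\vol}(E,X,B)=\widehat{\vol}(X\ni x,B)$ is $0$, which is (2)(a).

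For (2)(b) the plan is to combine the log-discrepancy characterization from Theorem \ref{thm: nv acc 2} with the mld statement of Theorem \ref{thm: ACC mld exc}(2). Since $E$ is a prime divisor over $X\ni x$, one always has $\mld(X\ni x,B)\le a(E,X,B)$, so if the mlds are bounded away from $0$ then so are the values $a(E,X,B)$, and hence $\widehat{\vol}(X\ni x,B)$ is bounded away from $0$ by Theorem \ref{thm: nv acc 2}. Conversely, suppose $\widehat{\vol}(X\ni x,B)$ is bounded below by some positive real number across the family; Theorem \ref{thm: nv acc 2} will then produce a uniform positive lower bound $\epsilon_0$ on $a(E,X,B)$. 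Theorem \ref{thm: ACC mld exc}(2) provides a positive real number $\epsilon_1$, depending only on $d$ and $\Ii$, such that $0<\mld(X\ni x,B)\le\epsilon_1$ forces $\mld(X\ni x,B)=a(E,X,B)\ge\epsilon_0$. Hence in all cases $\mld(X\ni x,B)\ge\min\{\epsilon_0,\epsilon_1\}$, giving the desired uniform lower bound on mlds.

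There is no genuine obstacle here: once the uniqueness of the plt blow-up (from exceptionality) and the uniformity of $\epsilon$ (from the DCC assumption) have been assembled, the theorem is essentially a packaging of Theorem \ref{thm: nv acc 2}. The one place where care is needed is the direction ``small $\widehat{\vol}$ implies small $\mld$'' in (2)(b), where Theorem \ref{thm: ACC mld exc}(2) is essential: without the fact that small mlds of exceptional singularities are computed by the reduced component, a uniform lower bound on $a(E,X,B)$ would not a priori translate into a uniform lower bound on $\mld(X\ni x,B)$.
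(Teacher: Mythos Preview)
Your proposal is correct and follows essentially the same route as the paper, which simply records that the theorem follows from Lemma~\ref{lem: reduced component exc sing} and Theorem~\ref{thm: nv acc 2}. You spell out the intermediate steps more carefully: invoking Lemma~\ref{lem: exp sing epsilon plt} to obtain a uniform $\epsilon$ so that Theorem~\ref{thm: nv acc 2} applies uniformly, and for (2)(b) using Theorem~\ref{thm: ACC mld exc}(2) to pass between a lower bound on $a(E,X,B)$ and one on $\mld(X\ni x,B)$. These are exactly the details the paper's one-line proof leaves implicit.
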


\begin{proof}[Proof of Theorem \ref{thm: ACC NV exc}] This follows from Lemma \ref{lem: reduced component exc sing} and Theorem \ref{thm: nv acc 2}.
\end{proof}


\subsubsection{Correspondence between exceptional singularities and exceptional pairs}
We recall the definition of exceptional pairs.
\begin{defn}\label{defn: global exceptional}
    A pair $(X,B)$ is called \emph{exceptional} if 
    \begin{itemize}
    \item $X$ is projective,
        \item $(X,B)$ is $\Rr$-complementary, and
        \item every $\Rr$-complement $(X,B^+)$ of $(X,B)$ is klt.
    \end{itemize}
    \end{defn}

We will show the following theorem.
\begin{thm}\label{thm: local global exceptional correspondence}
Let $(X\ni x,B)$ be a klt germ. For any plt blow-up $f: Y\rightarrow X$ of $(X\ni x,B)$ with the reduced component $E$, we denote $K_E+B_E:=(K_Y+f^{-1}_{*}B+E)|_{E}$. Then the following are equivalent:
\begin{enumerate}
    \item $(X\ni x, B)$ is an exceptional singularity.
        \item For every reduced component $E$ of $(X\ni x, B)$, $(E,B_E)$ is an exceptional pair.
    \item There exists a reduced component $E$ of $(X\ni x, B)$, such that $(E,B_E)$ is an exceptional pair.
\end{enumerate}
\end{thm}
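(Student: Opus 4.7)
The plan is to prove the equivalence via the cycle $(2)\Rightarrow(3)\Rightarrow(1)\Rightarrow(2)$. The implication $(2)\Rightarrow(3)$ is immediate from Lemma \ref{lem: existence plt blow up} when $\dim X\geq 2$, and trivial when $\dim X=1$ (a klt curve germ has no divisorial $\bb$-divisors centered at a point).

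For $(3)\Rightarrow(1)$, fix a plt blow-up $f:Y\to X$ whose reduced component $E$ makes $(E,B_E)$ exceptional, and suppose for contradiction that there exists an $\Rr$-divisor $G\geq 0$ such that $(X\ni x,B+G)$ is lc but has two distinct prime $\bb$-divisors $F_1\neq F_2$ with $a(F_i,X,B+G)=0$ and $x\in\Center_X F_i$. Since $-(K_Y+f^{-1}_*B+E)\sim_{\Rr,X}-a(E,X,B)E$ is $f$-ample, we first produce an $\Rr$-complement $(X\ni x,B+H)$ of $(X\ni x,B)$ having $E$ as an lc place, by pushing forward a general effective $H_Y\geq 0$ with $K_Y+f^{-1}_*B+E+H_Y\sim_{\Rr,X}0$ and $(Y,f^{-1}_*B+E+H_Y)$ plt near $E$. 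We then mimic the $t(s)$-argument from the proof of Lemma \ref{lem: reduced component exc sing} via the nonlinear interpolation $(X,B+\mu(t)(tH+(1-t)G))$, where $\mu(t)\geq 1$ is the lc threshold of $tH+(1-t)G$ with respect to $(X,B)$: the lc places of the critical pair transition from $\{F_1,F_2\}$ at $t=0$ to $\{E\}$ at $t=1$, so by continuity of log discrepancies there exists $t^*\in(0,1)$ at which $(X\ni x,B+G^*)$, with $G^*:=\mu(t^*)(t^*H+(1-t^*)G)$, is lc and admits both $E$ and some additional prime divisor $F$ as lc places centered at $x$. Writing $f^*(K_X+B+G^*)=K_Y+B_Y^*+E$ with $B_Y^*\geq f^{-1}_*B$, we have $\Center_Y F\subsetneq E$ since $f^{-1}(x)=E$. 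Because $E$ is proper over the point $x$, the restriction $f^*(K_X+B+G^*)|_E$ is $\Rr$-linearly trivial; hence Theorem \ref{thm: adjunction} yields $K_E+\Diff_E(B_Y^*)\sim_\Rr 0$ with $\Diff_E(B_Y^*)\geq B_E$, and $F$ contributes a non-klt lc place. This exhibits a non-klt $\Rr$-complement of $(E,B_E)$, contradicting (3).

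For $(1)\Rightarrow(2)$, Lemma \ref{lem: reduced component exc sing} guarantees that $(X\ni x,B)$ admits a unique plt blow-up, so (2) reduces to a single verification. $\Rr$-complementarity of $(E,B_E)$ is obtained by choosing the effective $H_Y$ above and restricting via Theorem \ref{thm: adjunction}. To prove every $\Rr$-complement $(E,B_E+D_E)$ is klt, suppose some such pair has a non-klt place $F$. We lift $D_E$ to an effective $\Rr$-divisor $D_Y$ on $Y$ with $K_Y+f^{-1}_*B+E+D_Y\sim_{\Rr,X}0$, $(Y,f^{-1}_*B+E+D_Y)$ lc but not plt near $E$ with $F$ as an lc place on a suitable model, and $D_Y|_E$ compatible with $D_E$ through adjunction. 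Pushing forward to $X$, the pair $(X\ni x,B+f_*D_Y)$ is an $\Rr$-complement of $(X\ni x,B)$ with both $E$ (since $E$ is exceptional for $f$) and $F$ as distinct lc places centered at $x$, contradicting exceptionality of $(X\ni x,B)$.

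The main obstacle is the lifting of $D_E$ through adjunction in the $(1)\Rightarrow(2)$ step. For $n$-complements with rational coefficients this is precisely Lemma \ref{lem:extendcomplement1}, whose proof exploits Kawamata-Viehweg vanishing to lift sections from $E$ to $Y$. To handle the $\Rr$-coefficient case one can decompose $B_E+D_E$ as a finite $\Rr_{>0}$-convex combination of $\Qq$-boundary $\Rr$-complements via Theorem \ref{thm: Uniform perturbation of R complements} applied on $E$, extract a $\Qq$-complement on $Y$ for each piece using Corollary \ref{cor: plt complement} (whose ampleness hypotheses are met because $-(K_Y+f^{-1}_*B+E)$ is $f$-ample), and recombine convexly. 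A parallel concern appears in the interpolation step of $(3)\Rightarrow(1)$, though there it is softened by the fact that adjunction automatically converts lc places of $(Y,B_Y^*+E)$ with center in $E$ into lc places of $(E,\Diff_E(B_Y^*))$.
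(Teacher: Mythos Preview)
Your cycle $(2)\Rightarrow(3)\Rightarrow(1)$ is essentially the paper's argument: the interpolation to find a pair with both $E$ and some other $F$ as lc places, followed by adjunction, is exactly how the paper handles $(3)\Rightarrow(1)$.

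The gap is in $(1)\Rightarrow(2)$. You correctly identify that the heart of the matter is lifting a non-klt $\Rr$-complement $(E,B_E+D_E)$ to $Y$, and that Lemma~\ref{lem:extendcomplement1} is the tool in the rational case. But then two things go wrong. First, you invoke Corollary~\ref{cor: plt complement} for the lifting; that corollary produces \emph{some} $n$-complement $(Y/X\ni x,B_Y+E+G_Y)$, with no control over $G_Y|_E$. After recombining convexly you have no reason to believe the resulting complement on $Y$ is non-plt near $E$, which is exactly what you need. The specific lifting is Lemma~\ref{lem:extendcomplement1}, not Corollary~\ref{cor: plt complement}. Second, and more subtly, Lemma~\ref{lem:extendcomplement1} requires $nB$ to be integral on $Y$, not only $nB_S$ on $E$. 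If $B$ has irrational coefficients, perturbing only $B_E+D_E$ on $E$ (via Theorem~\ref{thm: Uniform perturbation of R complements}, which in any case gives $\Rr$-complementary pairs rather than pairs that \emph{are} $\Rr$-complements) does not rationalize $f^{-1}_*B$ on $Y$, so the hypotheses of Lemma~\ref{lem:extendcomplement1} are never met.

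The paper resolves both issues at once. It first applies Lemma~\ref{lem: coeffB R-complement} to replace $G_E$ by one with coefficients in $\Span_\Qq(\Coeff(B)\cup\{1\})$ while keeping the same lc places. This allows $B_Y$, $B_E$, and $G_E$ to be written as $\Qq$-affine functional divisors in a \emph{common} set of variables $(r_1,\dots,r_c)$; adjunction then becomes an identity of functional divisors. By Lemma~\ref{lem: dltmodelQfactorialcoeff1}, rational specializations $\bm{v}_i$ near $\bm{r}$ make all three rational simultaneously, keep $K_E+B_E(\bm{v}_i)+G_E(\bm{v}_i)\sim_\Qq 0$, preserve the non-klt place, and keep $-(K_Y+B_Y(\bm{v}_i)+E)$ ample over $X$. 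Now Lemma~\ref{lem:extendcomplement1} applies to each piece and lifts the \emph{specific} $G_E(\bm{v}_i)$ to $G_i$ on $Y$; the convex recombination $G_Y=\sum a_iG_i$ then restricts to $G_E$, so $(E,\Diff_E(B_Y+G_Y))$ is not klt and $(Y,B_Y+E+G_Y)$ is not plt by Theorem~\ref{thm: adjunction}. Your sketch has the right shape, but it needs this joint rationalization and the correct lifting lemma to close.
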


To simplify the notation in the arguments below, we generalize the notation of $\Rr$-affine functional divisors, as in Definition \ref{defn: r affine functional divisor}, to multiple variables. 

\begin{defn}
Let $c$ be a positive integer, $X$ a normal variety, and $D_i$ distinct prime divisors on $X$. Suppose that $d_i(x_1,\dots,x_c):\mathbb R^{c}\rightarrow\mathbb R$ are $\Rr$-affine functions (resp. $\Qq$-affine functions). Then we call the formal sum $D(x_1,\dots,x_c):=\sum_id_i(x_1,\dots,x_c)D_i$ an $\Rr$-affine functional divisor (of multiple variables) (resp. $\Qq$-affine functional divisor (of multiple variables)). 
\end{defn}


\begin{lem}\label{lem: coeffB R-complement}
Let $(X,B)$ be a pair, $X\rightarrow Z$ a contraction, and $z\in Z$ a point, such that $(X/Z\ni z,B)$ has an $\Rr$-complement $(X/Z\ni z,B^{+}:=B+G)$. Then there exists an $\Rr$-complement $(X/Z\ni z,B')$ of $(X/Z\ni z,B)$, such that \begin{enumerate}
        \item $B'\in\Span_{\Qq}(\Coeff(B)\cup\{1\})$, and
        \item for every prime divisor $E$ over $X$, $a(E,X,B^+)=0$ if and only if $a(E,X,B')=0$.
    \end{enumerate} 
\end{lem}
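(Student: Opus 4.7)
The plan is to reformulate the conditions defining ``$\Rr$-complement of $(X/Z\ni z,B)$ with the same lc places as $B^+$'' as a $\Qq$-linear system on the coefficient vector $\mathbf{b}'$ of $B'$, and then extract a solution with coefficients in $M:=\Span_{\Qq}(\Coeff(B)\cup\{1\})$ by elementary linear algebra. By passing to a small $\Qq$-factorialization (or a dlt modification), I may assume $X$ is $\Qq$-factorial, so that $K_X$ and every prime divisor on $X$ is $\Qq$-Cartier; this reduction preserves $\Rr$-complements and lc places. I would then fix a log resolution $\pi:Y\to X$ of $(X,B^+)$ so that the lc places of $(X,B^+)$ correspond to the finitely many prime divisors $F_1,\ldots,F_m$ on $Y$ with $\mult_{F_k}B_Y^+=1$, where $K_Y+B_Y^+=\pi^{*}(K_X+B^+)$.

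Parametrize candidate complements $B'=\sum b_i'D_i$ by $\mathbf{b}'\in\Rr^n$, where $D_i$ are the prime divisors in $\Supp B^+$. The desired conditions are (i) $K_X+B'\sim_{\Rr,Z,z}0$ and (ii) $\mult_{F_k}(\pi^{*}(K_X+B')-K_Y)=1$ for each $k$, together with the open conditions $B'\geq B$ and $(X,B')$ lc. The key observation is that since $X$ is $\Qq$-factorial, each $D_i$ and $K_X$ has a class in $\Pic_{\Qq}(X/Z,z)$, so $[K_X+B']=[K_X]+\sum b_i'[D_i]$ decomposes $\Qq$-rationally; similarly $\mult_{F_k}\circ\pi^{*}$ restricts to a $\Qq$-linear functional on $\Pic_{\Qq}$. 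Writing the system in the shifted variable $\mathbf{u}=\mathbf{b}'-\mathbf{b}$, conditions (i) and (ii) take the form $A\mathbf{u}=\mathbf{c}$ with $A\in\Qq^{r\times n}$ and $\mathbf{c}\in M^r$: for (i), $\mathbf{c}$ has entries $-[K_X]-\sum b_i[D_i]$ (in a chosen $\Qq$-basis of $\Pic_{\Qq}$), whose coordinates lie in $\Qq+M\cdot\Qq=M$; for (ii), $\mathbf{c}$ has entries $1+\mult_{F_k}K_Y-\mult_{F_k}\pi^{*}K_X-\sum b_i\mult_{F_k}\pi^{*}D_i$, again in $M$ since the first three summands are rational and the last is a $\Qq$-combination of $b_i\in M$.

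Since $\mathbf{u}^+:=\mathbf{b}^+-\mathbf{b}$ is an $\Rr$-solution, elementary linear algebra gives that $A\mathbf{u}=\mathbf{c}$ is $M$-consistent: any $\Qq$-row-combination annihilating $A$ also annihilates $\mathbf{c}$ over $\Rr$, hence over $M$. The $M$-solution set is $\mathbf{u}_0+\ker(A)\otimes_{\Qq}M$ for some fixed $\mathbf{u}_0\in M^n$, which is dense in the $\Rr$-solution set $\mathbf{u}_0+\ker(A)\otimes_{\Qq}\Rr$. I would pick an $M$-valued $\mathbf{b}'$ sufficiently close to $\mathbf{b}^+$ so that $(X,B')$ is lc (by openness of the lc condition under perturbations with fixed support on $Y$), $B'\geq B$ with strict inequality wherever $b_i^+>b_i$, and (imposing the additional $M$-compatible equality $b_i'=b_i$ whenever $b_i^+=b_i$) equality elsewhere. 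Preservation of lc places over $X$ follows: on $Y$, condition (ii) forces $\mult_{F_k}B_Y'=1$ for each $k$, and the snc combinatorics of $B_Y^+$ at intersections of the $F_k$ yields the same higher-order lc valuations; openness ensures $a(E,X,B')>0$ for divisors $E$ with $a(E,X,B^+)>0$.

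The main technical obstacle is the bookkeeping showing $\mathbf{c}\in M^r$ in the key step, which requires $K_X$ and each $D_i$ to be $\Qq$-Cartier so that the Picard-class decomposition and log-pullback formulas go through $\Qq$-rationally; this is exactly what the reduction to a $\Qq$-factorial model achieves.
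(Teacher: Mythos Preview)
Your argument is essentially correct, but it takes a different route from the paper's proof. The paper parametrizes the coefficients of $B^+=B+G$ by a $\Qq$-transcendence basis: it writes $B=B(r_1,\dots,r_c)$ and $G=G(r_1,\dots,r_{c'})$ as $\Qq$-affine functional divisors with $1,r_1,\dots,r_{c'}$ linearly independent over $\Qq$ and $\Span_{\Qq}(\{1,r_1,\dots,r_c\})=\Span_{\Qq}(\Coeff(B)\cup\{1\})$. It then invokes Lemma~\ref{lem: dltmodelQfactorialcoeff1} (whose content is Lemma~\ref{lem: R boundary Q Cartier}) to see that for \emph{any} $(v_{c+1},\dots,v_{c'})$ the divisor $K_X+B+G(r_1,\dots,r_c,v_{c+1},\dots,v_{c'})$ is $\Rr$-Cartier, $\sim_{\Rr,Z}0$, and has exactly the same zero log discrepancies as $K_X+B^+$; choosing the extra parameters rational and close to $(r_{c+1},\dots,r_{c'})$ then gives $B'$. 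No reduction to $\Qq$-factorial $X$ is needed.

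Your approach instead reduces to $\Qq$-factorial $X$ and encodes the constraints as a finite $\Qq$-linear system $A\mathbf{u}=\mathbf{c}$ with $\mathbf{c}\in M^r$, then uses density of $M$-points in the affine solution set. This is a legitimate and more elementary argument; the two are really the same linear algebra dressed differently. What the paper's packaging buys is that Lemma~\ref{lem: dltmodelQfactorialcoeff1} already handles the non-$\Qq$-factorial case directly (so no reduction step is needed), and the ``same lc places'' conclusion falls out for free without fixing a log resolution. What your approach buys is that it avoids appealing to an earlier lemma and makes the linear-algebraic structure explicit.

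One small point to tighten: $\Pic_{\Qq}(X/Z)$ need not be finite-dimensional, so you cannot literally ``choose a $\Qq$-basis of $\Pic_{\Qq}$''; you should work in the finite-dimensional $\Qq$-span of $[K_X],[D_1],\dots,[D_n]$ instead, which is all your argument actually uses. With that adjustment, your proof goes through.
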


 

\begin{proof}
There exist two integers $0\leq c\leq c'$, two $\Qq$-affine functional divisors $B(x_1,\dots,x_c)$ and $G(x_1,\dots,x_{c'})$, and real numbers $r_0:=1,r_1,\dots, r_{c'}$ which are linearly independent over $\Qq$, such that  $B=B(r_1,\dots,r_c)$, $G=G(r_1,\dots,r_{c'})$, and $\Span_{\Qq}(\{r_0,\ldots,r_c\})=\Span_{\Qq}(\Coeff(B)\cup\{1\})$.

   If $c=c'$, then $B':=B^+$ satisfies the required properties. Therefore, we may assume that $c'>c$. By Lemma \ref{lem: dltmodelQfactorialcoeff1}, there exists an open neighborhood $U\subset\mathbb R^{c'-c}$ of $(r_{c+1},\dots,r_{c'})$ such that for any $(v_{c+1},\dots,v_{c'})\in U$,
   \begin{itemize}
      \item $(X/Z\ni z,B+G(r_1,\dots,r_c,v_{c+1},\dots,v_{c'}))$ is an $\Rr$-complement of $(X/Z\ni z,B)$, and
   \item for every prime divisor $E$ over $X$, $a(E,X,B^{+})=0$ if and only if $a(E,X,B+G(r_1,\dots,r_c,v_{c+1},\dots,v_{c'}))=0$ .
   \end{itemize}
Then 
$B':=B+G(r_1,\dots,r_c,v^0_{c+1},\dots,v^0_{c'})$ satisfies the required properties for any $(v^0_{c+1},\dots,v^0_{c'})\in U\cap\Qq^{c'-c}$.
\end{proof}

\begin{proof}[Proof of Theorem \ref{thm: local global exceptional correspondence}]
We first show that (1) implies (2). Suppose that (2) does not hold, then there exist a klt exceptional singularity $(X\ni x,B)$ and a plt blow-up $f: Y\rightarrow X$ of $(X\ni x,B)$ with the reduced component $E$, such that $(E,B_E)$ is not an exceptional pair. By Lemma \ref{lem: coeffB R-complement}, there exists an $\Rr$-complement $(E,B_E+G_E)$ of $(E,B_E)$, such that $(E,B_E+G_E)$ is not klt, and $G_E\subset \Span_{\Qq}(\Coeff(B)\cup\{1\})$.
 
 There exist real numbers $r_0:=1,r_1,\dots,r_c$ which are linearly independent over $\Qq$, a $\Qq$-affine functional divisor $B_Y(x_1,\dots,x_c)$ on $Y$, and $\Qq$-affine functional divisors $B_E(x_1,\dots,x_c)$ and $G_E(x_1,\dots,x_c)$ on $E$, such that $B_Y=B_Y(\bm{r})$, $B_E=B_E(\bm{r})$, and $G_E=G_E(\bm{r})$, where $\bm{r}:=(r_1,\dots,r_c)$, and $B_Y$ is the strict transform of $B$ on $Y$.
 
     By adjunction formula for $\Rr$-affine functional divisors (cf. \cite[Proposition 3.14]{HLQ17}), we have 
     $$(K_Y+B_Y(v_1,\dots,v_c)+E)|_E=K_E+B_E(v_1,\dots,v_c)$$ 
     for any $(v_1,\dots,v_c)\in \Rr^c$.
     
     By Lemma \ref{lem: dltmodelQfactorialcoeff1}, there exists a neighborhood $U\subset\mathbb R^c$ of $\bm{r}$, such that for any $\bm{v}\in U$,
    \begin{itemize}
        \item $(E,B_E(\bm{v})+G_E(\bm{v}))$ is an $\Rr$-complement of itself, 
        \item $(E,B_E(\bm{v})+G_E(\bm{v}))$ is not klt, and
        \item $-(K_Y+B_Y(\bm{v})+E)$ is ample over $X$.
    \end{itemize}

    Let $\bm{v}_1:=(v_1^1,\dots,v_c^1),\dots,\bm{v}_{c+1}:=(v_{1}^{c+1},\dots,v_{c}^{c+1})\in U\cap\Qq^c$ be $c+1$ points such that $\bm{r}$ belongs to the convex hull of these points. Let $a_1,\dots,a_{c+1}\in (0,1]$ be real numbers such that $\sum_{i=1}^{c+1}a_i=1$ and $\sum_{i=1}^{c+1}a_i\bm{v}_i=\bm{r}$. Then there exists a positive integer $n$, such that for any $1\leq i\leq c+1$,
    \begin{itemize}
        \item $(E,B_E(\bm{v}_i)+G_E(\bm{v}_i))$ is an $n$-complement of itself, and
        \item $nB(\bm{v}_i)$ and $nB_S(\bm{v}_i)$ are Weil divisors.
    \end{itemize} 
    By Lemma \ref{lem:extendcomplement1}, for each $i$, there exists a monotonic $n$-complement $(Y/X\ni x,B_Y(\bm{v}_i)+E+G_i)$ of $(Y/X\ni x,B_Y(\bm{v}_i)+E)$ such that $$(K_Y+B(\bm{v}_i)+E+G_i)|_{E}=K_E+B_E(\bm{v}_i)+G_E(\bm{v}_i).$$
    Let $G_Y:=\sum_{i=1}^{c+1}a_iG_i$. Then $(Y/X\ni x,B_Y+E+G_Y)$ is an $\Rr$-complement of itself, and $(Y,B_Y+E+G_Y)$ is not plt near $E$ by Theorem \ref{thm: adjunction}. Thus $(X\ni x,B)$ is not an exceptional singularity, a contradiction.

\medskip

Since (2) implies (3), it suffices to show that (3) implies (1). Suppose that $(E,B_E)$ is an exceptional pair, and $(X\ni x,B)$ is not an exceptional singularity. Then there exist two $\mathbb R$-Cartier $\mathbb R$-divisors $G\geq 0$ and $G'\geq 0$ on $X$ satisfying the following.
    \begin{itemize}
        \item $E$ is the unique lc place of $(X\ni x,B+G)$, and
        \item there exist at least two lc places of $(X\ni x,B+G')$.
    \end{itemize}
   
    For every real number $0\leq t\leq 1$, we define
    $$h(t):=\lct(X\ni x,B+tG';G).$$
    $h(t)$ is a monotonically decreasing function of $t$, $h(0)=1$ and $h(1)=0$. Let
    $$t_0:=\sup\{t\ge 0\mid E\text{ is  the unique lc place of }(X\ni x,B+tG'+h(t)G)\}.$$
    Then
    \begin{itemize}
\item $(X\ni x,\Delta:=B+t_0G'+h(t_0)G)$ is lc,
\item $E$ is an lc place of $(X\ni x,\Delta)$, and
\item $(X\ni x,\Delta)$ has at least two different lc places.
\end{itemize} 
Let $\Delta_Y$ be the strict transform of $\Delta$ on $Y$, and $K_E+\Diff_E(\Delta_Y):=(K_Y+\Delta_Y+E)|_{E}$. It follows that $\Diff_E(\Delta_Y)\geq B_E$, $K_E+\Diff_E(\Delta_Y)\sim_{\Rr}0$, and $(E,\Diff_E(\Delta_Y))$ is lc but not klt by Theorem \ref{thm: adjunction}. Hence $(E,B_E)$ is not an exceptional pair, a contradiction.
    \end{proof}

\subsection{Complete regularities}

\subsubsection{Definitions: dual complex, regularities, and complete regularities}
\begin{defn}[Dual complex of a simple normal crossing variety, cf. {\cite{Sho00}}]\label{defn: crt}
 Let $E$ be a simple normal crossing variety with irreducible components $\{E_i\mid i\in \mathcal{I}\}$. A stratum of $E$ is any irreducible component $F\subset \cap_{i\in \mathcal{J}}E_i$ for some $\mathcal{J}\subset\mathcal{I}$. A \emph{dual complex} of $E$, denoted by $\mathcal{D}(E)$, is a CW-complex whose vertices are labeled by the irreducible components of $E$ and for every stratum $F\subset \cap_{i\in \mathcal{J}}E_i$, we attach a $(|\mathcal{J}|-1)$-dimensional cell. 
 \end{defn}
 
 \begin{defn}[Dual complex of log canonical places] Let $(X,B)$ be an lc pair, $\pi: X\rightarrow Z$ a contraction, and $z\in Z$ a point. Let
     $$e(X/Z\supset z):=\{F\mid F\text{ is a prime divisor over }X, \pi(\Center_{X}F)\supset\bar{z}\}.$$
 \end{defn}
 
 The set of all the log canonical places of $(X,B)$ which belong to $e(X/Z\ni z)$ is denoted by $\LCP(X/Z\supset z,B)$.
    
    \begin{defn}[Regularities and complete regularities]
        Let $(X,B)$ be an lc pair, $X\rightarrow Z$ a contraction, and $z\in Z$ a point. Let $f:Y\to X$ be a dlt modification of $(X,B)$ over a neighborhood of $z$ with prime exceptional divisors $E_1,\ldots,E_k$, and $E:=\sum_{E_i\in e(X/Z\supset z)} E_i.$  The \emph{regularity} of $(X/Z\ni z,B)$ is
        $$\reg(X/Z\ni z,B):=\dim \mathcal{D}(E),$$ 
        where $\dim\mathcal{D}(\emptyset):=-1$. $\reg(X/Z\ni z,B)$ is independent of the choice of $f$, c.f. \cite[7.9]{Sho00}, \cite[Theorem 1]{dFKX17}. The \emph{complete regularity} of $(X/Z\ni z,B)$ is
          \begin{align*}
            \creg(X/Z\ni z,B):=\max\{-\infty,\reg(X/Z\ni z,B^+)\mid (X/Z\ni z,B^+)&\\
            \text{ is an }\Rr\text{-complement of }(X/Z\ni z,B)&\}.
        \end{align*}

                   For simplicity, when $X\to Z$ is the identity map, we may use the notation $\reg(X\ni x,B)$ and $\creg(X\ni x,B)$ instead of $\reg(X/X\ni x,B)$ and $\creg(X/X\ni x,B)$ respectively. 
    \end{defn}

The following lemma follows from the definition of the complete regularities.
    
        \begin{lem}\label{lem: creg and exc}
        Let $(X\ni x,B)$ be an lc germ of dimension $d$. Then 
        \begin{enumerate}
            \item $0\leq \creg(X\ni x,B)\leq d-1$, and
            \item $\creg(X\ni x,B)=0$ if and only if $(X\ni x,B)$ is an exceptional singularity.
        \end{enumerate}
            \end{lem}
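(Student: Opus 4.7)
The upper bound $\creg(X\ni x,B)\leq d-1$ is immediate from the definition of $\mathcal{D}(E)$: in any dlt modification the reduced boundary is simple normal crossing in a $d$-dimensional variety, so its dual complex has dimension at most $d-1$. For the lower bound $\creg(X\ni x,B)\geq 0$, the plan is to start with any $\Rr$-complement $(X\ni x, B^+)$ (which exists for any lc germ by a standard construction involving dlt modifications), and either verify it already has an lc place with center containing $x$, or modify it to produce one. In the latter case $(X, B^+)$ is klt at $x$; pick an affine neighborhood $U$ of $x$ and a regular function $f\in\mathcal{O}(U)$ with $f(x)=0$, and set $D:=\mathrm{div}(f)|_U\geq 0$. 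Then $c:=\lct(X\ni x, B^+; D)$ is positive and $(X, B^+ + cD)$ is lc with a new lc place whose center contains $x$. Since $cD\sim 0$, we have $K_X + B^+ + cD\sim_{\mathbb{R}} 0$ near $x$, giving an $\Rr$-complement of $(X\ni x, B)$ with $\reg\geq 0$.

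For (2) ``$\Leftarrow$'', if $(X\ni x, B)$ is exceptional, then any $\Rr$-complement $(X\ni x, B^+)$ can be written as $(X\ni x, B+G)$ with $G:=B^+ - B\geq 0$ and $(X, B+G)$ lc. By Definition \ref{defn: exc sing}, at most one prime $\bb$-divisor $F$ over $X\ni x$ satisfies $a(F,X,B^+)=0$; hence in any dlt modification of $(X,B^+)$ the reduced boundary $E$ has at most one component with center containing $x$, forcing $\dim\mathcal{D}(E)\leq 0$ and $\reg(X\ni x, B^+)\leq 0$. Combined with (1), $\creg(X\ni x, B)=0$.

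For the converse direction of (2), assume $(X\ni x,B)$ is not exceptional: there exists $G\geq 0$ such that $(X,B+G)$ is lc and admits two distinct prime $\bb$-divisors $F_1, F_2$ over $X\ni x$ with $a(F_i,X,B+G)=0$. I would extend $(X\ni x, B+G)$ to an $\Rr$-complement $(X\ni x, B+G+\Delta)$, which is then also an $\Rr$-complement of $(X\ni x, B)$. The key observation is that $F_1,F_2$ automatically remain lc places of the larger pair: since $a(F_i,X,B+G+\Delta)=-\mult_{F_i}(\Delta)\leq 0$ while the lc condition forces $a(F_i,X,B+G+\Delta)\geq 0$, we must have $\mult_{F_i}(\Delta)=0$ and $F_i$ stays an lc place. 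In a dlt modification $f:Y\to X$ of $(X,B+G+\Delta)$, the reduced boundary $E$ thus has at least two components with center containing $x$. By the Shokurov--Koll\'ar connectedness principle applied to the relative log Calabi--Yau pair obtained from this dlt modification---using that $f^{-1}(x)$ is connected and that every component of $E$ with center containing $x$ meets $f^{-1}(x)$---the dual complex $\mathcal{D}(E)$ is connected. A connected complex with at least two vertices contains an edge, so $\dim\mathcal{D}(E)\geq 1$, giving $\reg\geq 1$ and hence $\creg(X\ni x, B)\geq 1\neq 0$.

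The main technical obstacles are (a) the existence of an $\Rr$-complement of a given lc germ (needed both in the lower bound of (1) and in the ``$\Rightarrow$'' of (2) to lift $(X,B+G)$ to an $\Rr$-complement), and (b) the connectedness argument for the reduced boundary in the dlt modification over a closed point. Both are known but require appealing to dlt modification techniques together with the connectedness principle for relative log Calabi--Yau pairs, respectively; the automatic preservation of $F_1,F_2$ as lc places, via the lc-log-discrepancy identity above, is what makes the approach in (2)``$\Rightarrow$'' succeed without delicate choice of $\Delta$.
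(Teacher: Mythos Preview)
Your argument is correct in substance; the paper gives no proof beyond the remark that the lemma ``follows from the definition of the complete regularities.''

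One simplification you overlook: since the contraction here is the identity map $X\to X$, the condition $K_X+B^+\sim_{\Rr}0$ over a neighborhood of $x$ holds automatically for any $\Rr$-Cartier $K_X+B^+$. Hence any $B^+\geq B$ with $(X,B^+)$ lc near $x$ is already an $\Rr$-complement of $(X\ni x,B)$; this is exactly why $\Rct=\lct$ for the identity map, as the paper itself notes. So in part~(1) you may take $B^+=B+cD$ directly without first manufacturing an auxiliary $\Rr$-complement, and in (2)$\Rightarrow$ the pair $(X\ni x,B+G)$ is \emph{already} an $\Rr$-complement---there is no $\Delta$ to construct, and your verification that $F_1,F_2$ persist as lc places becomes vacuous.

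Your appeal to the Shokurov--Koll\'ar connectedness principle in (2)$\Rightarrow$ is the one substantive step the paper sweeps under ``follows from the definition'': merely having two lc places over $x$ yields two vertices in $\mathcal{D}(E)$, and it is connectedness of the non-klt locus over the fiber that produces an edge and hence $\reg\geq 1$. You have correctly identified and applied that ingredient.
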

    
    \subsubsection{Accumulation points of minimal log discrepancies of exceptional singularities}

    
    
    The following conjecture predicts a relation between the complete regularities and the accumulation points of minimal log discrepancies.
     
    \begin{conj}\label{conj: stronger ACC for mlds} Let $d$ and $r$ be two integers. We define
\begin{align*}
    \mld(d,r):=\{&\mld(X\ni x,B)\mid \dim X=d, (X\ni x,B)\text{ is lc},\\
    &B\in \{1-\frac{1}{n}\mid n\in\mathbb N^+\}\cup\{0\}, \creg(X\ni x,B)\le r\}.
\end{align*}
Then the accumulation points of $\mld(d,r)$ belong to $\mld(d-1,r-1)\cup\{0\}$.
\end{conj}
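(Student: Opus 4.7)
The plan is to prove this by induction on $r$, using the complement machinery developed in the paper together with adjunction onto log canonical centers. The base case $r=0$ follows from the results already established in this paper: by Lemma \ref{lem: creg and exc}, $\creg(X\ni x,B)=0$ exactly when $(X\ni x,B)$ is exceptional, and for hyperstandard coefficients the ACC for mlds of exceptional singularities (Theorem \ref{thm: ACC mld exc}) implies that the only possible accumulation point of $\mld(d,0)$ is $0$. Hence the accumulation points lie in $\{0\} = \mld(d-1,-1)\cup\{0\}$, as required.

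For the inductive step, assume the conjecture holds for $r-1$ and suppose, for contradiction, that there is a sequence of germs $(X_i\ni x_i,B_i)$ with $\dim X_i=d$, $\creg(X_i\ni x_i,B_i)\le r$, $B_i$ in the hyperstandard set, and $\mld(X_i\ni x_i,B_i)\to a$ with $a>0$ and $a\notin\mld(d-1,r-1)$. First I would apply Theorem \ref{thm: dcc existence n complement} (or \ref{thm: existence ni1i2 complement}) to the identity contraction $X_i\to X_i$, obtaining a monotonic $n$-complement $(X_i\ni x_i,B_i^+)$ for a single integer $n$ depending only on $d$; here hyperstandard coefficients lie in $\Qq$, so $\Ii_0=\{1\}$ suffices. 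By definition of $\creg$ and the finiteness of the relevant regularity invariants, I may choose $B_i^+$ with $\reg(X_i\ni x_i, B_i^+)\le r$.

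Next take a $\Qq$-factorial dlt modification $g_i:Y_i\to X_i$ of $(X_i,B_i^+)$, and let $E_i$ be the sum of prime components of $\lfloor (g_i)_*^{-1}B_i^++\Exc(g_i)\rfloor$ whose centers contain $x_i$, so $\dim\mathcal{D}(E_i)\le r$. Choose a vertex $S_i$ of $\mathcal{D}(E_i)$ and apply adjunction (Theorem \ref{thm: adjunction}) to obtain $K_{S_i}+B_{S_i}=(K_{Y_i}+(g_i)_*^{-1}B_i^++E_i)|_{S_i}$, where $B_{S_i}$ lies in a hyperstandard set depending only on $d$. Since strata of $E_i$ through $S_i$ correspond to cells of $\mathcal{D}(E_i)$ incident to $S_i$, the link of $S_i$ has dimension $\le r-1$, giving $\creg(S_i\ni s_i,B_{S_i})\le r-1$ for an appropriate closed point $s_i$ over $x_i$. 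If $F_i$ is a divisor over $X_i\ni x_i$ computing the mld, I would arrange (by modifying the dlt modification) that $F_i$ is extracted on $Y_i$ and that $\Center_{Y_i}(F_i)\subseteq S_i$ for a well-chosen $S_i$. Then inversion of adjunction identifies $a(F_i,Y_i,(g_i)_*^{-1}B_i^++E_i)$ with the log discrepancy on $(S_i\ni s_i,B_{S_i})$ of the corresponding divisor $F_i'$, producing a sequence on $(d-1)$-dimensional pairs of $\creg\le r-1$.

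The main obstacle is controlling the correction between $a(F_i,X_i,B_i)$ and $a(F_i,Y_i,(g_i)_*^{-1}B_i^++E_i)$: the difference is $\mult_{F_i}(B_i^+-B_i)$ plus the contributions from the reduced exceptional divisors in $E_i$ not lying over $S_i$, and these must either stabilize along a subsequence (placing $a\in\mld(d-1,r-1)$ and yielding a contradiction) or force $a=0$. Controlling this correction requires using the monotonic structure $B_i^+\ge B_i$ together with the fixed Cartier index bounds coming from Theorem \ref{thm: bdd local index exceptional} applied in the relative setting of $g_i$, and a careful choice of $S_i$ realizing a maximal-dimensional cell of $\mathcal{D}(E_i)$. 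The most delicate case is when $F_i$ is not over any vertex (so $F_i=E_{i,j}$ for some $j$ and the mld is attained on the boundary of the dlt model); here one must argue that either the dimension of the dual complex drops strictly below $r$ along a subsequence (reducing to the induction hypothesis for smaller $r$), or the mld must tend to $0$ because the coefficients of the complement force $a(E_{i,j},X_i,B_i)$ into the DCC accumulation regime from Proposition \ref{prop: acc ld kc}.
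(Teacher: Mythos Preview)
The statement you address is a \emph{conjecture}; the paper only establishes the case $r\le 0$, recorded as Theorem~\ref{thm: strong acc mld for exceptional singularities}. Your base case $r=0$ follows the same route as the paper---identify $\creg\le 0$ with exceptional via Lemma~\ref{lem: creg and exc}, then invoke Theorem~\ref{thm: ACC mld exc}---with one omission: for a merely DCC coefficient set, Theorem~\ref{thm: ACC mld exc} only yields ACC, not ``the only possible accumulation point is $0$''. The paper bridges this by first using Lemma~\ref{lem: exp sing epsilon plt} to deduce that the hyperstandard coefficients of $B$ actually lie in a \emph{finite} subset depending only on $d$, and then applies the finite-coefficient clause of Theorem~\ref{thm: ACC mld exc}.

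Your inductive step is an attempt to settle the general case, which the paper leaves open, and the argument has genuine gaps. The central one is the disconnect between the pair whose mld you want to track, $(X_i\ni x_i,B_i)$, and the pair on which you perform adjunction, the dlt model of the \emph{complement} $(X_i,B_i^+)$. After adjunction you obtain $(S_i,B_{S_i})$ with $K_{S_i}+B_{S_i}\sim_{\Rr}0$ locally; its log discrepancies encode those of $(X_i,B_i^+)$, not of $(X_i,B_i)$, and in particular its mld at $s_i$ is $0$. The ``correction'' you flag is exactly $\mult_{F_i}(B_i^+-B_i)$ plus the contribution of the reduced exceptional divisor, and nothing in the paper forces this to stabilise along a subsequence once $\creg>0$: Theorem~\ref{thm: bdd local index exceptional} and Proposition~\ref{prop: acc ld kc} both require an $\epsilon_0$-lc germ admitting an $\epsilon$-plt blow-up, hypotheses which are unavailable here. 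Two further problems: the divisor $F_i$ computing $\mld(X_i\ni x_i,B_i)>0$ is generally \emph{not} an lc place of $(X_i,B_i^+)$, so it is not extracted by any dlt modification of that pair and there is no way to ``arrange'' that its center lies on a chosen $S_i$; and the coefficients of $B_{S_i}$ produced by Theorem~\ref{thm: adjunction} from the $\tfrac1n\Zz$-coefficients of $B_i^+$ are not hyperstandard in general, so the induction hypothesis on $\mld(d-1,r-1)$, which is formulated only for the hyperstandard set, does not apply to $(S_i\ni s_i,B_{S_i})$.
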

    
    \begin{thm}\label{thm: strong acc mld for exceptional singularities}
    Conjecture \ref{conj: stronger ACC for mlds} holds when $r\leq 0$.
    \end{thm}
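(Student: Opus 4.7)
The plan is to first reduce the statement. Since every lc germ admits an $\mathbb R$-complement, $\creg$ is always non-negative, so Lemma \ref{lem: creg and exc} gives $\mld(d-1, r-1) = \emptyset$ whenever $r \le 0$, and $\mld(d, r) = \emptyset$ for $r < 0$. Thus the case $r < 0$ is vacuous, and the case $r = 0$ reduces to showing that the only possible accumulation point of $\mld(d, 0)$---the set of mlds of $d$-dimensional exceptional singularities with coefficients in the standard set $\Ii := \{1 - 1/n : n \in \mathbb N^+\} \cup \{0\}$---is $0$. The case $d = 1$ is immediate because the mlds are then of the form $1/n$.

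For $d \ge 2$ I would argue by contradiction. Suppose $a_i = \mld(X_i \ni x_i, B_i) \to a > 0$ with the $a_i$ distinct, where each $(X_i \ni x_i, B_i)$ is exceptional with $B_i \in \Ii$. I would split on the coefficient behavior of $B_i$. If there exists a positive integer $N$ such that, after passing to a subsequence, every coefficient of every $B_i$ lies in the finite subset $\Ii_N := \{1 - 1/n : 2 \le n \le N\} \cup \{0\}$, then the finite-set version of Theorem \ref{thm: ACC mld exc} applied to $\Ii_N$ places the $a_i$ in an ACC set whose only possible accumulation point is $0$, contradicting $a > 0$. Otherwise, after passing to a subsequence, there exist coefficients $b_{i, k(i)}$ of $B_i$ with $b_{i, k(i)} \to 1$.

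In this remaining case I would exploit the bounded family structure of the reduced components. By Lemma \ref{lem: exp sing epsilon plt}, each $(X_i \ni x_i, B_i)$ admits a uniform $\epsilon$-plt blow-up $f_i : Y_i \to X_i$ with unique reduced component $E_i$ (Remark-Definition \ref{remdef: reduced component exc sing}); Theorem \ref{thm: ACC mld exc}(2) provides a threshold $\epsilon_0 > 0$ so that whenever $a_i \le \epsilon_0$ one has $a_i = a(E_i, X_i, B_i)$, while the complementary case $a > \epsilon_0$ is handled directly by Theorem \ref{thm: acc ld dcc coefficient bdd}. By Lemma \ref{lem: bdd lemma} and Theorem \ref{thm: local global exceptional correspondence}, each $(E_i, B_{E_i})$ is a global exceptional Fano pair lying in a log bounded family, with the Cartier indices of $K_{E_i}$ and of $E_i|_{E_i}$ uniformly bounded by Proposition \ref{prop:CartierindexofEE}. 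Passing to a subsequential limit inside this family and tracking $b_{i, k(i)} \to 1$ through the adjunction formula of Theorem \ref{thm: adjunction}, I would obtain a limit pair $(E_\infty, B_{E_\infty})$ with some coefficient of $B_{E_\infty}$ equal to $1$; this limit is not klt, hence not exceptional by Theorem \ref{thm: local global exceptional correspondence}. On the other hand, the convergence $a_i \to a > 0$, combined with Proposition \ref{prop: acc ld kc} and Theorem \ref{thm: ACC NV exc}, should force the limit to inherit the exceptional structure, yielding the desired contradiction.

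The main obstacle is this limit argument in the second case: extracting a well-defined limiting global exceptional Fano pair from the log bounded family of $(E_i, B_{E_i})$ with coefficients in the DCC set induced by $\Ii$ through adjunction, and verifying simultaneously that a coefficient $1$ appears on $B_{E_\infty}$ and that the exceptional property persists in the limit. This ``dimension-drop'' from a $d$-dimensional exceptional singularity to a $(d-1)$-dimensional exceptional Fano pair is exactly the shape built into Conjecture \ref{conj: stronger ACC for mlds}, and I expect that a careful flat-family argument together with the adjunction-coefficient bookkeeping of Theorem \ref{thm: adjunction} is what makes it rigorous.
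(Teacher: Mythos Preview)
Your reduction to $r=0$ and your Case~1 (coefficients confined to a fixed finite subset $\Ii_N$) are correct and in fact constitute the entire argument. The point you miss is that Case~2 cannot occur, and the paper's proof is precisely the observation that rules it out.

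You already invoke Lemma~\ref{lem: exp sing epsilon plt} to get a uniform $\epsilon$-plt blow-up of each klt exceptional germ $(X_i\ni x_i,B_i)$. But this lemma does more than you use: the $\epsilon$-plt condition on $(Y_i,B_{Y_i}+E_i)$ forces, via adjunction (Theorem~\ref{thm: adjunction}), the pair $(E_i,\Diff_{E_i}(B_{Y_i}))$ to be $\epsilon$-klt. Every component of $B_i$ through $x_i$ has strict transform meeting $E_i$, and its coefficient $b$ satisfies $b\le \frac{m-1+b}{m}\le \mult_V\Diff_{E_i}(B_{Y_i})<1-\epsilon$ for the corresponding codimension-one point $V$ of $E_i$. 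Since $\{1-\tfrac{1}{n}\}\cup\{0\}$ has only finitely many elements below $1-\epsilon$, every $B_i$ already lies in a fixed finite set $\Ii_0$ depending only on $d$. Then the finite-coefficient clause of Theorem~\ref{thm: ACC mld exc} gives that the only possible accumulation point is $0$, and the proof is complete. This is exactly the paper's two-line argument.

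Your proposed limit argument for Case~2 is therefore unnecessary, and as you acknowledge it also has a genuine gap: nothing in Proposition~\ref{prop: acc ld kc} or Theorem~\ref{thm: ACC NV exc} controls whether exceptionality of $(E_i,B_{E_i})$ survives in a flat limit, so the contradiction you aim for is not established.
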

    \begin{proof}
    Since $\mld(d,r)=\emptyset$ for any $r\leq -1$, we only need to show that the only possible accumulation point of $\mld(d,0)$ is $0$.
    
        For any positive integer $d$, assume that $(X\ni x,B)$ is an lc germ of dimension $d$ such that  $B\in\{1-\frac{1}{n}\mid n\in\mathbb N^+\}\cup\{0\}$ and $\creg(X\ni x,B)\leq 0$. By Lemma \ref{lem: creg and exc}, $(X\ni x,B)$ is an exceptional singularity. By  Lemma \ref{lem: exp sing epsilon plt}, there exists a finite set $\Ii_0\subset\{1-\frac{1}{n}\mid n\in\mathbb N^+\}\cup\{0\}$ depending only on $d$ such that $B\in\Ii_0$. Theorem  \ref{thm: strong acc mld for exceptional singularities} follows from Theorem \ref{thm: ACC mld exc}.
        \end{proof}
    
    \subsubsection{ACC for complete regularity thresholds}
\begin{defn}[Complete regularity thresholds and $\Rr$-complementary thresholds]
      Let $c\geq -1$ be an integer, $(X,B)$ a pair, $D\geq 0$ an $\Rr$-Cartier $\Rr$-divisor on $X$, $\pi: X\rightarrow Z$ a contraction, and $z\in Z$ a point.
      
     We define the \emph{c-complete regularity threshold} (crt$_c$ for short) of $D$ with respect to $(X/Z\ni z,B)$ to be
    $$\crt_c(X/Z\ni z,B;D):=\sup\{-\infty,t\mid t\geq 0,\creg(X/Z\ni z,B+tD)\geq c\}.$$
We define the \emph{$\Rr$-complementary threshold} (Rct for short) of $D$ with respect to $(X/Z\ni z,B)$ to be
     $$\Rct(X/Z\ni z,B;D):=\crt_{-1}(X/Z\ni z,B;D).$$
\end{defn}    

\begin{rem}
    We remark that by definition, when $(X/Z\ni z,B)$ is $\Rr$-complementary, 
         $$\Rct(X/Z\ni z,B;D)=\sup\{t\in\Rr\mid (X/Z\ni z,B+tD)\text{ is $\Rr$-complementary}\}.$$
         In particular, when $X\to Z$ is the identity map,
         $$\Rct(X/Z\ni z,B;D)=\lct(X\ni z,B;D).$$
\end{rem}

\begin{defn}\label{defn: set of crt}
Let $d>0$ and $-1\leq c\leq d-1$ be two integers, and $\Ii\subset [0,1]$ and $\Ii'\subset [0,+\infty)$ two sets of real numbers. We define
\begin{align*}
   \CR_{\ge c}(\Ii,\Ii',d):=\{(X/Z\ni z,B;D)\mid \dim X=d, (X,B)\text{ is lc, } B\in\Ii, D\in\Ii',&\\
  X\to Z \text{ is a contraction},z\in Z\text{ is a point}, \creg(X/Z\ni z,B)\geq c\},&
\end{align*}
$$\CRT_c(\Ii,\Ii',d):=\{\crt_c(X/Z\ni z,B;D)\mid (X/Z\ni z,B,D)\in\CR_{\ge c}(\Ii,\Ii',d)\},$$
\begin{align*}
\CRT_{c,\FT}(\Ii,\Ii',d):=\{\crt_c(X/Z\ni z,B;D)\mid (X/Z\ni z,B,D)\in\CR_{\ge c}(\Ii,\Ii',d),&\\
X \text{ is of Fano type over }Z\},&
\end{align*}
$$\RCT(\Ii,\Ii',d):=\CRT_{-1}(\Ii,\Ii',d),$$
and
$$\RCT_{\FT}(\Ii,\Ii',d):=\CRT_{-1,\FT}(\Ii,\Ii',d),$$
\end{defn}

It is expected that both $\CRT_c(\Ii,\Ii',d)$ and $\RCT(\Ii,\Ii',d)$ satisfy the ACC, provided that both $\Ii$ and $\Ii'$ satisfy the DCC:
\begin{conj}\label{conj: ACC for CRT}
Let $d>0$ and $-1\leq c\leq d-1$ be two integers, and $\Ii\subset [0,1]$ and $\Ii'\subset [0,+\infty)$ two DCC sets. Then $\CRT_c(\Ii,\Ii',d)$ satisfies the ACC. In particular, $\RCT(\Ii,\Ii',d)$ satisfies the ACC.
\end{conj}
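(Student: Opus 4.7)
The plan is to mimic the proof of ACC for $a$-lc thresholds (Theorem \ref{thm: ACC aLCT exc}), replacing its use of Theorem \ref{thm: ACC mld exc} with the uniform $(n,\Ii_0)$-decomposable $\Rr$-complements (Theorem \ref{thm: existence ni1i2 complement}) and the uniform $\Rr$-complementary rational polytopes (Theorem \ref{thm: Uniform perturbation of R complements}). The approach should give the Fano-type version (i.e.\ ACC for $\CRT_{c,\FT}(\Ii,\Ii',d)$) unconditionally and reduce the general case to Conjecture \ref{conj: exist gmm}.

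Suppose for contradiction there is a strictly increasing sequence $t_i=\crt_c(X_i/Z_i\ni z_i,B_i;D_i)$ with $B_i\in\Ii$, $D_i\in\Ii'$, $\dim X_i=d$, and $\creg(X_i/Z_i\ni z_i,B_i+t_iD_i)\ge c$ (approximately), converging to some $t_0$. The first step is to verify that the coefficient set
\[\Gamma_*:=\{b+td\mid b\in\Ii\cup\{0\},\ d\in\Ii'\cup\{0\},\ t\in\{t_i\}_{i\ge 1}\cup\{t_0\}\}\]
is a DCC set: any decreasing sequence $b_k+t_{i_k}d_k$ can be refined, using the DCC of $\Ii$ and $\Ii'$, to make $b_k$ and $d_k$ nondecreasing, after which the strict monotonicity of $\{t_i\}$ forces $t_{i_k}$ to be eventually constant.

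By the definition of $\crt_c$ as a supremum, for each $i$ I can choose $s_i\le t_i$ with $t_i-s_i\to 0$ such that $(X_i/Z_i\ni z_i,B_i+s_iD_i)$ admits an $\Rr$-complement of regularity $\ge c$. The coefficient vectors of $B_i+s_iD_i$ lie in (a DCC enlargement of) $\Gamma_*$; in the Fano-type case I then invoke Theorem \ref{thm: existence ni1i2 complement} to produce a uniform $n$ and finite $\Ii_0$ so that each $(X_i/Z_i\ni z_i,B_i+s_iD_i)$ carries an $(n,\Ii_0)$-decomposable $\Rr$-complement. Now I apply Theorem \ref{thm: Uniform perturbation of R complements} to the rational envelope $V_i$ of the coefficient vector of $B_i+s_iD_i$: the $\Rr$-complementary property is preserved on a uniform open neighborhood $U_i\subset V_i$ of that vector, whose size depends only on $d$ and the dimension of $V_i$. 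For $s_i$ sufficiently close to $t_i$, the coefficient vector of $B_i+t'D_i$ for some $t'>t_i$ lies in $U_i$, contradicting $t_i=\crt_c$ (at least for $c=-1$).

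The main obstacle is promoting the argument from $c=-1$ (the $\Rr$-complementary threshold) to general $c\ge 0$: one must show that not only existence of an $\Rr$-complement, but also the regularity of its dual complex of lc places, is preserved under perturbation inside $U_i$. This should follow from an enhancement of Theorem \ref{thm: Uniform perturbation of R complements}: Lemma \ref{lem: dltmodelQfactorialcoeff1}(2) guarantees that log-discrepancy-zero places persist under $\Qq$-linear perturbation in the rational envelope, so on the dlt modification appearing in Lemma \ref{lem:dltmodel linearmap linear functional div} the dual complex of lc places is constant along $U_i$, which would preserve $\creg\ge c$. The secondary obstacle is the non-Fano-type case: without the Fano assumption, running the MMPs in the proofs of Theorems \ref{thm: existence ni1i2 complement} and \ref{thm: Uniform perturbation of R complements} requires the existence of good minimal models, i.e.\ Conjecture \ref{conj: exist gmm}. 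Consequently I expect an unconditional proof of ACC for $\CRT_{c,\FT}(\Ii,\Ii',d)$, and the full conjecture only conditional on Conjecture \ref{conj: exist gmm}.
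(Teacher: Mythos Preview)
The statement is a conjecture; the paper only establishes the Fano-type case (Theorem \ref{thm: crt acc}) and does not claim even a conditional proof of the general statement. You correctly anticipate this scope, so the relevant comparison is between your Fano-type argument and the paper's proof of Theorem \ref{thm: crt acc}.

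Your argument has a genuine gap that is not on your list of obstacles. You apply Theorem \ref{thm: Uniform perturbation of R complements} at the coefficient vector $\bm{v}_i$ of $B_i+s_iD_i$ and assert that the neighborhood $U_i$ has size depending only on $d$ and $\dim V_i$. That is not what the theorem provides: $U$ depends on $d,m$ and on the \emph{point} $\bm{v}$ itself. Tracing through Theorem \ref{thm: pepolytope linear maps} and Corollary \ref{cor: strong Nakamura approximation}, the radius $\epsilon$ depends on the specific irrationals $r_1,\dots,r_c$ appearing in $\bm{v}$. Since your $\bm{v}_i$ range over an infinite (merely DCC) set as $i$ varies, there is no uniform lower bound on the size of $U_i$, and the contradiction does not close. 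There is a secondary issue as well: the line $s\mapsto B_i+sD_i$ need not lie in $V_i$ at all (e.g.\ if some coefficient of $B_i$ is irrational while the corresponding coefficient of $D_i$ vanishes and another coefficient of $D_i$ is nonzero).

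The paper's route is different and avoids perturbation. After producing the $(n,\Ii_0)$-decomposable $\Rr$-complement $\tilde B_i\ge B_i+t_i'D_i$, it uses that the coefficients of $\tilde B_i$ lie in one \emph{finite} set $F$ independent of $i$, and argues directly that after a subsequence $\tilde B_i\ge B_i+tD_i$ (where $t=\lim t_i$): otherwise one extracts $(f_i,b_i,d_i)\in F\times\Ii\times\Ii'_{>0}$ with $t_i'\le (f_i-b_i)/d_i<t$, and such ratios form an ACC set, contradicting the accumulation at $t$ from below. For the regularity constraint $\creg\ge c$ the paper does not perturb an existing complement; instead Proposition \ref{thm: dual complex r-n complementary} passes to a dlt modification so the desired lc places become coefficient-$1$ components of the boundary, and then takes the $(n,\Ii_0)$-decomposable complement of \emph{that} pair, so those components automatically remain lc places. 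Your proposed use of Lemma \ref{lem: dltmodelQfactorialcoeff1}(2) does not cover this: that lemma preserves lc places of a fixed $\Rr$-Cartier divisor under motion in its rational envelope, whereas here the complement itself must change as $t$ moves.
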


We will show that Conjecture \ref{conj: ACC for CRT} holds when $X$ is of Fano type over $Z$. 

\begin{thm}\label{thm: crt acc}
Let $d>0$ and $-1\leq c\leq d-1$ be two integers, and $\Ii\subset [0,1]$ and $\Ii'\subset [0,+\infty)$ two DCC sets. Then $\CRT_{c,\FT}(\Ii,\Ii',d)$ satisfies the ACC. In particular, $\RCT_{\FT}(\Ii,\Ii',d)$ satisfies the ACC.
\end{thm}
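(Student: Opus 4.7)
My plan is to argue by contradiction, combining the complement-theoretic machinery from Section~5 with the MMP and global ACC argument pattern of the proof of Theorem \ref{thm: dcc limit divisor}. Suppose $t_i := \crt_c(X_i/Z_i \ni z_i, B_i; D_i)$ forms a strictly increasing sequence in $\CRT_{c,\FT}(\Ii, \Ii', d)$.

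First I would observe that $t_i$ is bounded: if $\Ii' \not\subseteq \{0\}$, then $\delta := \min(\Ii' \setminus \{0\})$ exists and is positive (by DCC of $\Ii'$), and the lc condition on $(X_i, B_i + t_i D_i)$ applied at a component of $D_i$ with coefficient $\delta$ forces $t_i \leq 1/\delta$. Furthermore, the auxiliary set $\Ii^{*} := \{b + t_i d : b \in \Ii \cup \{0\}, d \in \Ii' \cup \{0\}, i \geq 1\}$ is DCC: the map $(b, t, d) \mapsto b + t d$ is non-decreasing in each variable, and each of $\Ii \cup \{0\}$, $\Ii' \cup \{0\}$, and the strictly increasing set $\{t_i\}$ is DCC, so any strictly decreasing sequence in $\Ii^{*}$ would, after passing to simultaneously monotonic subsequences in each coordinate, be non-decreasing, a contradiction. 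Hence the pairs $(X_i/Z_i \ni z_i, B_i + t_i D_i)$ may be treated as a family with coefficients in the single DCC set $\Ii^{*}$.

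Next, by the definition of $\crt_c$, combined with the closedness of the $\Rr$-complementary condition for Fano type varieties from Theorem \ref{thm: pepolytope linear maps} (and a limiting argument approximating $t_i$ from below if the supremum is not attained), for each $i$ there is an $\Rr$-complement $(X_i/Z_i \ni z_i, B_i + t_i D_i + G_i)$ of regularity at least $c$. A dlt modification then produces a simple normal crossing divisor $E_i$ of lc places over $\bar{z_i}$ with $\dim \mathcal{D}(E_i) \geq c$. I would then apply a regularity-preserving refinement of Theorem \ref{thm: existence ni1i2 complement}, together with Lemma \ref{lem:extendcomplement1} used to extend $n$-complements across the prescribed lc places, to obtain a uniform positive integer $n$ and finite set $\Ii_0 \subset (0, 1]$ depending only on $d, \Ii, \Ii'$, and an $(n, \Ii_0)$-decomposable $\Rr$-complement whose dual complex still has dimension at least $c$.

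Following the MMP and Mori fiber space reduction used in the proof of Theorem \ref{thm: dcc limit divisor}, restricting the resulting complement to a general fiber $F_i$ produces a numerically trivial lc pair $(F_i, C_i)$ whose coefficients are $\Qq$-affine functions of $(b, t_i, d)$ with $b \in \bar\Ii$ and $d \in \bar\Ii'$, and contain a nontrivial $t_i$-term coming from the restriction of $D_i$. The global ACC of Hacon--M\textsuperscript{c}Kernan--Xu \cite{HMX14} then forces the coefficients of $C_i$ into a finite set; by solving the $\Qq$-affine relations, $t_i$ is likewise confined to a finite set, contradicting strict increase. The main obstacle will be the regularity-preserving refinement of Theorem \ref{thm: existence ni1i2 complement}: for $c = -1$ (yielding the $\RCT_{\FT}$ statement) no refinement is needed and the argument simplifies considerably, while for $c \geq 0$ one proceeds inductively on $c$, lifting complements from a suitable lc place $E$ (where $\creg(E,B_E) \geq c-1$) via adjunction and Lemma \ref{lem:extendcomplement1}.
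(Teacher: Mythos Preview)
Your proposal has a genuine gap in the endgame, and the overall strategy is more convoluted than necessary.

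The main problem is the final step. Once you have an $(n,\Ii_0)$-decomposable $\Rr$-complement $\tilde B_i$ (with regularity $\geq c$) of $(X_i/Z_i\ni z_i, B_i + t_i D_i)$, the pair $(X_i, \tilde B_i)$ is already $\Rr$-trivial over $Z_i$. There is no $-(K_X+\Delta)$-MMP to run in the style of Theorem~\ref{thm: dcc limit divisor}: that argument starts from a pair whose anticanonical class is \emph{not} known to be pseudo-effective, and reaches a Mori fibre space to force a numerically trivial restriction. Here you would simply be restricting $\tilde B_i$ (or the original $B_i + t_i D_i + G_i$) to a general fibre of $X_i\to Z_i$. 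But the coefficients of $\tilde B_i$ already lie in a fixed \emph{finite} set independent of $t_i$, so the global ACC gives no information about $t_i$; and if instead you restrict the original complement $B_i + t_i D_i + G_i$, the coefficients of $G_i$ are completely uncontrolled (not in any DCC set), so the global ACC does not apply either. Your claim that the restricted pair has coefficients which are ``$\Qq$-affine functions of $(b,t_i,d)$ with a nontrivial $t_i$-term'' is not justified.

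The paper's argument is both simpler and avoids these issues. One works at a value $t_i'$ with $t_i' < t_i$ and $t_i'\to t:=\lim t_i$ (so there is no need for your limiting argument at the supremum itself). The regularity-preserving complement is obtained not by induction on $c$ via Lemma~\ref{lem:extendcomplement1}, but by the much more direct Proposition~\ref{thm: dual complex r-n complementary}: take a dlt modification of the given $\Rr$-complement, so that all the relevant lc places become coefficient-$1$ components of the boundary, and then apply Theorem~\ref{thm: existence ni1i2 complement} to this new boundary (with coefficients in $\Ii\cup\{1\}$); monotonicity automatically preserves the coefficient-$1$ components, hence all the lc places. Finally, the contradiction is immediate: $\tilde B_i \geq B_i + t_i' D_i$ with $\tilde B_i$ having coefficients in a fixed finite set, so passing to a subsequence one gets $\tilde B_i \geq B_i + t D_i$, whence $\crt_c(X_i/Z_i\ni z_i, B_i; D_i) \geq t > t_i$, contradicting the definition of $t_i$. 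No MMP, no Mori fibre space, and no direct appeal to the global ACC are needed at this stage (they are of course buried inside Theorem~\ref{thm: existence ni1i2 complement}).
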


We need the following proposition, which follows from Theorem \ref{thm: dcc existence n complement} and Theorem \ref{thm: existence ni1i2 complement}.


\begin{prop}\label{thm: dual complex r-n complementary}
Let $d,p$ be two positive integers and $\Ii\subset [0,1]$ a DCC set of real numbers. Then there exist a positive integer $n$ such that $p\mid n$ and a finite set $\Ii_0\subset (0,1]$ depending only on $d,p$ and $\Ii$ satisfying the following. 

Assume that $(X,B)$ is a pair of dimension $d$, $G\geq 0$ is an $\mathbb R$-Cartier $\mathbb R$-divisor on $X$, $X\rightarrow Z$ is a contraction, and $z\in Z$ is a point, such that
\begin{enumerate}
 \item $X$ is of Fano type over $Z$,
\item $B\in\Ii$, and
\item $(X/Z\ni z,B+G)$ is an $\mathbb R$-complement of $(X/Z\ni z,B)$.
\end{enumerate}
Then there exists an $(n,\Ii_0)$-decomposable $\Rr$-complement $(X/Z\ni z,\tilde B)$ of $(X/Z\ni z,B)$ and an $n$-complement $(X/Z\ni z,B^+)$ of $(X/Z\ni z,B)$, such that 
$$\LCP(X/Z\ni z,B+G)\subset\LCP(X/Z\ni z,\tilde B)\cap \LCP(X/Z\ni z,B^+).$$
\end{prop}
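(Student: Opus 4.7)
My plan is to lift the problem to a $\mathbb Q$-factorial dlt modification of $(X,B+G)$, apply Theorem \ref{thm: existence ni1i2 complement} and Theorem \ref{thm: dcc existence n complement} there, and push the resulting complements back down. After shrinking $Z$ to a neighborhood of $z$, let $f:Y\to X$ be a $\mathbb Q$-factorial dlt modification of $(X,B+G)$ with $K_Y+\Theta=f^{*}(K_X+B+G)$, and let $E$ denote the sum of those components of $\lfloor\Theta\rfloor$ whose image on $Z$ contains $z$. The components of $E$ are exactly the elements of $\LCP(X/Z\ni z,B+G)$ realized as prime divisors on $Y$. Because $(X/Z,B+G)$ is $\mathbb R$-complementary (with $B+G$ itself serving as an $\mathbb R$-complement) and every $f$-exceptional prime divisor $F$ is an lc place of $(X,B+G)$, so $a(F,X,B+G)=0<1$, Lemma \ref{lem:Fanotypedlt} applied to $(X/Z,B+G)$ shows that $Y$ is of Fano type over $Z$.

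Next, I define a boundary $B_Y$ on $Y$ by setting the coefficient on each prime divisor $D\subset Y$ to be $1$ if $D$ is a component of $E$, equal to $\mult_{D}(f^{-1}_{*}B)$ if $D\subset f^{-1}_{*}B$ is not a component of $E$, and $0$ otherwise. Then the coefficients of $B_Y$ lie in the DCC set $\Ii\cup\{1\}$ and $E\le B_Y$; moreover $B_Y\le\Theta$ holds componentwise, since on components of $E$ both sides equal $1$, while on a component $D\subset f^{-1}_{*}B\setminus E$ the coefficient of $D$ in $\Theta$ equals its coefficient in $B+G$, which dominates $\mult_{D}B$ because $G\ge 0$. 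Hence $(Y/Z\ni z,B_Y)$ is $\mathbb R$-complementary with $\Theta$ as an $\mathbb R$-complement. Theorem \ref{thm: existence ni1i2 complement} now produces a positive integer $n_1$ and a finite set $\Ii_0\subset(0,1]$ depending only on $d$ and $\Ii$, together with an $(n_1,\Ii_0)$-decomposable $\mathbb R$-complement $(Y/Z\ni z,\tilde B_Y)$; Theorem \ref{thm: dcc existence n complement} with divisibility $p$ produces a positive integer $n_2$ divisible by $p$ and an $n_2$-complement $(Y/Z\ni z,B_Y^{+})$. Replacing both by a common multiple, I may take a single $n$ for both, with $p\mid n$.

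Set $\tilde B:=f_{*}\tilde B_Y$ and $B^{+}:=f_{*}B_Y^{+}$. Since $K_Y+\tilde B_Y\sim_{\mathbb R}0$ and $n(K_Y+B_Y^{+})\sim 0$ over $Z$ near $z$, comparing with the pullback $f^{*}(K_X+f_{*}\tilde B_Y)$ and invoking the negativity lemma on the resulting $f$-exceptional correction (as in Lemma \ref{lem:dltmodel linearmap DCC coeff}) forces $K_Y+\tilde B_Y=f^{*}(K_X+\tilde B)$ and $K_Y+B_Y^{+}=f^{*}(K_X+B^{+})$. Consequently $(X,\tilde B)$ and $(X,B^{+})$ are lc, the linear equivalences descend, and the decomposition of $\tilde B_Y$ pushes forward term by term to an $(n,\Ii_0)$-decomposition of $\tilde B$. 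A direct case check on the prime components of $\Supp B$, split by whether each lies in $\lfloor B+G\rfloor$, yields $\tilde B\ge B$ and the $n$-complement inequality $B^{+}\ge\lfloor B\rfloor+\tfrac{1}{n}\lfloor(n+1)\{B\}\rfloor$. Finally, $E\subseteq\lfloor B_Y\rfloor\subseteq\lfloor\tilde B_Y\rfloor\cap\lfloor B_Y^{+}\rfloor$, so every element of $\LCP(X/Z\ni z,B+G)$ is an lc place of both $(X,\tilde B)$ and $(X,B^{+})$. The main subtlety I anticipate is the construction of $B_Y$ in the middle step, which must simultaneously dominate $E$, be dominated by $\Theta$, and carry DCC coefficients; the ensuing push-forward bookkeeping for the $n$-complement inequality on $X$ is then a nontrivial but mechanical check.
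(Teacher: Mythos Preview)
Your argument is correct and follows the paper's approach almost verbatim: pass to a dlt modification of $(X,B+G)$, use Lemma~\ref{lem:Fanotypedlt} to get Fano type, build an auxiliary boundary in $\Ii\cup\{1\}$ that is $\ge E$ and $\le\Theta$ (the paper's $B_Y'=B_Y+S_Y-B_Y\wedge S_Y$ is your $B_Y$ with $S_Y=\lfloor\Theta\rfloor$ in place of your smaller $E$), apply Theorems~\ref{thm: existence ni1i2 complement} and~\ref{thm: dcc existence n complement}, and push forward. One small point: ``replacing both by a common multiple'' is imprecise since an $n_2$-complement need not be an $n$-complement for $n=kn_2$; the clean fix (which the paper also uses implicitly) is to first get $n_1,\Ii_0$ from Theorem~\ref{thm: existence ni1i2 complement} and then apply Theorem~\ref{thm: dcc existence n complement} with divisibility $pn_1$, and your final sentence should note that $E\le\tilde B_Y$ together with lc-ness forces \emph{every} lc place of $(Y,\Theta)$ over $z$ (not just components of $E$) to be an lc place of $(Y,\tilde B_Y)$, via monotonicity of log discrepancies.
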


\begin{proof}
 Let $f: Y\rightarrow X$ be a dlt modification of $(X,B+G)$ over a neighborhood of $z$, such that
$$K_Y+B_Y+E+G_Y=f^*(K_X+B+G),$$
where $B_Y$ and $G_Y$ are the strict transforms of $B$ and $G$ on $Y$ respectively, and $E$ is the reduced exceptional divisor of $f$. Let $S_Y:=\lfloor B_Y+E+G_Y\rfloor$. We have
$$\LCP(Y/Z\ni z,B_Y+E+G_Y)=\LCP(X/Z\ni z,B+G)=\LCP(Y/Z\ni z,S_Y).$$
Let $B_Y':=B_Y+S_Y-B_Y\wedge S_Y.$ It follows that $B_Y'\in\Ii\cup\{1\}$, $B_Y+E+G_Y\ge B_Y'\ge B_Y+E$, and $(Y/Z\ni z,B_Y')$ is $\Rr$-complementary. 

By Theorem \ref{thm: existence ni1i2 complement} and Theorem \ref{thm: dcc existence n complement}, there exist a positive integer $n$ divisible by $p$ and a finite set $\Ii_0\subset (0,1]$ depending only on $d,p$ and $\Ii$, and two $\mathbb R$-divisors $\tilde B_Y\geq 0$ and $B_Y^+\geq 0$ on $Y$, such that
\begin{itemize}
\item $(Y/Z\ni z,\tilde B_Y)$ is an $(n,\Ii_0)$-decomposable $\Rr$-complement of $(Y/Z\ni z, B_Y')$, and
\item $(Y/Z\ni z, B_Y^+)$ is an $n$-complement of $(Y/Z\ni z, B_Y')$.
\end{itemize}
By construction, $\tilde B_Y\geq S_Y$, $B_Y^+\geq S_Y$.
Thus 
$$\LCP(Y/Z\ni z,S_Y)\subset\LCP(Y/Z\ni z,\tilde B_Y)\cap\LCP(Y/Z\ni z, B_Y^{+}),$$
and we get the desired $\tilde B$ and $B^+$ by letting $\tilde B:=f_*\tilde B_Y$ and $B^+:=f_*B_Y^+$.
\end{proof}

\begin{proof}[Proof of Theorem \ref{thm: crt acc}]
Suppose that the theorem does not hold. Then there exist a sequence of pairs $(X_i,B_i)$ of dimension $d$, contractions $X_i\rightarrow Z_i$, points $z_i\in Z_i$, $\Rr$-Cartier $\Rr$-divisors $D_i\geq 0$ on $X_i$, and a strictly increasing sequence of positive real numbers $t_i$, such that $X_i$ is of Fano type over $Z_i$, $B_i\in\Ii$, $D_i\in\Ii'$, $t_i=\crt_c(X_i/Z_i\ni z_i,B_i;D_i)$, and $t:=\lim_{i\rightarrow+\infty}t_i<+\infty$. Let $t_i'$ be strictly increasing positive real numbers, such that $0\leq t_i'<t_i$ for any $i$, and $\lim_{i\rightarrow+\infty}t_i'=t$.

Then $\creg(X_i/Z_i\ni z_i,B_i+t_i'D_i)\ge c$ for every $i$. In particular, there exists an $\Rr$-Cartier $\Rr$-divisor $G_i\geq 0$ on $X_i$, such that $\reg(X_i/Z_i\ni z_i,B_i+t_i'D_i+G_i)\ge c$ for every $i$. Since the coefficients of $B_i+t_i'D_i$ belong to a DCC set, by Proposition \ref{thm: dual complex r-n complementary}, there exist a positive integer $n$ and a finite set $\Ii_0\subset (0,1]$ depending only on $d$, $\Ii$ and $\Ii'$, and an $(n,\Ii_0)$-decomposable $\Rr$-complement $(X_i/Z_i\ni z_i,\tilde B_i)$ of $(X_i/Z_i\ni z_i,B_i+t_i'D_i)$, such that
 $$\LCP(X_i/Z_i\ni z_i,\tilde B_i)\supset\LCP(X_i/Z_i\ni z_i,B_i+t_i'D_i+G_i).$$
In particular, $\creg(X_i/Z_i\ni z_i,\tilde B_i)\geq\reg(X_i/Z_i\ni z_i,B_i+t_i'D_i+G_i)\ge c$.

By construction, the coefficients of $\tilde B_i$ belong to a finite set, and $\tilde B_i\geq B_i+t_i'D_i+G_i\geq B_i+t_i'D_i$ for every $i$. Thus possibly passing to a subsequence, we may assume that  $\tilde B_i\geq B_i+tD_i$ for every $i$. In particular, $$\creg(X_i/Z_i\ni z_i,B_i+tD_i)\geq\creg(X_i/Z_i\ni z_i,\tilde B_i)\ge c.$$
Thus $\crt_c(X_i/Z_i\ni z_i,B_i;D_i)\geq t>t_i$, a contradiction.
\end{proof}

\begin{rem}
The proof of Theorem \ref{thm: crt acc} also implies that when $X$ is of Fano type over $Z$, $$\crt_c(X/Z\ni z,B;D)=\max\{-\infty, t\mid t\geq 0, \creg(X/Z\ni z,B+tD)\geq c\}.$$
\end{rem}

\begin{cor}\label{cor: r-n complement same regularity}
Let $d>0$ be a positive integer and $\Ii\subset [0,1]$ a DCC set of real numbers. Then there exists a finite set $\Ii_0\subset\bar\Ii$ depending only on $d,c$ and $\Ii$ satisfying the following. 

Assume that $(X,B)$ is a pair of dimension $d$,  $X\rightarrow Z$ is a contraction, and $z\in Z$ is a point, such that
\begin{enumerate}
   \item $X$ is of Fano type over $Z$, 
\item $B\in\Ii$, and
\item $(X/Z\ni z,B)$ is $\Rr$-complementary.
\end{enumerate}
Then there exists an $(n,\Ii_0)$-decomposable $\Rr$-complement $(X/Z\ni z,\tilde B)$ of $(X/Z\ni z,B)$, such that
$$\reg(X/Z\ni z,\tilde B)=\creg(X/Z\ni z,B).$$
\end{cor}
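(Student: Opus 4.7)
The plan is to deduce the corollary directly from Proposition \ref{thm: dual complex r-n complementary} (with $p=1$) together with the definition of $\creg$. First, by the definition of complete regularity, there exists an $\Rr$-complement $(X/Z\ni z, B+G)$ of $(X/Z\ni z, B)$ with
\[
\reg(X/Z\ni z, B+G) \;=\; \creg(X/Z\ni z, B).
\]
Since $X$ is of Fano type over $Z$ and $B\in\Ii$, the hypotheses of Proposition \ref{thm: dual complex r-n complementary} are satisfied. Applying it, I obtain a positive integer $n$ and a finite set $\Ii_0\subset(0,1]$ depending only on $d$ and $\Ii$, together with an $(n,\Ii_0)$-decomposable $\Rr$-complement $(X/Z\ni z, \tilde B)$ of $(X/Z\ni z, B)$ such that
\[
\LCP(X/Z\ni z, B+G) \;\subset\; \LCP(X/Z\ni z, \tilde B).
\]

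Next I will compare regularities. Taking a common dlt modification $f: Y\to X$ which resolves both $(X, B+G)$ and $(X, \tilde B)$, the inclusion of lc places above means every divisor contributing to the dual complex of $(X/Z\ni z, B+G)$ also contributes to the dual complex of $(X/Z\ni z, \tilde B)$, and every stratum persists. Hence the dual complex of $(X/Z\ni z, B+G)$ embeds as a subcomplex of the dual complex of $(X/Z\ni z, \tilde B)$, yielding
\[
\reg(X/Z\ni z, \tilde B) \;\geq\; \reg(X/Z\ni z, B+G) \;=\; \creg(X/Z\ni z, B).
\]
On the other hand, $(X/Z\ni z,\tilde B)$ is itself an $\Rr$-complement of $(X/Z\ni z, B)$, so by the definition of $\creg$ we have the reverse inequality
\[
\reg(X/Z\ni z, \tilde B) \;\leq\; \creg(X/Z\ni z, B),
\]
and equality follows.

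The only genuinely delicate point, which I would verify carefully in the write-up, is that the containment $\LCP(X/Z\ni z, B+G)\subset\LCP(X/Z\ni z,\tilde B)$ really translates to an inequality between the dimensions of the dual complexes (and not merely between the numbers of lc places). This is handled by passing to a dlt modification that resolves both pairs simultaneously and noting that every stratum of the first dual complex arises from an intersection of lc places that remain lc places of the larger pair; equivalently, if $F_1,\dots,F_k$ are lc places of $(X,B+G)$ over $\bar z$ with nonempty simultaneous center, then the same holds for them as lc places of $(X,\tilde B)$, since lc-ness is preserved under increasing the boundary only up to the same lc threshold along each $F_i$. This is routine given the construction of $\tilde B$ in the proof of Proposition \ref{thm: dual complex r-n complementary}, which comes from lifting from a dlt modification; no additional ingredients beyond what has already been established are needed.
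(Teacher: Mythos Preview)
Your proposal is correct and follows essentially the same approach as the paper: choose an $\Rr$-complement realizing $\creg$, apply Proposition \ref{thm: dual complex r-n complementary} to obtain an $(n,\Ii_0)$-decomposable $\Rr$-complement $\tilde B$ whose set of lc places contains those of $B+G$, and sandwich $\reg(X/Z\ni z,\tilde B)$ between $\creg(X/Z\ni z,B)$ from both sides. The paper simply asserts ``In particular, $\reg(X/Z\ni z,\tilde B)\ge c$'' from the LCP containment, whereas you spell out the dual-complex justification; this is extra care but not a different argument.
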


\begin{proof}
Let $c:=\creg(X/Z\ni z,B)$ and $(X/Z\ni z, B+G)$ an $\Rr$-complement of $(X/Z\ni z, B)$ such that $\reg(X/Z\ni z, B+G)=c$. By Proposition \ref{thm: dual complex r-n complementary}, there exist a positive integer $n$ and a finite set $\Ii_0\subset (0,1]$ depending only on $d$ and $\Ii$, and an $(n,\Ii_0)$-decomposable $\Rr$-complement $(X/Z\ni z,\tilde B)$ of $(X/Z\ni z, B)$, such that $\LCP(X/Z\ni z,\tilde B)\supset\LCP(X/Z\ni z,B+G)$. In particular, $\reg(X/Z\ni z,\tilde B)\ge c$. Since $(X/Z\ni z,\tilde B)$ is an $\Rr$-complement of $(X/Z\ni z, B)$, $\reg(X/Z\ni z,\tilde B)\le c.$ Hence $\reg(X/Z\ni z,\tilde B)=c$.
\end{proof}

\subsection{$n$-complements for non-Fano type varieties}

\begin{conj}[{Existence of $n$-complements, \cite[Conjecture 1.1]{CH20}}]\label{conj nonft lc complement}
	Let $d,p$ be two positive integers, and $\Ii \subseteq [0,1]$ a DCC set. Then there exists a positive integer $n$ divisible by $p$ depending only on $d, p$ and $\Gamma$ satisfying the following.
	
	Assume that $(X,B)$ is a pair of dimension $d$, $X\to Z$ a contraction, and $z\in Z$ a point, such that
	\begin{enumerate}     
		\item $B\in \Ii$, and    
		\item $(X/Z\ni z,B)$ is $\Rr$-complementary.     
	\end{enumerate} 
	Then there exists an $n$-complement $(X/Z\ni z,B^{+})$ of $(X/Z\ni z,B).$ Moreover, if $\Span_{\Qq_{\ge0}}(\bar{\Ii}\backslash\Qq)\cap (\Qq\backslash\{0\})=\emptyset$, then we may pick $B^+\ge B$.
\end{conj}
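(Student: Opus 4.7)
The strategy is to reduce the statement, in stages, to the case of a finite set of rational coefficients, where one can feed into \cite[Theorem 1.7, Theorem 1.8]{Bir19} or the already-established Fano type case. After taking a dlt modification and shrinking $Z$ around $z$, we may assume $(X,B)$ is $\Qq$-factorial dlt with $B=\sum b_iB_i$, $b_i\in\Ii$.

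\textbf{Step 1: DCC to finite coefficients.} I would first apply Theorem \ref{thm: dcc limit divisor} to produce a projection $g:\bar\Ii\to\Ii'\subset\bar\Ii$ with $\Ii'$ finite, such that $B':=\sum g(b_i)B_i\geq B$, $(X,B')$ is lc, and $-(K_X+B')$ is pseudo-effective over $Z$. In the non-Fano type case, passing from pseudo-effectivity to $\Rr$-complementarity of $(X/Z\ni z,B')$ requires conclusion (7) of that theorem, which is conditional on Conjecture \ref{conj: exist gmm} (the existence of good minimal models in dimension $d$); so I would either assume this, or absorb it into an inductive setup that handles non-Fano type varieties with extra structure.

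\textbf{Step 2: Finite real to finite rational.} Writing $\Ii'=\{v_1,\dots,v_s\}$, $\bm{v}=(v_1,\dots,v_s)$, and $V$ the rational envelope of $\bm{v}$, I would invoke Theorem \ref{thm: Uniform perturbation of R complements} (again conditional on good minimal models when $X$ is not of Fano type over $Z$) to obtain a neighborhood $U\ni\bm{v}$ in $V$ such that $(X/Z\ni z,\sum v_i^+B_i)$ is $\Rr$-complementary for every $\bm{v}^+\in U$. Choosing finitely many rational points of $U$ whose convex hull contains $\bm{v}$, we write $K_X+B'=\sum_{i=1}^k a_i(K_X+B_i')$ with $a_i>0$, $\sum a_i=1$, and each $B_i'$ lying in a finite set $\Ii_0$ of rationals, where each $(X/Z\ni z,B_i')$ is $\Rr$-complementary. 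The finite rational case of the conjecture (the input hypothesis for the non-Fano type setting, cf.\ Theorem \ref{thm: existence n complement nft q to r}) then provides an $n$-complement $(X/Z\ni z, B_i^+)$ of each $(X/Z\ni z,B_i')$ for a uniform $n$ divisible by $p$, and $B^+:=\sum a_iB_i^+$ yields the desired $n$-complement of $(X/Z\ni z, B)$ after verifying the fractional-part bound as in the final computation in the proof of Theorem \ref{thm: dcc existence n complement with part two}.

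\textbf{Step 3: Monotonic version and the main obstacle.} Under the hypothesis $\Span_{\Qq_{\geq 0}}(\bar\Ii\setminus\Qq)\cap(\Qq\setminus\{0\})=\emptyset$, Lemma \ref{lem:finitesetmonotonic} gives a basis $\{r_0,\dots,r_c\}$ over which $\Ii'\subset\Span_{\Qq_{\geq 0}}(\{r_0,\dots,r_c\})$. Combining Theorem \ref{thm: dcc existence n complement with part two} with a Diophantine approximation in a suitably chosen anisotropic direction $\bm{e}\in V$ produces $\bm{v}^+\in V\cap\tfrac{1}{n}\Zz^s$ with $v_i^+\geq v_i$ for all $i$, making the resulting $n$-complement monotonic. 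The main obstacle is Step 1 (and the parallel step inside Step 2) outside the Fano type setting: the limiting arguments via $-(K_X+B')$-MMPs and the reduction from $\Rr$-complementarity to finite-coefficient $\Rr$-complementarity fundamentally rely on running and terminating MMPs or constructing Mori fiber spaces, and without Conjecture \ref{conj: exist gmm} one cannot transfer an $\Rr$-complement of $(X/Z,B)$ to one of the finite-coefficient perturbation $(X/Z,B')$. This is precisely why the paper establishes only a conditional reduction (Theorem \ref{thm: existence n complement nft q to r}) for the non-Fano type case, and why the full conjecture appears to be genuinely out of reach with current tools.
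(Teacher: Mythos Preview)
The statement you were given is a \emph{conjecture}, not a theorem, and the paper does not prove it unconditionally. What the paper does prove is Theorem \ref{thm: existence n complement nft q to r}: assuming Conjecture \ref{conj: exist gmm} (good minimal models) in dimension $d$ and the finite rational coefficient case of Conjecture \ref{conj nonft lc complement}, the full conjecture follows; the paper's proof of this reduction is the single line ``follows from the same argument as in the proof of Theorem \ref{thm: dcc existence n complement}.'' Your proposal correctly identifies both the strategy and the obstruction: Steps 1--3 are precisely the argument of Theorem \ref{thm: dcc existence n complement} (via Theorems \ref{thm: dcc limit divisor}, \ref{thm: Uniform perturbation of R complements}, and \ref{thm: dcc existence n complement with part two}), and you correctly pinpoint that the passage from pseudo-effectivity to $\Rr$-complementarity in the non-Fano type setting is exactly where Conjecture \ref{conj: exist gmm} is needed. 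So your assessment is accurate and matches the paper's treatment: there is no unconditional proof here, only the conditional reduction you describe.
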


\begin{thm}\label{thm: existence n complement nft q to r}
Let $d$ be a positive integer. Assume that Conjecture \ref{conj nonft lc complement} holds in  dimension $d$ for the case when $\Ii\subset \Qq$ is finite, and Conjecture \ref{conj: exist gmm} holds in dimension $d$. Then Conjecture \ref{conj nonft lc complement} holds in dimension $d$. 
\end{thm}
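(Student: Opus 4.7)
The strategy is to mimic the proof of Theorem \ref{thm: dcc existence n complement}, which for the Fano type case was reduced to the finite rational coefficient case via Theorem \ref{thm: existence ni1i2 complement} combined with the Diophantine approximation argument in Theorem \ref{thm: dcc existence n complement with part two}. Our plan is to first establish a non-Fano type analogue of Theorem \ref{thm: existence ni1i2 complement} under the assumed conjectures, and then feed this into the same Diophantine approximation machinery.

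First, I would reduce from the DCC coefficient set $\Ii$ to a finite subset $\Ii'\subset \bar{\Ii}$. This is precisely the content of Theorem \ref{thm: dcc limit divisor}(7): assuming Conjecture \ref{conj: exist gmm} in dimension $d$, given $(X,B=\sum b_i B_i)$ with $(X/Z\ni z, B)$ being $\Rr$-complementary, there exist a finite set $\Ii'\subset\bar\Ii$ and a projection $g\colon \bar\Ii\to\Ii'$ with $g(b_i)\geq b_i$ such that $(X/Z\ni z, \sum g(b_i)B_i)$ is $\Rr$-complementary. Hence we may replace $B$ by $B':=\sum g(b_i)B_i$ and assume $\Ii=\{v_1,\dots,v_s\}$ is finite (but possibly irrational). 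This step does not require Fano type.

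Next, I would reduce from finite real coefficients to finite rational coefficients. Here Theorem \ref{thm: Uniform perturbation of R complements}, again using the assumption of Conjecture \ref{conj: exist gmm} in dimension $d$, yields a finite set $\Ii_0=\{a_1,\dots,a_k\}\subset(0,1]$ with $\sum a_i=1$, $\Qq$-divisors $B_1',\dots,B_k'$ with coefficients in a finite rational set $\Ii_1\subset[0,1]\cap\Qq$, and a decomposition
\[
K_X+B'=\sum_{i=1}^k a_i(K_X+B_i')
\]
such that each $(X/Z\ni z,B_i')$ is $\Rr$-complementary. Now invoke the hypothesis that Conjecture \ref{conj nonft lc complement} holds in dimension $d$ for finite rational coefficients: applying it to $(X/Z\ni z,B_i')$ with coefficient set $\Ii_1$ produces a single positive integer $n_1$ (taking the lcm of the finitely many $n$'s produced, made divisible by $p$) and an $n_1$-complement $(X/Z\ni z,B_i^+)$ of $(X/Z\ni z,B_i')$ for each $i$. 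This yields an $(n_1,\Ii_0)$-decomposable $\Rr$-complement of $(X/Z\ni z,B)$, establishing the non-Fano type analogue of Theorem \ref{thm: existence ni1i2 complement}.

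Finally, I would plug this decomposition into the Diophantine approximation argument from the proof of Theorem \ref{thm: dcc existence n complement with part two}. The argument there depended on the Fano type setting only through the appeal to Theorem \ref{thm: existence ni1i2 complement}; once that input is replaced by the analogue above, the purely numerical approximation step goes through verbatim. Concretely, we use Lemma \ref{lem: vectordiufantu} to simultaneously approximate the vector $(v_1,\dots,v_s)$ of irrational coefficients together with the decomposition weights $(a_1,\dots,a_k)$ by rational vectors $\bm{v}',\bm{a}'$ with common denominator $n_0$ such that $p\mid n_0$, and verify that $n:=n_0n_1$ and $B^+:=\sum a_i' B_i^+$ form the desired $n$-complement with $n\mid p$; the monotonic conclusion under $\Span_{\Qq_{\geq 0}}(\bar\Ii\backslash\Qq)\cap\Qq\backslash\{0\}=\emptyset$ follows by the same argument as in the proof of Theorem \ref{thm: dcc existence n complement}, combining Lemma \ref{lem:finitesetmonotonic}, Lemma \ref{lem: convexset n vectors}, and the anisotropic clause of Theorem \ref{thm: dcc existence n complement with part two}. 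The main conceptual obstacle is already resolved by the hypotheses: Conjecture \ref{conj: exist gmm} replaces the Fano type assumption in the two reduction steps, while the finite rational case of Conjecture \ref{conj nonft lc complement} replaces Theorem \ref{thm:Bir19thm1.8}. No new geometric input is required.
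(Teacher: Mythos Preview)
Your proposal is correct and follows exactly the route the paper intends: the paper's proof of Theorem \ref{thm: existence n complement nft q to r} is the single line ``follows from the same argument as in the proof of Theorem \ref{thm: dcc existence n complement},'' and you have accurately unpacked that argument, replacing the Fano type hypothesis by Conjecture \ref{conj: exist gmm} in the two reduction steps (Theorem \ref{thm: dcc limit divisor}(7) and Theorem \ref{thm: Uniform perturbation of R complements}) and replacing Theorem \ref{thm:Bir19thm1.8} by the assumed finite-rational case of Conjecture \ref{conj nonft lc complement}. One minor remark: there is no need to take an lcm of finitely many $n$'s, since all the $B_i'$ have coefficients in the single finite rational set $\Ii_1$ and the conjecture yields one $n_1$ for that set; also note the typo ``$n\mid p$'' near the end should read ``$p\mid n$.''
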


\begin{proof}
The proof follows from the same argument as in the proof of Theorem \ref{thm: dcc existence n complement}. 
\end{proof}

\subsection{Accumulation points of log canonical thresholds}
As an application of Theorem \ref{thm: dcc existence n complement}, we have the following theorem on accumulation points of log canonical thresholds. 
\begin{thm}\label{thm: accmu of lct}
	Let $d$ be a positive integer, and $\Ii\subset [0,1]$ and $\Ii'\subset [0,+\infty)$ two DCC sets. Then the accumulation points of
	$\LCT(\Ii,\Ii',d)$ belong to the set
	\begin{align*}
	&\{t\ge0 \mid \Span_{\Qq_{\ge0}}\left(((\bar{\Ii}+t\bar{\Ii'})\backslash\Qq)\cap[0,1]\right)\cap \Qq\backslash\{0\}\neq\emptyset\}\\
	\cup &\{t\ge0\mid (\bar{\Ii}+t\bar{\Ii'}_{>0})\cap [0,1]\cap\Qq\neq \emptyset\}\cup\{0\},
	\end{align*}
	where $\bar{\Ii'}_{>0}:=\bar{\Ii'}\cap(0,+\infty)$. In particular, when $\bar{\Ii}\subset [0,1]\cap\Qq$ and $\bar{\Ii}'\subset\Qq$, the accumulation points of
	$\LCT(\Ii,\Ii',d)$ are rational numbers.
\end{thm}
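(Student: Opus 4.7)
\emph{Proof plan.} The strategy is to reduce the problem to the global ACC for pairs with numerically trivial log canonical class \cite[Theorem~1.4]{HMX14} via a Mori fiber space construction, in the same spirit as the proofs of Theorem~\ref{thm: dcc limit divisor} and Theorem~\ref{thm: dcc existence n complement}. By Theorem~\ref{thm: acc lct}, $\LCT(\Ii,\Ii',d)$ satisfies the ACC, so every accumulation point $t$ is the limit of a strictly increasing sequence $t_i\nearrow t$. Writing $t_i=\lct(X_i,B_i;D_i)$ with $B_i\in\Ii$, $D_i\in\Ii'$, and $(X_i,B_i)$ an lc pair of dimension $d$, each pair $(X_i,B_i+t_iD_i)$ is lc but not klt, hence admits an lc place $F_i$. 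We may assume $t>0$, since $t=0$ already lies in the target set.

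Next, for each $i$ I would produce a $\mathbb{Q}$-factorial dlt modification $f_i\colon W_i\to X_i$ extracting $F_i$ and write
\[
K_{W_i}+\tilde B_i+t_i\tilde D_i+E_i=f_i^{*}(K_{X_i}+B_i+t_iD_i),
\]
where $\tilde B_i,\tilde D_i$ are strict transforms and $E_i$ is a reduced divisor containing (a component corresponding to) $F_i$. After perturbing the boundary so that the coefficient $1$ at $F_i$ is preserved but the other coefficients are strictly decreased, I would run a suitable MMP that terminates with a Mori fiber space $W_i'\to Z_i'$ on which the strict transform of $F_i$ survives as a vertical boundary component. Restricting to a general fiber $G_i$ yields a projective lc pair $(G_i,\Delta_{G_i})$ with $K_{G_i}+\Delta_{G_i}\equiv0$, $\dim G_i\le d-1$, at least one coefficient of $\Delta_{G_i}$ equal to $1$, and every other coefficient of $\Delta_{G_i}$ lying in the DCC set $\bigl((\Ii\cup\{0\})+t_i(\Ii'\cup\{0\})\bigr)\cap[0,1]$.

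Finally, I would apply the global ACC \cite[Theorem~1.4]{HMX14} to the sequence $(G_i,\Delta_{G_i})$: after passing to a subsequence, each coefficient $c_{i,j}=\alpha_{i,j}+t_i\beta_{i,j}$ with $\alpha_{i,j}\in\Ii\cup\{0\}$ and $\beta_{i,j}\in\Ii'\cup\{0\}$ stabilizes in $i$. Passing to limits yields $\alpha_j\in\bar{\Ii}$ and $\beta_j\in\bar{\Ii'}$ with $c_j:=\alpha_j+t\beta_j\in[0,1]$, and at least one $\beta_j$ must be strictly positive, for otherwise $t_i$ would not influence the limit data and the strict monotonicity of $t_i$ would contradict the stabilization. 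Intersecting the numerical relation $K_{G_i}+\Delta_{G_i}\equiv 0$ with an appropriate curve class on $G_i$ produces a non-trivial $\Qq_{\ge 0}$-linear combination of the $c_j$'s equal to a positive rational. Analyzing this combination gives exactly the required dichotomy: either some individual $c_j=\alpha_j+t\beta_j$ already lies in $\Qq\cap[0,1]$ with $\beta_j>0$, which is case~2 of the target set, or every relevant $c_j$ is irrational and a non-trivial $\Qq_{\ge0}$-combination of them is a non-zero rational, which is case~1.

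The main obstacle will be the MMP setup in the second step: one has to ensure termination of the appropriately perturbed MMP and that a component of $E_i$ with coefficient $1$ (inherited from the lc place $F_i$) survives as a vertical boundary on the Mori fiber space, so that the restriction to a general fiber both is numerically trivial and retains a reduced boundary component with the required coefficient information. The final combinatorial deduction follows the ACC-based pattern used in the proof of Theorem~\ref{thm: dcc limit divisor} and the arithmetic analysis of rational envelopes in Lemma~\ref{lem:finitesetmonotonic}.
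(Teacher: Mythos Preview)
Your proposal has a basic error at the outset: since $\LCT(\Ii,\Ii',d)$ satisfies the ACC (Theorem~\ref{thm: acc lct}), there are \emph{no} infinite strictly increasing sequences in it. An accumulation point $t$ is the limit of a strictly \emph{decreasing} sequence $t_i\searrow t$, not $t_i\nearrow t$. This reversal undermines the whole plan: with $t_i$ decreasing and $\beta_{i,j}\in\Ii'$ DCC, the coefficients $\alpha_{i,j}+t_i\beta_{i,j}$ need not lie in a DCC set, so the global ACC input you rely on is not available as stated. Your final ``dichotomy'' step is also too vague: intersecting $K_{G_i}+\Delta_{G_i}\equiv 0$ with a curve gives a $\Qq$-linear relation among the $c_j$ with possibly negative contribution from $K$, not a $\Qq_{\ge 0}$-combination equal to a positive rational; extracting the precise arithmetic conditions of the theorem from such a relation needs a real argument, not a gesture toward Lemma~\ref{lem:finitesetmonotonic}.

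The paper's proof takes a different and much shorter route. After replacing $(X_i,B_i)$ by a dlt modification so that $X_i$ is $\Qq$-factorial klt, one localizes at a point $x_i$ realizing $t_i$. Assuming $t>0$ lies in neither of the two listed sets, the coefficient set $(\Ii+t\Ii')\cap[0,1]$ satisfies exactly the hypothesis $\Span_{\Qq_{\ge 0}}(\cdot\backslash\Qq)\cap(\Qq\backslash\{0\})=\emptyset$ that makes the ``moreover'' part of Theorem~\ref{thm: dcc existence n complement} applicable (with $X\to Z$ the identity, which is trivially Fano type). This produces, for a single $n$, \emph{monotonic} $n$-complements $(X_i\ni x_i,B_i^{+})$ of $(X_i\ni x_i,B_i+tG_i)$. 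The coefficients $b_{i,j}^{+}\in\frac{1}{n}\Zz\cap[0,1]$ stabilize to $b_j^{+}\ge b_j+tg_j$; the assumed failure of the second listed condition forces strict inequality whenever $g_j>0$. This slack, together with $b_{i,j}\nearrow b_j$ and $g_{i,j}\nearrow g_j$, yields $b_{i,j}^{+}\ge b_{i,j}+t_i' g_{i,j}$ for some $t_i'>t_i$ after passing to a subsequence, so $(X_i\ni x_i,B_i^{+})$ witnesses $\lct(X_i\ni x_i,B_i;G_i)\ge t_i'>t_i$, a contradiction. No Mori fiber space construction or direct appeal to the global ACC is used.
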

\begin{proof}
We may assume that $0\in\Ii\cap\Ii'$ and $1\in\Ii$. Let $t>0$ be an accumulation point of $\LCT(\Ii,\Ii',d)$. There exist a sequence of lc pairs $(X_i,B_i)$ of dimension $d$, $\Rr$-Cartier $\Rr$-divisors $G_i\ge0$, strictly decreasing real numbers $t_i\ge0$, such that $t_i=\lct(X_i,B_i;G_i)$ and $t:=\lim_{i\to+\infty} t_i$. 

Let $f_i:Y_i\to X_i$ be a dlt modification of $(X_i,B_i)$, $G_{Y_i}$ the strict transform of $G_i$ on $Y_i$, and $$K_{Y_i}+B_{Y_i}:=f^{*}_{i}(K_{X_i}+B_i).$$
Then $B_{Y_i}\in\Ii$, and $t_i=\lct(Y_i,B_{Y_i};G_{Y_i})$. Thus replacing $(X_i,B_i)$ with $(Y_i,B_{Y_i})$ and $G_i$ with $G_{Y_i}$, we may assume that $X_i$ is $\Qq$-factorial klt. There exists a closed point $x_i\in X_i$, such that $t_i=\lct(X_i\ni x_i,B_i;G_i)$ for any $i$.

Suppose that $\Span_{\Qq_{\ge0}}\left(((\bar{\Ii}+t\bar{\Ii'})\backslash\Qq)\cap[0,1]\right)\cap \Qq\backslash\{0\}=\emptyset$, and $(\bar{\Ii}+t\bar{\Ii'}_{>0})\cap [0,1]\cap\Qq=\emptyset$. Since the coefficients of $B_i+tG_i$ belong to $\Ii+t\Ii'$ for any $i$, by Theorem \ref{thm: dcc existence n complement}, there exist a positive integer $n$ depending only on $d,\Ii$ and $\Ii'$, and $\Rr$-divisors $B_i^{+}$ on $X_i$, such that $(X_i\ni x_i,B_i^{+})$ is a monotonic $n$-complement of $(X_i\ni x_i,B_i+tG_i)$ for any $i$. Possibly shrinking $X_i$ to a neighborhood of $x_i$, we may assume that there exists a positive integer $m$, such that $B_i^{+}=\sum_{j=1}^m b_{i,j}^{+}B_{i,j}$, $B_i=\sum_{j=1}^m b_{i,j}B_{i,j}$, and $G_i=\sum_{j=1}^m g_{i,j}B_{i,j}$, where $nb_{i,j}^{+}\in[0,1]$, and $B_{i,j}$ are distinct prime divisors on $X_i$. Since $t>0$, $g_{i,j}\le \frac{1}{t}$. Possibly passing to a subsequence, we may assume that $\{b_{i,j}\}_{i=1}^{\infty}$, $\{g_{i,j}\}_{i=1}^{\infty}$ are increasing sequences for every $1\leq j\leq m$, and there exist $b_j^{+}$, $b_i\in\bar{\Ii}$, and $g_i\in\bar{\Ii'}$ such that $b_j^{+}=b_{i,j}^{+}$ for any $i,j$, $b_j=\lim_{i\to+\infty}b_{i,j}$, $g_j=\lim_{i\to+\infty} g_{i,j}$, and $b_j^{+}\ge b_j+tg_j$ for any $j$. By the construction of $t$, if $g_j\neq 0$, then $b_j^{+}\neq b_j+tg_j$, and $t<\frac{b_{j}^{+}-b_j}{g_j}$. Thus possibly passing to a subsequence, we may assume that there exists $t_i'>t_i$, such that $b_j^{+}> b_j+t_i'g_j\ge b_{i,j}+t_i'g_{ij}$ for any $g_j\neq 0$ and any $i$. Hence $\lct(X_i,B_i;G_i)\ge t_i'>t_i$ near a neighborhood of $x_i$, a contradiction.
\end{proof}


\noindent\textbf{Acknowledgments.} This work began when the first author visited the second author at the University of Utah in November of 2017. The first author would like to thank their hospitality. We are grateful to Yuchen Liu who is so kind to send us his note (Appendix A). The first and the second author would like to thank Christopher Hacon and Chenyang Xu for their constant supports and encouragements. The authors would like to thank Caucher Birkar, Guodu Chen, Chen Jiang, Yuchen Liu, Zipei Nie, Yuri G. Prokhorov, Lu Qi, Ruixiang Zhang, Zili Zhang, Ziwen Zhu, and Ziquan Zhuang for useful discussions. The first author was partially supported by AMS Simons Travel Grant. The second author was partially supported by NSF research grants no: DMS-1300750, DMS-1265285 and by a grant from the Simons Foundation; Award Number: 256202.

\appendix

\section{Index of a plt divisor}
\begin{center}
	by Yuchen Liu\footnote[1]{Y.L. would like to thank Chenyang Xu for fruitful discussions. This material is based upon work supported by the National Science Foundation under Grant No. DMS-1440140 while Y.L. was in residence at the Mathematical Sciences Research Institute in Berkeley, California, during the Spring 2019 semester.}\\
	Department of Mathematics, Yale University,\\
	New Haven, CT 06511, USA,\\
	E-mail: yuchen.liu@yale.edu
\end{center}
\medskip

For any $\bQ$-Cartier $\bQ$-divisor $D$
on a normal variety $Y$, we denote the Cartier index of $D$ at 
a point $y\in Y$ by $\ind(y\in Y,D):=\min\{m\in\bZ_{>0}\mid mD\textrm{ is
	Cartier}\}$. 

\begin{thm}\label{thm:pltind}
	Let $(Y,\Delta+E)$ be a plt pair over an algebraically closed field $k$
	of characteristic zero
	where $E$ is the integral part. Then for any $\bQ$-Cartier Weil divisor
	$L$ on $Y$ and any point $y\in E$, we have 
	\[
	\ind(y\in Y, L)=\ind(y\in E,L|_E).
	\]
	In particular, we have $\ind(y\in Y,E)=\ind(y\in E,E|_E)$.
\end{thm}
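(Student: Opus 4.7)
Write $n:=\ind(y\in E, L|_E)$ and $m:=\ind(y\in Y, L)$. The divisibility $n\mid m$ is immediate, since the restriction of a Cartier divisor to the normal prime divisor $E$ is again Cartier. It therefore suffices to prove $m\leq n$. After replacing $L$ by $nL$ we reduce to showing: if $L|_E$ is Cartier at $y$, then $L$ is Cartier at $y$, i.e.\ $m=1$.

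My approach is a cyclic cover construction combined with a local connectedness argument. Localize (or henselize) at $y\in Y$ and assume for contradiction that $m\geq 2$. Form the associated degree-$m$ cyclic cover
\[
\pi\colon\tilde Y:=\mathrm{Spec}_Y\Bigl(\bigoplus_{i=0}^{m-1}\mathcal{O}_Y(iL)\Bigr)\longrightarrow Y,
\]
with algebra structure coming from a local trivialization $\mathcal{O}_Y(mL)\cong\mathcal{O}_Y$. Then $\pi$ is finite, $\tilde Y$ is normal, $\pi$ is étale in codimension one, and $\pi^{\ast}L$ is Cartier on $\tilde Y$. Applying Lemma \ref{lem: ramification cover} to $(Y,\Delta+E)$ and $\pi$, the pulled-back pair $(\tilde Y,\tilde\Delta+\tilde E)$ is again plt, where $\tilde E:=\pi^{-1}(E)_{\mathrm{red}}$; in particular $\tilde E$ is normal.

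Next I would analyze the base change of $\pi$ along $E\hookrightarrow Y$. Since $L|_E$ is Cartier at $y$, the $\mathcal{O}_E$-algebra $\bigoplus_{i=0}^{m-1}\mathcal{O}_E(iL|_E)$ is, locally at $y$, isomorphic to the group algebra $\mathcal{O}_E[\mu_m]$ and thus splits as a product of $m$ copies of $\mathcal{O}_E$. Comparing with the normalization of $\tilde Y\times_Y E$, this identifies $\tilde E$ locally over $y$ with $m$ disjoint copies of $E$. Consequently $\pi^{-1}(y)\cap\tilde E$ contains at least $m$ distinct points. On the other hand, by construction $m$ is the \emph{exact} order of $[L]$ in the local class group $\mathrm{Cl}(\mathcal{O}_{Y,y})$, and this order is preserved under henselization for an excellent normal local ring. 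Hence the cyclic cover is connected after henselization at $y$, which forces $\pi^{-1}(y)$ to consist of a single point $\tilde y\in\tilde Y$. Thus $\pi^{-1}(y)\cap\tilde E$ has at most one element, contradicting the conclusion of the previous sentence. This forces $m=1$.

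The main obstacle will be cleanly identifying, locally at $y$, the normalization of $\tilde Y\times_Y E$ with $\tilde E$. The subtlety is that $E$ is only $\bQ$-Cartier on $Y$, so $\mathcal{O}_Y(iL)\otimes_{\mathcal{O}_Y}\mathcal{O}_E$ may carry torsion and need not agree with its reflexive hull $\mathcal{O}_E(iL|_E)$; matching up these structures while tracking the algebra multiplication requires care. A secondary, more conceptual, point is the preservation of the torsion order of $[L]$ under henselization, which underlies the ``$\pi^{-1}(y)$ is a single point'' step and ultimately uses the excellence of normal local rings of finite type over $k$.
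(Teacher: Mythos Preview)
Your proposal is correct and follows essentially the same route as the paper's proof: form the index-one cyclic cover $\pi:\tilde Y\to Y$, use that $(\tilde Y,\tilde\Delta+\tilde E)$ is plt so $\tilde E$ is normal, observe $\pi^{-1}(y)$ is a single point, and then analyse the fibre of $\tilde E\to E$ over $y$ via the restricted algebra to force $m=n$.

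A few small differences worth noting. First, the paper does not make your preliminary reduction (replacing $L$ by $nL$); instead it computes the fibre cardinality directly as $m/\ind(y\in E,L|_E)$. Your reduction is a harmless simplification. Second, the obstacle you flag --- identifying the normalisation of $\tilde Y\times_Y E$ with $\tilde E$, and matching $\bigl(\cO_Y(iL)\otimes\cO_E\bigr)^{**}$ with $\cO_E(iL|_E)$ --- is exactly where the paper spends its effort: it proves a separate Lemma establishing $(\cO_Y(L)\otimes\cO_E)^{**}\cong\cO_Y(L)/\cO_Y(L-E)\cong\cO_E(\lfloor L|_E\rfloor)$ by reducing to the plt \emph{surface} case and using the explicit cyclic-quotient description of the minimal resolution there. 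With that lemma in hand, the paper constructs $\hat E:=\Spec_E\bigoplus_i\cO_E(\lfloor -iL|_E\rfloor)$, gets a finite map $\hat E\to\tilde E$ which is an isomorphism over the Cartier locus of $E$, and concludes by Zariski's Main Theorem. So your identification of the ``main obstacle'' is spot on, and the paper's resolution of it goes through a codimension-two cut to surfaces rather than a purely sheaf-theoretic argument.
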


Before proving Theorem \ref{thm:pltind}, we will work out restriction exact sequence
for plt surface pairs. 

\begin{notation} Let $y\in Y$ be a normal excellent surface
	singularity. Let $E$ be a reduced irreducible curve on $Y$ passing through $y$.
	Assume that $(Y,E)$ is a plt pair. Then from \cite[Corollary 3.31]{Kol13},
	$E$ is regular at $y$, and the extended dual graph
	$(\Gamma,E)$ of $(y\in Y,E)$ is of cyclic quotient type:

	\begin{center}
		\begin{tikzpicture}[scale=1, baseline=(current bounding box.center)]
		\node[fill,circle,draw,inner sep=0pt,minimum size=6pt,label={}] (a) at (0,0){};
		\node[circle,draw,inner sep=0pt,minimum size=6pt,label={$c_1$}] (b) at (1.5,0){};
		\node[circle,inner sep=0pt,minimum size=6pt,label={}] (c) at (3,0){};
		\node[circle,inner sep=0pt,minimum size=6pt,label={},label={[shift={(-0.4,-0.35)}]$\cdots$}] (d) at (4,0){};
		\node[circle,draw,inner sep=0pt,minimum size=6pt,label={$c_r$}] (e) at (5.5,0){};
		
		\draw (a) edge (b);
		\draw (b) edge (c);
		\draw (d) edge (e);
		\end{tikzpicture}
	\end{center}
	Denote by $\sigma:\oY\to Y$ the minimal  
	resolution of $(Y,E)$.
	Here $\bullet$ represents the curve $\oE:=\sigma_*^{-1}E$, and $\circ$
	represent exceptional curves $C_i\cong\bP^1_{\kappa(y)}$ 
	with $-(C_i^2)=c_i\geq 2$ for $1\leq i\leq r$.
	
	Denote by $\Gamma_i$ and $\Gamma_i'$ the subgraph of $\Gamma$ spanned
	by $\{C_1,\cdots,C_i\}$ and $\{C_{r-i+1},\-\cdots,C_r\}$
	respectively. Let $d_i:=\det(\Gamma_i)$ and $d_i':=\det(\Gamma_i')$,
	with $d_0=d_0'=1$ by convention. Let $m:=\det(\Gamma)=d_r=d_r'$.
	From \cite[3.35]{Kol13}, we know that
	\begin{equation}\label{eq:discrep}
	\sigma^*E = \oE+\sum_{i=1}^r \frac{d_{r-i}'}{m} C_i,\quad
	\sigma^*(K_Y+E) = K_{\oY}+\oE+\sum_{i=1}^r \left(1-\frac{d_{i-1}}{m}\right)C_i.
	\end{equation}
	
\end{notation}

\begin{defn}\label{defn:pltsurf}
	With notation ($\star$), let $L$ be a Weil divisor on $Y$ whose support does not contain $E$.
	We define $L|_E$ to be the $\bQ$-divisor $\sigma_*((\sigma^*L)|_{\oE})$. We
	define $E|_E$ as the $\bQ$-divisor class (i.e. a $\bQ$-divisor unique
	up to $\bZ$-linear equivalence) $\sigma_*((\sigma^*E)|_{\oE})=\sigma_*(\oE|_{\oE})+
	\frac{d_{r-1}'}{m}y$.
\end{defn}

The following lemma is probably well-known to experts. We provide a proof 
for readers' convenience.
\begin{lem}\label{lem:pltres}
	With notation $(\star)$, for any Weil divisor $L$ on $Y$, there are canonical isomorphisms 
	\[
	(\cO_Y(L)\otimes\cO_E)^{**}\cong \cO_Y(L)/\cO_Y(L-E)\cong
	\cO_E(\lfloor L|_E \rfloor).
	\]
\end{lem}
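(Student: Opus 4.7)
The strategy is to derive the second isomorphism first via the minimal resolution $\sigma:\bar Y\to Y$, and then deduce the first isomorphism formally. Concretely, I would start from the short exact sequence on $\bar Y$ obtained by restricting to $\bar E$:
$$0 \to \cO_{\bar Y}(\lfloor \sigma^* L \rfloor - \bar E) \to \cO_{\bar Y}(\lfloor \sigma^* L \rfloor) \to \cO_{\bar E}(\lfloor \sigma^* L \rfloor|_{\bar E}) \to 0,$$
and apply $\sigma_*$. Producing the desired sequence on $Y$ then reduces to four ingredients which I would verify in turn:
(a) the standard identification $\sigma_* \cO_{\bar Y}(\lfloor \sigma^* L \rfloor) = \cO_Y(L)$ for Weil divisors on klt surface singularities;
(b) the identification $\sigma_* \cO_{\bar Y}(\lfloor \sigma^* L \rfloor - \bar E) = \cO_Y(L - E)$, which I would prove by rewriting $\sigma^*L - \bar E = \sigma^*(L-E) + \sum_{i=1}^{r}(d'_{r-i}/m)C_i$ via formula \eqref{eq:discrep}, and observing that both sides are rank-one reflexive sheaves agreeing on $Y\setminus\{y\}$;
(c) the vanishing $R^1\sigma_* \cO_{\bar Y}(\lfloor \sigma^* L \rfloor - \bar E) = 0$, which would come from relative Kawamata--Viehweg vanishing after writing $\lfloor \sigma^*L\rfloor - \bar E - K_{\bar Y}$ in terms of the plt discrepancy divisor from \eqref{eq:discrep} and using the numerical data $\sigma^*L\cdot C_i=0$, $\bar E\cdot C_i=\delta_{1i}$, $K_{\bar Y}\cdot C_i=c_i-2$;
(d) the computation $\sigma_* \cO_{\bar E}(\lfloor \sigma^* L \rfloor|_{\bar E}) = \cO_E(\lfloor L|_E\rfloor)$, which reduces via the isomorphism $\sigma|_{\bar E}:\bar E\to E$ to the equality of divisors $\sigma_*(\lfloor \sigma^*L\rfloor|_{\bar E}) = \lfloor L|_E \rfloor$; this in turn follows from Definition~\ref{defn:pltsurf}, since away from the point $\bar E\cap C_1$ both divisors agree, and at that point the coefficient of $\lfloor\sigma^*L\rfloor|_{\bar E}$ is exactly the floor of the coefficient of $(\sigma^*L)|_{\bar E}$ because $C_1\cdot\bar E=1$.

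Putting (a)--(d) together yields the short exact sequence $0\to\cO_Y(L-E)\to\cO_Y(L)\to\cO_E(\lfloor L|_E\rfloor)\to 0$ on $Y$, which gives the second isomorphism. For the first isomorphism, I would exploit that $E$ is regular at $y$ by the plt hypothesis, so that $\cO_{E,y}$ is a DVR and any rank-one torsion-free sheaf on $E$ is invertible. The natural surjection $\cO_Y(L)\otimes\cO_E\twoheadrightarrow\cO_Y(L)/\cO_Y(L-E)$ is an isomorphism on $E\setminus\{y\}$, and the target $\cO_E(\lfloor L|_E\rfloor)$ is already a line bundle on $E$; taking the double dual of both sides as $\cO_E$-modules kills the torsion supported at $y$ and produces the claimed canonical isomorphism.

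The main obstacle is step (b) together with (c): neither $\lfloor\sigma^*L\rfloor-\bar E$ is of the form $\lfloor\sigma^*L'\rfloor$ for a Weil divisor $L'$ on $Y$, nor is the sheaf $\sigma_*\cO_{\bar Y}(\lfloor\sigma^*L\rfloor-\bar E)$ a priori reflexive. Both difficulties are overcome precisely by the plt assumption, which forces the exceptional chain structure and the bound $d'_{r-i}/m\in(0,1)$ on all exceptional coefficients of $\sigma^*E$, and which simultaneously makes $\bar E+\sum(1-d_{i-1}/m)C_i$ a dlt boundary on $\bar Y$ to which relative Kawamata--Viehweg vanishing applies. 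Carrying out the numerical checks using \eqref{eq:discrep} and $C_i\cdot C_j$ from the chain is routine but must be done carefully; the remaining reflexivity in (b) follows because a rank-one coherent torsion-free sheaf on the normal surface $Y$ that agrees with the reflexive sheaf $\cO_Y(L-E)$ on $Y\setminus\{y\}$ must equal $\cO_Y(L-E)$ after reflexive hull, and the pushforward in (b) is $S_2$ at $y$ since $R^1\sigma_*$ vanishes there.
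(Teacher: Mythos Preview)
Your overall strategy is reasonable and close in spirit to the paper's, but there is a genuine gap in step~(c). The paper does \emph{not} push forward the sequence built from $\lfloor\sigma^*L\rfloor$; instead it invokes \cite[Proposition 3.1]{HW19} and \cite[Theorem 10.4]{Kol13}, which amount to pushing forward the sequence
\[
0\to\cO_{\bar Y}(K_{\bar Y}+\lceil\sigma^*D\rceil)\to\cO_{\bar Y}(K_{\bar Y}+\bar E+\lceil\sigma^*D\rceil)\to\cO_{\bar E}\otimes\cO_{\bar Y}(K_{\bar Y}+\bar E+\lceil\sigma^*D\rceil)\to 0
\]
with $D=L-K_Y-E$. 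The point of this choice is that the kernel has the form $K_{\bar Y}+\lceil\sigma^*D\rceil=K_{\bar Y}+\{-\sigma^*D\}+\sigma^*D$, so relative Kawamata--Viehweg applies immediately with the effective klt boundary $\{-\sigma^*D\}$ and the $\sigma$-trivial nef part $\sigma^*D$. The paper then computes, using \eqref{eq:discrep} and the identity $\lfloor-l_1/m+1-1/m\rfloor=-\lfloor l_1/m\rfloor$, that the restriction to $\bar E$ agrees with $\lfloor\sigma^*L\rfloor|_{\bar E}$, since only $C_1$ meets $\bar E$.

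Your proposed vanishing $R^1\sigma_*\cO_{\bar Y}(\lfloor\sigma^*L\rfloor-\bar E)=0$ is in fact true, but your argument for it does not work as stated. Writing $\lfloor\sigma^*L\rfloor-\bar E-K_{\bar Y}\equiv_\sigma\sum_i\bigl((1-d_{i-1}/m)-\{l_i/m\}\bigr)C_i$ via \eqref{eq:discrep}, the coefficients can be negative for $i\geq 2$: whenever $\{l_i/m\}>1-d_{i-1}/m$, which happens for instance on a $\frac{1}{5}(1,2)$ singularity with $l_2\equiv 3\pmod 5$. Kawamata--Viehweg requires an effective boundary, so the ``routine numerical check'' you describe fails. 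One can sometimes rescue this by redistributing between $\Delta$ and the nef part, but there is no uniform choice that works, and carrying this out amounts to rediscovering the paper's divisor $K_{\bar Y}+\lceil\sigma^*(L-K_Y-E)\rceil$, which differs from your $\lfloor\sigma^*L\rfloor$ exactly along those $C_i$ where your coefficient goes negative. Your step~(b) is correct (and does not actually need $R^1$ vanishing: torsion-freeness gives one inclusion, and $\bQ$-factoriality of the klt surface $Y$ together with integrality of $\mathrm{ord}_{C_i}$ gives the other), and your argument for the first isomorphism matches the paper's.
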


\begin{proof}
	We first prove the second isomorphism.
	By \cite[Proposition 3.1]{HW19} and \cite[Theorem 10.4]{Kol13}, for any Weil
	divisor $D$ on $Y$, we have the following short 
	exact sequence
	\[
	0\to\cO_{Y}(K_Y+D)\to\cO_Y(K_Y+E+D)\to\sigma_*
	(\cO_{\oY}(K_{\oY}+\oE+\lceil\sigma^* D\rceil)
	\otimes\cO_{\oE})
	\to 0.
	\]
	Denote the last non-zero term in the above exact
	sequence by $\cF$.
	For a Weil divisor $L$ on $Y$, we take $D:=L-K_Y-E$. 
	Then we have
	a canonical isomorphism between $\cO_Y(L)/\cO_Y(L-E)$ and $\cF$,
	where 
	\[
	\cF=\sigma_*(\cO_{\oY}(K_{\oY}+\oE+\lceil\sigma^* (L-K_Y-E)\rceil)
	\otimes\cO_{\oE})
	\]

	From \eqref{eq:discrep}, we know that 
	\[
	\sigma^*(-K_Y-E)=-K_{\oY}-\oE-
	\sum_{i=1}^r\left(1-\frac{d_{i-1}}{m}\right)C_i.
	\]
	Denote by $\oL:=\sigma_*^{-1}L$. Then we have $\sigma^*L=\oL+\sum_{i=1}^r \frac{l_i}{m} C_i$.
	Hence 
	\[
	K_{\oY}+\oE+\lceil\sigma^* (L-K_Y-E)\rceil=\oL-
	\sum_{i=1}^r\left\lfloor-\frac{l_i}{m}+1-\frac{d_{i-1}}{m}\right\rfloor C_i.
	\]
	Since
	$C_i\cap \oE=\emptyset$ for any $i\geq 2$, we know 
	that 
	\[
	\cF  =\sigma_*(\cO_{\oE}\otimes\cO_{\oY}(\oL+\lfloor l_1/m\rfloor C_1))
	=\sigma_*(\cO_{\oE}\otimes\cO_{\oY}(\lfloor\sigma^*L\rfloor)).
	\]
	Here we use the equality $\lfloor -\frac{l_1}{m}+1-\frac{1}{m}\rfloor =-\lfloor\frac{l_1}{m}\rfloor$.
	Thus by definition  we have $\cF=\cO_E(\lfloor L|_E \rfloor)$.

	For the first isomorphism, consider the following diagram:
	
	\[
	\begin{tikzcd}
	0\arrow{r}& \cI_E\cdot\cO_Y(L)\arrow[hookrightarrow]{d}\arrow{r}&
	\cO_Y(L)\arrow{d}{\id}\arrow{r} & \cO_E\otimes\cO_Y(L)\arrow[twoheadrightarrow]{d}{f}\arrow{r} & 0\\
	0\arrow{r}& \cO_{Y}(L-E)\arrow{r}& \cO_Y(L)\arrow{r} &\cO_Y(L)/\cO_Y(L-E)\arrow{r} & 0
	\end{tikzcd}
	\]
	It is clear that the vertical morphisms are isomorphic over a non-empty Zariski open
	subset. Since $\cO_Y(L)/\cO_Y(L-E)$ is torsion free, 
	the morphism $f$ induces a surjective morphism $(\cO_Y(L)\otimes\cO_E)^{**}\twoheadrightarrow
	\cO_Y(L)/\cO_Y(L-E)$ between invertible sheaves on $E$ which has to be an isomorphism.
\end{proof}

\begin{defn}\label{defn:pltrest}
	Let $(Y,E+\Delta)$ be a plt pair over an algebraically closed field $k$
	of characteristic zero where $E$ is the integral part. Let $L$ be a $\bQ$-Cartier Weil divisor 
	on $Y$. We define $L|_E$ as the $\bQ$-divisor class on $E$ determined by
	Definition \ref{defn:pltsurf} at every codimension $1$ point $\zeta$ of $E$.
\end{defn}

\begin{proof}[Proof of Theorem \ref{thm:pltind}]
	Let $m:=\ind(y\in Y,L)$, then $mL|_E$ is Cartier which implies that
	$m$ is a multiple of $\ind(y\in E,L|_E)$. Let us consider the index $1$ cover
	$\pi:\tilde{Y}\to Y$ of $L$. After shrinking $Y$ if necessary we may assume that  $Y$ is affine and $mL$ is trivial. 
	Then we can  take a generating section $s\in H^0(Y,mL)$ such that
	\[
	\tilde{Y}\cong\Spec_Y\oplus_{i=0}^{m-1}\cO_Y(-iL),
	\]
	where the ring structure is defined using $s$. Let $\tilde{E}:=\pi^*E$ and $\tilde{\Delta}:=\pi^*\Delta$.
	Then \cite[Proposition 5.20]{KM98} implies that $(\tilde{Y},\tilde{E}+\tilde{\Delta})$ is 
	plt, which in particular implies that $\tilde{E}$ is normal.
	Since $\pi^{-1}(y)$ is a single point, we know that $\tilde{E}$ is irreducible.
	Denote by $\hat{E}:=\Spec_E\oplus_{i=0}^{m-1}\cO_E(\lfloor -iL|_E\rfloor)$
	where the ring structure is defined using $s|_E$.
	By Lemma \ref{lem:pltres}, we have 
	\begin{equation}\label{eq:hatE}
	\hat{E}\xrightarrow{~\cong~}\Spec\oplus_{i=0}^{m-1}(\cO_E\otimes\cO_Y(-iL))^{**}
	\rightarrow E\times_Y\tilde{Y}.
	\end{equation}
	Notice that the first morphism is an isomorphism because it is an
	isomorphism over all codimension $1$ points on $E$ by Lemma \ref{lem:pltres},
	and both structure sheaves are reflexive.
	We denote the composition morphism by $g$. Then $g$ is finite and  isomorphic over $E^\circ\times_Y \tilde{Y}$ where
	$E^{\circ}\subset E$ is the Cartier locus of $E$ in $Y$. 
	Since $\tilde{E}$ is the 
	reduced scheme of $E\times_Y \tilde{Y}$, we may lift $g$ to 
	a finite morphism $h:\hat{E}\to \tilde{E}$
	which is isomorphic over $\pi^{-1}(E^\circ)$. By definition we know that
	$\hat{E}$ is equidimensional, hence $h$ is an isomorphism by the Zariski main theorem.
	Clearly, the fiber of $\hat{E}\to E$ over $y$ has cardinality $m/\ind(y\in E,L|_E)$.
	Thus $m=\ind(y\in E,L|_E)$ and the proof is finished. 
\end{proof}

\end{document}